\newcommand{\noun}[1]{\textsc{#1}}
\numberwithin{equation}{section}
\numberwithin{figure}{section}
\theoremstyle{plain}
\newtheorem{thm}{\protect\theoremname}[section]
\theoremstyle{definition}
\newtheorem{defn}[thm]{\protect\definitionname}
\theoremstyle{remark}
\newtheorem*{rem*}{\protect\remarkname}
\theoremstyle{plain}
\newtheorem{prop}[thm]{\protect\propositionname}
\theoremstyle{plain}
\newtheorem{lem}[thm]{\protect\lemmaname}
\theoremstyle{plain}
\newtheorem*{prop*}{\protect\propositionname}
\theoremstyle{remark}
\newtheorem{rem}[thm]{\protect\remarkname}
\theoremstyle{plain}
\newtheorem{cor}[thm]{\protect\corollaryname}
\theoremstyle{plain}
\newtheorem*{thm*}{\protect\theoremname}
\providecommand{\corollaryname}{Corollary}
\providecommand{\definitionname}{Definition}
\providecommand{\lemmaname}{Lemma}
\providecommand{\propositionname}{Proposition}
\providecommand{\remarkname}{Remark}
\providecommand{\theoremname}{Theorem}
\begin{document}
\title{Dimension of Furstenberg measures on $\mathbb{CP}^{1}$}
\author{\noindent Ariel Rapaport and Haojie Ren}
\subjclass[2000]{\noindent 28A80, 37C45.}
\thanks{This research was supported by the Israel Science Foundation (grant
No. 619/22). AR received support from the Horev Fellowship at the
Technion -- Israel Institute of Technology.}
\begin{abstract}
Let $\theta$ be a finitely supported probability measure on $\mathrm{SL}(2,\mathbb{C})$,
and suppose that the semigroup generated by $\mathcal{G}:=\mathrm{supp}(\theta)$
is strongly irreducible and proximal. Let $\mu$ denote the Furstenberg
measure on $\mathbb{CP}^{1}$ associated to $\theta$. Assume further
that no generalized circle is fixed by all Möbius transformations
corresponding to elements of $\mathcal{G}$, and that $\mathcal{G}$
satisfies a mild Diophantine condition. Under these assumptions, we
prove that $\dim\mu=\min\left\{ 2,h_{\mathrm{RW}}/\left(2\chi\right)\right\} $,
where $h_{\mathrm{RW}}$ and $\chi$ denote the random walk entropy
and Lyapunov exponent associated to $\theta$, respectively.

Since our result expresses $\dim\mu$ in terms of the random walk
entropy rather than the Furstenberg entropy, and relies only on a
mild Diophantine condition as a separation assumption, we are forced
to directly confront difficulties arising from the ambient space $\mathbb{CP}^{1}$
having real dimension $2$ rather than $1$. Moreover, our analysis
takes place in a projective, contracting-on-average setting. This
combination of features introduces significant challenges and requires
genuinely new ideas.
\end{abstract}

\maketitle

\section{\label{sec:Introduction-and-the main result}Introduction and the
main result}

\subsection{\label{subsec:Setup-and-background}Setup and background}

Set $\mathrm{G}:=\mathrm{SL}(2,\mathbb{C})$, and write $\mathbb{C}_{\infty}:=\mathbb{C}\cup\{\infty\}$
for the Riemann sphere. Given $g\in\mathrm{G},$ let $\varphi_{g}:\mathbb{C}_{\infty}\rightarrow\mathbb{C}_{\infty}$
denote the corresponding Möbius transformation. That is,
\[
\varphi_{g}(z)=\frac{az+b}{cz+d}\text{ for }z\in\mathbb{C}_{\infty},\text{ where }g=\left(\begin{array}{cc}
a & b\\
c & d
\end{array}\right).
\]
The action of $\mathrm{G}$ on $\mathbb{C}_{\infty}$ via Möbius transformations
is one of the most classical examples of a Lie group action on a compact
space. In this paper, under mild assumptions, we compute the dimension
of stationary measures on $\mathbb{C}_{\infty}$ associated to finitely
supported probability measures on $\mathrm{G}$.

Write $\mathbb{CP}^{1}:=\left\{ z\mathbb{C}\::\:0\ne z\in\mathbb{C}^{2}\right\} $
for the complex projective line, and define $\psi:\mathbb{CP}^{1}\rightarrow\mathbb{C}_{\infty}$
by
\[
\psi\left(z\mathbb{C}\right)=\begin{cases}
z_{1}/z_{2} & \text{ if }z_{2}\ne0\\
\infty & \text{ if }z_{2}=0
\end{cases}\text{ for all }(z_{1},z_{2})=z\in\mathbb{C}^{2}\setminus\{0\}.
\]
The group $\mathrm{G}$ acts naturally on $\mathbb{CP}^{1}$ by $g\cdot z\mathbb{C}:=gz\mathbb{C}$,
and the map $\psi$ is an isomorphism between this action and the
Möbius action of $\mathrm{G}$ on $\mathbb{C}_{\infty}$.

We equip $\mathbb{CP}^{1}$ with the metric given by 
\[
d_{\mathbb{CP}^{1}}\left(z\mathbb{C},w\mathbb{C}\right):=\frac{1}{\Vert z\Vert\Vert w\Vert}\left|\det\left(\begin{array}{cc}
z_{1} & w_{1}\\
z_{2} & w_{2}
\end{array}\right)\right|
\]
for nonzero vectors $z=(z_{1},z_{2})$ and $w=(w_{1},w_{2})$ in $\mathbb{C}^{2}$.
One readily checks that $d_{\mathbb{CP}^{1}}$ is bi-Lipschitz equivalent
to any Riemannian distance function on $\mathbb{CP}^{1}$.

Throughout the paper, let $\Lambda$ be a finite nonempty index set,
fix a collection $\mathcal{G}=\left\{ g_{i}\right\} _{i\in\Lambda}\subset\mathrm{G}$,
and fix a positive probability vector $p=(p_{i})_{i\in\Lambda}$.
Write $\mathrm{S}_{\mathcal{G}}$ for the subsemigroup of $\mathrm{G}$
generated by $\mathcal{G}$. We shall always assume that $\mathrm{S}_{\mathcal{G}}$
is strongly irreducible and proximal. Strong irreducibility means
that the action of $\mathrm{S}_{\mathcal{G}}$ on $\mathbb{CP}^{1}$
has no finite trajectory, while proximality means that $\mathrm{S}_{\mathcal{G}}$
is unbounded with respect to the operator norm $\Vert\cdot\Vert_{\mathrm{op}}$.

For a metric space $X$, denote by $\mathcal{M}(X)$ the collection
of compactly supported Borel probability measures on $X$. Under the
above assumptions, it is well known that there exists a unique $\mu\in\mathcal{M}\left(\mathbb{CP}^{1}\right)$
satisfying $\mu=\sum_{i\in\Lambda}p_{i}\cdot g_{i}\mu$, where $g_{i}\mu$
denotes the pushforward of $\mu$ via the map $z\mathbb{C}\mapsto g_{i}z\mathbb{C}$.
In other words, $\mu$ is the unique element of $\mathcal{M}\left(\mathbb{CP}^{1}\right)$
that is stationary with respect to $\sum_{i\in\Lambda}p_{i}\delta_{g_{i}}\in\mathcal{M}(\mathrm{G})$,
where $\delta_{g_{i}}$ is the Dirac mass at $g_{i}$. The measure
$\mu$ is called the Furstenberg measure associated to $\mathcal{G}$
and $p$. Furstenberg measures play a central role in the study of
the asymptotic behavior of random matrix products (see \cite{BQ,BL}),
and their dimension theory is an important strand of research in fractal
geometry (see, e.g., \cite{Bourgain-FurMeas,HS,LedLesGAFA}).

It follows from the recent work of Ledrappier and Lessa \cite{MR4845880}
that $\mu$ is exact dimensional. That is, there exists a number $\dim\mu$,
called the dimension of $\mu$, such that
\[
\underset{r\downarrow0}{\lim}\frac{\log\mu\left(B(z\mathbb{C},r)\right)}{\log r}=\dim\mu\text{ for }\mu\text{-a.e. }z\mathbb{C},
\]
where $B(z\mathbb{C},r)$ is the closed ball with center $z\mathbb{C}$
and radius $r$. In Appendix \ref{sec:appendix}, we deduce from \cite{rapaport2020exact}
the exact dimensionality of $\mu$, together with a Ledrappier--Young-type
formula for its dimension.

In our main result, we compute $\dim\mu$ in terms of the random walk
entropy and the Lyapunov exponent, which are fundamental dynamical
quantities. Write $\beta:=p^{\mathbb{N}}$ for the Bernoulli measure
on $\Lambda^{\mathbb{N}}$ corresponding to $p$, and denote by $\chi$
the Lyapunov exponent associated to $\mathcal{G}$ and $p$. That
is,
\begin{equation}
\underset{n\to\infty}{\lim}\frac{1}{n}\log\Vert g_{\omega_{0}}...g_{\omega_{n-1}}\Vert_{\mathrm{op}}=\chi\text{ for }\beta\text{-a.e. }\omega\in\Lambda^{\mathbb{N}},\label{eq:chi =00003D lim a.s.}
\end{equation}
where we always use $2$ as the base of the logarithm. Since $\mathrm{S}_{\mathcal{G}}$
is strongly irreducible and proximal, we have $\chi>0$ (see \cite[Corollary 4.32]{BQ}).

Denote by $h_{\mathrm{RW}}$ the random walk entropy associated to
$\mathcal{G}$ and $p$. That is,
\begin{equation}
h_{\mathrm{RW}}:=\underset{n\to\infty}{\lim}\frac{1}{n}H\left(X_{1}...X_{n}\right)=\underset{n\ge1}{\inf}\frac{1}{n}H\left(X_{1}...X_{n}\right),\label{eq:def of h_RW}
\end{equation}
where $X_{1},X_{2},...$ are i.i.d. $\mathrm{G}$-valued random elements
with $\mathbb{P}\left\{ X_{1}=g_{i}\right\} =p_{i}$ for each $i\in\Lambda$,
and $H\left(X_{1}...X_{n}\right)$ denotes the Shannon entropy of
the discrete random element $X_{1}...X_{n}$. The existence of the
limit and the second equality in (\ref{eq:def of h_RW}) follow from
subadditivity. Writing $H(p)$ for the entropy of $p$, note that
$h_{\mathrm{RW}}=H(p)$ if and only if $\mathcal{G}$ generates a
free semigroup.

By \cite[Proposition 10.2]{MR1449135},
\[
\dim\mu=\inf\left\{ \dim_{H}E\::\:E\subset\mathbb{CP}^{1}\text{ is a Borel set with }\mu(E)>0\right\} ,
\]
where $\dim_{H}E$ denotes the Hausdorff dimension of $E$. Thus,
as $\mathbb{CP}^{1}$ has dimension $2$ as a real manifold, $\dim\mu\le2$.
A second, less obvious upper bound for the dimension of $\mu$, of
a dynamical nature, arises from the aforementioned Ledrappier--Young-type
formula. Namely, using that formula, it is easy to show (see Lemma
\ref{lem:h/2chi is ub for dim mu}) that $\dim\mu\le h_{\mathrm{RW}}/\left(2\chi\right)$.
This bound can also be deduced from \cite[Theorem 1.2]{MR4845880}.

Under the additional assumption that $\mathcal{G}$ is contained in
a discrete subgroup of $\mathrm{G}$, it follows from the work of
Ledrappier \cite{Ledrappier1983} from the early 1980s that $\dim\mu=h_{\mathrm{RW}}/\left(2\chi\right)$.\footnote{In \cite{Ledrappier1983}, a slightly different notion of dimension
was used; however, since $\mu$ is exact dimensional, it coincides
with the usual one.} Motivated by this, and by important developments from the last decade
or so in the dimension theory of stationary fractal measures (see,
e.g., \cite{BHR,Ho1,HS,Var-Bernoulli}), it is expected that, in the
absence of obvious algebraic obstructions, the dimension of $\mu$
should equal its maximal possible value given the above upper bounds.
That is, it is expected that $\dim\mu=\min\left\{ 2,h_{\mathrm{RW}}/\left(2\chi\right)\right\} $.
In our main result, we establish this equality under mild assumptions,
thereby substantially relaxing the aforementioned discreteness assumption.

When $\mathcal{G}\subset\mathrm{SL}(2,\mathbb{R})$, the dimension
of $\mu$ was computed by Hochman and Solomyak \cite{HS}. To state
their result, and ours, we need the following definition. Let $d_{\mathrm{G}}$
denote the Riemannian distance function induced by a left-invariant
Riemannian metric on $\mathrm{G}$. Given a word $i_{1}...i_{n}=u\in\Lambda^{n}$,
write $g_{u}:=g_{i_{1}}...g_{i_{n}}$.
\begin{defn}
\label{def:Diophantine prop}We say that $\mathcal{G}$ is Diophantine
if there exists $c>0$ such that for every $n\ge1$,
\begin{equation}
d_{\mathrm{G}}\left(g_{u_{1}},g_{u_{2}}\right)\ge c^{n}\text{ for all }u_{1},u_{2}\in\Lambda^{n}\text{ with }g_{u_{1}}\ne g_{u_{2}}.\label{eq:prop in Diophantine prop}
\end{equation}
We say that $\mathcal{G}$ is weakly Diophantine if there exists $c>0$
such that (\ref{eq:prop in Diophantine prop}) holds for infinitely
many $n\ge1$.
\end{defn}

\begin{rem*}
As pointed out in \cite[Section 2.3]{HS}, Definition \ref{def:Diophantine prop}
is independent of the specific choice of left-invariant Riemannian
metric from which $d_{\mathrm{G}}$ is induced.
\end{rem*}
\begin{rem*}
We say that $\mathcal{G}$ is defined by algebraic parameters if the
entries of $g_{i}$ are algebraic numbers for each $i\in\Lambda$.
As shown in \cite[Lemma 6.1]{HS}, $\mathcal{G}$ is Diophantine whenever
it is defined by algebraic parameters.
\end{rem*}
\begin{rem*}
The Diophantine condition is substantially milder than the discreteness
assumption from \cite{Ledrappier1983}. Indeed, when $\mathcal{G}$
is contained in a discrete subgroup of $\mathrm{G}$, there exists
$c>0$ such that (\ref{eq:prop in Diophantine prop}) holds for all
$n\ge1$ with $c$ in place of $c^{n}$, which is, of course, much
more restrictive.
\end{rem*}
The main result of \cite{HS} states that $\dim\mu=\min\left\{ 1,h_{\mathrm{RW}}/\left(2\chi\right)\right\} $
whenever $\mathcal{G}\subset\mathrm{SL}(2,\mathbb{R})$, $\mathrm{S}_{\mathcal{G}}$
is strongly irreducible and proximal, and $\mathcal{G}$ is Diophantine.
It appears that the proof in \cite{HS} still applies if $\mathcal{G}$
is assumed to be weakly Diophantine rather than Diophantine. Moreover,
it is straightforward to relax the condition $\mathcal{G}\subset\mathrm{SL}(2,\mathbb{R})$
to the assumption that $\mathcal{G}$ can be conjugated into the subgroup\footnote{Note that $\mathrm{Stab}_{\mathrm{G}}\left(\mathbb{R}_{\infty}\right)$
equals the group generated by $\mathrm{SL}\left(2,\mathbb{R}\right)$
and the matrix $\mathrm{diag}\left(i,-i\right)\in\mathrm{G}$.}
\[
\mathrm{Stab}_{\mathrm{G}}\left(\mathbb{R}_{\infty}\right):=\left\{ g\in\mathrm{G}\::\:\varphi_{g}\left(\mathbb{R}_{\infty}\right)=\mathbb{R}_{\infty}\right\} ,
\]
where $\mathbb{R}_{\infty}:=\mathbb{R}\cup\{\infty\}$. The purpose
of the present paper is to treat the complementary case, namely when
such a conjugation is not possible.

\subsection{The main result}

We continue to use the setup and notation from the previous subsection.
For each $i\in\Lambda$, write $\varphi_{i}:=\varphi_{g_{i}}$. A
subset $C\subset\mathbb{C}_{\infty}$ is called a generalized circle
if either
\[
C=\left\{ z\in\mathbb{C}\::\:|z-z_{0}|=r\right\} \text{ for some }z_{0}\in\mathbb{C},r>0,
\]
or
\[
C=\left\{ z_{0}+tz_{1}\::\:t\in\mathbb{R}\right\} \cup\{\infty\}\text{ for some }z_{0},z_{1}\in\mathbb{C},z_{1}\ne0.
\]
We say that $\mathrm{S}_{\mathcal{G}}$ fixes a generalized circle
if there exists such a $C$ with $\varphi_{i}(C)=C$ for all $i\in\Lambda$.
The following theorem is our main result.
\begin{thm}
\label{thm:main result}Suppose that $\mathrm{S}_{\mathcal{G}}$ is
strongly irreducible, proximal, and does not fix a generalized circle.
Assume moreover that $\mathcal{G}$ is weakly Diophantine. Then,
\begin{equation}
\dim\mu=\min\left\{ 2,\frac{h_{\mathrm{RW}}}{2\chi}\right\} .\label{eq:equality to prove in main thm}
\end{equation}
\end{thm}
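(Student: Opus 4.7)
The upper bound in (\ref{eq:equality to prove in main thm}) is already in hand: $\dim\mu\le 2$ since $\mathbb{CP}^1$ is a real $2$-manifold, and $\dim\mu\le h_{\mathrm{RW}}/(2\chi)$ follows from the Ledrappier--Young-type formula established in Appendix \ref{sec:appendix} (cf.\ Lemma \ref{lem:h/2chi is ub for dim mu}). Thus the task is the matching lower bound
\[
\dim\mu \;\ge\; \min\bigl\{2,\,h_{\mathrm{RW}}/(2\chi)\bigr\},
\]
which the plan is to prove by an entropy-increment scheme in the spirit of Hochman and of Hochman--Solomyak \cite{HS}, adapted to the $2$-dimensional complex projective setting.

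The first step is to use the exact-dimensionality and Ledrappier--Young formula from the appendix to reduce the desired bound on $\dim\mu$ to a lower bound on the asymptotic dyadic entropy $\tfrac{1}{n}H(\mu,\mathcal{D}_n)$ as $n\to\infty$, where $\mathcal{D}_n$ is any partition of $\mathbb{CP}^1$ into sets of diameter $\approx 2^{-n}$. Next I would exploit the stationarity $\mu=\sum_{\omega\in\Lambda^m} p_\omega\cdot g_\omega\mu$ at length $m\approx n/(2\chi)$, chosen so that a $\beta$-typical $g_\omega$ contracts $\mathbb{CP}^1$ (outside a small neighborhood of its repelling fixed point) at rate $\approx 2^{-n}$. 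Linearizing each such $g_\omega$ in suitable charts around its attracting fixed point realizes $\varphi_{g_\omega}$ as an approximate complex affine map $z\mapsto a_\omega z+b_\omega$ with $|a_\omega|\approx 2^{-n}$, so that, up to error terms controlled by large deviations for the cocycle $\log\|g_{\omega_0}\cdots g_{\omega_{m-1}}\|_{\mathrm{op}}$, the measure $\mu$ becomes a weighted sum of complex affine pushforwards of copies of itself.

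Assuming for contradiction that $\dim\mu=\alpha<\min\{2,h_{\mathrm{RW}}/(2\chi)\}$, I would then invoke a $2$-dimensional inverse theorem for entropy of convolutions on $\mathbb{C}\cong\mathbb{R}^2$, generalizing Hochman's entropy-growth theorem from $\mathbb{R}$. Together with the linearization above and the combinatorial input that the $m$-step word distribution has Shannon entropy $\approx h_{\mathrm{RW}}\cdot m$, the inverse theorem should force one of two alternatives: (i) exponentially many near-collisions among the $g_\omega$, $\omega\in\Lambda^m$, i.e.\ $d_{\mathrm{G}}(g_{u_1},g_{u_2})<e^{-\epsilon m}$ for a nontrivial fraction of distinct pairs $u_1,u_2$; or (ii) at a positive-density sequence of scales, $\mu$ is concentrated near a smooth real $1$-dimensional submanifold of $\mathbb{CP}^1$. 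Alternative (i) is incompatible with the weakly Diophantine hypothesis applied along the infinite sequence of good $m$. For alternative (ii), the concentration must be propagated consistently by the random walk, and since Möbius transformations act conformally and preserve the class of generalized circles, the stationarity of $\mu$ should allow one to upgrade the concentration to a generalized circle invariant under every $\varphi_i$, contradicting the hypothesis.

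The main obstacle, I expect, is the $2$-dimensional inverse entropy theorem together with its transport through the projective linearizations. In the $\mathrm{SL}(2,\mathbb{R})$ situation of \cite{HS}, the target projective space is $1$-dimensional and Hochman's $\mathbb{R}$-valued inverse theorem applies directly; here on $\mathbb{C}$ one must control not only the scale at which entropy concentrates but also the \emph{direction}, distinguishing genuine $2$-dimensional growth from concentration along real $1$-dimensional subspaces, and one must handle the contracting-on-average nature of the walk (a single $g_i$ need not contract the Fubini--Study metric at all) via large-deviation estimates on the Lyapunov cocycle. A further delicate point is converting the soft, measure-theoretic concentration in alternative~(ii) into the rigid algebraic conclusion of an actually $\mathrm{S}_\mathcal{G}$-invariant generalized circle; this is precisely where the ``no fixed generalized circle'' hypothesis does genuine work, and where the argument departs most sharply from the real case.
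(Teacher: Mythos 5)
Your upper bound and the overall skeleton (Hochman-type entropy growth under convolution, the weak Diophantine hypothesis supplying separation at the group level, the no-fixed-circle hypothesis killing the degenerate branch) are the right skeleton, but the proposal has a genuine gap at precisely the point where the paper's main novelty lies: your alternative (ii) misidentifies the degenerate branch of the two-dimensional inverse theorem. When entropy fails to grow, Hochman's theorem on $\mathbb{R}^{2}$ does \emph{not} give that $\mu$ is concentrated near a smooth real one-dimensional submanifold at positive-density scales; it gives that, at most scales, the \emph{components} $\nu_{z,i}$ are entropy-saturated along some real line $V$ (i.e. $H(\nu_{z,i})\approx H(\pi_{V^{\perp}}\nu_{z,i})+1$, with $V$ possibly varying from component to component). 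This is a purely fiberwise statement, compatible for instance with $\dim\mu=3/2$ and no concentration near any curve whatsoever, so your plan to "upgrade the concentration to an invariant generalized circle by stationarity and conformality" has no starting point. The paper rules out saturation by proving Proposition \ref{prop:lb on ent of proj of comp of nu}: for a \emph{fixed} $\gamma>0$, with high probability every projection $\pi_{w\mathbb{R}}\nu_{z,n}$ has normalized entropy $>\dim\mu-1+\gamma$; combined with the uniform entropy dimension statement (Proposition \ref{prop:uni ent dim}) this contradicts saturation in Theorem \ref{thm:ent inc in C}. Obtaining the strict gap $\gamma$ (the soft bound $\dim\mu-1-\epsilon$ is useless here) is the heart of the matter: it requires the direction cocycle $\alpha_{n}$, the proof that it is not a coboundary (Proposition \ref{prop:alpha not coboundary}) -- this is exactly where the no-fixed-generalized-circle hypothesis enters, through $\nu(C)=0$ for every generalized circle (Lemma \ref{lem:nu(gen circ)=00003D0}) and Zariski density -- and an ergodic non-concentration result (Corollary \ref{cor:from equidist prop}), together with a linearization of $\pi_{z\mathbb{R}}\circ\varphi_{uv_{1}v_{2}}$. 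None of this is present in, or replaceable by, your alternative (ii) as stated; the difficulty you flag at the end is real, but the sketch offers no mechanism to resolve it.

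A second, smaller gap concerns how the random walk entropy is injected. You feed in "the $m$-step word distribution has Shannon entropy $\approx h_{\mathrm{RW}}m$", but distinct words may give identical or nearly identical M\"obius maps, and the inverse theorem needs entropy of a measure supported near the identity of $\mathrm{G}$ (then transported to $\mathbb{C}$), not symbolic entropy. The paper's route is: the Ledrappier--Young formula (Theorem \ref{thm:exact dim of mu and LY}) plus $\dim\mu<h_{\mathrm{RW}}/(2\chi)$ force the fibre measures $\Pi_{n}\beta_{\omega}$ of the boundary-map disintegration to carry Shannon entropy $\gtrsim\epsilon n$ on a positive-measure set of $\omega$; the weak Diophantine property converts this into dyadic entropy at scale $2^{-Mn}$ on $\mathrm{G}$; these measures are chopped into $2^{O(\eta n)}$ pieces of diameter at most $r$ and translated into $B(1_{\mathrm{G}},r)$, so that Theorem \ref{thm:ent inc} applies, with entropy on $\mathrm{G}$ transferred to entropy on $\mathbb{C}$ via a three-point injectivity argument (Proposition \ref{prop:from ent on G to ent on C}). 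Your near-collision dichotomy (i) is morally the same use of the Diophantine hypothesis, but without the disintegration (or an equivalent conditional-entropy bookkeeping showing the entropy survives the chopping into identity-neighborhood pieces) the contradiction with $\dim\mu<h_{\mathrm{RW}}/(2\chi)$ does not materialize from word entropy alone.
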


Let us make some remarks regarding the assumptions appearing in Theorem
\ref{thm:main result}. First note that $\mathrm{S}_{\mathcal{G}}$
is strongly irreducible, proximal, and does not fix a generalized
circle if and only if $\mathrm{S}_{\mathcal{G}}$ is dense in $\mathrm{G}$
with respect to the Zariski topology generated by the real polynomial
functions (see Section \ref{subsec:Zariski-density-of S_G}). We have
chosen to formulate the theorem in terms of these three conditions
rather than directly in terms of Zariski density, as this makes the
statement more transparent.

We now discuss the individual assumptions in more detail. The strong
irreducibility and proximality assumptions are standard in the theory
of random matrix products. When $\mathrm{S}_{\mathcal{G}}$ is nonproximal,
its closure is a compact Lie group, and the elements of $\mathcal{M}\left(\mathbb{CP}^{1}\right)$
that are stationary and ergodic with respect to $\theta:=\sum_{i\in\Lambda}p_{i}\delta_{g_{i}}$
are $\mathrm{S}_{\mathcal{G}}$-invariant smooth probability measures
supported on trajectories of the closure of $\mathrm{S}_{\mathcal{G}}$.

When $\mathrm{S}_{\mathcal{G}}$ is reducible, i.e. when its action
on $\mathbb{CP}^{1}$ has a common fixed point, one can, after conjugation,
assume that $\varphi_{i}(\infty)=\infty$ for each $i\in\Lambda$.
Hence, this case reduces to the study of self-similar measures on
$\mathbb{R}^{2}$. The strictly contracting case was studied by Hochman
\cite{Ho}, while the general contracting-on-average case was recently
addressed by Kittle and Kogler \cite{kittle2025dimension}.

When $\mathrm{S}_{\mathcal{G}}$ is proximal and irreducible but not
strongly irreducible, it is not difficult to see that there exist
distinct $z\mathbb{C},w\mathbb{C}\in\mathbb{CP}^{1}$ such that $\frac{1}{2}\left(\delta_{z\mathbb{C}}+\delta_{w\mathbb{C}}\right)$
is the unique $\theta$-stationary measure in $\mathcal{M}\left(\mathbb{CP}^{1}\right)$.
In particular, in this case the stationary measure is atomic, and
hence zero-dimensional.

When $\mathrm{S}_{\mathcal{G}}$ is strongly irreducible, proximal,
and fixes a generalized circle $C\subset\mathbb{C}_{\infty}$, the
measure $\mu$ is supported on the closed curve $\psi^{-1}(C)$, where
$\psi$ is the map defined at the beginning of Section \ref{subsec:Setup-and-background}.
Consequently, $\dim\mu\le1$, and (\ref{eq:equality to prove in main thm})
fails whenever $h_{\mathrm{RW}}/\left(2\chi\right)>1$. On the other
hand, in this case $\mathcal{G}$ can be conjugated into $\mathrm{Stab}_{\mathrm{G}}\left(\mathbb{R}_{\infty}\right)$,
and, as noted above, a slight extension of \cite{HS} yields $\dim\mu=\min\left\{ 1,h_{\mathrm{RW}}/\left(2\chi\right)\right\} $.

Finally, it is expected that Theorem \ref{thm:main result} should
remain valid even without the weakly Diophantine assumption. Unfortunately,
this lies well beyond our current reach. Indeed, such a statement
has not been achieved even in the considerably simpler setting of
self-similar measures on the real line, where its validity is regarded
as one of the major open problems in fractal geometry (see \cite{MR3966837,Var_ICM}).

On the other hand, the weak Diophantine condition is quite mild. Firstly,
as pointed out above, $\mathcal{G}$ is always Diophantine whenever
it is defined by algebraic parameters. Moreover, as suggested by the
work of Solomyak and Takahashi \cite{MR4300232} in the real case,
given a well-behaved parametric family of finite subsets of $\mathrm{G}$,
it should be possible to verify the Diophantine property outside a
small exceptional set of parameters. We do not pursue this direction
here, however, leaving it open for further research.

\subsection{Additional related results}

The dimension of Furstenberg measures on the real projective plane
$\mathbb{RP}^{2}$ was recently studied by Li, Pan, and Xu \cite{li2024FurstOnRP2}
and by Jurga \cite{jurga2023FurstOnRP2}. In both works, the results
were applied to settle a folklore conjecture concerning the dimension
of the Rauzy gasket, a well-known fractal arising in dynamical systems.
Let $\theta\in\mathcal{M}\left(\mathrm{SL}\left(3,\mathbb{R}\right)\right)$
be finitely supported, suppose that the semigroup generated by $\mathrm{supp}(\theta)$
is Zariski dense in $\mathrm{SL}\left(3,\mathbb{R}\right)$, and let
$\mu'\in\mathcal{M}\left(\mathbb{RP}^{2}\right)$ denote the Furstenberg
measure associated to $\theta$.

Assuming $\mathrm{supp}(\theta)$ is Diophantine, the dimension of
$\mu'$ was computed in \cite{li2024FurstOnRP2} in terms of the Furstenberg
entropy (see \cite[Eq.  (2.81)]{li2024FurstOnRP2} for the definition)
and the Lyapunov exponents. In the presence of substantial overlaps
between the supports of the measures $\left\{ g\mu':g\in\mathrm{supp}(\theta)\right\} $,
the Furstenberg entropy is usually difficult to compute. Moreover,
the Furstenberg entropy is always bounded above by the random walk
entropy. Hence, it is advantageous to compute $\dim\mu'$ in terms
of the latter rather than the former.\footnote{In the setup studied in the present paper, the dimension of $\mu$
was computed in terms of the Furstenberg entropy in \cite{Ledrappier1983},
while only assuming strong irreducibility and proximality.}

Assuming $\mathrm{supp}(\theta)$ consists of matrices with strictly
positive entries and satisfies the strong open set condition (SOSC),
the dimension of $\mu'$ was computed in \cite{jurga2023FurstOnRP2}
in terms of the Shannon entropy of $\theta$ and the Lyapunov exponents.
Roughly speaking, the SOSC requires that the supports of the measures
$\left\{ g\mu':g\in\mathrm{supp}(\theta)\right\} $ be nearly disjoint.

Both of the above results are obtained by computing the dimension
of projections of $\mu'$ onto (typical) one-dimensional projective
subspaces, and then applying the Ledrappier--Young formula from \cite{LedLesGAFA,rapaport2020exact}.
This approach suffices because Furstenberg entropy is used in place
of random walk entropy in \cite{li2024FurstOnRP2}, and because of
the SOSC assumption in \cite{jurga2023FurstOnRP2}. Consequently,
in both proofs most of the analysis is carried out in a one-dimensional
setting, and in this sense the fact that $\mathbb{RP}^{2}$ is two-dimensional,
which causes significant difficulties, is not confronted directly.

A measure in $\mathcal{M}\left(\mathbb{R}^{d}\right)$ is called self-affine
(resp. self-similar) if it is stationary with respect to a finitely
supported probability measure on the affine (resp. similarity) group
of $\mathbb{R}^{d}$. The dimension of self-affine and self-similar
measures was studied in \cite{Ho,HR,kittle2025dimension,Rap_SA_Rd},
while directly addressing challenges posed by high dimensionality.
However, in this setting the action is affine rather than projective,
which avoids some of the major difficulties present in the projective
case.

In the present work, we compute $\dim\mu$ in terms of the random
walk entropy, while requiring only the weakly Diophantine condition
as a separation assumption. This forces us to confront directly the
difficulties arising from the fact that $\mathbb{CP}^{1}$ has real
dimension $2$ rather than $1$. Furthermore, our analysis takes place
in a projective, contracting-on-average setting. As we explain in
the next subsection, this combination of features introduces significant
new challenges and requires genuinely new ideas.

\subsection{\label{subsec:About-the-proof}About the proof}

In this subsection we provide a general outline of our proof of Theorem
\ref{thm:main result}. Everything discussed here will be repeated
rigorously in later parts of the paper. In what follows we always
assume that $\mathrm{S}_{\mathcal{G}}$ is strongly irreducible, proximal,
and does not fix a generalized circle.

As in many other developments in fractal geometry in recent years,
the key ingredient of our proof is a statement ensuring a substantial
increase of entropy under convolution. This approach was initiated
by Hochman \cite{Ho1} in his seminal work on the dimension of exponentially
separated self-similar measures on $\mathbb{R}$.

In what follows we use standard notation for entropy; see Section
\ref{subsec:Entropy} for the relevant basic definitions. Given $n\ge0$,
write $\mathcal{D}_{n}^{\mathbb{CP}^{1}}$ (resp. $\mathcal{D}_{n}^{\mathrm{G}}$)
for the level-$n$ dyadic-like partition of $\mathbb{CP}^{1}$ (resp.
$\mathrm{G}$), defined later in Section \ref{subsec:Dyadic-partitions}.
We omit the superscript $\mathbb{CP}^{1}$ (resp. $\mathrm{G}$) when
it is clear from the context. Given $\theta\in\mathcal{M}(\mathrm{G})$
and $\xi\in\mathcal{M}\left(\mathbb{CP}^{1}\right)$, write $\theta.\xi\in\mathcal{M}\left(\mathbb{CP}^{1}\right)$
for the pushforward of $\theta\times\xi$ via the action map $(g,z\mathbb{C})\mapsto gz\mathbb{C}$.
For $r>0$, denote by $B(1_{\mathrm{G}},r)$ the closed ball in $\mathrm{G}$
with center $1_{\mathrm{G}}$, the identity element of $\mathrm{G}$,
and radius $r$. The following theorem is our entropy increase result. 
\begin{thm}
\label{thm:ent inc}Suppose that $\dim\mu<2$. Then there exists $0<r<1$
such that for every $\epsilon>0$, there exists $\delta=\delta(\epsilon)>0$
so that $\frac{1}{n}H(\theta.\mu,\mathcal{D}_{n})>\dim\mu+\delta$
for all $n\ge N(\epsilon)\ge1$ and $\theta\in\mathcal{M}\left(B(1_{\mathrm{G}},r)\right)$
with $\frac{1}{n}H(\theta,\mathcal{D}_{n})\ge\epsilon$.
\end{thm}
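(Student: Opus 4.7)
The plan is to adapt the entropy-increase-under-convolution strategy of Hochman \cite{Ho1} to the two-dimensional projective setting. Assume the theorem fails: by compactness one extracts measures $\theta_n\in\mathcal{M}(B(1_\mathrm{G},r))$ with $\frac{1}{n}H(\theta_n,\mathcal{D}_n)\ge\epsilon$ but $\frac{1}{n}H(\theta_n.\mu,\mathcal{D}_n)\le\dim\mu+o(1)$. Since exact dimensionality of $\mu$ forces $\frac{1}{n}H(\theta_n.\mu,\mathcal{D}_n)\ge\dim\mu-o(1)$, convolution by $\theta_n$ is almost entropy-preserving for $\mu$ at scale $2^{-n}$. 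The goal is to show that this near-invariance compels a rigid one-dimensional structure on $\mu$ along orbit directions, and then, with the help of weak Diophantineness, to contradict the no-fixed-circle hypothesis.

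First I would linearize. Near a $\mu$-typical $z\mathbb{C}\in\mathbb{CP}^1$ and for $r$ sufficiently small, the orbit map $g\mapsto g\cdot z\mathbb{C}$ is a smooth submersion from $B(1_\mathrm{G},r)$ onto a neighborhood in $\mathbb{CP}^1$; in a chart, it identifies $\theta_n.\mu$ locally with a planar convolution $\widetilde{\theta}_{n,z\mathbb{C}}*\mu_{z\mathbb{C}}$ in $T_{z\mathbb{C}}\mathbb{CP}^1\cong\mathbb{R}^2$, where $\widetilde{\theta}_{n,z\mathbb{C}}$ is the pushforward of $\theta_n$ and $\mu_{z\mathbb{C}}$ is a suitable localization of $\mu$. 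Averaging over $z\mathbb{C}\sim\mu$ and controlling the distortion of $\mathcal{D}_n^{\mathbb{CP}^1}$-cells under the charts, the hypothesized near-preservation of entropy becomes
\[
\mathbb{E}_{z\mathbb{C}\sim\mu}\!\left[H\bigl(\widetilde{\theta}_{n,z\mathbb{C}}*\mu_{z\mathbb{C}},\mathcal{D}_n^{\mathbb{R}^2}\bigr)-H\bigl(\mu_{z\mathbb{C}},\mathcal{D}_n^{\mathbb{R}^2}\bigr)\right]=o(n).
\]
The planar inverse theorem for convolution entropy then implies that, on a positive-density set of scales $k\le n$, the level-$k$ components of $\mu_{z\mathbb{C}}$ are $\delta$-concentrated near affine lines whose directions match those carrying the bulk of the entropy of the corresponding components of $\widetilde{\theta}_{n,z\mathbb{C}}$.

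The main obstacle is converting this multi-scale saturation into a genuine invariant direction field. Exploiting $\dim\mu<2$ and the Ledrappier--Young-type formula from Appendix~\ref{sec:appendix}, I expect the approximately saturated directions to be uniquely determined on a positive-measure set, producing in the limit a measurable section $z\mathbb{C}\mapsto\ell(z\mathbb{C})\in\mathbb{P}(T_{z\mathbb{C}}\mathbb{CP}^1)$ that is equivariant under the derivative action of $\mathrm{S}_\mathcal{G}$. The weakly Diophantine hypothesis enters precisely here, preventing spurious saturation coming from accidental $2^{-n}$-coincidences between distinct words $g_u\ne g_v$, in the same role as in \cite{HS,Ho1}, and thus allowing the passage from approximate to exact equivariance. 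Finally, since strong irreducibility, proximality, and the no-circle hypothesis together are equivalent to Zariski density of $\mathrm{S}_\mathcal{G}$, the existence of such an equivariant section of $\mathbb{P}(T\mathbb{CP}^1)$ must be precluded by a rigidity argument: any invariant line field under a Zariski-dense semigroup of $\mathrm{G}$ would arise from an algebraic invariant, most naturally the tangent field of a generalized circle, contradicting our hypothesis. I expect this last step, bridging local measure-theoretic saturation with global algebraic invariance, to be the most delicate component of the argument and to require genuinely new ideas.
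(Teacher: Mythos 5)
Your starting point (linearize the action convolution and feed components into Hochman's planar inverse theorem) is the right one, but the proposal has concrete gaps. First, Theorem \ref{thm:ent inc} carries no weakly Diophantine hypothesis --- that assumption is invoked only in Section \ref{subsec:Proof-of-Theorem main} --- and it could not help here anyway, since $\theta$ is an arbitrary measure on $B(1_{\mathrm{G}},r)$, not one supported on products $g_{u}$, so there are no ``accidental $2^{-n}$-coincidences between distinct words'' to exclude; the step where you use Diophantineness to upgrade approximate to exact equivariance rests on an unavailable hypothesis. Second, you take for granted that the pushforward $\widetilde{\theta}_{n,z\mathbb{C}}$ of $\theta_{n}$ under the orbit map $g\mapsto g\cdot z\mathbb{C}$ inherits nonnegligible entropy. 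The orbit map collapses the stabilizer directions of the six-dimensional group onto a two-dimensional target, so for a fixed $z$ the measure $\theta.z$ may carry essentially no entropy; the paper needs Proposition \ref{prop:from ent on G to ent on C}, proved via the three-point map $F_{z}$ and the uniform bi-Lipschitz estimate of Lemma \ref{lem:L_z bi-lip with uni const}, to guarantee entropy survives for a nonnegligible set of base points. Some argument of this kind is indispensable and is missing from your outline.

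Third, and most seriously, your reading of the inverse theorem is reversed and the decisive step is left open. The theorem produces subspaces $V_{j}$ such that the components of $\widetilde{\theta}$ are concentrated near translates of $V_{j}$ while the components of $\nu$ are entropy-saturated along $V_{j}$ (not concentrated near lines, as you write); to contradict the positive entropy of $\theta$ one must exclude $\dim_{\mathbb{R}}V_{j}=2$ and $\dim_{\mathbb{R}}V_{j}=1$. The first is handled by uniform entropy dimension together with $\dim\mu<2$ (Proposition \ref{prop:uni ent dim}); the second requires a quantitative gain: for most components of $\nu$, every line projection has normalized entropy at least $\dim\mu-1+\gamma$ for a fixed $\gamma>0$ (Proposition \ref{prop:lb on ent of proj of comp of nu}) --- the trivial bound $\dim\mu-1-\epsilon$ is useless. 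Your plan replaces this with the extraction of an exactly equivariant measurable line field from approximate saturation (which holds only on a positive-density set of scales and a positive-measure set of components) and a rigidity statement precluding such a field under Zariski density, and you yourself flag this as requiring ``genuinely new ideas''; no mechanism is offered for either the approximate-to-exact passage or the rigidity. That is precisely where the real difficulty sits. The paper's route is different: it proves the $\gamma$-gain directly, and the no-circle hypothesis enters by showing the direction cocycle is not a coboundary (Proposition \ref{prop:alpha not coboundary}), which relies on $\nu(C)=0$ for every generalized circle (Lemma \ref{lem:nu(gen circ)=00003D0}). As written, the proposal does not constitute a proof.
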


\begin{rem*}
Since $\mu$ is exact dimensional, $\frac{1}{n}H(\mu,\mathcal{D}_{n})\approx\dim\mu$
for large $n\ge1$. Hence, Theorem \ref{thm:ent inc} guarantees that
the entropy of the convolution $\theta.\mu$ is substantially larger
than the entropy of $\mu$ whenever $\dim\mu<2$ and $\theta\in\mathcal{M}\left(B(1_{\mathrm{G}},r)\right)$
has nonnegligible entropy.
\end{rem*}
\begin{rem*}
It is not difficult to deduce a version of Theorem \ref{thm:ent inc}
that is valid for any $r>0$ (in such a version $\delta$ would also
depend on $r$). However, we do not need this stronger form, and assuming
$r$ is some absolute small constant slightly simplifies the proof.
\end{rem*}
The argument for deducing Theorem \ref{thm:main result} from Theorem
\ref{thm:ent inc}, which we now briefly describe, is based on an
approach developed in \cite{HR} in the self-affine setting. Suppose
that $\mathcal{G}$ is weakly Diophantine, and assume by contradiction
that $\dim\mu<\min\left\{ 2,h_{\mathrm{RW}}/\left(2\chi\right)\right\} $.
Let $L:\Lambda^{\mathbb{N}}\rightarrow\mathbb{CP}^{1}$ denote the
Furstenberg boundary map associated to $\mathcal{G}$ and $p$ (see
Section \ref{subsec:results-from-rand-prod-of-mat}), and let $\left\{ \beta_{\omega}\right\} _{\omega\in\Lambda^{\mathbb{N}}}\subset\mathcal{M}(\Lambda^{\mathbb{N}})$
denote the disintegration of $\beta:=p^{\mathbb{N}}$ with respect
to $L^{-1}\mathcal{B}_{\mathbb{CP}^{1}}$, where $\mathcal{B}_{\mathbb{CP}^{1}}$
is the Borel $\sigma$-algebra of $\mathbb{CP}^{1}$. Given $n\ge1$,
let $\Pi_{n}:\Lambda^{\mathbb{N}}\rightarrow\mathrm{G}$ be defined
by $\Pi_{n}(\omega)=g_{\omega_{0}}...g_{\omega_{n-1}}$ for $\omega\in\Lambda^{\mathbb{N}}$.

Using $\dim\mu<h_{\mathrm{RW}}/\left(2\chi\right)$, the Ledrappier--Young
formula established in \cite{rapaport2020exact}, and the fact that
$\mathcal{G}$ is weakly Diophantine, it is not difficult to show
that there exist $\epsilon>0$ and $M>1$ such that for infinitely
many $n\ge1$,
\begin{equation}
\beta\left\{ \omega\::\:\frac{1}{n}H\left(\Pi_{n}\beta_{\omega},\mathcal{D}_{Mn}\right)>\epsilon\right\} >\epsilon,\label{eq:ent of slices > epsilon in intro}
\end{equation}
where $\Pi_{n}\beta_{\omega}\in\mathcal{M}\left(\mathrm{G}\right)$
denotes the pushforward of $\beta_{\omega}$ via $\Pi_{n}$. Moreover,
by the exact dimensionality of $\mu$, for large $n\ge1$ we have
\begin{equation}
\dim\mu\approx\frac{1}{Mn}H\left(\mu,\mathcal{D}_{\left(M+2\chi\right)n}\mid\mathcal{D}_{2\chi n}\right).\label{eq:dim mu approx cond ent in intro}
\end{equation}
With some additional work, one can now use (\ref{eq:ent of slices > epsilon in intro})
and (\ref{eq:dim mu approx cond ent in intro}), the decomposition
$\mu=\int\left(\Pi_{n}\beta_{\omega}\right).\mu\:d\beta(\omega)$,
the concavity of conditional entropy, the inequality $\dim\mu<2$,
and Theorem \ref{thm:ent inc}, to obtain the desired contradiction.
Note, however, that the measures $\Pi_{n}\beta_{\omega}$ are usually
supported far from the identity of $\mathrm{G}$, and there is no
reason to expect that $\mathrm{diam}\left(\mathrm{supp}\left(\Pi_{n}\beta_{\omega}\right)\right)<r$,
where $r>0$ is the constant appearing in Theorem \ref{thm:ent inc}.
To apply our entropy increase result, we therefore need to `chop'
the measures $\Pi_{n}\beta_{\omega}$ into $o_{\omega}(1)$ pieces
of diameter at most $r$, and translate these pieces into $B(1_{\mathrm{G}},r)$.

For the remainder of this subsection we discuss the proof of Theorem
\ref{thm:ent inc}. First, we need some additional notation. Given
an $\mathbb{R}$-linear subspace $V$ of $\mathbb{C}$, denote by
$\pi_{V}:\mathbb{C}\rightarrow\mathbb{C}$ the orthogonal projection
onto $V$, where $\mathbb{C}$ is identified with $\mathbb{R}^{2}$.
For $n\ge0$, let $\mathcal{D}_{n}^{\mathbb{C}}$ be the level-$n$
dyadic partition of $\mathbb{C}$, again identifying $\mathbb{C}$
with $\mathbb{R}^{2}$. We extend this to a partition of $\mathbb{C}_{\infty}$
by setting $\mathcal{D}_{n}^{\mathbb{C}_{\infty}}:=\mathcal{D}_{n}^{\mathbb{C}}\cup\left\{ \{\infty\}\right\} $.
As before, we omit the superscripts $\mathbb{C}$ and $\mathbb{C}_{\infty}$
when they are clear from the context. For $\xi\in\mathcal{M}\left(\mathbb{C}_{\infty}\right)$
and $z\in\mathbb{C}_{\infty}$ with $\xi\left(\mathcal{D}_{n}(z)\right)>0$,
write $\xi_{z,n}:=\xi_{\mathcal{D}_{n}(z)}$. Here $\mathcal{D}_{n}(z)$
is the unique element of $\mathcal{D}_{n}^{\mathbb{C}_{\infty}}$
containing $z$, and $\xi_{\mathcal{D}_{n}(z)}$ denotes the conditional
measure of $\xi$ on $\mathcal{D}_{n}(z)$. The measure $\xi_{z,n}$
is called a level-$n$ component of $\xi$. As mentioned in Section
\ref{subsec:Component-measures}, we shall use probabilistic notation
introduced in \cite[Section 2.2]{Ho1}. In particular, we often regard
$\xi_{z,n}$ as a random measure in a natural way.

The proof of Theorem \ref{thm:ent inc} relies on Hochman's \cite{Ho}
inverse theorem for entropy growth under convolutions in $\mathbb{R}^{d}$.
An immediate corollary of this result, whose precise statement is
given in Theorem \ref{thm:ent inc in C} below and which we state
here somewhat informally and in less generality, says the following.
Let $\epsilon>0$, $m\ge1$, $n\ge N(\epsilon,m)\ge1$, and $\theta,\xi\in\mathcal{M}(\mathbb{C})$,
be such that $\mathrm{diam}(\mathrm{supp}(\theta)),\mathrm{diam}(\mathrm{supp}(\xi))=O(1)$,
$\frac{1}{n}H\left(\theta,\mathcal{D}_{n}\right)\ge\epsilon$, and
for most scales $1\le i\le n$, and most $z\in\mathbb{C}$ with respect
to $\xi$, there does not exist a nonzero $\mathbb{R}$-linear subspace
$V\subset\mathbb{C}$ so that
\begin{equation}
\frac{1}{m}H\left(\xi_{z,i},\mathcal{D}_{i+m}\right)\ge\frac{1}{m}H\left(\pi_{V^{\perp}}\xi_{z,i},\mathcal{D}_{i+m}\right)+\dim_{\mathbb{R}}V-\epsilon.\label{eq:non sat cond in intro}
\end{equation}
Then, under these assumptions,
\[
\frac{1}{n}H\left(\theta*\xi,\mathcal{D}_{n}\right)\ge\frac{1}{n}H\left(\xi,\mathcal{D}_{n}\right)+\delta,
\]
where $\delta$ is a positive number depending only on $\epsilon$
and $m$.
\begin{rem*}
When $V=\mathbb{C}$, (\ref{eq:non sat cond in intro}) says that
$\frac{1}{m}H\left(\xi_{z,i},\mathcal{D}_{i+m}\right)$ is close to
its maximal possible value, namely $2$. When $\dim_{\mathbb{R}}V=1$,
(\ref{eq:non sat cond in intro}) says that $\xi_{z,i}$ is saturated,
from an entropy standpoint, along lines parallel to $V$. For more
details, see \cite[Section 2]{Ho}.
\end{rem*}
Recall the map $\psi:\mathbb{CP}^{1}\rightarrow\mathbb{C}_{\infty}$
from Section \ref{subsec:Setup-and-background}, and set $\nu:=\psi\mu\in\mathcal{M}\left(\mathbb{C}_{\infty}\right)$.
To apply Theorem \ref{thm:ent inc in C} in the proof of Theorem \ref{thm:ent inc},
we need to verify that (\ref{eq:non sat cond in intro}) fails for
most components $\nu_{z,i}$ and all nonzero real subspaces $V\subset\mathbb{C}$.
When $\dim\mu<2$, this follows from the following statements.
\begin{prop}
\label{prop:uni ent dim}For every $\epsilon>0$, $m\ge M(\epsilon)\ge1$
and $n\ge N(\epsilon,m)\ge1$,
\[
\mathbb{P}_{1\le i\le n}\left\{ \left|\frac{1}{m}H\left(\nu_{z,i},\mathcal{D}_{i+m}\right)-\dim\mu\right|<\epsilon\right\} >1-\epsilon.
\]
\end{prop}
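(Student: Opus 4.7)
The plan is to recast Proposition~\ref{prop:uni ent dim} in terms of $\nu:=\psi\mu\in\mathcal{M}(\mathbb{C}_\infty)$, and establish it as an instance of the standard fact that exact-dimensional compactly supported Borel measures on $\mathbb{R}^d$ have uniform entropy dimension equal to their pointwise dimension. Since $\mathrm{S}_{\mathcal{G}}$ is strongly irreducible and proximal, the Furstenberg measure $\mu$ is non-atomic, hence $\mu(\psi^{-1}\{\infty\})=0$ and $\nu$ is supported on a compact subset of $\mathbb{C}$ up to a null set. Because $\psi$ is bi-Lipschitz on any compact subset of $\mathbb{CP}^1$ disjoint from $\psi^{-1}\{\infty\}$, the exact dimensionality of $\mu$ established in Appendix~\ref{sec:appendix} transfers to exact dimensionality of $\nu$ with respect to the Euclidean metric on $\mathbb{C}$, of value $\dim\mu$. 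Since for $\nu$-a.e.~$z\in\mathbb{C}$ one has $B(z,c\cdot 2^{-j})\subseteq\mathcal{D}_j(z)\subseteq B(z,C\cdot 2^{-j})$ for all large $j$, this pointwise dimension on balls upgrades to the dyadic pointwise identity $\lim_{j\to\infty}-j^{-1}\log\nu(\mathcal{D}_j(z))=\dim\mu$ for $\nu$-a.e.~$z$.

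The mean of $\tfrac{1}{m}H(\nu_{z,i},\mathcal{D}_{i+m})$ under $(z\sim\nu,\, i$ uniform on $\{1,\ldots,n\})$ is handled by the chain rule: averaging
\[
\int H(\nu_{z,i},\mathcal{D}_{i+m})\,d\nu(z)\;=\;H(\nu,\mathcal{D}_{i+m})-H(\nu,\mathcal{D}_i)
\]
over $1\le i\le n$, the telescoping sum together with $H(\nu,\mathcal{D}_j)=j\dim\mu+o(j)$ gives
\[
\mathbb{E}_{1\le i\le n}\mathbb{E}_{z\sim\nu}\!\left[\tfrac{1}{m}H(\nu_{z,i},\mathcal{D}_{i+m})\right]\xrightarrow[n\to\infty]{}\dim\mu.
\]
To turn this mean into concentration, I apply Egorov to the pointwise dyadic dimension identity to obtain $G\subseteq\mathbb{C}$ with $\nu(G)>1-\epsilon^{10}$ on which convergence is uniform. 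By a Fubini--Markov argument, with probability $>1-O(\epsilon^{5})$ over $(z,i)$ one has $z\in G$, $i>j_0(\epsilon)$, and $\nu_{z,i}(G)>1-\epsilon$. On this good event, the identity $H(\nu_{z,i},\mathcal{D}_{i+m})=-\log\nu(\mathcal{D}_i(z))+\mathbb{E}_{w\sim\nu_{z,i}}[-\log\nu(\mathcal{D}_{i+m}(w))]$ combined with uniform control of the log-masses on $G$ for both $z$ and for a fraction $>1-\epsilon$ of $w\sim\nu_{z,i}$ yields the required estimate $|\tfrac{1}{m}H-\dim\mu|<\epsilon$.

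The main obstacle is that a naive application of uniform convergence produces additive errors of order $\epsilon_0(2i+m)$ in the log-mass estimates, which far exceed $\epsilon m$ whenever $i\gg m$. This is overcome via the conditional-expectation identity $|X(z,i)-\dim\mu|\le\mathbb{E}_{w\sim\nu_{z,i}}[|\tilde Y(w,i)-\dim\mu|]$ for $X(z,i):=\tfrac{1}{m}H(\nu_{z,i},\mathcal{D}_{i+m})$ and $\tilde Y(w,i):=\tfrac{1}{m}(-\log\nu(\mathcal{D}_{i+m}(w))+\log\nu(\mathcal{D}_i(w)))$; taking $\mathbb{E}_{z,i}$ and using that the marginal of $(w,i)$ equals the original $(z,i)$ distribution yields $\mathbb{E}_{z,i}[|X-\dim\mu|]\le\mathbb{E}_{w,i}[|\tilde Y-\dim\mu|]$. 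A careful Cesaro-type analysis, exploiting that $\{-\log\nu(\mathcal{D}_j)/j\}_{j\ge 1}$ is uniformly integrable (which follows from the global bound $H(\nu,\mathcal{D}_j)\le C j$ and standard maximal estimates), shows that $\mathbb{E}_{w,i}[|\tilde Y-\dim\mu|]\to 0$ as $n\to\infty$ for each fixed $m$ sufficiently large. Markov's inequality then delivers the claimed concentration $\mathbb{P}_{1\le i\le n}\{|\tfrac{1}{m}H(\nu_{z,i},\mathcal{D}_{i+m})-\dim\mu|\ge\epsilon\}<\epsilon$.
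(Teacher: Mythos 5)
Your computation of the average is correct: $\mathbb{E}_{1\le i\le n}\bigl(\frac1m H(\nu_{z,i},\mathcal{D}_{i+m})\bigr)\to\dim\mu$ does follow from exact dimensionality via Lemmas \ref{lem:from dim to ent} and \ref{lem:glob ent to loc ent}, and this is one half of the paper's proof. The gap is in your final step, the assertion that $\mathbb{E}_{w,i}\bigl[\,|\tilde Y(w,i)-\dim\mu|\,\bigr]\to0$ for fixed large $m$. Writing $I_j(w):=-\log\nu(\mathcal{D}_j(w))$, exact dimensionality gives $I_j(w)/j\to\dim\mu$ a.e., i.e. Ces\`aro-type control, but no control of the windowed increments $I_{i+m}(w)-I_i(w)$ for most $i$; averaging over $i$ only yields convergence of the mean of $\tilde Y$, not of $|\tilde Y-\dim\mu|$, and uniform integrability cannot repair this. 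Indeed the implication you are relying on --- exact dimensionality implies uniform entropy dimension --- is false. Take on $[0,1]$ the non-homogeneous product measure on binary digits in which the levels are grouped into consecutive blocks of lengths $1,2,3,\dots$, digits in odd blocks being i.i.d. fair bits and digits in even blocks being $0$. Then $I_j(x)=\#\{\text{uniform levels}\le j\}+O(1)$ for every $x$ in the support (for balls as well as dyadic cells), so this measure is exact dimensional of dimension $1/2$; yet for fixed $m$ and a proportion of pairs $(x,i)$ tending to $1$, the quantity $\frac1m H(\nu_{x,i},\mathcal{D}_{i+m})$ equals either $0$ or $1$, so the conclusion of Proposition \ref{prop:uni ent dim} fails for it. (Your first, Egorov-based route fails for the reason you yourself note; also the auxiliary claim $B(z,c2^{-j})\subseteq\mathcal{D}_j(z)$ for a.e.\ $z$ and all large $j$ is not literally true, though the a.e. dyadic local dimension identity can be salvaged by a Borel--Cantelli argument --- minor points compared to the main gap.)

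What is missing is exactly the structural input the paper uses: a high-probability one-sided bound $\frac1m H(\nu_{z,i},\mathcal{D}_{i+m})>\dim\mu-\epsilon$ (Proposition \ref{prop:lb on ent of comp of nu}), which cannot be extracted from exact dimensionality alone. The paper gets it from stationarity: by (\ref{eq:mu as conv comb with u in Psi_n}), $\nu=\sum_{u\in\Psi_{n+k}}p_u\,\psi g_u\mu$, each piece $\psi g_u\mu_{Y_{u,\eta}}$ is an approximately affine copy of $\nu$ of diameter about $2^{-n}$ whose entropy at relative scale $m$ is at least $\dim\mu-\epsilon$ by Lemma \ref{lem:ent of psi g mu} (exact dimensionality is invoked there only at scale $m$, so no loss proportional to $i$ arises), and most pieces lie well inside a single cell of $\mathcal{D}_n^{\mathbb{C}}$ by Proposition \ref{prop:nu-mass of neigh of dyd cubes}, whose proof requires that $\nu$ assigns zero mass to every generalized circle, i.e. the Zariski-density input --- none of which appears in your argument. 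Combining that lower bound with the mean computation (essentially your first display) gives the two-sided concentration. Without some such self-similarity input the scheme cannot succeed, since the statement is false for general exact dimensional measures.
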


\begin{rem*}
In the terminology of \cite[Section 5]{Ho1}, Proposition \ref{prop:uni ent dim}
says that $\nu$ has uniform entropy dimension $\dim\mu$.
\end{rem*}
Let $\mathbb{RP}^{1}$ denote the set of real lines in $\mathbb{C}$;
that is, $\mathbb{RP}^{1}:=\left\{ z\mathbb{R}\::\:0\ne z\in\mathbb{C}\right\} $.
\begin{prop}
\label{prop:lb on ent of proj of comp of nu}Suppose that $\dim\mu<2$.
Then there exists $\gamma>0$ such that for every $\epsilon>0$, $m\ge M(\epsilon)\ge1$
and $n\ge1$,
\[
\mathbb{P}\left\{ \underset{w\mathbb{R}\in\mathbb{RP}^{1}}{\inf}\frac{1}{m}H\left(\pi_{w\mathbb{R}}\nu_{z,n},\mathcal{D}_{n+m}\right)>\dim\mu-1+\gamma\right\} >1-\epsilon.
\]
\end{prop}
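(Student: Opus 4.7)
The plan is to argue by contradiction. Suppose the conclusion fails; then, choosing a sequence $\gamma_k \downarrow 0$, one finds $\epsilon_k > 0$, $m_k \to \infty$, and $n_k \ge 1$ such that with probability at least $\epsilon_k$ a random component $\nu_{z,n_k}$ admits a real line $w_k(z)\mathbb{R} \in \mathbb{RP}^{1}$ satisfying
\[
\frac{1}{m_k} H\!\left(\pi_{w_k(z)\mathbb{R}}\, \nu_{z,n_k},\, \mathcal{D}_{n_k+m_k}\right) \;\le\; \dim\mu - 1 + \gamma_k.
\]
Combined with Proposition \ref{prop:uni ent dim}, which yields $\frac{1}{m_k}H(\nu_{z,n_k},\mathcal{D}_{n_k+m_k}) = \dim\mu + o_k(1)$ with probability tending to $1$, this forces the conditional fiber entropy of $\nu_{z,n_k}$ along lines parallel to $w_k(z)^{\perp}\mathbb{R}$ to exceed $1 - \gamma_k - o_k(1)$. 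Hence these components are asymptotically fully saturated along $w_k(z)^{\perp}\mathbb{R}$ at scales between $n_k$ and $n_k+m_k$.

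The next step is to identify components of $\nu$ with M\"obius images of $\nu$. Writing $\nu = \int \Pi_N(\omega)\nu\,d\beta(\omega)$ and using the standard estimates for the Furstenberg boundary map $L$ recalled in Section \ref{subsec:results-from-rand-prod-of-mat}, for $\beta$-typical $\omega$ the measure $\Pi_N(\omega)\nu$ concentrates near $L(\omega)$ with contraction factor $\asymp \|\Pi_N(\omega)\|^{-2} \asymp 2^{-2\chi N}$. Choosing $N \asymp n_k/(2\chi)$ identifies, up to bounded multiplicative distortion and an affine rescaling to unit diameter, the component $\nu_{z,n_k}$ (for $z$ close to $L(\omega)$) with $\varphi_{\Pi_N(\omega)}\nu$. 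Since the M\"obius derivative is complex-scalar multiplication, the linear approximation of $\varphi_{\Pi_N(\omega)}$ near the attracting fixed point sends real lines to real lines, so the fiber direction $w_k(z)^{\perp}\mathbb{R}$ of the component pulls back to a well-defined real direction on the unit-scale picture of $\nu$.

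M\"obius transformations map real lines in $\mathbb{C}$ to generalized circles in $\mathbb{C}_{\infty}$, so the saturating line foliation of the component pulls back to a foliation of a unit-sized neighborhood of $\nu$ by arcs of generalized circles of uniformly bounded curvature. For each $k$, a macroscopic portion of $\nu$ is thus approximately supported on a bounded family of nearly tangent generalized circles. Extracting a weak-$*$ convergent subsequence in the appropriate compact parameter space of the auxiliary data yields a single generalized circle $C_{\star} \subset \mathbb{C}_{\infty}$ with $\mathrm{supp}(\nu) \subseteq C_{\star}$. The stationarity equation $\mu = \sum_i p_i g_i\mu$ together with the uniqueness of $\mu$ then forces $\varphi_i(C_{\star}) = C_{\star}$ for every $i \in \Lambda$, contradicting the hypothesis that $\mathrm{S}_{\mathcal{G}}$ fixes no generalized circle.

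The main obstacle, I expect, is the identification step above: one must quantitatively control the M\"obius nonlinearity over a neighborhood of unit diameter and verify that the small-scale saturation direction translates, under the M\"obius pullback, into a coherent macroscopic geometric structure on $\nu$. This requires exploiting the uniform Oseledec-type regularity of $\Pi_N(\omega)$ for $\beta$-typical $\omega$ --- in particular the exponential separation between attracting and repelling eigendirections --- together with quantitative M\"obius distortion estimates. A secondary subtlety is the compactness step: one must ensure that data coming from different $\omega$ and $w_k$ produce compatible limits, so that a single invariant generalized circle emerges rather than a nontrivial one-parameter family; the rigidity needed here comes from the uniqueness of the stationary measure together with strong irreducibility.
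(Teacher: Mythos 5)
Your opening reduction is fine: if $\frac{1}{m}H(\nu_{z,n},\mathcal{D}_{n+m})\approx\dim\mu$ while some projection has normalized entropy at most $\dim\mu-1+\gamma_k$, then the conditional entropy along the fibers of that projection is at least $1-\gamma_k-o(1)$, i.e.\ the component is nearly saturated along some line direction. The fatal step is the next one. Saturation along a line foliation is an entropy-\emph{richness} statement about the conditional measures on the fibers; it carries no concentration information, so it does not imply that ``a macroscopic portion of $\nu$ is approximately supported on a bounded family of nearly tangent generalized circles.'' As a test case, normalized Lebesgue measure on a square is saturated in every direction and every projection has normalized entropy about $1=\dim-1$, yet it is supported near no family of circles; similarly a component with entropy close to $\dim\mu$ and one projection of entropy $\dim\mu-1$ can be spread over its whole dyadic square. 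Consequently your weak-$*$ compactness step has nothing to converge to: no generalized circle $C_{\star}$ with $\mathrm{supp}(\nu)\subseteq C_{\star}$ can be extracted (indeed the paper shows $\nu(C)=0$ for \emph{every} generalized circle, Lemma \ref{lem:nu(gen circ)=00003D0}, so the terminal configuration you aim for is impossible --- but nothing in your argument forces it), and the intended contradiction never materializes. The identification of components with measures $\psi g_u\mu$ and the Möbius distortion control, which you flag as the main obstacle, are the comparatively routine parts.

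What is actually needed is an improved \emph{lower bound}, not a rigidity statement, and this is how the paper proceeds: it proves Proposition \ref{prop:lb on ent of proj of cylinders of nu}, a uniform bound $\dim\mu-1+\gamma$ for projections of cylinder measures $\pi_{z\mathbb{R}}\varphi_{u}\nu$, and then transfers it to components via the decomposition of components into such pieces (using Proposition \ref{prop:nu-mass of neigh of dyd cubes}). The easy bound $\dim\mu-1-\epsilon$ (Lemma \ref{lem:triv lb}) is upgraded as follows: appending a random word and linearizing (Lemma \ref{lem:lemma with lots of conds}) shows the relevant projection direction is, with nonnegligible probability, bounded away from the one bad direction of Lemma \ref{lem:ent > 1/2 dim mu outside a small interval}, giving entropy at least $\tfrac12\dim\mu-\epsilon>\dim\mu-1$ (Proposition \ref{prop:nontriv portion > dim(mu)/2}); averaging with concavity then yields the $\gamma$ gain. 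The required non-concentration of directions comes from the direction cocycle $\alpha_n$ not being a coboundary (Proposition \ref{prop:alpha not coboundary}) together with an ergodic argument, and it is precisely there --- not through support rigidity --- that the no-invariant-circle hypothesis enters, via Lemma \ref{lem:nu(gen circ)=00003D0}. Your proposal is missing this entire mechanism.
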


The derivation of Theorem \ref{thm:ent inc} from Propositions \ref{prop:uni ent dim}
and \ref{prop:lb on ent of proj of comp of nu} and Theorem \ref{thm:ent inc in C}
(the corollary of Hochman's inverse theorem) does not require significant
new ideas. It relies on a linearization argument, which is used to
replace the action convolution $\theta.\mu$ with convolutions of
measures on $\mathbb{C}$. Moreover, in the course of the derivation
we establish that, in a suitable sense to be made precise (see Proposition
\ref{prop:from ent on G to ent on C}), if $\theta\in\mathcal{M}(\mathrm{G})$
has nonnegligible entropy, then a nonnegligible portion of the measures
on $\mathbb{C}$ associated to $\theta$ through the linearization
argument also inherit nonnegligible entropy. These ideas have previously
appeared in various forms in the literature (see \cite{BHR,Ho,HS}).

Proposition \ref{prop:uni ent dim} also does not involve major innovations,
and its proof extends existing methods originating in \cite{Ho1}.
On the other hand, Proposition \ref{prop:lb on ent of proj of comp of nu},
whose proof constitutes the main novelty of this paper, does introduce
significant new ideas. For the remainder of this subsection we discuss
Proposition \ref{prop:lb on ent of proj of comp of nu} and its proof.

First, note that by applying Proposition \ref{prop:uni ent dim} and
using basic properties of entropy, one can easily establish a version
of Proposition \ref{prop:lb on ent of proj of comp of nu} in which
$\dim\mu-1+\gamma$ is replaced by $\dim\mu-1-\epsilon$ (where $\epsilon>0$
is arbitrarily small). However, such a version is of no use for the
derivation of Theorem \ref{thm:ent inc}. Proposition \ref{prop:lb on ent of proj of comp of nu}
provides exactly what is needed to rule out (\ref{eq:non sat cond in intro})
for most $\nu_{z,i}$ and all $V\in\mathbb{RP}^{1}$ in the proof
of the entropy increase result.

On the other hand, as we next explain, Proposition \ref{prop:lb on ent of proj of comp of nu}
may be far from being optimal. Indeed, given a self-similar measure
$\mu'$ on $\mathbb{R}^{2}$, corresponding to an IFS containing at
least one similarity with an irrational rotational part, it follows
from \cite{MR3263957,HoSh} that
\begin{equation}
\dim\pi_{V}\mu'=\min\left\{ 1,\dim\mu'\right\} \text{ for all }V\in\mathbb{RP}^{1}.\label{eq:dim of proj of SS}
\end{equation}
Note that $\min\left\{ 1,\dim\mu'\right\} $ is always an upper bound
for $\dim\pi_{V}\mu'$. Combining (\ref{eq:dim of proj of SS}) with
the recursive structure of $\mu'$, one can show that, in a certain
sense that can be made precise, for most components of $\mu'$ all
their projections have normalized entropy close to this upper bound.
In our case, however, we are unable to establish an analogous statement
for $\nu$. That is, we cannot strengthen Proposition \ref{prop:lb on ent of proj of comp of nu}
by replacing $\dim\mu-1+\gamma$ with $\min\left\{ 1,\dim\mu\right\} -\epsilon$.
In fact, it is not even completely clear to us whether such a strengthening
should be expected to hold.
\begin{rem*}
Given a bounded convex open subset $\Omega\subset\mathbb{R}^{2}$,
a measure $\mu'\in\mathcal{M}\left(\Omega\right)$ is said to be self-conformal
if it is stationary with respect to a finitely supported probability
measure on the semigroup of strictly contracting injective conformal
maps from $\Omega$ into itself. Since Möbius transformations are
conformal, the setting of self-conformal measures intersects nontrivially
with the setup studied here. In the paper \cite{MR3903263} by Bruce
and Jin, it is claimed that (\ref{eq:dim of proj of SS}) holds for
all self-conformal measures $\mu'$ satisfying a mild irrationality
assumption. However, as confirmed by X. Jin (private communication),
there appears to be an issue in the proof of this claim that requires
a nontrivial fix.
\end{rem*}
We now turn to the proof of the proposition. Recall that for $i\in\Lambda$
we write $\varphi_{i}:=\varphi_{g_{i}}$, and set $\varphi_{u}:=\varphi_{i_{1}}\circ...\circ\varphi_{i_{n}}$
for $i_{1}...i_{n}=u\in\Lambda^{*}$, where $\Lambda^{*}$ denotes
the set of finite words over $\Lambda$. We consider $\mathbb{RP}^{1}$
as a multiplicative group by setting $z\mathbb{R}w\mathbb{R}:=zw\mathbb{R}$
for $z\mathbb{R},w\mathbb{R}\in\mathbb{RP}^{1}$. In the following
informal discussion, given $u\in\Lambda^{*}$ and $z\mathbb{R}\in\mathbb{RP}^{1}$,
whenever we refer to the entropy of $\pi_{z\mathbb{R}}\varphi_{u}\nu$
we mean its dyadic conditional entropy at appropriate scales (depending
on $u$) that are left unspecified.

Most of the proof of Proposition \ref{prop:lb on ent of proj of comp of nu}
is devoted to showing that entropies of measures of the form $\pi_{z\mathbb{R}}\varphi_{u}\nu$
are bounded away from below by $\dim\mu-1$ (see Proposition \ref{prop:lb on ent of proj of cylinders of nu}).
Here $u\in\Lambda^{*}$ is a word satisfying certain conditions that
hold with high probability. Note that, in contrast to the self-similar
setting, $\pi_{z\mathbb{R}}\circ\varphi_{u}$ is typically not an
affine map, which creates significant difficulties.\footnote{Note that in the reversed situation, where the maps $\varphi_{i}$
are all similarities and $\pi_{z\mathbb{R}}$ is replaced by an arbitrary
smooth regular map $F:\mathbb{C}\rightarrow\mathbb{R}$, the non-affinity
of $F\circ\varphi_{u}$ is less problematic. Indeed, in \cite{MR3263957,HoSh},
a version of (\ref{eq:dim of proj of SS}) is established for smooth
images of self-similar measures.}

To deal with these difficulties, we use the recursive structure of
$\nu$, together with the concavity of entropy, to bound the entropy
of $\pi_{z\mathbb{R}}\varphi_{u}\nu$ from below by an average of
entropies of measures of the form $\pi_{z\mathbb{R}}\varphi_{uv_{1}v_{2}}\nu$.
Here $v_{1},v_{2}\in\Lambda^{*}$ are chosen at random with respect
to certain natural distributions induced by $p$, the word $v_{1}$
is typically much longer than $v_{2}$, and $uv_{1}v_{2}$ denotes
the concatenation of $u$,$v_{1}$ and $v_{2}$. It is not hard to
show that, with high probability, the entropy of $\pi_{z\mathbb{R}}\varphi_{uv_{1}v_{2}}\nu$
is at least $\dim\mu-1$ up to an arbitrarily small error. Thus, in
order to prove the proposition, it suffices to show that, with nonnegligible
probability, the entropy of $\pi_{z\mathbb{R}}\varphi_{uv_{1}v_{2}}\nu$
is bounded away from below by $\dim\mu-1$.

To achieve this goal, we first carry out a linearization procedure
that allows us to approximate the entropy of $\pi_{z\mathbb{R}}\varphi_{uv_{1}v_{2}}\nu$
by the entropy of $\pi_{z\mathbb{R}\ell\left(u,v_{1},v_{2}\right)}\varphi_{v_{2}}\nu$,
where $\ell$ is an explicit function of $u$, $v_{1}$ and $v_{2}$
with values in $\mathbb{RP}^{1}$. Secondly, it is not difficult to
show that, for most words $v_{2}$, there exists a small interval
$I_{v_{2}}\subset\mathbb{RP}^{1}$ such that the entropy of $\pi_{w\mathbb{R}}\varphi_{v_{2}}\nu$
is at least $\frac{1}{2}\dim\mu$, up to an arbitrarily small error,
for all $w\mathbb{R}\in\mathbb{RP}^{1}\setminus I_{v_{2}}$. Note
that since $\dim\mu<2$, we have $\frac{1}{2}\dim\mu>\dim\mu-1$.

Taking these facts into account, and examining the definition of $\ell$,
it turns out that in order to achieve our goal it is necessary to
study the ergodic-theoretic properties of the direction cocycle $\alpha_{n}:\Lambda^{\mathbb{N}}\rightarrow\mathbb{RP}^{1}$,
defined by
\[
\alpha_{n}(\omega):=\varphi_{\omega|_{n}}'\left(\psi L\left(\sigma^{n}\omega\right)\right)\mathbb{R}\text{ for }n\ge0\text{ and }\beta\text{-a.e. }\omega\in\Lambda^{\mathbb{N}}.
\]
Here $\omega|_{n}$ denotes the prefix of $\omega$ of length $n$,
$\sigma:\Lambda^{\mathbb{N}}\rightarrow\Lambda^{\mathbb{N}}$ is the
left-shift map, and recall that $L:\Lambda^{\mathbb{N}}\rightarrow\mathbb{CP}^{1}$
is the Furstenberg boundary map. More precisely, what is needed is
to show that for every continuous $h:\Lambda^{\mathbb{N}}\rightarrow\mathbb{RP}^{1}$
and for $\beta$-a.e. $\omega$, the sequence $\left(\alpha_{n}(\omega)h\left(\sigma^{n}\omega\right)\right)_{n\ge0}$
does not equidistribute to a mass point (in the proof we actually
require a slightly stronger quantitative version of this property).

At this point we encounter another key difficulty, arising from the
fact that the action of $\mathrm{G}$ on $\mathbb{CP}^{1}$ is only
contracting on average. In situations where the action is strictly
contracting (e.g., in the classical self-similar setting), the Furstenberg
boundary map (often called the coding map in that context) is Hölder
continuous. In the contracting-on-average case, however, the boundary
map $L$ is in general only Borel measurable. This poses substantial
difficulties when studying the long-term behavior of $\alpha_{n}$,
and prevents the use of existing results on skew products of shifts
with compact groups (see, e.g., Parry \cite{parry_skew_prod}). Nevertheless,
using an ergodic-theoretic argument, we are still able to establish
the desired behavior of the sequences $\left(\alpha_{n}(\omega)h\left(\sigma^{n}\omega\right)\right)_{n\ge0}$.

The key step preceding the ergodic-theoretic argument is to show that
the cocycle $\alpha_{n}$ is not a coboundary; that is, there does
not exist a Borel measurable map $f:\Lambda^{\mathbb{N}}\rightarrow\mathbb{RP}^{1}$
such that $\alpha_{1}(\omega)=f(\omega)^{-1}f\left(\sigma\omega\right)$
for $\beta$-a.e. $\omega$. To establish this,  we show that if $\alpha_{n}$
were a coboundary, then it would necessarily follow that $\nu(C)>0$
for some generalized circle $C\subset\mathbb{C}_{\infty}$. However,
our standing assumptions on $\mathrm{S}_{\mathcal{G}}$ rule out this
possibility.

\subsubsection*{\textbf{\emph{Structure of the paper}}}

The rest of the paper is organized as follows. In Section \ref{sec:Preliminaries},
we introduce the necessary notation and definitions, and establish
several auxiliary results used throughout the paper. Section \ref{sec:Uniform-entropy-dimension}
establishes Proposition \ref{prop:uni ent dim}, showing that $\nu$
has uniform entropy dimension. In Section \ref{sec:Entropy-of-projections},
we prove Proposition \ref{prop:lb on ent of proj of comp of nu},
which bounds from below the entropy of projections of components of
$\nu$; this section contains the main novelty of our work. Section
\ref{sec:Proof-of-the ent inc res} derives the entropy increase result,
Theorem \ref{thm:ent inc}. In Section \ref{sec:Proof-of-the main result},
we complete the proof of our main result, Theorem \ref{thm:main result}.
Finally, in Appendix \ref{sec:appendix}, we use results from \cite{rapaport2020exact}
to deduce the exact dimensionality of $\mu$, together with a Ledrappier--Young-type
formula for its dimension.

\section{\label{sec:Preliminaries}Preliminaries}

\subsection{\label{subsec:Basic-notations}Basic notation and the setup}

Throughout this paper, the base of the logarithm is always $2$.

For a metric space $X$, denote by $\mathcal{M}(X)$ the collection
of all compactly supported Borel probability measures on $X$. Given
another metric space $Y$, a Borel map $f:X\rightarrow Y$, and a
measure $\nu\in\mathcal{M}(X)$, we write $f\nu:=\nu\circ f^{-1}$
for the pushforward of $\nu$ via $f$. For a Borel set $E\subset X$
with $\nu(E)>0$, we denote by $\nu_{E}$ the conditional measure
of $\nu$ on $E$; that is, $\nu_{E}:=\frac{1}{\nu(E)}\nu|_{E}$,
where $\nu|_{E}$ is the restriction of $\nu$ to $E$.

Given a partition $\mathcal{D}$ of a set $X$, for $x\in X$ we denote
by $\mathcal{D}(x)$ the unique $D\in\mathcal{D}$ containing $x$.

Given an integer $n\ge1$, let $\mathcal{N}_{n}:=\left\{ 1,...,n\right\} $,
and denote the normalized counting measure on $\mathcal{N}_{n}$ by
$\lambda_{n}$; that is, $\lambda_{n}\{i\}=1/n$ for each $1\le i\le n$.

\subsubsection*{Relations between parameters}

Given $R_{1},R_{2}\in\mathbb{R}$ with $R_{1},R_{2}\ge1$, we write
$R_{1}\ll R_{2}$ to indicate that $R_{2}$ is large with respect
to $R_{1}$. Formally, this means that $R_{2}\ge f(R_{1})$, where
$f$ is an unspecified function from $[1,\infty)$ into itself. The
values attained by $f$ are assumed to be sufficiently large, in a
manner depending on the specific context.

Similarly, given $0<\epsilon_{1},\epsilon_{2}<1$, we write $R_{1}\ll\epsilon_{1}^{-1}$,
$\epsilon_{2}^{-1}\ll R_{2}$, and $\epsilon_{1}^{-1}\ll\epsilon_{2}^{-1}$
to respectively indicate that $\epsilon_{1}$ is small with respect
to $R_{1}$, $R_{2}$ is large with respect to $\epsilon_{2}$, and
$\epsilon_{2}$ is small with respect to $\epsilon_{1}$.

The relation $\ll$ is clearly transitive. That is, if $R_{1}\ll R_{2}$
and for $R_{3}\ge1$ we have $R_{2}\ll R_{3}$, then also $R_{1}\ll R_{3}$.
For instance, the sentence ``Let $m\ge1$, $k\ge K(m)\ge1$ and $n\ge N(m,k)\ge1$
be given'' is equivalent to ``Let $m,k,n\ge1$ be with $m\ll k\ll n$''.

\subsubsection*{The setup}

As in Section \ref{sec:Introduction-and-the main result}, set $\mathrm{G}:=\mathrm{SL}(2,\mathbb{C})$,
let $\Lambda$ be a finite nonempty index set, fix a collection $\mathcal{G}=\{g_{i}\}_{i\in\Lambda}\subset\mathrm{G}$,
and fix a positive probability vector $p=(p_{i})_{i\in\Lambda}$.
Write $\mathrm{S}_{\mathcal{G}}$ for the subsemigroup of $\mathrm{G}$
generated by $\mathcal{G}$. For each $i\in\Lambda$, set $\varphi_{i}:=\varphi_{g_{i}}$,
where $\varphi_{g_{i}}:\mathbb{C}_{\infty}\rightarrow\mathbb{C}_{\infty}$
is the Möbius transformation induced by $g_{i}$.

In what follows, we always assume that $\mathrm{S}_{\mathcal{G}}$
is strongly irreducible, proximal, and does not fix a generalized
circle. We assume that $\mathcal{G}$ is weakly Diophantine only in
Section \ref{subsec:Proof-of-Theorem main}, where we prove our main
result.

As before, write $\mu\in\mathcal{M}\left(\mathbb{CP}^{1}\right)$
for the Furstenberg measure associated to $\mathcal{G}$ and $p$;
that is, $\mu$ is the unique element of $\mathcal{M}\left(\mathbb{CP}^{1}\right)$
satisfying $\mu=\sum_{i\in\Lambda}p_{i}\cdot g_{i}\mu$.

\subsection{\label{subsec:Algebraic-notation}Algebraic notation}

Given $w\in\mathbb{C}$, let $S_{w}:\mathbb{C}\rightarrow\mathbb{C}$
be defined by $S_{w}(z)=wz$ for $z\in\mathbb{C}$.

We denote by $\mathbb{RP}^{1}$ the set of real lines in $\mathbb{C}$;
that is, $\mathbb{RP}^{1}:=\left\{ z\mathbb{R}\::\:0\ne z\in\mathbb{C}\right\} $.
For $z\mathbb{R},w\mathbb{R}\in\mathbb{RP}^{1}$, we set $z\mathbb{R}w\mathbb{R}:=zw\mathbb{R}$,
which makes $\mathbb{RP}^{1}$ into a multiplicative group whose identity
element is $\mathbb{R}$. Let $S_{z\mathbb{R}}:\mathbb{RP}^{1}\rightarrow\mathbb{RP}^{1}$
be defined by $S_{z\mathbb{R}}\left(w\mathbb{R}\right)=zw\mathbb{R}$.

Given $z\mathbb{R}\in\mathbb{RP}^{1}$, we denote by $\pi_{z\mathbb{R}}:\mathbb{C}\rightarrow\mathbb{C}$
the orthogonal projection onto $z\mathbb{R}$, where $\mathbb{C}$
is identified with $\mathbb{R}^{2}$; that is,
\[
\pi_{z\mathbb{R}}(w)=|z|^{-2}\mathrm{Re}\left(w\overline{z}\right)z\text{ for }w\in\mathbb{C}.
\]

Let $\mathrm{SU}(2)$ denote the special unitary group of degree $2$,
which is a compact subgroup of $\mathrm{G}$. Given $g\in\mathrm{G}$
and setting $D:=\mathrm{diag}\left(\Vert g\Vert_{\mathrm{op}},\Vert g\Vert_{\mathrm{op}}^{-1}\right)\in\mathrm{G}$,
where $\Vert\cdot\Vert_{\mathrm{op}}$ is the operator norm, it is
well known that there exist $U,V\in\mathrm{SU}(2)$ such that $g=UDV$.
In this situation, we say that $UDV$ is a singular value decomposition
of $g$.

Let us define a Borel mapping $L:\mathrm{G}\rightarrow\mathbb{CP}^{1}$
as follows. Write $\left\{ e_{1},e_{2}\right\} $ for the standard
basis of $\mathbb{C}^{2}$. Let $g\in\mathrm{G}$, and let $g=UDV$
be a singular value decomposition of $g$. If $\Vert g\Vert_{\mathrm{op}}>1$,
then we define $L(g)=Ue_{1}\mathbb{C}$; otherwise, if $\Vert g\Vert_{\mathrm{op}}=1$,
we define $L(g)=e_{1}\mathbb{C}$. It is easy to see that this definition
is independent of the specific singular value decomposition of $g$,
and hence $L$ is well defined.

Let $\psi:\mathbb{CP}^{1}\rightarrow\mathbb{C}_{\infty}$ be defined
by
\[
\psi\left(z\mathbb{C}\right)=\begin{cases}
z_{1}/z_{2} & \text{ if }z_{2}\ne0\\
\infty & \text{ if }z_{2}=0
\end{cases}\text{ for all }(z_{1},z_{2})=z\in\mathbb{C}^{2}\setminus\{0\}.
\]
Note that $\psi$ is $\mathrm{G}$-equivariant, meaning that
\begin{equation}
\psi\left(gz\mathbb{C}\right)=\varphi_{g}\circ\psi\left(z\mathbb{C}\right)\text{ for all }g\in\mathrm{G}\text{ and }z\mathbb{C}\in\mathbb{CP}^{1}.\label{eq:psi is G-equi}
\end{equation}
Writing $\nu:=\psi\mu$, it follows that $\nu$ is the unique element
of $\mathcal{M}\left(\mathbb{C}_{\infty}\right)$ satisfying $\nu=\sum_{i\in\Lambda}p_{i}\cdot\varphi_{i}\nu$.

Given $\theta\in\mathcal{M}(\mathrm{G})$ and $\xi\in\mathcal{M}\left(\mathbb{CP}^{1}\right)$,
we write $\theta.\xi\in\mathcal{M}\left(\mathbb{CP}^{1}\right)$ for
the pushforward of $\theta\times\xi$ via the action map $\left(g,z\mathbb{C}\right)\mapsto gz\mathbb{C}$.
Similarly, given $\xi\in\mathcal{M}(\mathbb{C}_{\infty})$, we denote
by $\theta.\xi\in\mathcal{M}(\mathbb{C}_{\infty})$ the pushforward
of $\theta\times\xi$ via the map $(g,z)\mapsto\varphi_{g}(z)$. For
$z\in\mathbb{C}_{\infty}$, we write $\theta\ldotp z$ in place of
$\theta.\delta_{z}$, where $\delta_{z}$ is the Dirac mass at $z$.

\subsection{\label{subsec:Metric-preliminaries}Metric preliminaries}

In what follows, given a metric space $(X,d)$, a point $x\in X$,
and $r>0$, we write $B(x,r)$ for the closed ball in $X$ with center
$x$ and radius $r$. For a nonempty subset $E\subset X$, we write
$\mathrm{diam}(E)$ for its diameter, and denote by $E^{(r)}$ the
closed $r$-neighborhood of $E$; that is, $E^{(r)}:=\left\{ x\in X\::\:d(x,E)\le r\right\} $.

Given $m\in\mathbb{Z}_{>0}$, we denote by $\left\langle \cdot,\cdot\right\rangle $
and $\Vert\cdot\Vert$ the standard inner product and norm of $\mathbb{C}^{m}$.
We denote by $d_{\mathbb{C}^{m}}$ the metric induced by $\Vert\cdot\Vert$.
In particular, $d_{\mathbb{C}}$ is the metric induced by the standard
absolute value of $\mathbb{C}$.

For $(z_{1},z_{2})=z,(w_{1},w_{2})=w\in\mathbb{C}^{2}\setminus\{0\}$,
define
\[
d_{\mathbb{CP}^{1}}\left(z\mathbb{C},w\mathbb{C}\right):=\frac{1}{\Vert z\Vert\Vert w\Vert}\left|\det\left(\begin{array}{cc}
z_{1} & w_{1}\\
z_{2} & w_{2}
\end{array}\right)\right|.
\]
As pointed out in \cite[Section 13.1]{BQ}, this defines a metric
which induces the usual compact topology on $\mathbb{CP}^{1}$. Note
that $\mathrm{diam}\left(\mathbb{CP}^{1}\right)=1$. Additionally,
for each $U\in\mathrm{SU}(2)$, the map $z\mathbb{C}\mapsto Uz\mathbb{C}$
is an isometry of $\left(\mathbb{CP}^{1},d_{\mathbb{CP}^{1}}\right)$.
Moreover, it is easy to see that $d_{\mathbb{CP}^{1}}$ is bi-Lipschitz
equivalent to any Riemannian distance function on $\mathbb{CP}^{1}$.

For $z,w\in\mathbb{C}$ with $|z|=|w|=1$, write
\[
d_{\mathbb{RP}^{1}}\left(z\mathbb{R},w\mathbb{R}\right):=\left(1-\mathrm{Re}\left(z\overline{w}\right)^{2}\right)^{1/2},
\]
which defines a metric on $\mathbb{RP}^{1}$ (see \cite[Section III.4]{BL}).

Let $d_{\mathrm{G}}$ be the Riemannian distance function induced
by a left-invariant Riemannian metric on $\mathrm{G}$. Then $d_{\mathrm{G}}$
is also left-invariant, meaning that
\[
d_{\mathrm{G}}(hg,hg')=d_{\mathrm{G}}(g,g')\text{ for all }h,g,g'\in\mathrm{G}.
\]
It is easy to see that the metric space $\left(\mathrm{G},d_{\mathrm{G}}\right)$
is complete. Hence, by the Hopf--Rinow theorem (see \cite[Chapter 7]{dC}),
closed and bounded subsets of $\mathrm{G}$ are compact. In particular,
$B(1_{\mathrm{G}},r)$ is a compact subset of $\mathrm{G}$ for all
$r>0$, where $1_{\mathrm{G}}$ denotes the identity element of $\mathrm{G}$.

In what follows, all metric concepts in $\mathbb{C}^{m}$, $\mathbb{CP}^{1}$,
$\mathbb{RP}^{1}$ and $\mathrm{G}$ should be understood with respect
to $d_{\mathbb{C}^{m}}$, $d_{\mathbb{CP}^{1}}$, $d_{\mathbb{RP}^{1}}$,
and $d_{\mathrm{G}}$, respectively. We shall omit the subscripts
when there is no risk of confusion.

The following lemma, whose simple proof is omitted, will be used repeatedly.
\begin{lem}
\label{lem:bi-lip prop of psi}Given $R>0$,
\[
\psi^{-1}\left\{ z\in\mathbb{C}\::\:|z|<R\right\} =\mathbb{CP}^{1}\setminus B\left(e_{1}\mathbb{C},\left(1+R^{2}\right)^{-1/2}\right).
\]
Moreover, for each $z,z'\in\mathbb{C}$ with $|z|,|z'|<R$,
\[
\frac{1}{1+R^{2}}\left|z-z'\right|\le d\left(\psi^{-1}(z),\psi^{-1}(z')\right)\le\left|z-z'\right|.
\]
Consequently, for each $w\mathbb{C},w'\mathbb{C}\in\mathbb{CP}^{1}\setminus B\left(e_{1}\mathbb{C},1/R\right)$,
\[
d\left(w\mathbb{C},w'\mathbb{C}\right)\le\left|\psi\left(w\mathbb{C}\right)-\psi\left(w'\mathbb{C}\right)\right|\le\left(1+R^{2}\right)d\left(w\mathbb{C},w'\mathbb{C}\right).
\]
\end{lem}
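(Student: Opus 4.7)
The plan is to establish all three claims by explicit computation with the formula defining $d_{\mathbb{CP}^{1}}$. The proof is essentially a one-page book-keeping exercise, and I do not anticipate any real obstacle; the only thing to watch is that in the third assertion the set $\mathbb{CP}^{1}\setminus B(e_{1}\mathbb{C},1/R)$ is nonempty only when $R\ge 1$, so one should note that the claim is vacuous otherwise.

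First I would fix a canonical representative: for $z\in\mathbb{C}$ we have $\psi^{-1}(z)=(z,1)\mathbb{C}$, and of course $e_{1}\mathbb{C}=(1,0)\mathbb{C}$ corresponds to $\infty$ under $\psi$. Plugging $(1,0)$ and $(z,1)$ into the defining formula for $d_{\mathbb{CP}^{1}}$ gives
\[
d_{\mathbb{CP}^{1}}\bigl(e_{1}\mathbb{C},\psi^{-1}(z)\bigr)=\frac{1}{\sqrt{1+|z|^{2}}}.
\]
This immediately yields the first displayed equation, since $\psi^{-1}(z)\in B\bigl(e_{1}\mathbb{C},(1+R^{2})^{-1/2}\bigr)$ is equivalent to $\sqrt{1+|z|^{2}}\ge\sqrt{1+R^{2}}$, i.e.\ to $|z|\ge R$.

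Next, for $z,z'\in\mathbb{C}$ the same formula applied to the representatives $(z,1)$ and $(z',1)$ gives
\[
d_{\mathbb{CP}^{1}}\bigl(\psi^{-1}(z),\psi^{-1}(z')\bigr)=\frac{|z-z'|}{\sqrt{(1+|z|^{2})(1+|z'|^{2})}}.
\]
Since the denominator is always $\ge 1$ and, under the hypothesis $|z|,|z'|<R$, is $<1+R^{2}$, the second pair of inequalities follows at once. The third assertion then reduces to the second by a direct substitution: if $w\mathbb{C}\notin B(e_{1}\mathbb{C},1/R)$ then in particular $w\mathbb{C}\ne e_{1}\mathbb{C}$, so $w\mathbb{C}=\psi^{-1}(z)$ for some $z\in\mathbb{C}$, and the hypothesis $1/\sqrt{1+|z|^{2}}>1/R$ gives $|z|^{2}<R^{2}-1<R^{2}$ (with the observation that when $R<1$ the set is empty so there is nothing to prove). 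Doing the same for $w'\mathbb{C}$ and inverting the inequalities of the second claim yields exactly $d(w\mathbb{C},w'\mathbb{C})\le|\psi(w\mathbb{C})-\psi(w'\mathbb{C})|\le(1+R^{2})\,d(w\mathbb{C},w'\mathbb{C})$.

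The hardest part, if any, is purely organizational: choosing the canonical representatives so that the two determinants collapse to $1$ and $|z-z'|$ respectively, and then keeping track of strict versus non-strict inequalities between the conditions $|z|<R$, $|z|\le R$ and the closed-ball definition of $B(\,\cdot\,,\,\cdot\,)$. No deeper idea is needed, which is why the authors describe the proof as a straightforward omission.
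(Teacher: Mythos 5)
Your computation is correct and complete: the explicit formulas $d(e_{1}\mathbb{C},\psi^{-1}(z))=(1+|z|^{2})^{-1/2}$ and $d(\psi^{-1}(z),\psi^{-1}(z'))=|z-z'|\,(1+|z|^{2})^{-1/2}(1+|z'|^{2})^{-1/2}$ give all three assertions exactly as you argue, including the reduction of the third claim to the second. The paper omits the proof as ``simple,'' and this direct computation with the representatives $(z,1)$ and $(1,0)$ is precisely the intended argument, so there is nothing further to compare.
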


We shall also need the following lemmas concerning metric properties
of the action of $\mathrm{G}$ on $\mathbb{CP}^{1}$.
\begin{lem}
\label{lem:zC to gzC is bi-Lip with const norm^2}Let $g\in\mathrm{G}$
be given. Then the map sending $z\mathbb{C}\in\mathbb{CP}^{1}$ to
$gz\mathbb{C}$ is bi-Lipschitz with bi-Lipschitz constant $\Vert g\Vert_{\mathrm{op}}^{2}$;
that is, for all $z\mathbb{C},w\mathbb{C}\in\mathbb{CP}^{1}$,
\[
\Vert g\Vert_{\mathrm{op}}^{-2}d\left(z\mathbb{C},w\mathbb{C}\right)\le d\left(gz\mathbb{C},gw\mathbb{C}\right)\le\Vert g\Vert_{\mathrm{op}}^{2}d\left(z\mathbb{C},w\mathbb{C}\right)
\]
\end{lem}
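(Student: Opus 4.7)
The plan is to compute $d(gz\mathbb{C}, gw\mathbb{C})$ directly from the formula defining $d_{\mathbb{CP}^1}$ and then control the distortion by operator norm bounds. The key identity driving the proof is $\det g = 1$, which follows from $g \in \mathrm{G} = \mathrm{SL}(2,\mathbb{C})$.

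First I would observe that for nonzero $z, w \in \mathbb{C}^2$, the numerator of $d_{\mathbb{CP}^1}(gz\mathbb{C}, gw\mathbb{C})$ is
\[
\left|\det\begin{pmatrix} (gz)_1 & (gw)_1 \\ (gz)_2 & (gw)_2 \end{pmatrix}\right| = \left|\det(g)\right| \cdot \left|\det\begin{pmatrix} z_1 & w_1 \\ z_2 & w_2 \end{pmatrix}\right| = \left|\det\begin{pmatrix} z_1 & w_1 \\ z_2 & w_2 \end{pmatrix}\right|,
\]
using the multiplicativity of the determinant and $\det g = 1$. Dividing by $\|gz\|\|gw\|$ and comparing with the definition of $d_{\mathbb{CP}^1}(z\mathbb{C}, w\mathbb{C})$ gives the clean identity
\[
d(gz\mathbb{C}, gw\mathbb{C}) = d(z\mathbb{C}, w\mathbb{C}) \cdot \frac{\|z\|\|w\|}{\|gz\|\|gw\|}.
\]

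Next I would bound the ratio $\|z\|\|w\|/(\|gz\|\|gw\|)$ from above and below. The upper bound $\|gz\| \le \|g\|_{\mathrm{op}} \|z\|$ is immediate. For the lower bound on $\|gz\|$, I would use that $g \in \mathrm{SL}(2,\mathbb{C})$ has singular values $\sigma_1 \ge \sigma_2$ with $\sigma_1 \sigma_2 = |\det g| = 1$, so $\|g^{-1}\|_{\mathrm{op}} = 1/\sigma_2 = \sigma_1 = \|g\|_{\mathrm{op}}$. Consequently,
\[
\|z\| = \|g^{-1}(gz)\| \le \|g^{-1}\|_{\mathrm{op}} \|gz\| = \|g\|_{\mathrm{op}} \|gz\|,
\]
so that $\|z\|/\|g\|_{\mathrm{op}} \le \|gz\| \le \|g\|_{\mathrm{op}} \|z\|$, and likewise for $w$. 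Multiplying these two-sided inequalities yields
\[
\|g\|_{\mathrm{op}}^{-2} \le \frac{\|z\|\|w\|}{\|gz\|\|gw\|} \le \|g\|_{\mathrm{op}}^{2},
\]
and substituting into the identity above gives exactly the claimed bi-Lipschitz bound.

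There is essentially no substantive obstacle here: the argument is a one-line computation followed by a standard operator-norm estimate, with the crucial simplifications $\det g = 1$ and $\|g^{-1}\|_{\mathrm{op}} = \|g\|_{\mathrm{op}}$ both supplied by the unimodular hypothesis $g \in \mathrm{SL}(2,\mathbb{C})$.
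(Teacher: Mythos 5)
Your proof is correct. It differs from the paper's argument mainly in organization: the paper writes a singular value decomposition $g=UDV$, notes that $U,V\in\mathrm{SU}(2)$ act as isometries of $\left(\mathbb{CP}^{1},d_{\mathbb{CP}^{1}}\right)$, checks the Lipschitz upper bound for the diagonal part $D$ by the determinant computation, and then obtains the lower bound by running the same argument for $g^{-1}$ together with $\Vert g^{-1}\Vert_{\mathrm{op}}=\Vert g\Vert_{\mathrm{op}}$. You instead exploit $\det g=1$ directly for general $g$ to get the exact distortion identity
\[
d\left(gz\mathbb{C},gw\mathbb{C}\right)=d\left(z\mathbb{C},w\mathbb{C}\right)\cdot\frac{\Vert z\Vert\Vert w\Vert}{\Vert gz\Vert\Vert gw\Vert},
\]
and then both bounds follow at once from the two-sided estimate $\Vert g\Vert_{\mathrm{op}}^{-1}\Vert z\Vert\le\Vert gz\Vert\le\Vert g\Vert_{\mathrm{op}}\Vert z\Vert$ (the lower bound again using $\Vert g^{-1}\Vert_{\mathrm{op}}=\Vert g\Vert_{\mathrm{op}}$, i.e.\ the unimodularity of $g$). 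The underlying facts are the same, but your route avoids the SVD reduction and the separate symmetric step for $g^{-1}$, and it additionally records the exact formula for the local distortion, which is slightly more information than the bi-Lipschitz inequality itself.
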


\begin{proof}
Let $(z_{1},z_{2})=z,(w_{1},w_{2})=w\in\mathbb{C}^{2}\setminus\{0\}$
be given. Setting $D:=\mathrm{diag}\left(\Vert g\Vert_{\mathrm{op}},\Vert g\Vert_{\mathrm{op}}^{-1}\right)$,
we have
\begin{eqnarray*}
d\left(Dz\mathbb{C},Dw\mathbb{C}\right) & = & \frac{1}{\Vert Dz\Vert\Vert Dw\Vert}\left|\det\left(\begin{array}{cc}
\Vert g\Vert_{\mathrm{op}}z_{1} & \Vert g\Vert_{\mathrm{op}}w_{1}\\
\Vert g\Vert_{\mathrm{op}}^{-1}z_{2} & \Vert g\Vert_{\mathrm{op}}^{-1}w_{2}
\end{array}\right)\right|\\
 & \le & \Vert g\Vert_{\mathrm{op}}^{2}d\left(z\mathbb{C},w\mathbb{C}\right).
\end{eqnarray*}
Moreover, as pointed out above,
\[
d\left(Uz\mathbb{C},Uw\mathbb{C}\right)=d\left(z\mathbb{C},w\mathbb{C}\right)\text{ for all }U\in\mathrm{SU}(2).
\]
Hence, by considering a singular value decomposition of $g$, we see
that the map $z\mathbb{C}\mapsto gz\mathbb{C}$ is $\Vert g\Vert_{\mathrm{op}}^{2}$-Lipschitz.
The lemma now follows by applying this also to the map $z\mathbb{C}\mapsto g^{-1}z\mathbb{C}$
and noting that $\Vert g^{-1}\Vert_{\mathrm{op}}=\Vert g\Vert_{\mathrm{op}}$.
\end{proof}
\begin{lem}
\label{lem:dist of gzC to L(g)}Let $g\in\mathrm{G}$ and $0<\epsilon<1$
be given. Then
\[
d\left(gz\mathbb{C},gw\mathbb{C}\right)\le\epsilon^{-2}\Vert g\Vert_{\mathrm{op}}^{-2}d\left(z\mathbb{C},w\mathbb{C}\right)\text{ for all }z\mathbb{C},w\mathbb{C}\in\mathbb{CP}^{1}\setminus B\left(L(g^{-1}),\epsilon\right),
\]
and
\[
d\left(L(g),gz\mathbb{C}\right)\le\epsilon^{-1}\Vert g\Vert_{\mathrm{op}}^{-2}\text{ for all }z\mathbb{C}\in\mathbb{CP}^{1}\setminus B\left(L(g^{-1}),\epsilon\right).
\]
\end{lem}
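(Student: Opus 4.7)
The plan is to exploit the singular value decomposition of $g$ together with the fact, noted in Section~\ref{subsec:Metric-preliminaries}, that $\mathrm{SU}(2)$ acts isometrically on $(\mathbb{CP}^{1}, d_{\mathbb{CP}^{1}})$. This reduces both inequalities to a direct calculation with the diagonal matrix $D := \mathrm{diag}(\Vert g\Vert_{\mathrm{op}}, \Vert g\Vert_{\mathrm{op}}^{-1})$ and the explicit determinant formula defining $d_{\mathbb{CP}^{1}}$. First I would dispose of the trivial case $\Vert g\Vert_{\mathrm{op}} = 1$, in which $g \in \mathrm{SU}(2)$: the first inequality becomes $d(z\mathbb{C}, w\mathbb{C}) \le \epsilon^{-2} d(z\mathbb{C}, w\mathbb{C})$, and the second follows from $\mathrm{diam}(\mathbb{CP}^{1}) = 1 \le \epsilon^{-1}$.

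Assume now $a := \Vert g\Vert_{\mathrm{op}} > 1$, and fix a singular value decomposition $g = UDV$ with $U, V \in \mathrm{SU}(2)$. The key preparatory step is to identify $L(g^{-1})$ in these coordinates: introducing the Weyl element $J := \left(\begin{array}{cc} 0 & 1 \\ -1 & 0 \end{array}\right) \in \mathrm{SU}(2)$, one checks directly that $JD^{-1}J^{-1} = D$, so $g^{-1} = (V^{-1}J^{-1}) D (JU^{-1})$ is a singular value decomposition of $g^{-1}$. By the definition of $L$ this gives $L(g) = Ue_{1}\mathbb{C}$ and $L(g^{-1}) = V^{-1}J^{-1}e_{1}\mathbb{C} = V^{-1}e_{2}\mathbb{C}$. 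Since $V$ is an isometry of $\mathbb{CP}^{1}$, the hypothesis $z\mathbb{C} \notin B(L(g^{-1}), \epsilon)$ translates into $d(Vz\mathbb{C}, e_{2}\mathbb{C}) \ge \epsilon$, and evaluating the determinant formula shows that this is exactly $|v_{1}| \ge \epsilon$, where $v = (v_{1}, v_{2})$ is a unit representative of $Vz\mathbb{C}$.

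With this setup both inequalities reduce to one-line applications of the determinant formula. For the first, letting $u = (u_{1}, u_{2})$ be a unit representative of $Vw\mathbb{C}$, the metric formula yields
\[
d(gz\mathbb{C}, gw\mathbb{C}) = d(DVz\mathbb{C}, DVw\mathbb{C}) = \frac{|v_{1}u_{2} - v_{2}u_{1}|}{\Vert Dv\Vert \Vert Du\Vert},
\]
and since $\Vert Dv\Vert^{2} \ge a^{2}|v_{1}|^{2} \ge a^{2}\epsilon^{2}$ (and likewise for $Du$), the desired bound $\epsilon^{-2}\Vert g\Vert_{\mathrm{op}}^{-2} d(z\mathbb{C}, w\mathbb{C})$ follows, using $|v_{1}u_{2} - v_{2}u_{1}| = d(Vz\mathbb{C}, Vw\mathbb{C}) = d(z\mathbb{C}, w\mathbb{C})$. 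For the second, the same formula applied to $DVz\mathbb{C}$ and $e_{1}\mathbb{C}$ gives $d(DVz\mathbb{C}, e_{1}\mathbb{C}) = a^{-1}|v_{2}|/\Vert Dv\Vert \le a^{-1}/(a\epsilon) = \epsilon^{-1}\Vert g\Vert_{\mathrm{op}}^{-2}$, and applying the isometry $U$ sends $e_{1}\mathbb{C}$ to $L(g)$ without changing the distance.

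There is no genuine obstacle here; the only subtle point is the identification of $L(g^{-1})$ via the Weyl element $J$, which ensures that the hypothesis on $z\mathbb{C}$ produces the clean lower bound $|v_{1}| \ge \epsilon$ in the coordinate system determined by $V$. Once this is in place, the two estimates are immediate consequences of the structure of $D$.
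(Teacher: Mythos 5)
Your proposal is correct and follows essentially the same route as the paper: the $\Vert g\Vert_{\mathrm{op}}=1$ case is dismissed separately, and for $\Vert g\Vert_{\mathrm{op}}>1$ one uses a singular value decomposition $g=UDV$, the identification $L(g^{-1})=V^{-1}e_{2}\mathbb{C}$ (which the paper states without the explicit Weyl-element verification you supply), and a direct evaluation of the determinant formula for $d_{\mathbb{CP}^{1}}$, with the second inequality obtained from the same computation applied to $e_{1}\mathbb{C}$ (the paper phrases this as taking $w=V^{-1}e_{1}$, which is the same estimate).
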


\begin{proof}
Set $M:=\Vert g\Vert_{\mathrm{op}}$. When $M=1$ we have $g\in\mathrm{SU}(2)$,
so in this case the lemma is clear.

Suppose that $M>1$, and let $g=UDV$ be a singular value decomposition
of $g$. Let $z,w\in\mathbb{C}^{2}$ be unit vectors with $z\mathbb{C},w\mathbb{C}\notin B\left(L(g^{-1}),\epsilon\right)$,
and let $a,b,a',b'\in\mathbb{C}$ be such that $Vz=(a,b)$ and $Vw=(a',b')$.
Note that $L(g^{-1})=V^{-1}e_{2}\mathbb{C}$. Hence,
\[
\left|a\right|=d\left(Vz\mathbb{C},e_{2}\mathbb{C}\right)=d\left(z\mathbb{C},V^{-1}e_{2}\mathbb{C}\right)>\epsilon,
\]
and similarly $\left|a'\right|>\epsilon$. Thus,
\begin{multline}
d\left(gz\mathbb{C},gw\mathbb{C}\right)=d\left(DVz\mathbb{C},DVw\mathbb{C}\right)\\
=\frac{1}{\left\Vert \left(Ma,M^{-1}b\right)\right\Vert }\frac{1}{\left\Vert \left(Ma',M^{-1}b'\right)\right\Vert }\left|\det\left(\begin{array}{cc}
Ma & Ma'\\
M^{-1}b & M^{-1}b'
\end{array}\right)\right|\\
\le\frac{d\left(Vz\mathbb{C},Vw\mathbb{C}\right)}{\left|a\right|\left|a'\right|M^{2}}=\frac{d\left(z\mathbb{C},w\mathbb{C}\right)}{\left|a\right|\left|a'\right|M^{2}}\le\frac{d\left(z\mathbb{C},w\mathbb{C}\right)}{\epsilon^{2}M^{2}},\label{eq:first part of d(gzC,L(g)) lemma}
\end{multline}
which proves the first part of the lemma.

Setting $w:=V^{-1}e_{1}$, we have $d\left(w\mathbb{C},L(g^{-1})\right)=1$,
$Vw=(1,0)$, and $gw\mathbb{C}=L(g)$. Hence, from (\ref{eq:first part of d(gzC,L(g)) lemma}),
\[
d\left(gz\mathbb{C},L(g)\right)\le\frac{d\left(z\mathbb{C},w\mathbb{C}\right)}{\left|a\right|M^{2}}\le\epsilon^{-1}M^{-2},
\]
which completes the proof of the lemma.
\end{proof}

\subsection{\label{subsec:Entropy}Entropy}

Let $(X,\mathcal{F})$ be a measurable space. Given a probability
measure $\theta$ on $X$ and a countable partition $\mathcal{D}\subset\mathcal{F}$
of $X$, the entropy of $\theta$ with respect to $\mathcal{D}$ is
defined by
\[
H(\theta,\mathcal{D}):=-\sum_{D\in\mathcal{D}}\theta(D)\log\theta(D).
\]
If $\mathcal{E}\subset\mathcal{F}$ is another countable partition
of $X$, the conditional entropy given $\mathcal{E}$ is defined by
\[
H(\theta,\mathcal{D}\mid\mathcal{E}):=\sum_{E\in\mathcal{E}}\theta(E)\cdot H(\theta_{E},\mathcal{D}).
\]

Throughout the paper, we repeatedly use basic properties of entropy
and conditional entropy, often without explicit reference. Readers
are advised to consult \cite[Section 3.1]{Ho1} for details.

In particular, we shall often use the fact that entropy and conditional
entropy are concave and almost convex in the measure argument. That
is, given probability measures $\theta_{1},...,\theta_{k}$ on $X$
and a probability vector $q=(q_{i})_{i=1}^{k}$ such that $\theta=\sum_{i=1}^{k}q_{i}\theta_{i}$,
we have
\[
\sum_{i=1}^{k}q_{i}H(\theta_{i},\mathcal{D})\le H(\theta,\mathcal{D})\le\sum_{i=1}^{k}q_{i}H(\theta_{i},\mathcal{D})+H(q),
\]
where $H(q):=-\sum_{i=1}^{k}q_{i}\log q_{i}$ is the entropy of $q$.
These inequalities remain valid with $H(\cdot,\mathcal{D}\mid\mathcal{E})$
in place of $H(\cdot,\mathcal{D})$.

\subsection{\label{subsec:Dyadic-partitions}Dyadic partitions}

For $m\ge1$ and $n\ge0$, denote by $\mathcal{D}_{n}^{\mathbb{C}^{m}}$
the level-$n$ dyadic partition of $\mathbb{C}^{m}$, where $\mathbb{C}^{m}$
is identified with $\mathbb{R}^{2m}$. For a real number $t\ge0$,
we write $\mathcal{D}_{t}^{\mathbb{C}^{m}}$ in place of $\mathcal{D}_{\left\lfloor t\right\rfloor }^{\mathbb{C}^{m}}$,
where $\left\lfloor t\right\rfloor $ denotes the integral part of
$t$. We extend these partitions to $\mathbb{C}_{\infty}$ by setting
\[
\mathcal{D}_{n}^{\mathbb{C}_{\infty}}:=\mathcal{D}_{n}^{\mathbb{C}}\cup\left\{ \{\infty\}\right\} .
\]
We usually omit the superscripts $\mathbb{C}^{m}$ and $\mathbb{C}_{\infty}$
when they are clear from the context. For instance, it is easy to
verify that
\begin{equation}
\frac{1}{k}H\left(\xi,\mathcal{D}_{n+k}\mid\mathcal{D}_{n}\right)\le2\text{ for every }\xi\in\mathcal{M}(\mathbb{C}),n\in\mathbb{Z}_{\ge0}\text{ and }k\in\mathbb{Z}_{>0}.\label{eq:norm cond ent on C <=00003D 2}
\end{equation}

We also need to introduce dyadic-like partitions for $\mathbb{CP}^{1}$
and $\mathrm{G}$. Letting $X$ denote either $\mathbb{CP}^{1}$ or
$\mathrm{G}$, it follows from \cite[Remark 2.2]{KRS} that there
exists a sequence $\{\mathcal{D}_{n}^{X}\}_{n\ge0}$ of Borel partitions
of $X$ such that:
\begin{enumerate}
\item $\mathcal{D}_{n+1}^{X}$ refines $\mathcal{D}_{n}^{X}$ for each $n\ge0$;
that is, for each $D\in\mathcal{D}_{n+1}^{X}$, there exists $D'\in\mathcal{D}_{n}^{X}$
with $D\subset D'$;
\item there exists a constant $C=C(X)>1$ such that for each $n\ge0$ and
$D\in\mathcal{D}_{n}^{X}$, there exists $x_{D}\in D$ with
\begin{equation}
B(x_{D},C^{-1}2^{-n})\subset D\subset B(x_{D},C2^{-n}).\label{eq:in second prop of D^X}
\end{equation}
\end{enumerate}
As mentioned above, for a real $t\ge0$, we shall write $\mathcal{D}_{t}^{X}$
in place of $\mathcal{D}_{\left\lfloor t\right\rfloor }^{X}$. When
there is no risk of confusion, we write $\mathcal{D}_{n}$ in place
of $\mathcal{D}_{n}^{X}$.

Recall that $\mathrm{diam}\left(\mathbb{CP}^{1}\right)=1$, and note
that $\mathbb{CP}^{1}$ has dimension $2$ as a real manifold. Hence,
by Lemma \ref{lem:ub on card of dyad atoms intersecting} below, there
exists a constant $C>1$ such that
\begin{equation}
\left|\mathcal{D}_{n}^{\mathbb{CP}^{1}}\right|\le C2^{2n}\text{ for all }n\ge0.\label{eq:ub on card of dyd part of CP^1}
\end{equation}

The following lemma, which relates dimension and entropy, follows
easily from \cite[Theorem 4.4]{Yo} and basic properties of entropy.
\begin{lem}
\label{lem:from dim to ent}Let $\xi\in\mathcal{M}\left(\mathbb{CP}^{1}\right)$
be exact dimensional. Then,
\[
\underset{n\rightarrow\infty}{\lim}\frac{1}{n}H\left(\xi,\mathcal{D}_{n}\right)=\dim\xi.
\]
\end{lem}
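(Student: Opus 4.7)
The plan is to deduce the lemma from Young's theorem \cite[Theorem 4.4]{Yo}, which for an exact dimensional measure identifies the dimension with the almost sure limit of the normalized information function associated to any nested partition whose atoms are comparable to metric balls. Setting $\alpha := \dim \xi$, the starting identity is
\[
\frac{1}{n} H(\xi, \mathcal{D}_n) = \int \frac{-\log \xi(\mathcal{D}_n(x))}{n}\, d\xi(x),
\]
so the task reduces to controlling the integrand $I_n(x) := -\frac{1}{n}\log \xi(\mathcal{D}_n(x))$ pointwise and in mean.

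For the lower bound, I would use property (\ref{eq:in second prop of D^X}), which gives $\mathcal{D}_n(x) \subset B(x, 2C\cdot 2^{-n})$, hence
\[
I_n(x) \ge \frac{-\log \xi\bigl(B(x, 2C\cdot 2^{-n})\bigr)}{n}.
\]
By exact dimensionality, the right-hand side converges $\xi$-a.s. to $\alpha$, and Fatou's lemma delivers $\liminf_n \tfrac{1}{n} H(\xi, \mathcal{D}_n) \ge \alpha$. For the matching upper bound, \cite[Theorem 4.4]{Yo} applied to $\xi$ and the dyadic-like partition yields the pointwise convergence $I_n(x) \to \alpha$ for $\xi$-a.e.\ $x$. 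To swap the limit and integral, I would combine this with the uniform estimate $\tfrac{1}{n} H(\xi, \mathcal{D}_n) \le \tfrac{1}{n}\log|\mathcal{D}_n^{\mathbb{CP}^1}| \le 2 + O(1/n)$ coming from (\ref{eq:ub on card of dyd part of CP^1}). A standard truncation argument, splitting the integral of $I_n$ according to whether $I_n(x)$ exceeds $\alpha + \epsilon$ and using the $L^1$-bound above to control the tail, yields $\limsup_n \tfrac{1}{n} H(\xi, \mathcal{D}_n) \le \alpha + \epsilon$ for every $\epsilon > 0$.

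The main subtlety — and hence the main obstacle — is the upper pointwise bound $\limsup_n I_n(x) \le \alpha$ a.s., since a point $x$ need not lie at the center of its dyadic atom and so the elementary inclusion $B(x, r) \subset \mathcal{D}_n(x)$ fails in general. This asymmetry is precisely what Young's theorem handles, by comparing atoms to balls centered at the partition centers $x_D$ from property (\ref{eq:in second prop of D^X}) and invoking the local dimension of $\xi$ at density points. Once that input is accepted, the rest of the argument is bookkeeping via Fatou and a truncation bound for uniform integrability.
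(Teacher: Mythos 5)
Your overall strategy -- reducing everything to \cite[Theorem 4.4]{Yo} -- is the same as the paper's, which indeed disposes of the lemma by citing that theorem together with basic entropy properties; and your lower bound (the inclusion $\mathcal{D}_n(x)\subset B(x,2C\cdot 2^{-n})$ from (\ref{eq:in second prop of D^X}) plus Fatou) is correct. The problem is the upper bound, which is the entire content of the lemma. You attribute the pointwise convergence $-\frac1n\log\xi\left(\mathcal{D}_n(x)\right)\to\dim\xi$ for $\xi$-a.e.\ $x$ directly to Young's theorem, and describe the mechanism as ``comparing atoms to balls centered at the partition centers $x_D$ and invoking the local dimension at density points.'' Young's theorem is a statement about balls (local dimension) and about dimension quantities defined through coverings or through infima of entropies of partitions of small diameter; it says nothing about the specific partitions $\mathcal{D}_n^{\mathbb{CP}^1}$ constructed from \cite{KRS}, and the comparison you sketch points the wrong way: (\ref{eq:in second prop of D^X}) gives $B(x_D,C^{-1}2^{-n})\subset\mathcal{D}_n(x)$, while exact dimensionality controls $\xi\left(B(x,r)\right)$ at the point $x$, and there is no fixed $c>0$ with $B(x,c2^{-n})\subset\mathcal{D}_n(x)$ -- precisely the asymmetry you yourself flag. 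To close this you need an actual bridge, for instance: (a) for any partition $\mathcal{C}$ of $\mathbb{CP}^1$ with atoms of diameter at most $2^{-n}$, Lemma \ref{lem:ub on card of dyad atoms intersecting} gives $H\left(\xi,\mathcal{D}_n\mid\mathcal{C}\right)=O(1)$, hence $H\left(\xi,\mathcal{D}_n\right)\le H\left(\xi,\mathcal{C}\right)+O(1)$, which transfers the infimum-over-partitions characterization in Young's theorem to the specific family $\mathcal{D}_n$ (this is presumably the ``basic properties of entropy'' the paper alludes to); or (b) the standard maximal estimate $\int \xi\left(B(x,2^{-n})\right)/\xi\left(\mathcal{D}_n(x)\right)\,d\xi(x)=O(1)$ (again bounded overlap), followed by Markov and Borel--Cantelli, which yields exactly the pointwise statement you want. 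As written, this key step is asserted rather than proved.

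A second, smaller gap is the interchange of limit and integral: even granting the pointwise convergence, ``using the $L^1$-bound to control the tail'' is not a valid justification, since a bound on $\int I_n\,d\xi$ does not control $\int_{A_n}I_n\,d\xi$ over sets $A_n:=\{I_n>\dim\xi+\epsilon\}$ of small measure. What does work is the cardinality bound (\ref{eq:ub on card of dyd part of CP^1}) applied to the conditional measure on $A_n$, which gives $\int_{A}I_n\,d\xi\le\xi(A)\left(2+O(1/n)\right)+\frac1n\,\xi(A)\log\frac{1}{\xi(A)}$ for any Borel $A$, a quantity that tends to $0$ as $\xi(A)\to0$ uniformly in $n$; with this the truncation argument closes. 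Both repairs are short, but they constitute the actual mathematical content of the lemma, and in your proposal the decisive cells-versus-balls comparison is delegated to a citation that does not state it.
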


For the remainder of this subsection, let $X$ denote either $\mathbb{CP}^{1}$,
$\mathrm{G}$, or $\mathbb{C}^{m}$ for some $m\ge1$. The next lemma
will be used several times in what follows.
\begin{lem}
\label{lem:ub on card of dyad atoms intersecting}Let $R>1$ be given,
and write $q$ for the dimension of $X$ as a real manifold. Then
for every Borel set $\emptyset\ne F\subset X$ with $\mathrm{diam}(F)\le R$,
\[
\#\left\{ D\in\mathcal{D}_{n}^{X}\::\:D\cap F\ne\emptyset\right\} =O_{X,R}\left(1+2^{nq}\mathrm{diam}(F)^{q}\right)\text{ for all }n\in\mathbb{Z}_{\ge0}.
\]
\end{lem}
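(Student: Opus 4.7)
The plan is a straightforward volume-packing argument based on the inner-ball property (\ref{eq:in second prop of D^X}). Let $N := \#\{D \in \mathcal{D}_n^X : D \cap F \ne \emptyset\}$, and for each such atom pick $x_D \in D$ as in (\ref{eq:in second prop of D^X}). Since the atoms $D$ are pairwise disjoint, so are the inner balls $B(x_D, C^{-1}2^{-n})$. Fixing any $p \in F$ and using that $D$ meets $F$ and is contained in $B(x_D, C \cdot 2^{-n})$, the triangle inequality gives $d(x_D, p) \le \mathrm{diam}(F) + C \cdot 2^{-n}$, so
\[
B(x_D, C^{-1}2^{-n}) \subset B\bigl(p, \mathrm{diam}(F) + 2C \cdot 2^{-n}\bigr).
\]

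To convert the disjointness into a counting bound, I would introduce a Borel reference measure $\lambda$ on $X$ for which there exist constants $c_1, c_2 > 0$ (depending only on $X$ and $R$) such that
\[
c_1 r^q \le \lambda\bigl(B(x,r)\bigr) \le c_2 r^q \quad \text{for all } x \in X \text{ and all } 0 < r \le R + 2C.
\]
I would take Lebesgue measure when $X = \mathbb{C}^m$, a Riemannian volume measure when $X = \mathbb{CP}^1$, and a left Haar measure when $X = \mathrm{G}$. Summing $\lambda$ over the disjoint balls $B(x_D, C^{-1}2^{-n})$ and applying the containment above then yields
\[
N \cdot c_1 \bigl(C^{-1}2^{-n}\bigr)^q \le \lambda\bigl(B(p, \mathrm{diam}(F) + 2C \cdot 2^{-n})\bigr) \le c_2 \bigl(\mathrm{diam}(F) + 2C \cdot 2^{-n}\bigr)^q,
\]
and rearranging gives $N \lesssim_{X,R} 1 + 2^{nq} \mathrm{diam}(F)^q$, as desired.

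The only substantive step is justifying the uniform two-sided estimate $\lambda(B(x,r)) \asymp r^q$ in $x$ for $r$ in a bounded range; I expect this to be the main, though still mild, technical point. In the Euclidean case it is immediate. On $\mathbb{CP}^1$ it follows from compactness together with the fact that the Riemannian volume of a smooth compact $q$-dimensional manifold is $q$-Ahlfors regular. For $\mathrm{G}$, both $d_{\mathrm{G}}$ and the Haar measure $\lambda$ are left-invariant, so the estimate reduces to a neighborhood of $1_{\mathrm{G}}$, where the exponential map is a diffeomorphism onto a Euclidean ball with bounded Jacobian; the finite range $r \le R + 2C$ is then handled by a routine finite-cover argument. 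This is the only place the ambient structure of $X$ enters the proof.
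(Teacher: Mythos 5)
Your proof is correct and follows essentially the same volume-packing argument as the paper: disjoint inner balls supplied by (\ref{eq:in second prop of D^X}), measured against a reference measure satisfying $\lambda(B(x,r))\asymp_{X,R} r^{q}$ on a bounded range of radii (Lebesgue on $\mathbb{C}^m$, the $\mathrm{SU}(2)$-invariant/Riemannian measure on $\mathbb{CP}^1$, left Haar on $\mathrm{G}$). The only difference is cosmetic: the paper splits into the cases $2^{-n}\le\mathrm{diam}(F)/(2C)$ and $2^{-n}>\mathrm{diam}(F)/(2C)$, passing to a finer dyadic level in the second case, whereas you carry the extra $2C\cdot 2^{-n}$ in the radius and absorb it directly into the $O_{X,R}\left(1+2^{nq}\mathrm{diam}(F)^{q}\right)$ bound.
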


\begin{rem*}
The parameter $R$ in the statement of the lemma is in fact needed
only when $X=\mathrm{G}$, where it is required because $\mathrm{G}$
has exponential volume growth.
\end{rem*}
\begin{proof}
If $X=\mathbb{CP}^{1}$, let $\lambda$ denote the unique $\mathrm{SU}(2)$-invariant
member of $\mathcal{M}(X)$. If $X=\mathrm{G}$, let $\lambda$ denote
the Haar measure on $\mathrm{G}$ associated to the left-invariant
Riemannian metric inducing $d_{\mathrm{G}}$. If $X=\mathbb{C}^{m}$
for some $m\ge1$, let $\lambda$ denote the Lebesgue measure on $\mathbb{C}^{m}$.
In any case, there exists $M=M(X,R)>1$ such that
\[
M^{-1}r^{q}\le\lambda\left(B(x,r)\right)\le Mr^{q}\text{ for all }x\in X\text{ and }0<r\le3R.
\]

Let $\emptyset\ne F\subset X$ be a Borel set with $\mathrm{diam}(F)\le R$,
let $n\in\mathbb{Z}_{\ge0}$, and write
\[
\mathcal{E}:=\left\{ D\in\mathcal{D}_{n}^{X}\::\:D\cap F\ne\emptyset\right\} .
\]
Let $C=C(X)>1$ be a constant as appearing in (\ref{eq:in second prop of D^X}),
set $\rho:=\mathrm{diam}(F)$, and suppose first that $2^{-n}\le\frac{\rho}{2C}$.
For each $D\in\mathcal{E}$ there exists $x_{D}\in D$ such that
\[
B\left(x_{D},C^{-1}2^{-n}\right)\subset D\subset B\left(x_{D},C2^{-n}\right),
\]
which implies that $\mathrm{diam}(D)\le C2^{1-n}\le\rho$.

Fix some $y\in F$. Given $D\in\mathcal{E}$, there exists $z_{D}\in D\cap F$,
and so
\[
d\left(x_{D},y\right)\le d\left(x_{D},z_{D}\right)+d\left(z_{D},y\right)\le2\rho.
\]
Thus, since $\mathrm{diam}\left(B\left(x_{D},C^{-1}2^{-n}\right)\right)\le\rho$,
\[
B\left(x_{D},C^{-1}2^{-n}\right)\subset B\left(y,3\rho\right)\text{ for each }D\in\mathcal{E}.
\]
Hence, since the balls $\left\{ B\left(x_{D},C^{-1}2^{-n}\right)\right\} _{D\in\mathcal{E}}$are
disjoint,
\[
\left|\mathcal{E}\right|M^{-1}C^{-q}2^{-nq}\le\sum_{D\in\mathcal{E}}\lambda\left(B\left(x_{D},C^{-1}2^{-n}\right)\right)\le\lambda\left(B\left(y,3\rho\right)\right)\le M3^{q}\rho^{q},
\]
which gives
\[
\left|\mathcal{E}\right|\le M^{2}C^{q}3^{q}\cdot2^{nq}\rho^{q}=O_{X,R}\left(2^{nq}\rho^{q}\right).
\]

Suppose next that $2^{-n}>\frac{\rho}{2C}$, and let $k\in\mathbb{Z}_{>0}$
be with $2^{-k}\le\frac{\rho}{2C}<2^{1-k}$. Since $k>n$, it holds
that $\mathcal{D}_{k}^{X}$ refines $\mathcal{D}_{n}^{X}$. Hence,
by the preceding part of the proof,
\[
\left|\mathcal{E}\right|\le\#\left\{ D\in\mathcal{D}_{k}^{X}\::\:D\cap F\ne\emptyset\right\} =M^{2}C^{q}3^{q}\cdot2^{kq}\rho^{q}=O_{X,R}(1),
\]
which completes the proof of the lemma.
\end{proof}
The following statement follows directly from (\ref{eq:in second prop of D^X})
and Lemma \ref{lem:ub on card of dyad atoms intersecting}.
\begin{lem}
\label{lem:bd num of subatoms}There exists a constant $C=C(X)>1$
such that for every $n\ge0$ and $D\in\mathcal{D}_{n}^{X}$,
\[
\#\left\{ D'\in\mathcal{D}_{n+1}^{X}\::\:D'\subset D\right\} \le C.
\]
\end{lem}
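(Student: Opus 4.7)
The plan is to deduce this from Lemma \ref{lem:ub on card of dyad atoms intersecting} together with the metric property (\ref{eq:in second prop of D^X}) of the dyadic-like partitions. The key point is simply that every child $D'$ of $D$ is contained in $D$, and $D$ itself has diameter controlled solely by the outer-ball constant from (\ref{eq:in second prop of D^X}), independently of $n$.

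More precisely, let $C_0 = C_0(X) > 1$ be the constant from (\ref{eq:in second prop of D^X}), so that for every $n \ge 0$ and every $D \in \mathcal{D}_n^X$ there exists $x_D \in D$ with $D \subset B(x_D, C_0 2^{-n})$. In particular, $\mathrm{diam}(D) \le 2 C_0 2^{-n}$. Set $R := 2 C_0$, which depends only on $X$. Applying Lemma \ref{lem:ub on card of dyad atoms intersecting} with $F = D$ (whose diameter is at most $R \cdot 2^{-n} \le R$) and with the scale $n+1$ in place of $n$, we obtain
\[
\#\bigl\{ D' \in \mathcal{D}_{n+1}^X \::\: D' \cap D \ne \emptyset \bigr\} = O_{X,R}\bigl(1 + 2^{(n+1)q}\mathrm{diam}(D)^q\bigr) = O_{X,R}\bigl(1 + 2^{(n+1)q}(2 C_0 2^{-n})^q\bigr),
\]
and the right-hand side is bounded by a constant depending only on $X$ (since $R$ itself depends only on $X$).

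Every $D' \in \mathcal{D}_{n+1}^X$ with $D' \subset D$ satisfies in particular $D' \cap D \ne \emptyset$, so the set of children of $D$ is a subset of the set just bounded. This gives the desired constant $C = C(X)$. I do not expect any serious obstacle: the only mildly delicate point is making sure that the parameter $R$ in Lemma \ref{lem:ub on card of dyad atoms intersecting} can be chosen depending on $X$ alone, which is automatic here because the diameter bound $2 C_0 2^{-n} \le 2 C_0$ is uniform in $n$.
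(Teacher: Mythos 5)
Your proposal is correct and is exactly the route the paper takes: it states that the lemma follows directly from (\ref{eq:in second prop of D^X}) and Lemma \ref{lem:ub on card of dyad atoms intersecting}, which is precisely your argument of bounding $\mathrm{diam}(D)$ by $O_X(2^{-n})$ and counting level-$(n+1)$ atoms meeting $D$. The uniformity in $n$ of the resulting constant is handled just as you describe.
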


In the following lemma, let $X'$ denote either $\mathbb{CP}^{1}$,
$\mathrm{G}$, or $\mathbb{C}^{m}$ for some $m\ge1$.
\begin{lem}
\label{lem:dyad ent =000026 lip func}Let $\theta\in\mathcal{M}(X)$,
$f:\mathrm{supp}(\theta)\rightarrow X'$, $s>0$, and $C\ge1$ be
such that
\[
C^{-1}s\cdot d\left(x_{1},x_{2}\right)\le d\left(f(x_{1}),f(x_{2})\right)\le Cs\cdot d\left(x_{1},x_{2}\right)\text{ for all }x_{1},x_{2}\in\mathrm{supp}(\theta).
\]
Then for each $n\ge\log C$ with $n+\log s\ge\log C$,
\begin{equation}
\left|H\left(f\theta,\mathcal{D}_{n}\right)-H\left(\theta,\mathcal{D}_{n+\log s}\right)\right|=O_{X,X'}\left(1+\log C\right).\label{eq:conc in lip func lemma}
\end{equation}
Moreover, (\ref{eq:conc in lip func lemma}) holds for all $n\ge\max\left\{ 0,-\log s\right\} $
whenever $X=X'=\mathbb{CP}^{1}$.
\end{lem}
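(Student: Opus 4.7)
The plan is to reduce the estimate to the standard entropy comparison principle for partitions with bounded intersection multiplicity. Using $H(f\theta, \mathcal{D}_n^{X'}) = H(\theta, f^{-1}\mathcal{D}_n^{X'})$, the target becomes $|H(\theta,\mathcal{P}) - H(\theta,\mathcal{Q})| = O_{X,X'}(1+\log C)$, where $\mathcal{P} := f^{-1}\mathcal{D}_n^{X'}$ and $\mathcal{Q} := \mathcal{D}_{n+\log s}^{X}$, both viewed as partitions of $\mathrm{supp}(\theta)$. Recall that if two countable partitions on a probability space have the property that each atom of one meets at most $k$ atoms of the other, then their entropies differ by at most $\log k$; this reduces the task to bounding these mutual multiplicities.

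I would then carry out the multiplicity bounds via two applications of Lemma \ref{lem:ub on card of dyad atoms intersecting}. Write $q,q'$ for the real dimensions of $X,X'$. Property (2) of the dyadic-like partitions gives $\mathrm{diam}(D) = O_{X'}(2^{-n})$ for any $D\in\mathcal{D}_n^{X'}$, so the upper bi-Lipschitz bound forces $\mathrm{diam}(f^{-1}(D)\cap\mathrm{supp}(\theta)) = O(C\cdot 2^{-(n+\log s)})$. The hypothesis $n+\log s \geq \log C$ keeps this diameter bounded by an absolute constant, so Lemma \ref{lem:ub on card of dyad atoms intersecting} produces at most $O_X(C^{q})$ atoms of $\mathcal{D}_{n+\log s}^{X}$ meeting $f^{-1}(D)$. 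A symmetric calculation, using the lower bi-Lipschitz bound together with $n\geq\log C$ to keep the image diameter bounded, shows that each atom of $\mathcal{Q}$ meets at most $O_{X'}(C^{q'})$ atoms of $\mathcal{P}$. Taking logarithms yields the stated bound.

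For the moreover part, when $X=X'=\mathbb{CP}^{1}$ the ambient space has diameter $1$, so every subset automatically has diameter at most $1$. Thus Lemma \ref{lem:ub on card of dyad atoms intersecting} applies with $R=1$ regardless of the values of $n$ and $n+\log s$, and the multiplicity count becomes $O(1 + 2^{nq}\cdot\min(C\cdot 2^{-(n+\log s)},1)^{q}) = O(C^{q})$ in both directions, without requiring any lower bound involving $\log C$. The only constraint that remains is that both $\mathcal{D}_n^{\mathbb{CP}^{1}}$ and $\mathcal{D}_{n+\log s}^{\mathbb{CP}^{1}}$ be defined, which is exactly $n\geq\max\{0,-\log s\}$. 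I do not expect a genuine obstacle here; the main delicacy is simply to track that the implicit constant in Lemma \ref{lem:ub on card of dyad atoms intersecting} depends only on $X,X'$ and on $R$, so that the hypotheses $n\geq\log C$ and $n+\log s\geq\log C$ --- which exist precisely to render the relevant diameters absolute --- combine to give a clean $O_{X,X'}(1+\log C)$ bound, with the additive $1$ absorbing the $+1$ term from Lemma \ref{lem:ub on card of dyad atoms intersecting}.
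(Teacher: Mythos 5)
Your proposal is correct and follows essentially the same route as the paper: both arguments reduce to bounding, via the bi-Lipschitz diameter estimates and Lemma \ref{lem:ub on card of dyad atoms intersecting} (with the hypotheses $n\ge\log C$, $n+\log s\ge\log C$ serving only to keep the relevant diameters bounded, and being unnecessary when $X=X'=\mathbb{CP}^{1}$), how many atoms of one partition meet each atom of the other, and then comparing entropies of commensurable partitions. The only cosmetic difference is that you invoke the two-sided bounded-multiplicity comparison directly, whereas the paper proves one direction via $H(\theta,f^{-1}\mathcal{D}_{n}\mid\mathcal{D}_{n+\log s})$ and gets the other by applying the same argument to $f^{-1}$ and $f\theta$; also note the preimage-diameter bound comes from the lower bi-Lipschitz inequality (and the image-diameter bound from the upper one), not as you labeled them.
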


\begin{rem*}
It is not difficult to see that the stronger assumptions $n\ge\log C$
and $n+\log s\ge\log C$ are in fact needed only when $X=\mathrm{G}$
or $X'=\mathrm{G}$. However, we will not need this refinement.
\end{rem*}
\begin{proof}
Let $n\ge0$ be given. If $X\ne\mathbb{CP}^{1}$ or $X'\ne\mathbb{CP}^{1}$,
assume that $n\ge\log C$ and $n+\log s\ge\log C$. Otherwise, if
$X=X'=\mathbb{CP}^{1}$, assume only that $n+\log s\ge0$.

For $D\in\mathcal{D}_{n+\log s}^{X}$ we have $\mathrm{diam}(D)=O_{X}\left(s^{-1}2^{-n}\right)$,
and so
\[
\mathrm{diam}\left(f\left(D\cap\mathrm{supp}(\theta)\right)\right)=O_{X}\left(C2^{-n}\right)
\]
(note that $C2^{-n}\le1$ when $X'\ne\mathbb{CP}^{1}$). Hence, by
applying Lemma \ref{lem:ub on card of dyad atoms intersecting} in
$X'$ with $F=f\left(D\cap\mathrm{supp}(\theta)\right)$,
\[
\log\left(\#\left\{ E\in f^{-1}\mathcal{D}_{n}^{X'}\::\:E\cap D\ne\emptyset\right\} \right)=O_{X,X'}\left(1+\log C\right)\text{ for }D\in\mathcal{D}_{n+\log s}^{X},
\]
which implies
\[
H\left(f\theta,\mathcal{D}_{n}\right)-H\left(\theta,\mathcal{D}_{n+\log s}\right)\le H\left(\theta,f^{-1}\mathcal{D}_{n}\mid\mathcal{D}_{n+\log s}\right)=O_{X,X'}\left(1+\log C\right).
\]

Set $\theta':=f\theta\in\mathcal{M}(X')$ and $h:=f^{-1}$, and note
that $h:\mathrm{supp}\left(\theta'\right)\rightarrow X$ satisfies
\[
C^{-1}s^{-1}\cdot d\left(x_{1}',x_{2}'\right)\le d\left(h(x_{1}'),h(x_{2}')\right)\le Cs^{-1}\cdot d\left(x_{1}',x_{2}'\right)
\]
for all $x_{1}',x_{2}'\in\mathrm{supp}\left(\theta'\right)$. Hence,
by applying the preceding argument with $\theta'$ in place of $\theta$,
$h$ in place of $f$, $s^{-1}$ in place of $s$, and $n':=n+\log s$
in place of $n$, we obtain
\[
H\left(\theta',h^{-1}\mathcal{D}_{n'}\right)-H\left(\theta',\mathcal{D}_{n'+\log s^{-1}}\right)\le O_{X,X'}\left(1+\log C\right).
\]
Since
\[
H\left(\theta',h^{-1}\mathcal{D}_{n'}\right)=H\left(\theta,\mathcal{D}_{n+\log s}\right)\text{ and }H\left(\theta',\mathcal{D}_{n'+\log s^{-1}}\right)=H\left(f\theta,\mathcal{D}_{n}\right),
\]
this completes the proof of the lemma.
\end{proof}
We shall also need the following statement. Its simple proof is similar
to that of Lemma \ref{lem:dyad ent =000026 lip func} and is therefore
omitted.
\begin{lem}
\label{lem:ent of push by close func}Let $\left(Z,\mathcal{F},\theta\right)$
be a probability space, and let $f,h:Z\rightarrow X$ be measurable.
Let $n\ge0$, and suppose that $d_{X}\left(f(z),h(z)\right)\le2^{-n}$
for all $z\in Z$. Then,
\[
H\left(f\theta,\mathcal{D}_{n}\right)=H\left(h\theta,\mathcal{D}_{n}\right)+O_{X}(1).
\]
\end{lem}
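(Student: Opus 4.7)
The plan is to show that the partitions $f^{-1}\mathcal{D}_n$ and $h^{-1}\mathcal{D}_n$ of $Z$ differ only by a bounded amount of information, and then read off the entropy comparison from the standard chain-rule inequality
\[
H(\theta,f^{-1}\mathcal{D}_n)\le H(\theta,h^{-1}\mathcal{D}_n)+H(\theta,f^{-1}\mathcal{D}_n\mid h^{-1}\mathcal{D}_n),
\]
together with its reverse. Since $H(f\theta,\mathcal{D}_n)=H(\theta,f^{-1}\mathcal{D}_n)$ and similarly for $h$, the lemma will follow once we bound the conditional entropies on the right (and the symmetric version) by $O_X(1)$.

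To bound $H(\theta,f^{-1}\mathcal{D}_n\mid h^{-1}\mathcal{D}_n)$, I would fix $D\in\mathcal{D}_n^X$ and use property (\ref{eq:in second prop of D^X}) of the dyadic-like partitions to find $x_D\in D$ with $D\subset B(x_D,C\cdot 2^{-n})$, where $C=C(X)$. For every $z\in h^{-1}(D)$, the pointwise closeness assumption gives
\[
d_X(f(z),x_D)\le d_X(f(z),h(z))+d_X(h(z),x_D)\le (C+1)\cdot 2^{-n},
\]
so $f(h^{-1}(D))\subset B(x_D,(C+1)2^{-n})$. Applying Lemma \ref{lem:ub on card of dyad atoms intersecting} with $F=B(x_D,(C+1)2^{-n})$ (of diameter at most $2(C+1)\le 2(C+1)$, and in particular bounded) yields
\[
\#\bigl\{D'\in\mathcal{D}_n^X:D'\cap f(h^{-1}(D))\ne\emptyset\bigr\}=O_X(1).
\]
Hence each atom of $h^{-1}\mathcal{D}_n$ meets at most $O_X(1)$ atoms of $f^{-1}\mathcal{D}_n$, which gives $H(\theta,f^{-1}\mathcal{D}_n\mid h^{-1}\mathcal{D}_n)=O_X(1)$. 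Combined with the chain-rule inequality above, this establishes $H(f\theta,\mathcal{D}_n)\le H(h\theta,\mathcal{D}_n)+O_X(1)$.

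For the reverse inequality, I would simply swap the roles of $f$ and $h$: the hypothesis $d_X(f(z),h(z))\le 2^{-n}$ is symmetric, so the exact same argument yields $H(h\theta,\mathcal{D}_n)\le H(f\theta,\mathcal{D}_n)+O_X(1)$. There is no real obstacle in this proof; the only small point that requires care is that when $X=\mathrm{G}$ the constant implicit in Lemma \ref{lem:ub on card of dyad atoms intersecting} depends on an upper bound for the diameter of $F$, but here $F$ is a ball of radius at most $(C+1)2^{-n}\le C+1$, so that diameter is bounded by a constant depending only on $X$ and the absorbed constant $C(X)$, and the bound remains $O_X(1)$ uniformly in $n$.
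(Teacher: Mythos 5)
Your proof is correct, and it is essentially the argument the paper has in mind: the proof is omitted there with the remark that it parallels the proof of Lemma \ref{lem:dyad ent =000026 lip func}, which is exactly your scheme of bounding $H(\theta,f^{-1}\mathcal{D}_{n}\mid h^{-1}\mathcal{D}_{n})$ via Lemma \ref{lem:ub on card of dyad atoms intersecting} and then symmetrizing in $f$ and $h$. The one point you flag (uniformity of the constant when $X=\mathrm{G}$) is handled correctly, since the ball $B(x_{D},(C+1)2^{-n})$ has diameter bounded by a constant depending only on $X$, so the count of intersected atoms is $O_{X}(1)$ uniformly in $n$.
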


\subsection{\label{subsec:Component-measures}Component measures}

In this subsection, let $X$ denote either $\mathbb{CP}^{1}$, $\mathrm{G}$,
$\mathbb{C}_{\infty}$, or $\mathbb{C}^{m}$ for some $m\ge1$. Let
$\theta\in\mathcal{M}(X)$ be given. For $n\ge0$ and $x\in X$ with
$\theta\left(\mathcal{D}_{n}(x)\right)>0$, we write $\theta_{x,n}$
in place of the conditional measure $\theta_{\mathcal{D}_{n}(x)}$.
The measure $\theta_{x,n}$ is said to be a level-$n$ component of
$\theta$.

Throughout the rest of the paper, we use the probabilistic notations
introduced in \cite[Section 2.2]{Ho1}; readers are encouraged to
consult this reference for further details. In particular, we often
consider $\theta_{x,n}$ as a random measure in a natural way. Thus,
for an event $\mathcal{U}\subset\mathcal{M}(X)$,
\[
\mathbb{P}\left(\theta_{x,n}\in\mathcal{U}\right):=\theta\left\{ x\in X\::\:\theta_{\mathcal{D}_{n}(x)}\in\mathcal{U}\right\} .
\]
Additionally, for integers $n_{2}\ge n_{1}\ge0$, we write
\[
\mathbb{P}_{n_{1}\le i\le n_{2}}\left(\theta_{x,i}\in\mathcal{U}\right):=\frac{1}{n_{2}-n_{1}+1}\sum_{i=n_{1}}^{n_{2}}\mathbb{P}\left(\theta_{x,i}\in\mathcal{U}\right).
\]
Similarly, given a measurable $f:\mathcal{M}(X)\rightarrow[0,\infty)$,
\[
\mathbb{E}_{n_{1}\le i\le n_{2}}\left(f\left(\theta_{x,i}\right)\right):=\frac{1}{n_{2}-n_{1}+1}\sum_{i=n_{1}}^{n_{2}}\int f\left(\theta_{\mathcal{D}_{i}(x)}\right)\:d\theta(x).
\]

The proof of the following lemma is similar to that of \cite[Lemma 3.4]{Ho1}
and is therefore omitted.
\begin{lem}
\label{lem:glob ent to loc ent}Let $\theta\in\mathcal{M}(X)$, $n\ge m\ge1$,
$i\in\mathbb{Z}_{\ge0}$, and $C>1$ be given. Suppose that $\mathrm{diam}\left(\mathrm{supp}(\theta)\right)\le C2^{-i}$.
Then,
\[
\frac{1}{n}H\left(\theta,\mathcal{D}_{i+n}\right)=\mathbb{E}_{i\le j\le i+n}\left(\frac{1}{m}H\left(\theta_{x,j},\mathcal{D}_{j+m}\right)\right)+O_{X,C}\left(\frac{m}{n}\right).
\]
\end{lem}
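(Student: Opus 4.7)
The plan is to reduce the stated identity to a chain-rule telescoping argument and then control two sources of error: the boundary of the averaging range, and the passage from conditional to unconditional entropy using the diameter hypothesis on $\mathrm{supp}(\theta)$.

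First, by the very definition of conditional entropy, for each $j \ge 0$
\[
\int H\bigl(\theta_{x,j},\mathcal{D}_{j+m}\bigr)\,d\theta(x) = H\bigl(\theta,\mathcal{D}_{j+m}\mid\mathcal{D}_{j}\bigr),
\]
so the right-hand side of the lemma equals
\[
\frac{1}{m(n+1)}\sum_{j=i}^{i+n} H\bigl(\theta,\mathcal{D}_{j+m}\mid\mathcal{D}_{j}\bigr).
\]
Next, using the chain rule for conditional entropy along the refining sequence $\{\mathcal{D}_\ell\}$, I would write
\[
H\bigl(\theta,\mathcal{D}_{j+m}\mid\mathcal{D}_{j}\bigr)=\sum_{k=0}^{m-1}H\bigl(\theta,\mathcal{D}_{j+k+1}\mid\mathcal{D}_{j+k}\bigr),
\]
and then interchange the order of summation in $j$ and $k$. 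Setting $\ell=j+k$, each one-step term $H(\theta,\mathcal{D}_{\ell+1}\mid\mathcal{D}_{\ell})$ appears with multiplicity equal to $m$ for $\ell$ in the bulk range $[i+m-1,\,i+n]$, and with multiplicity strictly less than $m$ only for the $O(m)$ boundary indices. By Lemma \ref{lem:bd num of subatoms} each one-step term is bounded by an absolute constant $O_X(1)$, so the total boundary defect is $O_X(m^2)$. After dividing by $m(n+1)$ the discrepancy is $O_X(m/n)$, and the bulk contribution telescopes to
\[
\frac{1}{n+1}\sum_{\ell=i}^{i+n+m-1}H\bigl(\theta,\mathcal{D}_{\ell+1}\mid\mathcal{D}_{\ell}\bigr)=\frac{1}{n+1}H\bigl(\theta,\mathcal{D}_{i+n+m}\mid\mathcal{D}_{i}\bigr).
\]

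The remaining step is to replace $H(\theta,\mathcal{D}_{i+n+m}\mid\mathcal{D}_{i})$ by $H(\theta,\mathcal{D}_{i+n})$, and to swap $1/(n+1)$ for $1/n$. Here the diameter hypothesis enters: since $\mathrm{diam}(\mathrm{supp}(\theta))\le C 2^{-i}$, Lemma \ref{lem:ub on card of dyad atoms intersecting} yields $|\{D\in\mathcal{D}_i : D\cap\mathrm{supp}(\theta)\ne\emptyset\}|=O_{X,C}(1)$, hence $H(\theta,\mathcal{D}_i)=O_{X,C}(1)$, so
\[
H\bigl(\theta,\mathcal{D}_{i+n+m}\mid\mathcal{D}_{i}\bigr)=H\bigl(\theta,\mathcal{D}_{i+n+m}\bigr)+O_{X,C}(1).
\]
Similarly, by Lemma \ref{lem:bd num of subatoms} applied $m$ times, $H(\theta,\mathcal{D}_{i+n+m})-H(\theta,\mathcal{D}_{i+n})=O_X(m)$. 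Finally, since $H(\theta,\mathcal{D}_{i+n})=O_{X,C}(n)$ (it grows at most linearly in $n$, again by Lemma \ref{lem:bd num of subatoms}), replacing $1/(n+1)$ by $1/n$ costs $O_{X,C}(1/n)$. Collecting the errors, and using $m\ge1$ and $n\ge m$, yields the claimed bound $O_{X,C}(m/n)$.

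There is no serious obstacle here; this is a bookkeeping lemma. The only point requiring mild care is that one must pay a single $O_{X,C}(1)$ to convert the conditional entropy at scale $i$ into an absolute entropy (where the diameter assumption is essential, otherwise $H(\theta,\mathcal{D}_i)$ could be large), and one must verify that the boundary defect of the interchange of sums genuinely contributes only $O_X(m^2)$ before normalization. Both checks are routine given Lemmas \ref{lem:ub on card of dyad atoms intersecting} and \ref{lem:bd num of subatoms}.
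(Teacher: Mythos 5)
Your argument is correct and is essentially the standard proof the paper points to (it omits the proof, citing \cite[Lemma 3.4]{Ho1}): express the component average as $\frac{1}{m(n+1)}\sum_{j}H(\theta,\mathcal{D}_{j+m}\mid\mathcal{D}_{j})$, telescope via the chain rule with $O_X(m^2)$ boundary defect, and use the diameter hypothesis to pay a single $O_{X,C}(1)$ when passing from $H(\theta,\mathcal{D}_{i+n+m}\mid\mathcal{D}_i)$ to $H(\theta,\mathcal{D}_{i+n})$. The bookkeeping of multiplicities and the $O_X(m)$, $O_{X,C}(1/n)$ error terms all check out, so nothing further is needed.
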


\subsection{\label{subsec:Symbolic-related-notations}Symbolic notation}

Let $\Lambda^{*}$ denote the set of finite words over $\Lambda$,
including the empty word $\emptyset$. Given a group $\mathrm{H}$,
indexed elements $\{h_{i}\}_{i\in\Lambda}\subset\mathrm{H}$, and
a word $i_{1}...i_{n}=u\in\Lambda^{*}$, we shall write $h_{u}:=h_{i_{1}}...h_{i_{n}}$,
where $h_{\emptyset}$ denotes the identity element of $\mathrm{H}$.

Let $\Lambda^{\mathbb{N}}$ denote the set of one-sided infinite words
over $\Lambda$. We equip $\Lambda^{\mathbb{N}}$ with the product
topology, where each copy of $\Lambda$ is equipped with the discrete
topology. Let $\sigma:\Lambda^{\mathbb{N}}\rightarrow\Lambda^{\mathbb{N}}$
denote the left-shift map. That is, $\sigma(\omega)=(\omega_{n+1})_{n\ge0}$
for $(\omega_{n})_{n\ge0}=\omega\in\Lambda^{\mathbb{N}}$.

For $n\ge0$ and $\omega\in\Lambda^{\mathbb{N}}$ write $\omega|_{n}$
for the prefix of $\omega$ of length $n$. That is, $\omega|_{n}:=\omega_{0}...\omega_{n-1}$
with $\omega|_{0}:=\emptyset$. Given a word $u\in\Lambda^{n}$, denote
by $[u]$ the cylinder set in $\Lambda^{\mathbb{N}}$ corresponding
to $u$. That is,
\[
[u]:=\left\{ \omega\in\Lambda^{\mathbb{N}}\::\:\omega|_{n}=u\right\} .
\]
We denote by $\mathcal{P}_{n}:=\left\{ [u]\::\:u\in\Lambda^{n}\right\} $
the partition of $\Lambda^{\mathbb{N}}$ into level-$n$ cylinders.
For a set of words $\mathcal{U}\subset\Lambda^{*}$, we write $\left[\mathcal{U}\right]:=\cup_{u\in\mathcal{U}}[u]$.

Let $\beta:=p^{\mathbb{N}}$ denote the Bernoulli measure on $\Lambda^{\mathbb{N}}$
corresponding to $p$. That is, $\beta$ is the unique element in
$\mathcal{M}(\Lambda^{\mathbb{N}})$ such that $\beta([u])=p_{u}$
for each $u\in\Lambda^{*}$.

Given $u,v\in\Lambda^{*}$ and $\omega\in\Lambda^{\mathbb{N}}$, write
$uv$ and $u\omega$ for the concatenation of $u$ with $v$ and of
$u$ with $\omega$, respectively.

For $u\in\Lambda^{*}$ and $\eta>0$, write
\[
Y_{u,\eta}:=\mathbb{CP}^{1}\setminus B\left(L(g_{u}^{-1}),\eta\right).
\]
As in the proof of Lemma \ref{lem:dist of gzC to L(g)}, it is easy
to verify that
\begin{equation}
\Vert g_{u}z\Vert\ge\eta\Vert g_{u}\Vert_{\mathrm{op}}\Vert z\Vert\text{ for }0\ne z\in\mathbb{C}^{2}\text{ with }z\mathbb{C}\in Y_{u,\eta}.\label{eq:lb for norm of g_u z}
\end{equation}

For $u\in\Lambda^{*}$, set
\[
\chi_{u}:=2\log\Vert g_{u}\Vert_{\mathrm{op}}.
\]
Note that
\begin{equation}
\underset{n\rightarrow\infty}{\lim}\frac{1}{n}\chi_{\omega|_{n}}=2\chi\text{ for }\beta\text{-a.e. }\omega,\label{eq:lim on normalized chi_omega|_n}
\end{equation}
where recall from Section \ref{sec:Introduction-and-the main result}
that $\chi$ denotes the Lyapunov exponent associated to $\mathcal{G}$
and $p$.

Given integers $l,n\ge1$ and $0\le j<l$, let $\Psi\left(j,l;n\right)$
denote the set of words $u_{0}...u_{s}\in\Lambda^{*}$ such that $u_{0}\in\Lambda^{j}$,
$u_{i}\in\Lambda^{l}$ for $1\le i\le s$, $\chi_{u_{0}...u_{s}}>n$,
and $\chi_{u_{0}...u_{i}}\le n$ for $0\le i<s$. Note that there
exists a constant $C_{l}>1$, depending only on $\mathcal{G}$ and
$l$, such that
\begin{equation}
2^{n/2}<\Vert g_{u}\Vert_{\mathrm{op}}\le C_{l}2^{n/2}\text{ for all }u\in\Psi\left(j,l;n\right).\label{eq:op norm is comp to 2^(n/2) for u in Psi_n}
\end{equation}
Since $\chi>0$, we have $\beta\left(\left[\Psi\left(j,l;n\right)\right]\right)=1$.
From this, and the relation $\mu=\sum_{i\in\Lambda}p_{i}\cdot g_{i}\mu$,
it follows easily that
\begin{equation}
\mu=\sum_{u\in\Psi\left(j,l;n\right)}p_{u}\cdot g_{u}\mu.\label{eq:mu as conv comb with u in Psi_n}
\end{equation}
We shall write $\Psi_{n}$ in place of $\Psi\left(0,1;n\right)$.

It will sometimes be useful to choose words from $\Lambda^{n}$ and
$\Psi\left(j,l;n\right)$ at random. Let $\mathbf{U}_{n}$ and $\mathbf{I}(j,l;n)$
denote the random words with
\[
\mathbb{P}\left\{ \mathbf{U}_{n}=u\right\} =\begin{cases}
p_{u} & \text{ if }u\in\Lambda^{n}\\
0 & \text{ otherwise}
\end{cases}\:\text{ and }\:\mathbb{P}\left\{ \mathbf{I}(j,l;n)=u\right\} =\begin{cases}
p_{u} & \text{ if }u\in\Psi\left(j,l;n\right)\\
0 & \text{ otherwise}
\end{cases}.
\]
We shall write $\mathbf{I}_{n}$ in place of $\mathbf{I}(j,l;n)$.
Lemma \ref{lem:ac of words} in Section \ref{sec:Entropy-of-projections}
shows why $\Psi_{n}$ and $\mathbf{I}_{n}$ are not sufficient, and
why the more general $\Psi\left(j,l;n\right)$ and $\mathbf{I}(j,l;n)$
are required.

\subsection{\label{subsec:results-from-rand-prod-of-mat}Results from the theory
of random products of matrices}

Recall that $\mathrm{S}_{\mathcal{G}}$ is assumed to be strongly
irreducible and proximal, which implies that $\chi>0$. Moreover,
by \cite[Proposition 4.7]{BQ}, there exists a Borel map $L:\Lambda^{\mathbb{N}}\rightarrow\mathbb{CP}^{1}$,
called the Furstenberg boundary map, such that $L\beta=\mu$ and
\begin{equation}
L(\omega)=\underset{n\rightarrow\infty}{\lim}L\left(g_{\omega|_{n}}\right)\text{ for }\beta\text{-a.e. }\omega.\label{eq:def of L(omega)}
\end{equation}
Consequently, given $l\ge1$ and $0\le j<l$, the sequences of random
directions $\left\{ L\left(g_{\mathbf{U}_{n}}\right)\right\} _{n\ge1}$
and $\left\{ L\left(g_{\mathbf{I}(j,l;n)}\right)\right\} _{n\ge1}$
converge to $\mu$ in distribution. As shown in \cite[Lemma 5.11]{HR},
the boundary map is equivariant in the sense that
\begin{equation}
L(\omega)=g_{\omega_{0}}L\left(\sigma\omega\right)\text{ for }\beta\text{-a.e. }\omega.\label{eq:L is equivariant}
\end{equation}

Since $\mathrm{S}_{\mathcal{G}}$ is strongly irreducible and proximal,
the same holds for the semigroup generated by $\left\{ g_{i}^{t}\right\} _{i\in\Lambda}$,
where $g_{i}^{t}$ denotes the transpose of $g_{i}$. Write $\mu^{t}\in\mathcal{M}\left(\mathbb{CP}^{1}\right)$
for the Furstenberg measure associated to $\left\{ g_{i}^{t}\right\} _{i\in\Lambda}$
and $p$. That is, $\mu^{t}$ is the unique element in $\mathcal{M}\left(\mathbb{CP}^{1}\right)$
such that $\mu^{t}=\sum_{i\in\Lambda}p_{i}\cdot g_{i}^{t}\mu^{t}$.

By \cite[Proposition 4.7]{BQ}, it follows easily that for each $z\mathbb{C}\in\mathbb{CP}^{1}$,
the sequence $\left\{ g_{\mathbf{U}_{n}}^{t}z\mathbb{C}\right\} _{n\ge1}$
converges to $\mu^{t}$ in distribution, where $g_{u}^{t}:=(g_{u})^{t}$
for $u\in\Lambda^{*}$. In the case of real matrices, such a statement
is proved in \cite[Theorem III.4.3]{BL}, and the proof applies without
change here.

By \cite[Lemma 4.6]{BQ}, the measures $\mu$ and $\mu^{t}$ are nonatomic;
that is, $\mu\left\{ z\mathbb{C}\right\} =\mu^{t}\left\{ z\mathbb{C}\right\} =0$
for each $z\mathbb{C}\in\mathbb{CP}^{1}$. The following lemma follows
directly from this, by compactness, and by the aforementioned convergences
in distribution.
\begin{lem}
\label{lem:small ball --> small mass}For each $\epsilon>0$ there
exists $\eta>0$ such that
\[
\mu\left(B(z\mathbb{C},2\eta)\right),\mu^{t}\left(B(z\mathbb{C},2\eta)\right)<\epsilon/2\:\text{ for all }z\mathbb{C}\in\mathbb{CP}^{1}.
\]
Consequently, given $w\mathbb{C}\in\mathbb{CP}^{1}$, there exists
$N\ge1$ such that for all $n\ge N$ and $z\mathbb{C}\in\mathbb{CP}^{1}$,
\[
\mathbb{P}\left\{ L\left(g_{\mathbf{U}_{n}}\right)\in B(z\mathbb{C},\eta)\right\} ,\mathbb{P}\left\{ g_{\mathbf{U}_{n}}^{t}w\mathbb{C}\in B(z\mathbb{C},\eta)\right\} <\epsilon.
\]
Similarly, given $l\ge1$ and $0\le j<l$, there exists $N'\ge1$
such that
\[
\mathbb{P}\left\{ L\left(g_{\mathbf{I}(j,l;n)}\right)\in B(z\mathbb{C},\eta)\right\} <\epsilon\:\text{ for all }n\ge N'\text{ and }z\mathbb{C}\in\mathbb{CP}^{1}.
\]
\end{lem}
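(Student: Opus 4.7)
The plan is to handle the two assertions in sequence, with the first (uniform bound on small-ball masses of $\mu$ and $\mu^{t}$) feeding into a standard weak convergence plus partition-of-unity argument for the second.

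For the first assertion, I would fix $\epsilon>0$ and use nonatomicity pointwise: for each $z\mathbb{C}\in\mathbb{CP}^{1}$, by continuity from above and the fact that $\mu\{z\mathbb{C}\}=\mu^{t}\{z\mathbb{C}\}=0$, there exists $\eta(z\mathbb{C})>0$ with
\[
\mu\left(B(z\mathbb{C},3\eta(z\mathbb{C}))\right),\mu^{t}\left(B(z\mathbb{C},3\eta(z\mathbb{C}))\right)<\epsilon/2.
\]
Since $\mathbb{CP}^{1}$ is compact, finitely many balls $B(z_{i}\mathbb{C},\eta(z_{i}\mathbb{C}))$, $1\le i\le k$, cover it; set $\eta:=\min_{i}\eta(z_{i}\mathbb{C})$. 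For any $z\mathbb{C}\in\mathbb{CP}^{1}$, choosing $i$ with $d(z\mathbb{C},z_{i}\mathbb{C})\le\eta(z_{i}\mathbb{C})$ gives $B(z\mathbb{C},2\eta)\subset B(z_{i}\mathbb{C},3\eta(z_{i}\mathbb{C}))$, so both $\mu(B(z\mathbb{C},2\eta))$ and $\mu^{t}(B(z\mathbb{C},2\eta))$ are bounded by $\epsilon/2$ uniformly in $z\mathbb{C}$.

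For the second assertion, I would use compactness again plus Urysohn-type cutoffs to upgrade the available weak convergence to a uniform statement. Cover $\mathbb{CP}^{1}$ by finitely many balls $B(z_{i}\mathbb{C},\eta/2)$, $1\le i\le k'$, and for each $i$ pick a continuous $f_{i}:\mathbb{CP}^{1}\to[0,1]$ with
\[
\mathbf{1}_{B(z_{i}\mathbb{C},3\eta/2)}\le f_{i}\le\mathbf{1}_{B(z_{i}\mathbb{C},2\eta)}.
\]
Then $\int f_{i}\,d\mu\le\mu(B(z_{i}\mathbb{C},2\eta))<\epsilon/2$, and by the convergence $L(g_{\mathbf{U}_{n}})\to\mu$ in distribution we have $\mathbb{E}[f_{i}(L(g_{\mathbf{U}_{n}}))]\to\int f_{i}\,d\mu$ as $n\to\infty$. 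Taking the maximum over the finite index set $i$, there exists $N$ such that $\mathbb{E}[f_{i}(L(g_{\mathbf{U}_{n}}))]<\epsilon$ for all $n\ge N$ and all $i$. Now for arbitrary $z\mathbb{C}$, choosing $i$ with $d(z\mathbb{C},z_{i}\mathbb{C})\le\eta/2$ yields $B(z\mathbb{C},\eta)\subset B(z_{i}\mathbb{C},3\eta/2)$, so
\[
\mathbb{P}\left\{ L(g_{\mathbf{U}_{n}})\in B(z\mathbb{C},\eta)\right\} \le\mathbb{E}\left[f_{i}(L(g_{\mathbf{U}_{n}}))\right]<\epsilon
\]
uniformly in $z\mathbb{C}$. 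The same cutoff argument, applied to $g_{\mathbf{U}_{n}}^{t}w\mathbb{C}\to\mu^{t}$ (convergence in distribution stated just before the lemma, which is uniform in the starting point $w\mathbb{C}$ or at least can be invoked for the given $w\mathbb{C}$), handles the second probability; and applied to $L(g_{\mathbf{I}(j,l;n)})\to\mu$ it handles the third.

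There is no real obstacle here: nonatomicity of $\mu$ and $\mu^{t}$ is quoted from \cite[Lemma 4.6]{BQ}, and the three weak-convergence inputs are already on the table in the preceding paragraphs. The only small point requiring care is that weak convergence alone only controls single balls, while the statement requires uniformity over all centers $z\mathbb{C}$; the partition-of-unity step with the cushion $\eta/2$ between the cover radius and the target radius $\eta$ turns the finite covering into the required uniform bound.
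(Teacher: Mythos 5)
Your argument is correct and is exactly the route the paper intends: the paper gives no written proof, stating only that the lemma "follows directly" from nonatomicity of $\mu$ and $\mu^{t}$, compactness, and the convergences in distribution, which is precisely your covering-plus-cutoff argument with the details filled in. The cushion between the covering radius and the target radius $\eta$ is the right way to turn weak convergence on a finite net into uniformity over all centers $z\mathbb{C}$.
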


\subsection{\label{subsec:Zariski-density-of S_G}Zariski density of $\mathrm{S}_{\mathcal{G}}$}

Write $\mathrm{M}_{2}(\mathbb{C})$ for the vector space of $2\times2$
matrices with entries in $\mathbb{C}$. By a real polynomial function
on $\mathrm{M}_{2}(\mathbb{C})$, we mean a function from $\mathrm{M}_{2}(\mathbb{C})$
to $\mathbb{R}$ which may be expressed as a real polynomial in the
real and imaginary parts of the matrix entries. In what follows, whenever
we refer to the Zariski topology, we mean the Zariski topology generated
by the real polynomial functions. For the definition and basic facts
on the Zariski topology, see for instance \cite[Section 6.1]{BQ}.
\begin{lem}
\label{lem:zariski density}The semigroup $\mathrm{S}_{\mathcal{G}}$
is Zariski dense in $\mathrm{G}$. That is, every real polynomial
function on $\mathrm{M}_{2}(\mathbb{C})$ vanishing on $\mathrm{S}_{\mathcal{G}}$
also vanishes on $\mathrm{G}$.
\end{lem}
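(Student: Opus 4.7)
The plan is to pass to the Zariski closure $H$ of $\mathrm{S}_{\mathcal{G}}$ in $\mathrm{G}$ and to use the three standing hypotheses to rule out every proper real algebraic subgroup of $\mathrm{G}$.

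By the standard fact that the Zariski closure of a sub-semigroup of a real algebraic group is itself a closed subgroup (see \cite[Section 6]{BQ}), $H$ is a Zariski closed subgroup of $\mathrm{G}$. The three defining properties transfer from $\mathrm{S}_{\mathcal{G}}$ to $H$: any finite $H$-invariant subset of $\mathbb{CP}^{1}$ would also be $\mathrm{S}_{\mathcal{G}}$-invariant, and unboundedness of $\mathrm{S}_{\mathcal{G}}$ in operator norm forces unboundedness of $H$. Preservation of a given generalized circle $C$ is a Zariski closed condition on $\mathrm{G}$, since the space of generalized circles is a real algebraic variety on which $\mathrm{G}$ acts algebraically (for example, via Hermitian forms of signature $(1,1)$ on $\mathbb{C}^{2}$, together with real symmetric bilinear forms for the degenerate case); thus if $H$ fixed some $C$ then so would $\mathrm{S}_{\mathcal{G}}$. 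Hence $H$ inherits strong irreducibility, proximality, and nonpreservation of any generalized circle.

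The remaining task is to argue that these three properties together force $H = \mathrm{G}$. I would work at the Lie-algebra level: the identity component $H^{\circ}$ has Lie algebra a real subalgebra $\mathfrak{h} \subseteq \mathfrak{sl}(2,\mathbb{C})_{\mathbb{R}}$, and if $H \neq \mathrm{G}$ then $\mathfrak{h}$ is a proper real subalgebra of this six-dimensional Lie algebra. Any such proper $\mathfrak{h}$ falls into one of three classes: (i) $\mathfrak{h}$ stabilizes a line in $\mathbb{C}^{2}$, so that $H^{\circ}$ has a fixed point in $\mathbb{CP}^{1}$ whose finite $H$-orbit contradicts strong irreducibility; (ii) $\mathfrak{h}$ is compact (conjugate to $\mathfrak{su}(2)$), forcing $H$ to be bounded and contradicting proximality; or (iii) $\mathfrak{h}$ is conjugate to $\mathfrak{sl}(2,\mathbb{R})$ or $\mathfrak{su}(1,1)$, in which case $H^{\circ}$ is the identity component of the stabilizer of some generalized circle $C$. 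In the last case, since $H$ normalizes $H^{\circ}$ and the normalizer in $\mathrm{G}$ of such an $H^{\circ}$ coincides with $\mathrm{Stab}_{\mathrm{G}}(C)$, the full $H$ also preserves $C$, contradicting the third hypothesis. Hence only $\mathfrak{h} = \mathfrak{sl}(2,\mathbb{C})_{\mathbb{R}}$ survives, which gives $H = \mathrm{G}$ since $\mathrm{G}$ is connected.

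The main obstacle I expect is making the classification of proper real Lie subalgebras of $\mathfrak{sl}(2,\mathbb{C})_{\mathbb{R}}$ rigorous while simultaneously handling the possible disconnectedness of $H$. The crucial input is the identity $N_{\mathrm{G}}(H^{\circ}) = \mathrm{Stab}_{\mathrm{G}}(C)$ in case (iii), which ensures that the full $H$ --- and not merely its identity component --- preserves a single generalized circle. A cleaner, more invariant-theoretic alternative is to observe directly that a proper Zariski closed subgroup of $\mathrm{G}$ must preserve either a line in $\mathbb{C}^{2}$, a definite Hermitian form on $\mathbb{C}^{2}$, or an indefinite Hermitian or real symmetric bilinear form on $\mathbb{C}^{2}$, each of these three alternatives being excluded by exactly one of our standing hypotheses.
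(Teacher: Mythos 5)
Your plan follows essentially the same route as the paper's proof: pass to the Zariski closure, reduce to classifying the proper real Lie subalgebras of $\mathfrak{sl}(2,\mathbb{C})$, and eliminate the line-stabilizing (solvable), $\mathfrak{su}(2)$, and $\mathfrak{sl}(2,\mathbb{R})/\mathfrak{su}(1,1)$ cases using strong irreducibility, proximality, and the no-fixed-circle hypothesis respectively, with the normalizer identification taking care of the full, possibly disconnected, closure in the last case. The one ingredient you flag as an obstacle --- controlling the components of the closure --- is supplied in the paper by Whitney's theorem, which gives that a real algebraic subgroup has finitely many topological components, exactly what your finite-orbit claim in case (i) and boundedness claim in case (ii) require.
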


\begin{proof}
Write $\mathrm{H}$ for the Zariski closure of $\mathrm{S}_{\mathcal{G}}$.
By \cite[Lemma 6.15]{BQ} it follows that $\mathrm{H}$ is a Lie subgroup
of $\mathrm{G}$. Set $\mathfrak{g}:=\mathfrak{sl}(2,\mathbb{C})\subset\mathrm{M}_{2}(\mathbb{C})$,
and write $\mathfrak{h}\subset\mathfrak{g}$ for the Lie algebra of
$\mathrm{H}$. In order to show that $\mathrm{H}=\mathrm{G}$ and
complete the proof, it suffices to show that $\mathfrak{h}=\mathfrak{g}$.

First, assume by contradiction that $\mathfrak{h}$ is solvable. By
Lie's theorem, this implies that there exists a common eigenvector
in $\mathbb{C}^{2}$ for the elements of $\mathfrak{h}$. Moreover,
by \cite[Theorem 3]{Wh}, it follows that $\mathrm{H}$ has finitely
many connected components with respect to the standard metric topology
of $\mathrm{G}$. The last two facts together imply that $\mathrm{H}$,
and hence $\mathrm{S}_{\mathcal{G}}$, is not strongly irreducible.
But this contradicts our standing assumption, and so $\mathfrak{h}$
cannot be solvable.

Set $\mathfrak{h}':=\mathfrak{h}+i\mathfrak{h}$, and note that $\mathfrak{h}'$
is a complex Lie subalgebra of $\mathfrak{g}$. If $\mathfrak{h}'\ne\mathfrak{g}$,
then $\dim_{\mathbb{C}}\mathfrak{h}'<3$, from which it follows that
$\mathfrak{h}'$ is solvable. But this implies that $\mathfrak{h}$
is also solvable. Hence we must have $\mathfrak{h}'=\mathfrak{g}$,
and in particular $\mathfrak{h}'$ is semisimple. From this, and by
Cartan's criterion of semisimplicity, it follows easily that $\mathfrak{h}$
is also semisimple.

Since $\mathfrak{h}$ is a real semisimple subalgebra of $\mathfrak{g}$,
exactly one of the following holds: $\mathfrak{h}=\mathfrak{g}$,
$\mathfrak{h}$ is isomorphic to $\mathfrak{su}(2)$, or $\mathfrak{h}$
is isomorphic to $\mathfrak{sl}(2,\mathbb{R})$. If $\mathfrak{h}\cong\mathfrak{su}(2)$,
then $\mathrm{H}$ is conjugate to $\mathrm{SU}(2)$, which is impossible
since $\mathrm{S}_{\mathcal{G}}$ is proximal and so $\mathrm{H}$
cannot be compact. If $\mathfrak{h}\cong\mathfrak{sl}(2,\mathbb{R})$,
then $\mathrm{H}$ is conjugate to $\mathrm{SL}(2,\mathbb{R})$ or
to its normalizer $\mathrm{N}_{\mathrm{G}}\left(\mathrm{SL}(2,\mathbb{R})\right)$,
which is equal to the group generated by $\mathrm{SL}(2,\mathbb{R})$
and the element $\mathrm{diag}\left(i,-i\right)$. But, as $\varphi_{g}\left(\mathbb{R}\right)=\mathbb{R}$
for all $g\in\mathrm{N}_{\mathrm{G}}\left(\mathrm{SL}(2,\mathbb{R})\right)$,
this contradicts the assumption that $\mathrm{S}_{\mathcal{G}}$ does
not fix a generalized circle. Hence we must have $\mathfrak{h}=\mathfrak{g}$,
which completes the proof.
\end{proof}

\subsection{The $\nu$-measure of generalized circles}

Write $\mathrm{Circ}(\mathbb{C}_{\infty})$ for the collection of
all generalized circles in $\mathbb{C}_{\infty}$.
\begin{lem}
\label{lem:No Q exists}There does not exist a finite nonempty subset
$\mathcal{Q}$ of $\mathrm{Circ}(\mathbb{C}_{\infty})$ such that
$\varphi_{i}(C)\in\mathcal{Q}$ for all $i\in\Lambda$ and $C\in\mathcal{Q}$.
\end{lem}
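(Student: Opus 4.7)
The plan is to argue by contradiction. Suppose such a $\mathcal{Q}$ exists, with $n:=|\mathcal{Q}|\geq 1$. I will show that this forces every element of $\mathrm{G}$ to stabilize some $C\in\mathcal{Q}$, contradicting the standing assumption that $\mathrm{S}_{\mathcal{G}}$ does not fix a generalized circle.

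First I would note that since each $\varphi_{i}$ is a bijection of $\mathbb{C}_{\infty}$ and $\mathcal{Q}$ is finite, $\varphi_{i}$ restricts to a permutation of $\mathcal{Q}$. A routine induction on word length then shows that $\varphi_{g}$ permutes $\mathcal{Q}$ for every $g\in\mathrm{S}_{\mathcal{G}}$. Since every element of the symmetric group on $\mathcal{Q}$ has order dividing $n!$, this yields $\varphi_{g^{n!}}(C)=C$ for every $g\in\mathrm{S}_{\mathcal{G}}$ and every $C\in\mathcal{Q}$.

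Now fix $C\in\mathcal{Q}$ and consider the stabilizer $H_{C}:=\{g\in\mathrm{G}:\varphi_{g}(C)=C\}$. Writing $C$ as the zero set in $\mathbb{C}_{\infty}$ of a real quadratic form $A|z|^{2}+2\mathrm{Re}(\bar{B}z)+D$, a direct computation with $\varphi_{g}(z)=(az+b)/(cz+d)$ shows that its pushforward is a real quadratic form in $w=\varphi_{g}(z)$ whose coefficients are polynomials in the real and imaginary parts of $a,b,c,d$; the condition $\varphi_{g}(C)=C$ reduces to the proportionality of these two forms, which is itself a system of real polynomial equations in the entries of $g$. Hence $H_{C}$ is Zariski closed in $\mathrm{M}_{2}(\mathbb{C})$ in the sense of Section \ref{subsec:Zariski-density-of S_G}. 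Since $g\mapsto g^{n!}$ is a polynomial map,
\[
E_{C}:=\{g\in\mathrm{G}:g^{n!}\in H_{C}\}
\]
is also Zariski closed. The previous paragraph gives $\mathrm{S}_{\mathcal{G}}\subset E_{C}$, so Lemma \ref{lem:zariski density} yields $E_{C}=\mathrm{G}$.

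To finish, the map $\phi:g\mapsto g^{n!}$ has differential $X\mapsto (n!)\,X$ at $1_{\mathrm{G}}$ (from $\exp(X)^{n!}=\exp(n!\,X)$), and is therefore a local diffeomorphism at $1_{\mathrm{G}}$ by the inverse function theorem. Its image contains an open neighborhood of $1_{\mathrm{G}}$ in $\mathrm{G}$, which by the previous step is contained in $H_{C}$. Hence the subgroup $H_{C}$ is open in the connected Lie group $\mathrm{G}$, and as every open subgroup of a connected group is clopen, $H_{C}=\mathrm{G}$. But then every $\varphi_{i}$ fixes $C$, contradicting the standing hypothesis on $\mathrm{S}_{\mathcal{G}}$. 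The only technically delicate step in this plan is the verification that $H_{C}$ is Zariski closed with respect to the real-polynomial Zariski topology of Section \ref{subsec:Zariski-density-of S_G}; the rest of the argument is essentially formal once one has the right packaging (the $n!$-th power trick to convert a permutation action into an algebraic stabilizer condition, and local-diffeomorphism of the power map to convert Zariski-density into actual equality on a neighborhood of $1_{\mathrm{G}}$).
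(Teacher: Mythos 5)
Your argument is correct, but it takes a genuinely different route from the paper's. You convert the finite-orbit hypothesis into an exact stabilizer condition via the $n!$-power trick ($\varphi_{g^{n!}}(C)=C$ for all $g\in\mathrm{S}_{\mathcal{G}}$), verify that the stabilizer $H_{C}$ is cut out by real polynomial equations (via proportionality of the Hermitian/quadratic forms defining $C$), pull this back along the polynomial map $g\mapsto g^{n!}$, apply Lemma \ref{lem:zariski density} to get $g^{n!}\in H_{C}$ for all $g\in\mathrm{G}$, and then use Lie-theoretic input (the power map is a local diffeomorphism at $1_{\mathrm{G}}$, an open subgroup of the connected group $\mathrm{G}$ is everything) to conclude $H_{C}=\mathrm{G}$, contradicting the no-fixed-circle assumption directly. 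The paper instead fixes a single point $z$ lying on some circle of $\mathcal{Q}$, builds one real polynomial function $q=\prod_{C\in\mathcal{Q}}p_{z,C}$ on $\mathrm{M}_{2}(\mathbb{C})$ that vanishes at $g$ exactly when $\varphi_{g}(z)\in\{\infty\}\cup\bigcup_{C\in\mathcal{Q}}C$, notes $q$ vanishes on $\mathrm{S}_{\mathcal{G}}$, applies the same Zariski-density lemma, and derives the contradiction from the transitivity of the $\mathrm{G}$-action on $\mathbb{C}_{\infty}$ together with the fact that finitely many generalized circles cannot cover the sphere. The paper's version is slightly more elementary: by tracking a point rather than the circle itself, it sidesteps both the verification that ``same circle $\Leftrightarrow$ proportional forms'' (the step you rightly flag as the delicate one, though your sketch of it is sound) and the inverse-function-theorem/open-subgroup step; your version, in exchange, yields the stronger intermediate conclusion that every circle in $\mathcal{Q}$ is stabilized by all of $\mathrm{G}$ and contradicts the standing hypothesis without invoking transitivity.
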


\begin{proof}
Assume by contradiction that such a $\mathcal{Q}\subset\mathrm{Circ}(\mathbb{C}_{\infty})$
does exist, which implies that
\begin{equation}
\varphi_{g}(C)\in\mathcal{Q}\text{ for all }g\in\mathrm{S}_{\mathcal{G}}\text{ and }C\in\mathcal{Q}.\label{eq:all g in S_G and C in Q}
\end{equation}

Fix $z\in\mathbb{C}$ belonging to one of the circles in $\mathcal{Q}$.
Given $C\in\mathcal{Q}$, there exists a polynomial $p_{C}\in\mathbb{R}\left[X,Y\right]$,
of degree at most $2$, such that
\[
\left\{ w\in\mathbb{C}\::\:p_{C}\left(\mathrm{Re}(w),\mathrm{Im}(w)\right)=0\right\} =C\setminus\left\{ \infty\right\} .
\]
Let $p_{z,C}:\mathrm{M}_{2}\left(\mathbb{C}\right)\rightarrow\mathbb{C}$
be defined by $p_{z,C}(A)=0$ for $(a_{i,j})=A\in\mathrm{M}_{2}\left(\mathbb{C}\right)$
with $a_{2,1}z+a_{2,2}=0$, and
\[
p_{z,C}(A)=\left|a_{2,1}z+a_{2,2}\right|^{4}p_{C}\left(\mathrm{Re}\left(\frac{a_{1,1}z+a_{1,2}}{a_{2,1}z+a_{2,2}}\right),\mathrm{Im}\left(\frac{a_{1,1}z+a_{1,2}}{a_{2,1}z+a_{2,2}}\right)\right)
\]
for $(a_{i,j})=A\in\mathrm{M}_{2}\left(\mathbb{C}\right)$ with $a_{2,1}z+a_{2,2}\ne0$.
It is easy to verify that $p_{z,C}$ is a real polynomial function
on $\mathrm{M}_{2}\left(\mathbb{C}\right)$, and that for $g\in\mathrm{G}$
\begin{equation}
p_{z,C}(g)=0\text{ if and only if }\varphi_{g}(z)\in C\cup\left\{ \infty\right\} .\label{eq:iff cond}
\end{equation}

Let $q:\mathrm{M}_{2}\left(\mathbb{C}\right)\rightarrow\mathbb{C}$
be the real polynomial function defined by $q(A)=\prod_{C\in\mathcal{Q}}p_{z,C}(A)$
for $A\in\mathrm{M}_{2}\left(\mathbb{C}\right)$. From (\ref{eq:all g in S_G and C in Q})
and (\ref{eq:iff cond}), and since $z\in C$ for some $C\in\mathcal{Q}$,
it follows that $q(g)=0$ for all $g\in\mathrm{S}_{\mathcal{G}}$.
Thus, by Lemma \ref{lem:zariski density}, we have $q(g)=0$ for all
$g\in\mathrm{G}$. This, together with (\ref{eq:iff cond}), implies
that $\varphi_{g}(z)\in\left\{ \infty\right\} \cup\bigcup_{C\in\mathcal{Q}}C$
for all $g\in\mathrm{G}$. But, since $\mathrm{G}$ acts transitively
on $\mathbb{C}_{\infty}$ and $\mathcal{Q}$ is finite, this is clearly
impossible, completing the proof of the lemma.
\end{proof}
\begin{lem}
\label{lem:nu(gen circ)=00003D0}For each generalized circle $C\subset\mathbb{C}_{\infty}$
we have $\nu(C)=0$.
\end{lem}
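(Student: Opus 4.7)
The plan is to argue by contradiction. Suppose $\nu(C_0) > 0$ for some generalized circle $C_0$, and set $s := \sup\{\nu(C) : C \in \mathrm{Circ}(\mathbb{C}_\infty)\}$, so $s \geq \nu(C_0) > 0$. I will exhibit a finite nonempty $\mathcal{Q} \subset \mathrm{Circ}(\mathbb{C}_\infty)$ with $\varphi_i(\mathcal{Q}) \subseteq \mathcal{Q}$ for every $i \in \Lambda$, which contradicts Lemma \ref{lem:No Q exists}. The natural candidate is $\mathcal{Q} := \{C \in \mathrm{Circ}(\mathbb{C}_\infty) : \nu(C) = s\}$.

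The first main step is to show that the supremum $s$ is attained, so that $\mathcal{Q}$ is nonempty. For this I would take a maximizing sequence $C_n \in \mathrm{Circ}(\mathbb{C}_\infty)$, parametrize generalized circles as planar sections of $S^2 \cong \mathbb{C}_\infty$ (equivalently, pairs $(v,d) \in S^2 \times (-1,1)$ modulo antipodal identification), and use non-atomicity of $\nu$ (which follows from non-atomicity of $\mu$ via \cite[Lemma 4.6]{BQ} since $\psi$ is a homeomorphism) to rule out the degenerate case $\mathrm{diam}(C_n) \to 0$. After passing to a subsequence I may assume $C_n$ converges in Hausdorff distance to a generalized circle $C_\infty$. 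Since $C_\infty$ is closed, the set-theoretic limsup satisfies $\limsup_n C_n \subseteq C_\infty$, and reverse Fatou (valid since $\nu$ is a probability measure) gives $\nu(C_\infty) \geq \limsup_n \nu(C_n) = s$, so $C_\infty \in \mathcal{Q}$.

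The remaining steps are more routine. Finiteness of $\mathcal{Q}$: two distinct generalized circles meet in at most two points, which are $\nu$-null by non-atomicity, so for $k$ distinct $C_1,\dots,C_k \in \mathcal{Q}$ one has $\nu\bigl(\bigcup_j C_j\bigr) = ks$, forcing $k \leq 1/s$. Invariance of $\mathcal{Q}$: for $C \in \mathcal{Q}$, the stationarity $\nu = \sum_i p_i \cdot \varphi_i \nu$ yields
\[
s = \nu(C) = \sum_{i \in \Lambda} p_i \, \nu\bigl(\varphi_i^{-1}(C)\bigr),
\]
where $\varphi_i^{-1}(C)$ is again a generalized circle. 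Since each summand is at most $s$ and every $p_i > 0$, equality forces $\nu(\varphi_i^{-1}(C)) = s$ for all $i$, i.e., $\varphi_i^{-1}(\mathcal{Q}) \subseteq \mathcal{Q}$. Finiteness of $\mathcal{Q}$ together with injectivity of $\varphi_i^{-1}$ upgrades this to $\varphi_i(\mathcal{Q}) = \mathcal{Q}$ for every $i$, contradicting Lemma \ref{lem:No Q exists}.

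I expect the attainment step to be the only delicate point, as it combines a mild compactness argument in the space of generalized circles with an upper-semicontinuity estimate $\limsup_n \nu(C_n) \leq \nu(C_\infty)$ along Hausdorff-convergent sequences. Everything downstream — finiteness, invariance, and the appeal to Lemma \ref{lem:No Q exists} — is bookkeeping once $\mathcal{Q}$ is known to be nonempty and finite.
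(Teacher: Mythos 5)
Your proposal is correct and follows the same overall strategy as the paper: define $s$ as the supremal measure of a generalized circle, let $\mathcal{Q}$ be the circles attaining it, show $\mathcal{Q}$ is nonempty and finite, use stationarity together with $p_i>0$ to get $\varphi_i^{-1}(\mathcal{Q})\subseteq\mathcal{Q}$, and contradict Lemma \ref{lem:No Q exists} (your explicit remark that finiteness plus injectivity upgrades this to $\varphi_i(\mathcal{Q})=\mathcal{Q}$ is a detail the paper leaves implicit). The only place you diverge is the attainment step: you get nonemptiness of $\mathcal{Q}$ by compactness in the space of generalized circles plus upper semicontinuity of $C\mapsto\nu(C)$ along Hausdorff-convergent sequences (reverse Fatou, with non-atomicity disposing of the degenerate limits), whereas the paper deduces nonemptiness and finiteness simultaneously from the single observation that distinct generalized circles have $\nu$-null intersection: for any $\epsilon>0$ at most $1/\epsilon$ circles can have measure at least $\epsilon$, so the circles of measure greater than $s/2$ form a nonempty finite set and the supremum is a maximum. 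In fact the counting argument you already use for finiteness of $\mathcal{Q}$ gives attainment for free, so the compactness/semicontinuity machinery, while valid, is not needed; its only advantage is that it avoids invoking the pairwise-null-intersection fact twice.
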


\begin{proof}
Set
\[
s=\sup\left\{ \nu(C)\::\:C\in\mathrm{Circ}(\mathbb{C}_{\infty})\right\} \text{ and }\mathcal{Q}:=\left\{ C\in\mathrm{Circ}(\mathbb{C}_{\infty})\::\:\nu(C)=s\right\} ,
\]
and assume by contradiction that $s>0$. Since $\mu$ is nonatomic
and $\nu=\psi\mu$, it follows that $\nu$ is also nonatomic. Thus,
$\nu\left(C_{1}\cap C_{2}\right)=0$ for all distinct $C_{1},C_{2}\in\mathrm{Circ}(\mathbb{C}_{\infty})$,
from which it follows that $\mathcal{Q}$ is nonempty and finite. 

Given $C\in\mathcal{Q}$,
\[
s=\nu(C)=\sum_{i\in\Lambda}p_{i}\cdot\nu\left(\varphi_{i}^{-1}(C)\right).
\]
Hence, since $\varphi_{i}^{-1}(C)\in\mathrm{Circ}(\mathbb{C}_{\infty})$
for $i\in\Lambda$, and by the definitions of $s$ and $\mathcal{Q}$,
it follows that $\varphi_{i}^{-1}(C)\in\mathcal{Q}$ for all $i\in\Lambda$.
But this contradicts Lemma \ref{lem:No Q exists}, which completes
the proof of the lemma.
\end{proof}

\subsection{\label{subsec:Exact-dimensionality-and LY formula}Exact dimensionality
and Ledrappier--Young formula}

Given $n\ge1$, recall that $\mathcal{P}_{n}$ denotes the partition
of $\Lambda^{\mathbb{N}}$ into level-$n$ cylinders. Write $\mathcal{B}_{\mathbb{CP}^{1}}$
for the Borel $\sigma$-algebra of $\mathbb{CP}^{1}$, and set
\[
\Delta:=H\left(\beta,\mathcal{P}_{1}\mid L^{-1}\mathcal{B}_{\mathbb{CP}^{1}}\right),
\]
where the right-hand side stands for the entropy of $\beta$ with
respect to $\mathcal{P}_{1}$ conditioned on the $\sigma$-algebra
$L^{-1}\mathcal{B}_{\mathbb{CP}^{1}}$. Let $\left\{ \beta_{\omega}\right\} _{\omega\in\Lambda^{\mathbb{N}}}\subset\mathcal{M}(\Lambda^{\mathbb{N}})$
denote the disintegration of $\beta$ with respect to $L^{-1}\mathcal{B}_{\mathbb{CP}^{1}}$
(for details on disintegrations, see e.g. \cite[Section 5.3]{EiWa}).
\begin{thm}
\label{thm:exact dim of mu and LY}The measure $\mu$ is exact dimensional
with $\dim\mu=\frac{H(p)-\Delta}{2\chi}$. Moreover,
\[
\underset{n\to\infty}{\lim}\frac{1}{n}H\left(\beta_{\omega},\mathcal{P}_{n}\right)=\Delta\text{ for }\beta\text{-a.e. }\omega.
\]
\end{thm}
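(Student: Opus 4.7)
The plan is to derive both claims by invoking the general Ledrappier--Young-type machinery of \cite{rapaport2020exact}, which establishes exact dimensionality and an associated dimension formula for stationary measures of random walks under suitable Lyapunov-contracting and coding hypotheses. My task in the appendix is to verify that the projective action of $\mathrm{S}_{\mathcal{G}}$ on $\mathbb{CP}^{1}$ fits into that framework and to identify the resulting abstract formula with $(H(p)-\Delta)/(2\chi)$.

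First I would set up the coding. The Furstenberg boundary map $L:\Lambda^{\mathbb{N}}\to\mathbb{CP}^{1}$ pushes $\beta$ forward to $\mu$ and satisfies the equivariance (\ref{eq:L is equivariant}); this plays the role of the cocycle relation required by \cite{rapaport2020exact}. The symbolic Lyapunov rate is quantified by combining Lemmas \ref{lem:zC to gzC is bi-Lip with const norm^2} and \ref{lem:dist of gzC to L(g)} with (\ref{eq:lim on normalized chi_omega|_n}): for every fixed $\eta>0$, the map $z\mathbb{C}\mapsto g_{\omega|_n}z\mathbb{C}$ is bi-Lipschitz on $Y_{\omega|_n,\eta}$ with ratio $2^{-\chi_{\omega|_n}}=2^{-(2\chi+o(1))n}$ for $\beta$-a.e.\ $\omega$. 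Thus scales of order $2^{-n}$ in $\mathbb{CP}^{1}$ correspond, under $L$, to symbolic cylinders of length $n/(2\chi)$, producing the factor $2\chi$ in the denominator.

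With these inputs, the main theorem of \cite{rapaport2020exact} yields exact dimensionality together with
\[
\dim\mu \;=\; \frac{1}{2\chi}\lim_{n\to\infty}\frac{1}{n}\bigl(H(\beta,\mathcal{P}_{n}) - H(\beta,\mathcal{P}_{n} \mid L^{-1}\mathcal{B}_{\mathbb{CP}^{1}})\bigr).
\]
The first term equals $H(p)$ by Bernoullicity. For the second, the equivariance (\ref{eq:L is equivariant}) implies that $L^{-1}\mathcal{B}_{\mathbb{CP}^{1}}\vee\mathcal{P}_{1}=\sigma^{-1}L^{-1}\mathcal{B}_{\mathbb{CP}^{1}}\vee\mathcal{P}_{1}$ modulo $\beta$-null sets, since $\omega_{0}$ and either of $L(\omega),L(\sigma\omega)$ determine the other via $g_{\omega_{0}}$. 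Combined with the independence of $\mathcal{P}_{1}$ from $\sigma^{-1}(\mathcal{P}_{n-1}\vee L^{-1}\mathcal{B}_{\mathbb{CP}^{1}})$ under the Bernoulli $\beta$ and with $\sigma$-invariance of $\beta$, a routine induction using the chain rule for conditional entropy gives $H(\beta,\mathcal{P}_{n}\mid L^{-1}\mathcal{B}_{\mathbb{CP}^{1}})=n\Delta$. Substituting produces $\dim\mu=(H(p)-\Delta)/(2\chi)$. The pointwise claim would then be handled by a conditional Shannon--McMillan--Breiman-type argument: the information functions $-\tfrac{1}{n}\log\beta_{\omega}([\omega|_{n}])$ have constant $\beta$-integral $\Delta$, and a martingale-plus-ergodic argument exploiting the same near-equivariance of $L^{-1}\mathcal{B}_{\mathbb{CP}^{1}}$ shows that they converge $\beta$-a.e., with the limit forced to be $\Delta$; dominated convergence then upgrades to convergence of $\frac{1}{n}H(\beta_{\omega},\mathcal{P}_{n})$.

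The main obstacle is handling the fact that the coding $\sigma$-algebra $L^{-1}\mathcal{B}_{\mathbb{CP}^{1}}$ is only measurably (not topologically) quasi-invariant under $\sigma$: in the contracting-on-average regime the boundary map $L$ is merely Borel, with no H\"older control. The entire analysis must therefore be carried out with measurable partitions and disintegrations, and the near-invariance encoded by (\ref{eq:L is equivariant}) has to be leveraged carefully both when matching the abstract hypotheses of \cite{rapaport2020exact} and when establishing the pointwise convergence of the fiber entropies.
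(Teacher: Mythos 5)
There is a genuine gap at the very first step: you assume that the machinery of \cite{rapaport2020exact} applies directly to the action of $\mathrm{S}_{\mathcal{G}}$ on $\mathbb{CP}^{1}$ and outputs the single-term formula $\dim\mu=\frac{1}{2\chi}\lim_n\frac{1}{n}\bigl(H(\beta,\mathcal{P}_n)-H(\beta,\mathcal{P}_n\mid L^{-1}\mathcal{B}_{\mathbb{CP}^{1}})\bigr)$. But \cite{rapaport2020exact} treats Furstenberg measures on \emph{real} projective spaces under strong irreducibility and proximality of the real linear action, and the obvious way to realize $\mathbb{CP}^{1}$ in that setting --- viewing $\mathrm{SL}(2,\mathbb{C})\subset\mathrm{GL}(4,\mathbb{R})$ acting on $\mathbb{RP}^{3}$ --- is \emph{not} proximal, so the hypotheses you propose to ``verify'' simply fail in that model; your bi-Lipschitz/Lyapunov heuristic about scales does not substitute for this. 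The paper's proof has to pass to the representation $\rho$ on $\wedge^{2}\mathbb{R}^{4}$, restrict to the $4$-dimensional invariant subspace $\mathbb{V}$ on which $\rho(\mathrm{S}_{\mathcal{G}})$ is strongly irreducible and proximal, and identify $\mathbb{CP}^{1}$ with the invariant surface $\mathrm{P}(X)\subset\mathrm{P}(\mathbb{V})$ via the diffeomorphism $F$.

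This detour creates a second issue your proposal never confronts: in that model the Lyapunov spectrum is $2\chi,0,0,-2\chi$, so Theorem 1.3 of \cite{rapaport2020exact} gives a two-term formula $\dim\mu'=\frac{H(p)-\mathrm{H}_{1}}{2\chi}+\frac{\mathrm{H}_{1}-\mathrm{H}_{2}}{4\chi}$, and one must prove the nontrivial identity $\mathrm{H}_{1}=\mathrm{H}_{2}$ (equivalently, that the conditional measures of $\mu'$ on fibers of typical orthogonal projections are zero-dimensional). The paper does this by a genuinely geometric argument: in the coordinates $\zeta_{1},\dots,\zeta_{4}$ the surface $\mathrm{P}(X)$ is a translated paraboloid, and each fiber of $\pi_{\ell^{\perp}}$ meets it in at most two points. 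Nothing in your outline produces this cancellation, and without it the claimed formula $(H(p)-\Delta)/(2\chi)$ is not what the cited machinery yields. Your chain-rule computation $H(\beta,\mathcal{P}_n\mid L^{-1}\mathcal{B}_{\mathbb{CP}^{1}})=n\Delta$ is correct but is attached to a formula you have not legitimately obtained; likewise the a.e.\ convergence of $\frac{1}{n}H(\beta_{\omega},\mathcal{P}_{n})$ is only sketched, whereas in the paper it follows from Lemma 4.4 of \cite{rapaport2020exact} once the identification $L'=F\circ L$ of boundary maps (and hence of the disintegrations) is in place.
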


The proof of Theorem \ref{thm:exact dim of mu and LY}, which is given
in Appendix \ref{sec:appendix}, relies on the results of \cite{rapaport2020exact}.
Note that \cite{rapaport2020exact} deals with Furstenberg measures
on real projective spaces under the standard proximality assumption.
On the other hand, if one considers $\mathrm{G}$ as a subgroup of
$\mathrm{GL}(4,\mathbb{R})$ in the natural way, then the corresponding
action on $\mathbb{RP}^{3}$ is not proximal. For that reason, the
derivation of Theorem \ref{thm:exact dim of mu and LY} from \cite{rapaport2020exact}
is somewhat technical and relies on a different representation of
$\mathrm{G}$.

\section{\label{sec:Uniform-entropy-dimension}Uniform entropy dimension}

In this section we prove Proposition \ref{prop:uni ent dim}. Section
\ref{subsec:Neighborhoods-of-dyadic} establishes a necessary preliminary
statement concerning the $\nu$-measure of neighborhoods of dyadic
cubes, and Section \ref{subsec:Proof-of-Proposition uni ent dim}
contains the proof of Proposition \ref{prop:uni ent dim}.

\subsection{\label{subsec:Neighborhoods-of-dyadic}Neighborhoods of dyadic cubes
have small $\nu$-measure}

The purpose of this subsection is to prove the following proposition.
Recall from Section \ref{subsec:Metric-preliminaries} that, for $r>0$
and a nonempty subset $E$ of a metric space, the closed $r$-neighborhood
of $E$ is denoted by $E^{(r)}$.
\begin{prop}
\label{prop:nu-mass of neigh of dyd cubes}For each $\epsilon>0$
there exists $\delta>0$ such that,
\[
\nu\left(\cup_{D\in\mathcal{D}_{n}^{\mathbb{C}}}(\partial D)^{(\delta2^{-n})}\right)<\epsilon\text{ for all }n\ge1,
\]
where $\partial D$ denotes the boundary of $D$.
\end{prop}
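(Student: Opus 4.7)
The plan is to reduce the two-dimensional estimate to a statement about one-dimensional coordinate projections of $\nu$. Writing $\mathrm{Re},\mathrm{Im}:\mathbb{C}\to\mathbb{R}$ for the real and imaginary parts, the grid $\bigcup_{D\in\mathcal{D}_n^{\mathbb{C}}}\partial D$ is the union of horizontal lines $\{\mathrm{Im}=k2^{-n}\}$ and vertical lines $\{\mathrm{Re}=k2^{-n}\}$ for $k\in\mathbb{Z}$. Hence a point of $\mathbb{C}$ lies in the $\delta 2^{-n}$-neighborhood of this grid precisely when $\mathrm{dist}(\mathrm{Im}(z),2^{-n}\mathbb{Z})\le\delta 2^{-n}$ or $\mathrm{dist}(\mathrm{Re}(z),2^{-n}\mathbb{Z})\le\delta 2^{-n}$. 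Setting $E_{n,\delta}:=\{c\in\mathbb{R}:\mathrm{dist}(c,2^{-n}\mathbb{Z})\le\delta 2^{-n}\}$, it therefore suffices to prove that for each of the two pushforwards $\mathrm{Re}_*\nu$ and $\mathrm{Im}_*\nu$, and each $\epsilon>0$, there exists $\delta>0$ such that the projection mass of $E_{n,\delta}$ is at most $\epsilon/2$ for all $n\ge 1$.

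Both projections $\mathrm{Re}_*\nu$ and $\mathrm{Im}_*\nu$ are non-atomic: for any $c\in\mathbb{R}$ the fibre $\mathrm{Re}^{-1}(c)$ (resp.\ $\mathrm{Im}^{-1}(c)$) is a vertical (resp.\ horizontal) line and hence a generalized circle, so Lemma \ref{lem:nu(gen circ)=00003D0} gives $\mathrm{Re}_*\nu\{c\}=\mathrm{Im}_*\nu\{c\}=0$. Moreover $\nu(\{\infty\})=0$ because $\mu$ is non-atomic, so tightness allows us to fix $R=R(\epsilon)>0$ with $\nu(\{|z|>R\}\cup\{\infty\})<\epsilon/4$, reducing the problem to bounding $\mathrm{Im}_*\nu(E_{n,\delta}\cap[-R,R])$ uniformly in $n$ (and analogously for $\mathrm{Re}_*\nu$).

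For the uniform-in-$n$ estimate I would argue by contradiction. Suppose sequences $\delta_k\downarrow 0$ and $n_k\ge 1$ exist with $\mathrm{Im}_*\nu(E_{n_k,\delta_k}\cap[-R,R])\ge\epsilon/4$. After passing to a subsequence, either $n_k$ is eventually constant, in which case the sets $E_{n_k,\delta_k}$ decrease to a fixed countable subset of $2^{-n_k}\mathbb{Z}\cap[-R,R]$ whose $\mathrm{Im}_*\nu$-mass vanishes by the non-atomicity above --- a contradiction via dominated convergence --- or $n_k\to\infty$. In the latter case I would rescale by $2^{n_k}$ and reduce modulo $\mathbb{Z}$, producing a sequence of probability measures $\tau_k$ on $\mathbb{R}/\mathbb{Z}$ satisfying $\tau_k([-\delta_k,\delta_k])\ge\epsilon/4$. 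Extracting a weak limit $\tau_\infty$ would force $\tau_\infty\{0\}\ge\epsilon/4$, i.e., $\mathrm{Im}_*\nu$ would concentrate along the dyadic lattice at arbitrarily deep scales. To turn this into a contradiction, I would unfold the stationarity relation $\nu=\sum_i p_i\,\varphi_i\nu$, which after pushing forward reads $\mathrm{Im}_*\nu=\sum_i p_i\,(\mathrm{Im}\circ\varphi_i)_*\nu$; iterating along long words $u\in\Lambda^*$ and exploiting the non-affinity of the maps $\mathrm{Im}\circ\varphi_u$, the dyadic concentration at scale $n_k$ should translate into an accumulation of $\nu$ along a generalized circle obtained as a limit of level sets, contradicting Lemma \ref{lem:nu(gen circ)=00003D0}.

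The hard part is precisely this last step: transferring dyadic concentration of the rescaled projections back to a charge carried by a generalized circle. Non-atomicity of $\mathrm{Im}_*\nu$ alone is not enough, since singular continuous measures on $\mathbb{R}$ can exhibit exactly the kind of dyadic concentration that must be ruled out. The argument must therefore genuinely exploit the projective (rather than affine) nature of the maps $\varphi_i$, so that coordinate projections mix scales, together with the fact that $\mathrm{S}_{\mathcal{G}}$ is Zariski dense and fixes no generalized circle (Lemmas \ref{lem:zariski density} and \ref{lem:nu(gen circ)=00003D0}).
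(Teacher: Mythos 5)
There is a genuine gap, and you have in fact flagged it yourself: the entire content of the proposition is the uniformity in $n$, and your argument stops exactly at the step that would provide it. Reducing to the coordinate projections and observing (via Lemma \ref{lem:nu(gen circ)=00003D0}) that $\mathrm{Re}_*\nu$ and $\mathrm{Im}_*\nu$ are non-atomic is fine, and it disposes of bounded sequences $n_k$; but for $n_k\to\infty$ the weak limit of the rescaled-mod-$\mathbb{Z}$ projections having an atom at $0$ carries no usable information about $\nu$, and your final sentence (``the dyadic concentration at scale $n_k$ should translate into an accumulation of $\nu$ along a generalized circle obtained as a limit of level sets'') is precisely the assertion that needs proof. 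As you note, non-atomicity, and even the vanishing of $\nu$ on every generalized circle, does not by itself rule out deep-scale concentration of a projection near a lattice, so no contradiction has actually been derived. The proposal is therefore a correct reduction plus a program, not a proof.

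The missing mechanism, which is how the paper proceeds, is a renormalization of scales through the stationarity decomposition rather than a weak-limit argument on $\mathbb{R}/\mathbb{Z}$. Write $\nu=\sum_{u\in\Psi_{n}}p_{u}\cdot\psi g_{u}\mu$ as in (\ref{eq:mu as conv comb with u in Psi_n}). For a typical $u\in\Psi_{n}$ (those with $L(g_{u})$ away from $e_{1}\mathbb{C}$, which is most of them by Lemma \ref{lem:small ball --> small mass}), the piece $\psi g_{u}\mu_{Y_{u,\eta}}$ has support of diameter $O_{\eta}(2^{-n})$, so it meets the $\delta2^{-n}$-neighborhoods of only $O_{\eta}(1)$ grid lines; and for any single line $C$, the map $z\mathbb{C}\mapsto g_{u}z\mathbb{C}$ is bi-Lipschitz with constant comparable to $2^{n}$ on the relevant region (Lemmas \ref{lem:zC to gzC is bi-Lip with const norm^2} and \ref{lem:dist of gzC to L(g)} together with (\ref{eq:op norm is comp to 2^(n/2) for u in Psi_n})), so the $\delta2^{-n}$-neighborhood of $\psi^{-1}C$ pulls back under $g_{u}$ into a fixed-scale $O_{\eta}(\delta)$-neighborhood of $\psi^{-1}\left(\varphi_{g_{u}}^{-1}C\right)$, which is again the preimage of a generalized circle. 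The uniform smallness of $\mu$-mass of fixed-scale neighborhoods of \emph{all} generalized circles follows from Lemma \ref{lem:nu(gen circ)=00003D0} by a compactness argument (this is Lemma \ref{lem:g_u mu(neigh of psi^-1(C))<epsilon}). Averaging over $u$ then gives the claim. So the projective structure enters not through ``non-affinity mixing scales'' in a limit, but through the facts that M\"obius images of lines are generalized circles and that $g_{u}$ rescales the $\delta2^{-n}$-neighborhood to a $\delta$-neighborhood; if you want to salvage your outline, this is the step you must insert in place of the weak-limit extraction.
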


The proof of Proposition \ref{prop:nu-mass of neigh of dyd cubes}
requires the following statement. Recall the sets of words $\Psi_{n}$
defined in Section \ref{subsec:Symbolic-related-notations}.
\begin{lem}
\label{lem:g_u mu(neigh of psi^-1(C))<epsilon}For each $\epsilon>0$,
there exists $\delta>0$ such that $g_{u}\mu\left(\left(\psi^{-1}C\right)^{(\delta2^{-n})}\right)<\epsilon$
for all $n\ge1$, $u\in\Psi_{n}$, and generalized circle $C\subset\mathbb{C}_{\infty}$.
\end{lem}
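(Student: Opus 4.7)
The plan is to reduce the lemma to a uniform strengthening of Lemma \ref{lem:nu(gen circ)=00003D0}: for every $\epsilon > 0$ there exists $\delta_{0} > 0$ such that $\mu\bigl((\psi^{-1}C)^{(\delta_{0})}\bigr) < \epsilon$ for all generalized circles $C \subset \mathbb{C}_{\infty}$. Granted this uniform estimate, the lemma follows from a single bi-Lipschitz observation. Möbius transformations preserve the family of generalized circles, so equivariance of $\psi$ (see (\ref{eq:psi is G-equi})) gives $g_{u}^{-1}(\psi^{-1}C) = \psi^{-1}(\widetilde{C}_{u})$ with $\widetilde{C}_{u} := \varphi_{g_{u}^{-1}}(C)$. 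By Lemma \ref{lem:zC to gzC is bi-Lip with const norm^2} applied to $g_{u}^{-1}$, the map $z\mathbb{C}\mapsto g_{u}^{-1}z\mathbb{C}$ is $\|g_{u}\|_{\mathrm{op}}^{2}$-Lipschitz on all of $\mathbb{CP}^{1}$, hence
\[
g_{u}^{-1}\bigl((\psi^{-1}C)^{(\delta 2^{-n})}\bigr) \subset (\psi^{-1}\widetilde{C}_{u})^{(\|g_{u}\|_{\mathrm{op}}^{2}\delta 2^{-n})}.
\]
For $u \in \Psi_{n}$, estimate (\ref{eq:op norm is comp to 2^(n/2) for u in Psi_n}) bounds $\|g_{u}\|_{\mathrm{op}}^{2} \le C_{1}^{2}2^{n}$; setting $\delta := \delta_{0}/C_{1}^{2}$ shrinks the inflated tube back to $(\psi^{-1}\widetilde{C}_{u})^{(\delta_{0})}$, whose $\mu$-measure is below $\epsilon$ by the uniform estimate.

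The substance of the proof thus lies in establishing the uniform claim, which I would handle by compactness and contradiction. Suppose the claim fails for some $\epsilon > 0$: then there exist generalized circles $C_{k} \subset \mathbb{C}_{\infty}$ and $\delta_{k} \downarrow 0$ with $\mu\bigl((\psi^{-1}C_{k})^{(\delta_{k})}\bigr) \ge \epsilon$. Since $\mathbb{CP}^{1}$ is compact, the hyperspace of nonempty closed subsets of $\mathbb{CP}^{1}$ equipped with the Hausdorff metric is also compact, so after passing to a subsequence we have $\psi^{-1}(C_{k}) \to K$ in Hausdorff distance for some nonempty closed $K \subset \mathbb{CP}^{1}$. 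Via the stereographic identification $\mathbb{C}_{\infty}\cong S^{2}$, generalized circles correspond to intersections of $S^{2}$ with affine planes in $\mathbb{R}^{3}$; the only Hausdorff degeneration of such a family is a tangent plane, collapsing to a single point. Thus either $K = \psi^{-1}(C_{\infty})$ for some generalized circle $C_{\infty}$, in which case $\mu(K) = \nu(C_{\infty}) = 0$ by Lemma \ref{lem:nu(gen circ)=00003D0}, or $K$ is a singleton, in which case $\mu(K) = 0$ by the nonatomicity of $\mu$ recorded in Section \ref{subsec:results-from-rand-prod-of-mat}. Since Hausdorff convergence together with $\delta_{k}\downarrow 0$ imply $(\psi^{-1}C_{k})^{(\delta_{k})} \subset K^{(\eta)}$ eventually for every $\eta > 0$, our hypothesis forces $\mu(K^{(\eta)}) \ge \epsilon$ for all $\eta > 0$; downward continuity of $\mu$ along the closed set $K$ then contradicts $\mu(K) = 0$.

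The hardest part will be the classification step in the compactness argument, namely verifying that every Hausdorff limit of $\psi^{-1}(C_{k})$ must be either $\psi^{-1}$ of a generalized circle or a single point, with no more exotic degenerations (such as proper arcs) possible. The stereographic parametrization of generalized circles by plane data renders this transparent, and once it is secured, Lemma \ref{lem:nu(gen circ)=00003D0} disposes of the circular limits while nonatomicity disposes of the point limits, completing the argument.
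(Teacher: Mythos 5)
Your proof is correct and follows essentially the same route as the paper: reduce to the uniform bound $\mu\bigl((\psi^{-1}C)^{(\delta_{0})}\bigr)<\epsilon$ over all generalized circles (obtained from Lemma \ref{lem:nu(gen circ)=00003D0} by compactness), then conclude via the equivariance (\ref{eq:psi is G-equi}), Lemma \ref{lem:zC to gzC is bi-Lip with const norm^2}, and (\ref{eq:op norm is comp to 2^(n/2) for u in Psi_n}). The only difference is that you spell out the compactness step (Hausdorff limits of circles on $S^{2}$ are circles or points, the latter handled by nonatomicity of $\mu$), which the paper leaves implicit; that argument is sound.
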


\begin{proof}
Given $\epsilon>0$, by Lemma \ref{lem:nu(gen circ)=00003D0} and
a compactness argument, there exists $\delta>0$ such that $\mu\left(\left(\psi^{-1}C\right)^{(\delta)}\right)<\epsilon$
for every generalized circle $C\subset\mathbb{C}_{\infty}$. Also
note that $\varphi_{g}(C)$ is a generalized circle for all $g\in\mathrm{G}$
and generalized circle $C\subset\mathbb{C}_{\infty}$. The lemma now
follows from these facts together with Lemma \ref{lem:zC to gzC is bi-Lip with const norm^2},
(\ref{eq:op norm is comp to 2^(n/2) for u in Psi_n}), and (\ref{eq:psi is G-equi}).
\end{proof}
\begin{proof}[Proof of Proposition \ref{prop:nu-mass of neigh of dyd cubes}]
It clearly suffices to prove the proposition for all $n$ sufficiently
large. Let $\epsilon,\eta,\rho,\delta\in(0,1)$, $M>1$, and $n\in\mathbb{Z}_{>0}$
be with
\[
\epsilon^{-1}\ll\eta^{-1}\ll M\ll\rho^{-1}\ll\delta^{-1}\ll n.
\]

Fix $u\in\Psi_{n}$ such that $L(g_{u})\notin B(e_{1}\mathbb{C},2\eta)$,
and set $Y:=Y_{u,\eta}$, where recall that
\[
Y_{u,\eta}=\mathbb{CP}^{1}\setminus B(L(g_{u}^{-1}),\eta).
\]
By Lemma \ref{lem:small ball --> small mass}, we may assume that
$\mu(Y)>1-\epsilon/3$. By Lemma \ref{lem:dist of gzC to L(g)} and
since $u\in\Psi_{n}$,
\begin{equation}
\mathrm{supp}\left(g_{u}\mu_{Y}\right)\subset B\left(L(g_{u}),\eta^{-1}2^{-n}\right).\label{eq:supp contains in ball around L(g_u)}
\end{equation}
Thus, since $\eta^{-1}\ll n$ and $L(g_{u})\notin B(e_{1}\mathbb{C},2\eta)$,
\begin{equation}
\mathrm{supp}\left(g_{u}\mu_{Y}\right)\cap B(e_{1}\mathbb{C},\eta)=\emptyset.\label{supp cap B(e_1C,eta) =00003D empty}
\end{equation}

Note that, by Lemma \ref{lem:bi-lip prop of psi}, the restriction
of $\psi$ to $\mathbb{CP}^{1}\setminus B(e_{1}\mathbb{C},\eta/2)$
is a bi-Lipschitz map with bi-Lipschitz constant depending only on
$\eta$. Since $\eta^{-1}\ll M$, we may assume that this bi-Lipschitz
constant is at most $M$.

Let $C\subset\mathbb{C}_{\infty}$ be a generalized circle, and set
$C_{0}:=C\setminus\{\infty\}$. We have $C_{0}\subset\mathbb{C}$,
and so $C_{0}^{(\delta2^{-n})}$ denotes the closed $\delta2^{-n}$-neighborhood
of $C_{0}$ in $\mathbb{C}$. Given
\[
z\in C_{0}^{(\delta2^{-n})}\setminus\psi\left(B(e_{1}\mathbb{C},\eta)\right),
\]
there exists $w\in C_{0}$ such that $|z-w|\le\delta2^{-n}$. Since
$z\notin\psi\left(B(e_{1}\mathbb{C},\eta)\right)$ and $\eta^{-1}\ll\delta^{-1}$,
we may assume that $w\notin\psi\left(B(e_{1}\mathbb{C},\eta/2)\right)$.
This implies that $d\left(\psi^{-1}(z),\psi^{-1}(w)\right)\le M\delta2^{-n}$,
showing that
\[
\psi^{-1}\left(C_{0}^{(\delta2^{-n})}\setminus\psi\left(B(e_{1}\mathbb{C},\eta)\right)\right)\subset\left(\psi^{-1}C\right)^{(M\delta2^{-n})}.
\]
Thus, from (\ref{supp cap B(e_1C,eta) =00003D empty}), since $M,\rho^{-1}\ll\delta^{-1}$,
and by Lemma \ref{lem:g_u mu(neigh of psi^-1(C))<epsilon},
\[
\psi g_{u}\mu_{Y}\left(C_{0}^{(\delta2^{-n})}\right)\le g_{u}\mu_{Y}\left(\left(\psi^{-1}C\right)^{(M\delta2^{-n})}\right)<\rho.
\]
As this holds for every generalized circle $C\subset\mathbb{C}_{\infty}$,
\begin{equation}
\psi g_{u}\mu_{Y}\left((\partial D)^{(\delta2^{-n})}\right)<4\rho\text{ for all }D\in\mathcal{D}_{n}^{\mathbb{C}}.\label{eq:mass of neigh of bd of D <4rho}
\end{equation}

Additionally, from (\ref{eq:supp contains in ball around L(g_u)})
and (\ref{supp cap B(e_1C,eta) =00003D empty}),
\[
\mathrm{diam}\left(\mathrm{supp}\left(\psi g_{u}\mu_{Y}\right)\right)\le M\eta^{-1}2^{1-n}.
\]
Thus, by Lemma \ref{lem:ub on card of dyad atoms intersecting},
\[
\#\left\{ D\in\mathcal{D}_{n}^{\mathbb{C}}\::\:\mathrm{supp}\left(\psi g_{u}\mu_{Y}\right)\cap(\partial D)^{(\delta2^{-n})}\right\} =O_{\eta,M}(1).
\]
Setting $F:=\cup_{D\in\mathcal{D}_{n}^{\mathbb{C}}}(\partial D)^{(\delta2^{-n})}$,
it follows from this and (\ref{eq:mass of neigh of bd of D <4rho})
that $\psi g_{u}\mu_{Y}(F)=O_{\eta,M}(\rho)$. Hence, from $\epsilon^{-1},\eta^{-1},M\ll\rho^{-1}$
and $\mu(Y)>1-\epsilon/3$,
\begin{equation}
\psi g_{u}\mu(F)<2\epsilon/3\text{ for all }u\in\Psi_{n}\text{ with }L(g_{u})\notin B(e_{1}\mathbb{C},2\eta).\label{eq:psi g_u mu(F)<2epsilon/3}
\end{equation}

Now, from $\epsilon^{-1}\ll\eta^{-1}\ll n$ and by Lemma \ref{lem:small ball --> small mass},
\[
\mathbb{P}\left\{ L\left(g_{\mathbf{I}_{n}}\right)\in B(e_{1}\mathbb{C},2\eta)\right\} <\epsilon/3.
\]
Hence, from (\ref{eq:psi g_u mu(F)<2epsilon/3}) and by the decomposition
$\nu=\mathbb{E}\left(\psi g_{\mathbf{I}_{n}}\mu\right)$, we obtain
that $\nu(F)<\epsilon$, which completes the proof of the proposition.
\end{proof}

\subsection{\label{subsec:Proof-of-Proposition uni ent dim}Proof of Proposition
\ref{prop:uni ent dim}}

The following proposition is the main ingredient in the proof of Proposition
\ref{prop:uni ent dim}.
\begin{prop}
\label{prop:lb on ent of comp of nu}For each $\epsilon>0$, $m\ge M(\epsilon)\ge1$
and $n\ge1$,
\[
\mathbb{P}\left(\frac{1}{m}H\left(\nu_{z,n},\mathcal{D}_{n+m}\right)>\dim\mu-\epsilon\right)>1-\epsilon.
\]
\end{prop}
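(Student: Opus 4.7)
The approach is to exploit the self-similar decomposition $\nu = \sum_{u \in \Psi_{n+K}} p_u \varphi_u \nu$ together with Proposition \ref{prop:nu-mass of neigh of dyd cubes} to reduce the claim to an entropy estimate for the cylinder measures $\varphi_u \nu$, which inherits the exact dimensionality of $\nu$ (Lemma \ref{lem:from dim to ent}) via the Lipschitz transport Lemma \ref{lem:dyad ent =000026 lip func}. Given $\epsilon > 0$, apply Lemma \ref{lem:small ball --> small mass} to pick $\eta = \eta(\epsilon) > 0$ with $\mu(B(z\mathbb{C},2\eta)) < \epsilon^{2}/100$ for all $z\mathbb{C}$, and apply Proposition \ref{prop:nu-mass of neigh of dyd cubes} to pick $\delta = \delta(\epsilon) > 0$ with $\nu\bigl(\bigcup_{D \in \mathcal{D}_{n}^{\mathbb{C}}}(\partial D)^{(\delta 2^{-n})}\bigr) < \epsilon^{2}/100$ for every $n \ge 1$. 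Set $Y_u' := Y_{u,\eta} \setminus \psi^{-1} B(e_{1}\mathbb{C},\eta)$ and $\nu_u' := \nu|_{\psi Y_u'} / \nu(\psi Y_u')$, so that $\nu(\psi Y_u') > 1 - \epsilon^{2}/50$ uniformly in $u$, and $\psi Y_u'$ lies in a bounded region of $\mathbb{C}$ on which $\psi$ is bi-Lipschitz (Lemma \ref{lem:bi-lip prop of psi}).

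For $u \in \Psi_{n+K}$, Lemmas \ref{lem:dist of gzC to L(g)} and \ref{lem:bi-lip prop of psi} imply that $\varphi_u \nu_u'$ is supported in a ball of radius $O_{\epsilon}(2^{-n-K})$ around $\psi L(g_u) \in \mathbb{C}$, and that $\varphi_u|_{\psi Y_u'}$ is bi-Lipschitz with scale factor $\asymp 2^{-n-K}$ and bi-Lipschitz constant $O_{\epsilon}(1)$. Choose $K = K(\epsilon)$ large enough that both (i) this support radius is smaller than $\delta 2^{-n}/2$ for every $n \ge 1$, and (ii) $p\{u \in \Psi_{n+K} : L(g_u) \in B(e_{1}\mathbb{C}, 2\eta)\} < \epsilon^{2}/50$ uniformly in $n$ (which holds once $K$ exceeds the threshold supplied by Lemma \ref{lem:small ball --> small mass}). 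Define
\[
\Psi^{*} := \bigl\{u \in \Psi_{n+K} : L(g_u) \notin B(e_{1}\mathbb{C},2\eta) \text{ and } \psi L(g_u) \notin \bigcup\nolimits_{D \in \mathcal{D}_{n}^{\mathbb{C}}}(\partial D)^{(\delta 2^{-n}/2)}\bigr\}.
\]
For any $u$ satisfying $L(g_u) \notin B(e_{1}\mathbb{C},2\eta)$ but $\psi L(g_u) \in \bigcup_D(\partial D)^{(\delta 2^{-n}/2)}$, the support of $\varphi_u\nu_u'$ lies in $\bigcup_D (\partial D)^{(\delta 2^{-n})}$; hence testing the identity $\nu = \sum_v p_v \varphi_v \nu$ against this set and invoking Proposition \ref{prop:nu-mass of neigh of dyd cubes} yields that the total $p$-mass of such $u$ is less than $\epsilon^{2}/50$, and combining with (ii) gives $p(\Psi^{*c}) < \epsilon^{2}/25$. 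For $u \in \Psi^{*}$ the measure $\varphi_u \nu_u'$ is supported in a single cube $D_u \in \mathcal{D}_{n}^{\mathbb{C}}$, and Lemma \ref{lem:dyad ent =000026 lip func} combined with Lemma \ref{lem:from dim to ent} applied to $\nu_u'$ (which differs from $\nu$ by an $O(\epsilon^{2})$-mass defect) yields $\tfrac{1}{m} H(\varphi_u \nu_u', \mathcal{D}_{n+m}) \ge \dim\mu - \epsilon/2$ for every $m \ge M(\epsilon)$.

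For each $D \in \mathcal{D}_{n}^{\mathbb{C}}$, decompose $\nu|_{D} = \nu_{D}^{\mathrm{main}} + R_{D}$ with $\nu_{D}^{\mathrm{main}} := \sum_{u \in \Psi^{*},\, D_u = D} p_u (\varphi_u \nu)|_{\varphi_u \psi Y_u'}$. Then $\sum_D R_{D}(D) \le p(\Psi^{*c}) + \sum_{u \in \Psi^{*}} p_u\, \varphi_u \nu((\psi Y_u')^{c}) < \epsilon^{2}/5$, so by Markov's inequality the ``bad'' set $\mathcal{D}^{\mathrm{bad}} := \{D : \nu_{D}^{\mathrm{main}}(D) < (1-\epsilon)\nu(D)\}$ satisfies $\sum_{D \in \mathcal{D}^{\mathrm{bad}}} \nu(D) < \epsilon/5$. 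For $D \notin \mathcal{D}^{\mathrm{bad}}$, concavity of entropy applied twice---first to split $\nu_{D}$ as a convex combination of the normalized main part and remainder, then to the convex combination of the $\varphi_u \nu_u'$ within the main part---combined with the entropy bound of the preceding paragraph gives $\tfrac{1}{m} H(\nu_{D}, \mathcal{D}_{n+m}) \ge (1-\epsilon)(\dim\mu - \epsilon/2) \ge \dim\mu - C\epsilon$ for some absolute constant $C$. Rescaling $\epsilon$ by $1/C$ at the outset completes the proof. The principal technical obstacle is achieving uniformity in $n \ge 1$: $K$ must depend only on $\epsilon$ yet be large enough that the distributional bound (ii) above holds even for $n = 1$, and it is essential that the boundary-neighborhood bound of Proposition \ref{prop:nu-mass of neigh of dyd cubes} be uniform in $n$.
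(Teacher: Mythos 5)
Your overall architecture is the same as the paper's: decompose $\nu$ over $\Psi_{n+K}$ with $K=K(\epsilon)$, discard words whose attracting direction is near $e_{1}\mathbb{C}$ (Lemma \ref{lem:small ball --> small mass}) or whose cylinder measure straddles a dyadic boundary (Proposition \ref{prop:nu-mass of neigh of dyd cubes}), observe that each surviving cylinder measure sits inside a single level-$n$ cube, and finish with Markov plus concavity. Those parts are fine. The gap is in the one step that carries the actual content: the claim that $\tfrac{1}{m}H\left(\varphi_{u}\nu_{u}',\mathcal{D}_{n+m}\right)\ge\dim\mu-\epsilon/2$ for every $m\ge M(\epsilon)$, \emph{uniformly} in $u\in\Psi^{*}$ and in $n\ge1$. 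You justify it by ``Lemma \ref{lem:dyad ent =000026 lip func} combined with Lemma \ref{lem:from dim to ent} applied to $\nu_{u}'$'', but Lemma \ref{lem:from dim to ent} applied to the measure $\nu_{u}'$ only gives an entropy threshold depending on that measure, and $\nu_{u}'$ depends on $u$ (through $Y_{u,\eta}$, i.e.\ through $L(g_{u}^{-1})$, which ranges over essentially all of $\mathbb{CP}^{1}$) and on $n$. Exact dimensionality gives no rate, so no single $M(\epsilon)$ works for all these restrictions without a further argument.

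Your parenthetical ``differs from $\nu$ by an $O(\epsilon^{2})$-mass defect'' points at the correct repair (compare $H(\nu_{u}',\cdot)$ with $H(\nu,\cdot)$ by almost-convexity), but as written this also does not close the gap: the comparison needs (a) the entropy convergence $\tfrac1mH(\nu,\mathcal{D}_m)\to\dim\mu$ for $\nu$ itself, whereas Lemma \ref{lem:from dim to ent} is stated on $\mathbb{CP}^{1}$ and $\nu$ lives on $\mathbb{C}_{\infty}$ with possibly unbounded support in $\mathbb{C}$; and (b) a uniform $O(1)$ bound on the normalized entropy of the discarded piece $\nu_{(\psi Y_{u}')^{c}}$, which is automatic on $\mathbb{CP}^{1}$ by (\ref{eq:ub on card of dyd part of CP^1}) but not on $\mathbb{C}_{\infty}$ (a priori even $H(\nu,\mathcal{D}_{0})$ is not known to be finite without a tail estimate the paper never uses). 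This is exactly why the paper proves Lemma \ref{lem:ent of psi g mu}: using a singular value decomposition $g_{u}=UDV$ it transports $\psi g_{u}\mu_{Y_{u,\eta}}$ back, step by step with bi-Lipschitz constants depending only on $\eta$, to the restriction $\mu_{Y_{u,\eta}}$ of the \emph{fixed} measure $\mu$ on the compact space $\mathbb{CP}^{1}$, where Lemma \ref{lem:from dim to ent} is applied once to $\mu$ and almost-convexity with the uniform bound (\ref{eq:ub on card of dyd part of CP^1}) handles the restriction, yielding a threshold depending only on $\epsilon$ and $\eta$. Your proof becomes complete once you replace the quoted sentence by an argument of this type (or simply by Lemma \ref{lem:ent of psi g mu} itself, noting $\chi_{u}\ge n+K$ and $K\ll m$ to pass from scale $\chi_{u}+m$ to $n+m$).
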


The proof of Proposition \ref{prop:lb on ent of comp of nu} relies
on the following lemma. Given $u\in\Lambda^{*}$, recall from Section
\ref{subsec:Symbolic-related-notations} that $\chi_{u}:=2\log\Vert g_{u}\Vert_{\mathrm{op}}$.
\begin{lem}
\label{lem:ent of psi g mu}For each $\epsilon>0$, $0<\eta<\eta(\epsilon)$,
and $m\ge M(\epsilon,\eta)\ge1$ the following holds. Let $u\in\Lambda^{*}$
be with $\Vert g_{u}\Vert_{\mathrm{op}}\ge\eta^{-1}$ and $L(g_{u})\notin B(e_{1}\mathbb{C},2\eta)$.
Then,
\[
\frac{1}{m}H\left(\psi g_{u}\mu_{Y_{u,\eta}},\mathcal{D}_{\chi_{u}+m}\right)>\dim\mu-\epsilon.
\]
\end{lem}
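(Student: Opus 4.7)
The plan is to reduce the entropy of $\psi g_u \mu_{Y_{u,\eta}}$ at scale $\chi_u + m$ to the entropy of $\mu$ at scale $m$ via a change-of-variables argument, and then invoke exact dimensionality of $\mu$ (Theorem \ref{thm:exact dim of mu and LY}) together with Lemma \ref{lem:from dim to ent}. Write $Y := Y_{u,\eta}$ throughout. The essential geometric input is that, under the two standing hypotheses, the composition $f : Y \to \mathbb{C}$ given by $f(z\mathbb{C}) = \psi(g_u z\mathbb{C})$ is bi-Lipschitz with scale $s = \|g_u\|_{\mathrm{op}}^{-2} = 2^{-\chi_u}$ and distortion $C = O(\eta^{-4})$, after which one applies Lemma \ref{lem:dyad ent =000026 lip func}.

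To set up the bi-Lipschitz claim, I first observe that the second estimate in Lemma \ref{lem:dist of gzC to L(g)}, combined with the hypothesis $\|g_u\|_{\mathrm{op}} \ge \eta^{-1}$, gives $g_u Y \subset B(L(g_u), \eta^{-1}\|g_u\|_{\mathrm{op}}^{-2}) \subset B(L(g_u), \eta)$; together with $L(g_u) \notin B(e_1\mathbb{C}, 2\eta)$ this yields $g_u Y \subset \mathbb{CP}^1 \setminus B(e_1\mathbb{C}, \eta)$. Lemma \ref{lem:bi-lip prop of psi} with $R = \eta^{-1}$ then makes $\psi$ bi-Lipschitz on $g_u Y$ with constant $O(\eta^{-2})$, the lower constant of course being $1$. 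Composing with the upper Lipschitz bound $\eta^{-2}\|g_u\|_{\mathrm{op}}^{-2}$ from the first part of Lemma \ref{lem:dist of gzC to L(g)} and the lower bound $\|g_u\|_{\mathrm{op}}^{-2}$ from Lemma \ref{lem:zC to gzC is bi-Lip with const norm^2} gives the claimed bi-Lipschitz property of $f$ on $Y$.

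Applying Lemma \ref{lem:dyad ent =000026 lip func} to $f$ with $n = \chi_u + m$, so that $n + \log s = m$ and $\log C = O(\log \eta^{-1})$, yields
\[
\bigl| H(\psi g_u \mu_Y, \mathcal{D}_{\chi_u + m}) - H(\mu_Y, \mathcal{D}_m) \bigr| = O(1 + \log \eta^{-1}).
\]
The hypotheses $n \ge \log C$ and $n + \log s \ge \log C$ required by that lemma hold once $m \ge M(\eta)$, using $\chi_u \ge 2\log \eta^{-1}$.

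For the final step, Lemma \ref{lem:small ball --> small mass} lets me choose $\eta = \eta(\epsilon)$ so that $\mu(Y) \ge 1 - \epsilon$ uniformly in $u$. Writing $\mu = \mu(Y)\mu_Y + \mu(Y^c)\mu_{Y^c}$ and using almost convexity of entropy together with the trivial bound $H(\mu_{Y^c}, \mathcal{D}_m) \le 2m + O(1)$ from \eqref{eq:ub on card of dyd part of CP^1}, I get
\[
H(\mu_Y, \mathcal{D}_m) \ge H(\mu, \mathcal{D}_m) - O(\epsilon m) - O(1).
\]
Exact dimensionality of $\mu$ and Lemma \ref{lem:from dim to ent} give $\frac{1}{m}H(\mu, \mathcal{D}_m) \ge \dim\mu - \epsilon$ for $m \ge M(\epsilon)$. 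Combining, dividing by $m$, and taking $m \ge M(\epsilon, \eta)$ large enough that $(1 + \log \eta^{-1})/m < \epsilon$ produces the desired lower bound $\dim\mu - O(\epsilon)$, which after adjusting $\epsilon$ at the start completes the plan. The main obstacle is the bi-Lipschitz bookkeeping in Step 2 above, since the singularity of $\psi$ at $e_1\mathbb{C}$ forces me to use \emph{both} hypotheses ($\|g_u\|_{\mathrm{op}} \ge \eta^{-1}$ to squeeze $g_u Y$ into $B(L(g_u), \eta)$, and $L(g_u) \notin B(e_1\mathbb{C}, 2\eta)$ to push this small ball away from the singular direction); without either one, the composition $\psi \circ g_u$ could be arbitrarily distorted on $Y$ and the entropy transfer via Lemma \ref{lem:dyad ent =000026 lip func} would fail.
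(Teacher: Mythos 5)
Your argument is correct and yields exactly the paper's bound, but the central entropy-transfer step is organized differently from the paper's proof, so a comparison is worthwhile. The paper factors $g_{u}=UDV$ through a singular value decomposition and makes three successive comparisons: $U$ and $V$ act as isometries of $\mathbb{CP}^{1}$, $D$ becomes the scalar map $S_{\Vert g_{u}\Vert_{\mathrm{op}}^{-2}}$ after applying $\psi$ near $e_{2}\mathbb{C}$, and each stage is handled with Lemmas \ref{lem:bi-lip prop of psi} and \ref{lem:dyad ent =000026 lip func}, passing through the intermediate measures $DV\mu_{Y}$, $S_{\Vert g_{u}\Vert_{\mathrm{op}}^{-2}}\psi V\mu_{Y}$ and $\psi V\mu_{Y}$. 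You instead treat the single composite $\psi\circ g_{u}$ on $Y_{u,\eta}$ as one bi-Lipschitz map of scale $s=2^{-\chi_{u}}$ and distortion $C=O(\eta^{-4})$: the lower Lipschitz bound comes from Lemma \ref{lem:zC to gzC is bi-Lip with const norm^2} (which the paper's proof of this lemma does not invoke), the upper bound from the first part of Lemma \ref{lem:dist of gzC to L(g)} combined with Lemma \ref{lem:bi-lip prop of psi} on $g_{u}Y_{u,\eta}\subset\mathbb{CP}^{1}\setminus B(e_{1}\mathbb{C},\eta)$, and then a single application of Lemma \ref{lem:dyad ent =000026 lip func} with $n=\chi_{u}+m$ performs the transfer; you also correctly verify its hypotheses $n\ge\log C$ and $n+\log s\ge\log C$ using $\chi_{u}\ge2\log\eta^{-1}$ and $m\ge M(\eta)$. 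The closing step (Lemma \ref{lem:small ball --> small mass} to get $\mu(Y_{u,\eta})\ge1-\epsilon$ uniformly in $u$, then exact dimensionality via Lemma \ref{lem:from dim to ent} and almost-convexity with the trivial bound on $H(\mu_{Y^{c}},\mathcal{D}_{m})$) coincides with the paper's. What your route buys is brevity and a one-stroke statement of the underlying mechanism, namely that $\psi\circ g_{u}$ is a contraction by $2^{-\chi_{u}}$ up to $O_{\eta}(1)$ distortion on $Y_{u,\eta}$; what the paper's staged SVD version buys is that the role of each factor is explicit and the error at each stage is a clean $\epsilon$. One cosmetic remark: $\mathrm{supp}(\mu_{Y_{u,\eta}})$ lies in the closure of $Y_{u,\eta}$ rather than in $Y_{u,\eta}$ itself, but since the inequalities of Lemmas \ref{lem:dist of gzC to L(g)} and \ref{lem:zC to gzC is bi-Lip with const norm^2} are non-strict they pass to the closure, so nothing is lost.
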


\begin{proof}
Let $\epsilon,\eta\in(0,1)$ and $m\in\mathbb{Z}_{>0}$ be with $\epsilon^{-1}\ll\eta^{-1}\ll m$.
Fix $u\in\Lambda^{*}$ such that $\Vert g_{u}\Vert_{\mathrm{op}}\ge\eta^{-1}$
and $L(g_{u})\notin B(e_{1}\mathbb{C},2\eta)$, and set $Y:=Y_{u,\eta}$.
By Lemma \ref{lem:small ball --> small mass}, we may assume that
$\mu(Y)>1-\epsilon$. Set $D:=\mathrm{diag}\left(\Vert g_{u}\Vert_{\mathrm{op}}^{-1},\Vert g_{u}\Vert_{\mathrm{op}}\right)$,
and let $U,V\in\mathrm{SU}(2)$ be such that $g_{u}=UDV$.

By Lemma \ref{lem:dist of gzC to L(g)} and since $\Vert g_{u}\Vert_{\mathrm{op}}\ge\eta^{-1}$,
\begin{equation}
\mathrm{supp}\left(g_{u}\mu_{Y}\right)\subset B\left(L(g_{u}),\eta^{-1}\Vert g_{u}\Vert_{\mathrm{op}}^{-2}\right)\subset B\left(L(g_{u}),\eta\right).\label{eq:supp(g_u mu_E) subset of ball}
\end{equation}
Thus, since $L(g_{u})\notin B(e_{1}\mathbb{C},2\eta)$,
\[
\mathrm{supp}\left(g_{u}\mu_{Y}\right)\cap B(e_{1}\mathbb{C},\eta)=\emptyset.
\]
From this, by Lemmas \ref{lem:bi-lip prop of psi} and \ref{lem:dyad ent =000026 lip func},
and since $z\mathbb{C}\mapsto Uz\mathbb{C}$ is an isometry of $\mathbb{CP}^{1}$,
\begin{equation}
\left|\frac{1}{m}H\left(\psi g_{u}\mu_{Y},\mathcal{D}_{\chi_{u}+m}\right)-\frac{1}{m}H\left(DV\mu_{Y},\mathcal{D}_{\chi_{u}+m}\right)\right|<\epsilon.\label{eq:step 1 in ent of psi g mu lem}
\end{equation}

Since $L(g_{u})=Ue_{2}\mathbb{C}$ and by (\ref{eq:supp(g_u mu_E) subset of ball}),
\[
\mathrm{supp}\left(DV\mu_{Y}\right)\subset B\left(e_{2}\mathbb{C},\eta\right).
\]
Hence, by Lemmas \ref{lem:bi-lip prop of psi} and \ref{lem:dyad ent =000026 lip func},
and since $\psi\left(Dz\mathbb{C}\right)=\Vert g_{u}\Vert_{\mathrm{op}}^{-2}\psi\left(z\mathbb{C}\right)$
for $z\in\mathbb{C}^{2}\setminus\{e_{1}\mathbb{C}\}$,
\[
\left|\frac{1}{m}H\left(DV\mu_{Y},\mathcal{D}_{\chi_{u}+m}\right)-\frac{1}{m}H\left(S_{\Vert g_{u}\Vert_{\mathrm{op}}^{-2}}\psi V\mu_{Y},\mathcal{D}_{\chi_{u}+m}\right)\right|<\epsilon.
\]
Thus, since $\chi_{u}=2\log\Vert g_{u}\Vert_{\mathrm{op}}$ and $\epsilon^{-1}\ll m$,
\begin{equation}
\left|\frac{1}{m}H\left(DV\mu_{Y},\mathcal{D}_{\chi_{u}+m}\right)-\frac{1}{m}H\left(\psi V\mu_{Y},\mathcal{D}_{m}\right)\right|<2\epsilon.\label{eq:step 2 in ent of psi g mu lem}
\end{equation}

We have $L(g_{u}^{-1})=V^{-1}e_{1}\mathbb{C}$, and so
\[
\mathrm{supp}\left(V\mu_{Y}\right)\cap B(e_{1}\mathbb{C},\eta/2)=\emptyset.
\]
From this and by Lemmas \ref{lem:bi-lip prop of psi} and \ref{lem:dyad ent =000026 lip func},
\begin{equation}
\left|\frac{1}{m}H\left(\psi V\mu_{Y},\mathcal{D}_{m}\right)-\frac{1}{m}H\left(\mu_{Y},\mathcal{D}_{m}\right)\right|<\epsilon.\label{eq:step 3 in ent of psi g mu lem}
\end{equation}

By Lemma \ref{lem:from dim to ent},
\[
\left|\frac{1}{m}H\left(\mu,\mathcal{D}_{m}\right)-\dim\mu\right|<\epsilon.
\]
Hence, by the almost-convexity of entropy (see Section \ref{subsec:Entropy}),
\[
\mu(Y)\frac{1}{m}H\left(\mu_{Y},\mathcal{D}_{m}\right)+\mu(Y^{c})\frac{1}{m}H\left(\mu_{Y^{c}},\mathcal{D}_{m}\right)+\frac{1}{m}>\dim\mu-\epsilon,
\]
where $Y^{c}:=\mathbb{CP}^{1}\setminus Y$. From this, since $\mu(Y^{c})<\epsilon$,
and from (\ref{eq:ub on card of dyd part of CP^1}),
\[
\frac{1}{m}H\left(\mu_{Y},\mathcal{D}_{m}\right)>\dim\mu-O(\epsilon).
\]
The lemma now follows from the last inequality and from (\ref{eq:step 1 in ent of psi g mu lem}),
(\ref{eq:step 2 in ent of psi g mu lem}), and (\ref{eq:step 3 in ent of psi g mu lem}).
\end{proof}
\begin{proof}[Proof of Proposition \ref{prop:lb on ent of comp of nu}]
Let $\epsilon,\eta\in(0,1)$ and $k,m,n\in\mathbb{Z}_{>0}$ be with
$\epsilon^{-1}\ll\eta^{-1}\ll k\ll m$. Let $\mathcal{U}_{1}$ be
the set of all words $u\in\Psi_{n+k}$ such that $L(g_{u})\notin B\left(e_{1}\mathbb{C},2\eta\right)$.
For each $u\in\mathcal{U}_{1}$ set $Y_{u}:=Y_{u,\eta}$. From $\epsilon^{-1}\ll\eta^{-1}\ll k$,
and by Lemma \ref{lem:small ball --> small mass}, we have $\beta\left(\left[\mathcal{U}_{1}\right]\right)>1-\epsilon$
and $\mu(Y_{u})>1-\epsilon/2$ for $u\in\mathcal{U}_{1}$, where recall
that $\left[\mathcal{U}_{1}\right]:=\cup_{u\in\mathcal{U}_{1}}[u]$.

Let $u\in\mathcal{U}_{1}$ be given. By Lemma \ref{lem:dist of gzC to L(g)}
and since $u\in\Psi_{n+k}$,
\[
\mathrm{supp}\left(g_{u}\mu_{Y_{u}}\right)\subset B\left(L(g_{u}),\eta^{-1}2^{-n-k}\right).
\]
Thus, since $\eta^{-1}\ll k$ and $L(g_{u})\notin B(e_{1}\mathbb{C},2\eta)$,
\[
\mathrm{supp}\left(g_{u}\mu_{Y_{u}}\right)\cap B(e_{1}\mathbb{C},\eta)=\emptyset.
\]
From these facts, by Lemma \ref{lem:bi-lip prop of psi}, and since
$\eta^{-1}\ll k$, we obtain
\begin{equation}
\mathrm{diam}\left(\mathrm{supp}\left(\psi g_{u}\mu_{Y_{u}}\right)\right)<\eta2^{-n}\text{ for }u\in\mathcal{U}_{1}.\label{eq:ub on diam(supp(psi g mu_E))}
\end{equation}

Let $\mathcal{U}_{2}$ be the set of all $u\in\mathcal{U}_{1}$ for
which there exists $D\in\mathcal{D}_{n}^{\mathbb{C}}$ such that $\mathrm{supp}\left(\psi g_{u}\mu_{Y_{u}}\right)\subset D$.
Setting
\[
F:=\cup_{D\in\mathcal{D}_{n}^{\mathbb{C}}}(\partial D)^{(\eta2^{-n})},
\]
it clearly follows from (\ref{eq:ub on diam(supp(psi g mu_E))}) that
$\psi g_{u}\mu_{Y_{u}}(F)=1$ for $u\in\mathcal{U}_{1}\setminus\mathcal{U}_{2}$.
Additionally, by Proposition \ref{prop:nu-mass of neigh of dyd cubes}
and since $\epsilon^{-1}\ll\eta^{-1}$, we have $\nu(F)<\epsilon$.
Thus, by (\ref{eq:mu as conv comb with u in Psi_n}) and since $\mu(Y_{u})>1-\epsilon/2>1/2$
for $u\in\mathcal{U}_{1}$,
\[
\epsilon>\psi\mu(F)=\sum_{u\in\Psi_{n}}p_{u}\cdot\psi g_{u}\mu(F)>\frac{1}{2}\sum_{u\in\mathcal{U}_{1}\setminus\mathcal{U}_{2}}p_{u}\cdot\psi g_{u}\mu_{Y_{u}}(F)=\frac{1}{2}\beta\left(\left[\mathcal{U}_{1}\setminus\mathcal{U}_{2}\right]\right).
\]
Since $\beta\left(\left[\mathcal{U}_{1}\right]\right)>1-\epsilon$,
this implies that $\beta\left(\left[\mathcal{U}_{2}\right]\right)>1-3\epsilon.$

Setting $q:=\sum_{u\in\mathcal{U}_{2}}p_{u}\mu(Y_{u})$,
\[
\nu_{1}:=\frac{1}{q}\sum_{u\in\mathcal{U}_{2}}p_{u}\mu(Y_{u})\cdot\psi g_{u}\mu_{Y_{u}},\text{ and }\nu_{2}:=\frac{1}{1-q}\left(\nu-q\nu_{1}\right),
\]
we have $\nu=q\nu_{1}+(1-q)\nu_{2}$ and $q>1-4\epsilon$. Let $\mathcal{E}$
denote the set of all $D\in\mathcal{D}_{n}^{\mathbb{C}}$ such that
$2\epsilon^{1/2}\nu(D)>(1-q)\nu_{2}(D)$. Since $q>1-4\epsilon$ and
by Markov's inequality,
\[
4\epsilon>\sum_{D\in\mathcal{D}_{n}^{\mathbb{C}}}\nu(D)\frac{(1-q)\nu_{2}(D)}{\nu(D)}\ge2\epsilon^{1/2}\cdot\nu\left(\bigcup(\mathcal{D}_{n}^{\mathbb{C}}\setminus\mathcal{E})\right),
\]
which implies that $\nu\left(\bigcup\mathcal{E}\right)>1-2\epsilon^{1/2}$.

By the definitions of $\mathcal{U}_{2}$ and $\nu_{1}$, given $D\in\mathcal{D}_{n}^{\mathbb{C}}$
with $\nu_{1}(D)>0$, there exist $u_{1},...,u_{l}\in\mathcal{U}_{1}$
and a probability vector $(\rho_{1},...,\rho_{l})$ such that
\[
(\nu_{1})_{D}=\sum_{i=1}^{l}\rho_{i}\cdot\psi g_{u_{i}}\mu_{Y_{u_{i}}}.
\]
Moreover, from $\epsilon^{-1}\ll\eta^{-1}\ll k\ll m$, from (\ref{eq:op norm is comp to 2^(n/2) for u in Psi_n}),
and by Lemma \ref{lem:ent of psi g mu},
\[
\frac{1}{m}H\left(\psi g_{u}\mu_{Y_{u}},\mathcal{D}_{n+m}\right)>\dim\mu-\epsilon\text{ for }u\in\mathcal{U}_{1}.
\]
Hence, by concavity of entropy,
\begin{equation}
\frac{1}{m}H\left((\nu_{1})_{D},\mathcal{D}_{n+m}\right)>\dim\mu-\epsilon\text{ for }D\in\mathcal{D}_{n}^{\mathbb{C}}\text{ with }\nu_{1}(D)>0.\label{eq:lb of ent of(nu_1)_D}
\end{equation}

Let $D\in\mathcal{E}$, and note that
\[
\nu_{D}=\frac{q\nu_{1}(D)}{\nu(D)}(\nu_{1})_{D}+\frac{(1-q)\nu_{2}(D)}{\nu(D)}(\nu_{2})_{D}.
\]
From this equality and by the definition of $\mathcal{E}$, we obtain
$\nu(D)^{-1}q\nu_{1}(D)>1-2\epsilon^{1/2}$. Thus, by concavity and
from (\ref{eq:lb of ent of(nu_1)_D}),
\[
\frac{1}{m}H\left(\nu_{D},\mathcal{D}_{n+m}\right)\ge\frac{q\nu_{1}(D)}{\nu(D)}\frac{1}{m}H\left((\nu_{1})_{D},\mathcal{D}_{n+m}\right)>\left(1-2\epsilon^{1/2}\right)\left(\dim\mu-\epsilon\right).
\]
As this holds for all $D\in\mathcal{E}$, and since $\nu\left(\bigcup\mathcal{E}\right)>1-2\epsilon^{1/2}$,
this completes the proof of the proposition.
\end{proof}
We can now prove Proposition \ref{prop:uni ent dim}, which is the
following statement.
\begin{prop*}
For every $\epsilon>0$, $m\ge M(\epsilon)\ge1$ and $n\ge N(\epsilon,m)\ge1$,
\[
\mathbb{P}_{1\le i\le n}\left\{ \left|\frac{1}{m}H\left(\nu_{z,i},\mathcal{D}_{i+m}\right)-\dim\mu\right|<\epsilon\right\} >1-\epsilon.
\]
\end{prop*}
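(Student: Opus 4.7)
The plan is to derive two-sided concentration of the normalised component entropy $X_{i}:=\frac{1}{m}H(\nu_{z,i},\mathcal{D}_{i+m})$ around $\dim\mu$ by combining the one-sided lower tail bound supplied by Proposition \ref{prop:lb on ent of comp of nu} with a matching \emph{upper} bound on the average $\mathbb{E}_{1\le i\le n}X_{i}$, together with the a priori ceiling $X_{i}\le 2+O(1/m)$ (which follows from Lemma \ref{lem:bd num of subatoms} iterated $m$ times, since $\nu_{z,i}$ is supported in a single atom of $\mathcal{D}_{i}$).

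First I would record that $\nu=\psi\mu$ is exact dimensional with $\dim\nu=\dim\mu$. Indeed, $\mu$ is exact dimensional by Theorem \ref{thm:exact dim of mu and LY}, and since $\nu\{\infty\}=\mu\{e_{1}\mathbb{C}\}=0$ while $\psi$ is locally bi-Lipschitz on $\mathbb{CP}^{1}\setminus\{e_{1}\mathbb{C}\}$ (Lemma \ref{lem:bi-lip prop of psi}), the local dimensions transfer $\nu$-almost everywhere. Lemma \ref{lem:from dim to ent} then yields $\frac{1}{n}H(\nu,\mathcal{D}_{n})\to\dim\mu$, and Lemma \ref{lem:glob ent to loc ent} applied with $i=0$ converts this into
\[
\mathbb{E}_{0\le j\le n}\left(\frac{1}{m}H(\nu_{x,j},\mathcal{D}_{j+m})\right)=\frac{1}{n}H(\nu,\mathcal{D}_{n})+O(m/n)=\dim\mu+o_{n}(1).
\]
Hence, for any prescribed $\delta>0$ and $m\ll n$ sufficiently large, $\mathbb{E}_{1\le i\le n}X_{i}\le\dim\mu+\delta$.

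The concluding step is a Chebyshev-type computation. Fix the target $\epsilon$, choose $\delta\ll\epsilon$, and take $m\ge M(\delta)$ so that Proposition \ref{prop:lb on ent of comp of nu} gives $\mathbb{P}(X_{i}\le\dim\mu-\delta)<\delta$ for every $i\ge 1$; averaging over $1\le i\le n$ yields $q:=\mathbb{P}_{1\le i\le n}(X_{i}<\dim\mu-\delta)<\delta$. Writing $p:=\mathbb{P}_{1\le i\le n}(X_{i}>\dim\mu+\epsilon)$ and splitting $\mathbb{E}_{1\le i\le n}X_{i}$ according to the three regimes $\{X_{i}<\dim\mu-\delta\}$, $\{\dim\mu-\delta\le X_{i}\le\dim\mu+\epsilon\}$, $\{X_{i}>\dim\mu+\epsilon\}$ and dropping the non-negative lower bucket, one gets
\[
\dim\mu+\delta\ge\mathbb{E}_{1\le i\le n}X_{i}\ge(\dim\mu-\delta)(1-p-q)+(\dim\mu+\epsilon)p,
\]
which rearranges to $p\le\frac{2\delta+(\dim\mu-\delta)q}{\epsilon+\delta}\lesssim\delta/\epsilon$. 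Choosing $\delta$ small enough (depending on $\epsilon$) makes $p+q<\epsilon$, which is the stated concentration.

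The main obstacle I anticipate is the preliminary bookkeeping around exact dimensionality and Lemma \ref{lem:glob ent to loc ent} for $\nu$ on $\mathbb{C}_{\infty}$, where one must handle the metric structure near $\infty$ and components that brush the point at infinity. If treating $\mathbb{C}_{\infty}$ directly is awkward, a clean workaround is to pick $R$ with $\nu(\{|z|>R\})<\delta$, work instead with the normalised restriction $\nu'\in\mathcal{M}(\mathbb{C})$ of $\nu$ to $\{|z|\le R\}$ (which inherits exact dimensionality from $\mu$ with the same dimension via $\psi$), apply Lemma \ref{lem:glob ent to loc ent} to $\nu'$, and transfer the resulting average-entropy bound back to $\nu$ at the cost of an $O(\delta)$ error, using the a priori ceiling on the exceptional components.
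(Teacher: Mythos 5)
Your proposal is correct and essentially reproduces the paper's own argument: the lower tail comes from Proposition \ref{prop:lb on ent of comp of nu}, the matching bound on the average component entropy comes from exact dimensionality together with restriction to a large disk $\{|z|\le R\}$, the bi-Lipschitz property of $\psi$ there, and Lemma \ref{lem:glob ent to loc ent}, and the conclusion is a Markov-type estimate. The ``workaround'' you describe at the end (working with $\nu_{\{|z|\le R\}}$ and transferring back with an $O(\delta)$ loss via concavity, almost-convexity and the entropy ceiling) is precisely how the paper handles the point at infinity, so no changes are needed.
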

\begin{proof}
Let $\epsilon\in(0,1)$, $R>1$, and $m,n\in\mathbb{Z}_{>0}$ be with
$\epsilon^{-1}\ll R\ll m\ll n$. Setting $B:=\left\{ z\in\mathbb{C}\::\:|z|\le R\right\} $,
it follows from $\epsilon^{-1}\ll R$ that $\nu(B)>1-\epsilon$.

Since $\mu\left(\psi^{-1}(B)\right)=\nu(B)>0$ and $\mu$ is exact
dimensional, $\mu_{\psi^{-1}(B)}$ is also exact dimensional with
dimension $\dim\mu$. Hence, by Lemmas \ref{lem:bi-lip prop of psi},
\ref{lem:from dim to ent} and \ref{lem:dyad ent =000026 lip func},
and since $\nu_{B}=\psi\mu_{\psi^{-1}(B)}$ and $\epsilon^{-1},R\ll n$,
\[
\left|\frac{1}{n}H\left(\nu_{B},\mathcal{D}_{n}\right)-\dim\mu\right|<\epsilon.
\]
Thus, by Lemma \ref{lem:glob ent to loc ent} and from $R,m\ll n$,
\[
\mathbb{E}_{1\le i\le n}\left(\frac{1}{m}H\left(\nu_{B},\mathcal{D}_{i+m}\mid\mathcal{D}_{i}\right)\right)=\dim\mu+O(\epsilon).
\]
From this, since $\nu(B)>1-\epsilon$, by concavity and almost-convexity
(see Section \ref{subsec:Entropy}), and from (\ref{eq:norm cond ent on C <=00003D 2}),
\[
\mathbb{E}_{1\le i\le n}\left(\frac{1}{m}H\left(\nu_{z,i},\mathcal{D}_{i+m}\right)\right)=\dim\mu+O(\epsilon).
\]

Additionally, by Proposition \ref{prop:lb on ent of comp of nu},
\[
\mathbb{P}_{1\le i\le n}\left(\frac{1}{m}H\left(\nu_{z,i},\mathcal{D}_{i+m}\right)>\dim\mu-\epsilon\right)>1-\epsilon.
\]
The proposition now follows directly from the last two formulas (by
starting with a smaller $\epsilon$).
\end{proof}

\section{\label{sec:Entropy-of-projections}Entropy of projections of components
of $\nu$}

In this section we prove Proposition \ref{prop:lb on ent of proj of comp of nu}.
Most of the argument is devoted to establishing the following statement.
\begin{prop}
\label{prop:lb on ent of proj of cylinders of nu}Suppose that $\dim\mu<2$.
Then there exist $\gamma,\eta_{0}\in(0,1)$ such that for every $0<\eta<\eta_{0}$,
$n\ge N(\eta)\ge1$, $z\mathbb{R}\in\mathbb{RP}^{1}$, and $u\in\Lambda^{*}$
with $\Vert g_{u}\Vert_{\mathrm{op}}\ge\eta^{-1}$ and $L(g_{u})\notin B\left(e_{1}\mathbb{C},2\eta\right)$,
\[
\frac{1}{n}H\left(\pi_{z\mathbb{R}}\varphi_{u}\nu,\mathcal{D}_{\chi_{u}+n}\mid\mathcal{D}_{\chi_{u}}\right)\ge\dim\mu-1+\gamma.
\]
\end{prop}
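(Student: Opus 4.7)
The plan follows the outline in Section~\ref{subsec:About-the-proof}. Fix $u\in\Lambda^{*}$ satisfying the hypotheses. I iterate the stationarity relation (\ref{eq:mu as conv comb with u in Psi_n}) twice at widely separated scales, writing
\[
\varphi_u\nu\;=\;\sum_{v_1}\sum_{v_2}p_{v_1}p_{v_2}\,\varphi_{uv_1v_2}\nu,
\]
with $v_1\in\Psi(j_1,l_1;n_1)$ and $v_2\in\Psi(j_2,l_2;n_2)$ for $n_2\ll n_1$ and appropriate $j_i,l_i$. By concavity of conditional entropy, the proposition reduces to showing that, with probability bounded away from zero in the random pair $(v_1,v_2)$,
\[
\tfrac{1}{n}H\bigl(\pi_{z\mathbb{R}}\varphi_{uv_1v_2}\nu,\mathcal{D}_{\chi_u+n}\mid\mathcal{D}_{\chi_u}\bigr)\;\ge\;\dim\mu-1+\gamma.
\]

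The second step is a linearization. Because $\Vert g_{v_2}\Vert_{\mathrm{op}}$ is large, $\varphi_{v_2}\nu$ concentrates near $\psi L(g_{v_2})$, where $\varphi_{uv_1}$ is well approximated by the complex affine map with linear part $w:=\varphi'_{uv_1}(\psi L(g_{v_2}))$. Using the identity $\pi_{z\mathbb{R}}\circ S_w=S_w\circ\pi_{zw^{-1}\mathbb{R}}$ together with the Lipschitz and close-map swaps of Lemmas~\ref{lem:dyad ent =000026 lip func} and \ref{lem:ent of push by close func}, the displayed quantity is, at matching scales, approximately
\[
\tfrac{1}{n}H\bigl(\pi_{z\mathbb{R}\,\ell(u,v_1,v_2)}\varphi_{v_2}\nu,\mathcal{D}_{\chi_{v_2}+n}\mid\mathcal{D}_{\chi_{v_2}}\bigr),
\]
where $\ell(u,v_1,v_2)\in\mathbb{RP}^{1}$ is explicitly expressible through the direction cocycle $\alpha_n$. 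For most $v_2$, an application of Proposition~\ref{prop:uni ent dim} combined with the subadditivity $H(\xi,\cdot)\le H(\pi_{w\mathbb{R}}\xi,\cdot)+H(\pi_{w^{\perp}\mathbb{R}}\xi,\cdot)+O(1)$ applied to typical components of $\varphi_{v_2}\nu$ produces a \emph{short} interval $I_{v_2}\subset\mathbb{RP}^{1}$ outside which the normalized entropy of $\pi_{w\mathbb{R}}\varphi_{v_2}\nu$ is at least $\tfrac{1}{2}\dim\mu-o(1)$; since $\dim\mu<2$, we have $\tfrac{1}{2}\dim\mu>\dim\mu-1$, providing the slack $\gamma$.

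The remaining task is to ensure that, for most $v_2$, the random direction $z\mathbb{R}\cdot\ell(u,v_1,v_2)$ lies outside $I_{v_2}$ with probability bounded away from zero over $v_1$. Unwinding the definition of $\ell$, this becomes an equidistribution statement for sequences $\bigl(\alpha_n(\omega)h(\sigma^n\omega)\bigr)_{n\ge0}$, for continuous $h:\Lambda^{\mathbb{N}}\to\mathbb{RP}^{1}$: one must rule out $\beta$-a.s.\ equidistribution to a Dirac mass, with a quantitative error. This is where the main obstacle lies: the action is only contracting on average, so $L$, and hence $\alpha_n$, is merely Borel, and Parry-type skew-product machinery is unavailable. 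The decisive cohomological input is that $\alpha_n$ is \emph{not} a Borel coboundary: if $\alpha_1(\omega)=f(\omega)^{-1}f(\sigma\omega)$ for some Borel $f:\Lambda^{\mathbb{N}}\to\mathbb{RP}^{1}$, then combining this identity with the equivariance $L(\omega)=g_{\omega_0}L(\sigma\omega)$ and the chain rule for $(\varphi_{\omega|_n})'$ shows that the family of real lines through $\psi L(\omega)$ in direction $f(\omega)$ is preserved by the $\varphi_i$'s. Since the image of a real line under a M\"obius transformation is a generalized circle, this forces $\nu$ to charge a generalized circle, contradicting Lemma~\ref{lem:nu(gen circ)=00003D0}. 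With the non-coboundary property in hand, a direct ergodic argument on the skew product $(\omega,\theta)\mapsto(\sigma\omega,\alpha_1(\omega)\theta)$, together with the nonatomicity of $\nu$, yields the required nonconcentration and completes the plan.
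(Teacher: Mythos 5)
Your plan retraces the outline of Section \ref{subsec:About-the-proof}, but at the two places where the real work happens it has genuine gaps. First, the reduction is not valid as stated: after writing $\varphi_{u}\nu$ as a convex combination of the measures $\varphi_{uv_{1}v_{2}}\nu$, concavity only converts \emph{piecewise} lower bounds into a bound on the average, so it does not suffice to know that with probability bounded away from zero the piece has conditional entropy at least $\dim\mu-1+\gamma$; the pairs $(v_{1},v_{2})$ for which your direction $z\mathbb{R}\,\ell(u,v_{1},v_{2})$ falls inside the exceptional interval $I_{v_{2}}$ may carry most of the weight, and for them you give no bound at all. What is needed in addition is the complementary ``trivial'' bound $\dim\mu-1-\epsilon$, valid for (almost) all remaining pieces; this is exactly Lemmas \ref{lem:lb on ent wrt mod dyad part}, \ref{lem:pre triv lb} and \ref{lem:triv lb} in the paper (proved via the join of the partitions $\pi_{z\mathbb{R}}^{-1}\mathcal{D}$ and $\pi_{(z\mathbb{R})^{\perp}}^{-1}\mathcal{D}$, after checking that $\Vert g_{uv}\Vert_{\mathrm{op}}$ is comparable to $\Vert g_{u}\Vert_{\mathrm{op}}\Vert g_{v}\Vert_{\mathrm{op}}$ and that $L(g_{uv})$ stays away from $e_{1}\mathbb{C}$), and it appears nowhere in your proposal. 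Only with both bounds does the averaging give $\dim\mu-1+\gamma$, the gain coming from $\frac{1}{2}\dim\mu>\dim\mu-1$. Relatedly, the non-concentration statement for the cocycle (Corollary \ref{cor:from equidist prop}) is along prefixes $\omega|_{i}$, i.e.\ the laws of $\mathbf{U}_{i}$, while your decomposition uses stopped words from $\Psi(j,l;\cdot)$ so that all pieces live at a common scale; passing between the two requires the absolute-continuity comparison of Lemma \ref{lem:ac of words}, which you do not address.

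Second, and more seriously, your argument that $\alpha$ is not a coboundary does not work as written. From $\alpha(\omega)=f(\omega)^{-1}f(\sigma\omega)$ you claim that ``the family of real lines through $\psi L(\omega)$ in direction $f(\omega)$ is preserved by the $\varphi_{i}$'s'' and hence, since M\"obius images of real lines are generalized circles, that $\nu$ charges a generalized circle. But M\"obius transformations do not map lines to lines; the coboundary identity only transports the tangent \emph{direction} $f(\omega)$ along the orbit, i.e.\ it produces an invariant Borel line field over $\nu$, and an invariant measurable line field does not by itself force the measure onto a one-dimensional set. The paper's proof of Proposition \ref{prop:alpha not coboundary} needs genuinely more: Lusin's theorem plus a martingale argument to make $f$ approximately constant on deep cylinders, the non-concentration of $\mu^{t}$ (through the distributional limit of $g_{\mathbf{U}_{n}}^{t}e_{2}\mathbb{C}$) to extract two well-separated limit directions $w_{1},w_{2}$, the explicit formula $\varphi_{g}'(z)=(cz+d)^{-2}$ expressing the derivative through $g^{t}e_{2}$, and finally the comparison of the two resulting expressions for $f(\omega)$, which shows that $\left(\varphi_{A}(\psi L(\omega))\right)^{2}\mathbb{R}$ is a.s.\ constant and hence that $\nu$ is carried by the union of two generalized circles, contradicting Lemma \ref{lem:nu(gen circ)=00003D0}. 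None of this is recoverable from the one-line geometric deduction you propose, so the central cohomological step of your plan is missing.
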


The proof of Proposition \ref{prop:lb on ent of proj of cylinders of nu}
follows the overview of the argument given in Section \ref{subsec:About-the-proof}.
In particular, the proof involves bounding from below entropies of
the form,
\begin{equation}
\frac{1}{m}H\left(\pi_{z\mathbb{R}}\varphi_{uv}\nu,\mathcal{D}_{\chi_{u}+\chi_{v}+m}\mid\mathcal{D}_{\chi_{u}+\chi_{v}}\right)\label{eq:entropies needed for prop}
\end{equation}
with $u,v\in\Lambda^{*}$, where $u$ is as in the statement of Proposition
\ref{prop:lb on ent of proj of cylinders of nu}.

In Section \ref{subsec:The-trivial-lower bd}, we show that most of
the entropies in (\ref{eq:entropies needed for prop}) are bounded
from below by $\dim\mu-1$ up to an arbitrarily small error. Section
\ref{subsec:The-direction-cocycle} is devoted to the study of the
direction cocycle $\alpha_{n}:\Lambda^{\mathbb{N}}\rightarrow\mathbb{RP}^{1}$
(defined in that section). We prove that it is not a coboundary, and
use this to derive an important non-concentration corollary (Corollary
\ref{cor:from equidist prop}). In Section \ref{subsec:The-nontrivial-lower lb},
we use this corollary in order to show that, when $v$ is chosen randomly
according to $\mathbb{E}_{1\le i\le n}\left(\delta_{\mathbf{U}_{i}}\right)$,
the entropies in (\ref{eq:entropies needed for prop}) are, with nonnegligible
probability, bounded from below by $\frac{1}{2}\dim\mu-\epsilon$,
where $\epsilon>0$ is arbitrarily small. In Section \ref{subsec:A-technical-lemma on random words},
we prove a lemma concerning random words, which implies the same conclusion
when the random words $\mathbf{I}(j,l;i)$ are used in place of $\mathbf{U}_{i}$.
Finally, in Section \ref{subsec:Proof-of-Propositions}, we complete
the proofs of Propositions \ref{prop:lb on ent of proj of comp of nu}
and \ref{prop:lb on ent of proj of cylinders of nu}.

\subsection{\label{subsec:The-trivial-lower bd}The trivial lower bound}

The purpose of this subsection is to prove Lemma \ref{lem:triv lb},
stated below. First we need some preliminary statements.
\begin{lem}
\label{lem:lb on ent wrt mod dyad part}For every $\epsilon>0$, $0<\eta<\eta(\epsilon)$,
and $m\ge M(\epsilon,\eta)\ge1$ the following holds. Let $u\in\Lambda^{*}$
be with $\Vert g_{u}\Vert_{\mathrm{op}}\ge\eta^{-1}$ and $L(g_{u})\notin B(e_{1}\mathbb{C},2\eta)$,
and let $z\mathbb{R},w\mathbb{R}\in\mathbb{RP}^{1}$ be with $d\left(z\mathbb{R},w\mathbb{R}\right)\ge\eta$.
Then,
\[
\frac{1}{m}H\left(\psi g_{u}\mu_{Y_{u,\eta}},\pi_{z\mathbb{R}}^{-1}\mathcal{D}_{\chi_{u}+m}\vee\pi_{w\mathbb{R}}^{-1}\mathcal{D}_{\chi_{u}+m}\right)>\dim\mu-\epsilon.
\]
\end{lem}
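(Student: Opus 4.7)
The strategy is to reduce the statement to Lemma~\ref{lem:ent of psi g mu} by comparing the join partition
\[
\mathcal{Q} := \pi_{z\mathbb{R}}^{-1}\mathcal{D}_{\chi_u + m}^{\mathbb{C}} \vee \pi_{w\mathbb{R}}^{-1}\mathcal{D}_{\chi_u + m}^{\mathbb{C}}
\]
with the ambient dyadic partition $\mathcal{D}_{\chi_u + m}^{\mathbb{C}}$ on the support of $\theta := \psi g_u \mu_{Y_{u,\eta}}$. The hypotheses on $u$ are precisely those of Lemma~\ref{lem:ent of psi g mu}, so applying that lemma with $\epsilon/2$ in place of $\epsilon$ yields $\frac{1}{m} H(\theta, \mathcal{D}_{\chi_u + m}^{\mathbb{C}}) > \dim \mu - \epsilon/2$. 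It therefore suffices to establish the geometric comparison
\[
H(\theta, \mathcal{Q}) \;\ge\; H(\theta, \mathcal{D}_{\chi_u + m}^{\mathbb{C}}) - O(\log \eta^{-1}),
\]
as the error will be absorbed upon dividing by $m$, using the relation $\log \eta^{-1} \ll m$ guaranteed by the standing convention $\epsilon^{-1} \ll \eta^{-1} \ll m$.

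For the comparison, the key observation is that, by the very definition of $d_{\mathbb{RP}^1}$, the hypothesis $d(z\mathbb{R}, w\mathbb{R}) \ge \eta$ is equivalent to $\sin \theta_0 \ge \eta$, where $\theta_0 \in (0, \pi/2]$ is the angle between $z\mathbb{R}$ and $w\mathbb{R}$ as real lines in $\mathbb{C}$. The atoms of $\mathcal{Q}$ are therefore bounded parallelograms formed by intersecting two strips, perpendicular to $z\mathbb{R}$ and $w\mathbb{R}$ respectively, each of width at most $\sqrt{2} \cdot 2^{-(\chi_u + m)}$; under $\sin\theta_0 \ge \eta$, such a parallelogram has area $O(\eta^{-1} 4^{-(\chi_u + m)})$ and perimeter $O(\eta^{-1} 2^{-(\chi_u + m)})$. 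A standard area-plus-boundary count then shows that each atom of $\mathcal{Q}$ meets at most $O(\eta^{-1})$ atoms of $\mathcal{D}_{\chi_u + m}^{\mathbb{C}}$, from which $H(\theta, \mathcal{D}_{\chi_u + m}^{\mathbb{C}} \mid \mathcal{Q}) \le O(\log \eta^{-1})$. Combining this with the chain-rule identity
\[
H(\theta, \mathcal{Q}) = H(\theta, \mathcal{D}_{\chi_u + m}^{\mathbb{C}}) + H(\theta, \mathcal{Q} \mid \mathcal{D}_{\chi_u + m}^{\mathbb{C}}) - H(\theta, \mathcal{D}_{\chi_u + m}^{\mathbb{C}} \mid \mathcal{Q})
\]
and the trivial nonnegativity of $H(\theta, \mathcal{Q} \mid \mathcal{D}_{\chi_u + m}^{\mathbb{C}})$ yields the required comparison.

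The only substantive step is this geometric lattice count, and its crucial feature is that the separation hypothesis gives a uniform lower bound on $\sin\theta_0$ depending only on $\eta$. Consequently, the entropy discrepancy between $\mathcal{Q}$ and $\mathcal{D}_{\chi_u + m}^{\mathbb{C}}$ depends on the directions $z\mathbb{R}, w\mathbb{R}$ only through $\eta$, and the resulting loss of $O(\log \eta^{-1})$ becomes $o(\epsilon)$ after normalization by $m$ under the convention $\epsilon^{-1} \ll \eta^{-1} \ll m$. I do not anticipate any subtler obstacle, since we need only the lower bound on $H(\theta, \mathcal{Q})$ and so the possibility that certain strips become very thin (which affects the opposite direction of the comparison) is irrelevant.
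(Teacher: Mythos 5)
Your proposal is correct and follows essentially the same route as the paper: both reduce to Lemma \ref{lem:ent of psi g mu} and then compare the join partition with $\mathcal{D}_{\chi_{u}+m}^{\mathbb{C}}$ via commensurability, the only difference being that the paper invokes two-sided $O_{\eta}(1)$-commensurability together with \cite[Lemma 3.2]{Ho1}, while you prove directly the one-sided bound $H\left(\theta,\mathcal{D}_{\chi_{u}+m}\mid\mathcal{Q}\right)=O\left(1+\log\eta^{-1}\right)$ and conclude by the chain rule. Your geometric count of dyadic squares met by each parallelogram atom, and the identification $d\left(z\mathbb{R},w\mathbb{R}\right)=\sin\theta_{0}$, are both accurate, so the argument is complete.
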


\begin{rem}
\label{rem:is a comp supp measure on C}Note that by the assumptions
on $u$, by Lemmas \ref{lem:bi-lip prop of psi} and \ref{lem:dist of gzC to L(g)},
and by an argument used a number of times in Section \ref{sec:Uniform-entropy-dimension}
(see e.g. the proof of Proposition \ref{prop:nu-mass of neigh of dyd cubes}),
it follows that $\psi g_{u}\mu_{Y_{u,\eta}}\in\mathcal{M}\left(\mathbb{C}\right)$
with
\[
\mathrm{diam}\left(\mathrm{supp}\left(\psi g_{u}\mu_{Y_{u,\eta}}\right)\right)=O_{\eta}\left(\Vert g_{u}\Vert_{\mathrm{op}}^{-2}\right).
\]
\end{rem}

\begin{proof}
Let $\epsilon,\eta\in(0,1)$ and $m\in\mathbb{Z}_{>0}$ be such that
$\epsilon^{-1}\ll\eta^{-1}\ll m$, let $u\in\Lambda^{*}$ be with
$\Vert g_{u}\Vert_{\mathrm{op}}\ge\eta^{-1}$ and $L(g_{u})\notin B(e_{1}\mathbb{C},2\eta)$,
let $z\mathbb{R},w\mathbb{R}\in\mathbb{RP}^{1}$ be with $d\left(z\mathbb{R},w\mathbb{R}\right)\ge\eta$,
and set
\[
\mathcal{E}:=\pi_{z\mathbb{R}}^{-1}\mathcal{D}_{\chi_{u}+m}\vee\pi_{w\mathbb{R}}^{-1}\mathcal{D}_{\chi_{u}+m}.
\]
From $d\left(z\mathbb{R},w\mathbb{R}\right)\ge\eta$ it follows easily
that the partitions $\mathcal{E}$ and $\mathcal{D}_{\chi_{u}+m}^{\mathbb{C}}$
are $O_{\eta}(1)$-commensurable. That is, for each $E\in\mathcal{E}$
and $D\in\mathcal{D}_{\chi_{u}+m}^{\mathbb{C}}$
\[
\#\left\{ D'\in\mathcal{D}_{\chi_{u}+m}^{\mathbb{C}}\::\:D'\cap E\ne\emptyset\right\} ,\#\left\{ E'\in\mathcal{E}\::\:E'\cap D\ne\emptyset\right\} =O_{\eta}(1).
\]
Hence, by \cite[Lemma 3.2]{Ho1} and since $\epsilon^{-1},\eta^{-1}\ll m$,
\[
\left|\frac{1}{m}H\left(\psi g_{u}\mu_{Y_{u,\eta}},\mathcal{E}\right)-\frac{1}{m}H\left(\psi g_{u}\mu_{Y_{u,\eta}},\mathcal{D}_{\chi_{u}+m}\right)\right|<\frac{\epsilon}{2}.
\]
Moreover, by Lemma \ref{lem:ent of psi g mu},
\[
\frac{1}{m}H\left(\psi g_{u}\mu_{Y_{u,\eta}},\mathcal{D}_{\chi_{u}+m}\right)>\dim\mu-\epsilon/2,
\]
which completes the proof.
\end{proof}
\begin{lem}
\label{lem:pre triv lb}For every $\epsilon>0$, $0<\eta<\eta(\epsilon)$,
$m\ge M(\epsilon,\eta)\ge1$, $z\mathbb{R}\in\mathbb{RP}^{1}$, and
$u\in\Lambda^{*}$ with $\Vert g_{u}\Vert_{\mathrm{op}}\ge\eta^{-1}$
and $L(g_{u})\notin B(e_{1}\mathbb{C},2\eta)$,
\[
\frac{1}{m}H\left(\pi_{z\mathbb{R}}\varphi_{u}\nu,\mathcal{D}_{\chi_{u}+m}\mid\mathcal{D}_{\chi_{u}}\right)>\dim\mu-1-\epsilon.
\]
\end{lem}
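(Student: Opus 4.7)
The plan is to derive the bound from Lemma \ref{lem:lb on ent wrt mod dyad part} by choosing a second direction $w\mathbb{R}$ perpendicular to $z\mathbb{R}$, invoking subadditivity of entropy under joins of partitions, and bounding the entropy of the projection onto $w\mathbb{R}$ trivially. First, by $\mathrm{G}$-equivariance of $\psi$ we have $\varphi_u\nu=\psi g_u\mu$. Writing $Y:=Y_{u,\eta}$, the convex decomposition $\mu=\mu(Y)\mu_Y+\mu(Y^c)\mu_{Y^c}$ together with concavity of conditional entropy reduces the task to the analogous bound for $\pi_{z\mathbb{R}}\psi g_u\mu_Y$, since Lemma \ref{lem:small ball --> small mass} ensures $\mu(Y)>1-\epsilon$ provided $\epsilon^{-1}\ll\eta^{-1}$.

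Next, I would set $w\mathbb{R}:=iz\mathbb{R}\in\mathbb{RP}^1$, which is perpendicular to $z\mathbb{R}$ and hence at distance $1\ge\eta$ from it, and apply Lemma \ref{lem:lb on ent wrt mod dyad part} to obtain
\[
H\bigl(\psi g_u\mu_Y,\pi_{z\mathbb{R}}^{-1}\mathcal{D}_{\chi_u+m}\vee\pi_{w\mathbb{R}}^{-1}\mathcal{D}_{\chi_u+m}\bigr)>m(\dim\mu-\epsilon).
\]
By the subadditivity of entropy under joins of partitions, the left-hand side is bounded above by
\[
H(\pi_{z\mathbb{R}}\psi g_u\mu_Y,\mathcal{D}_{\chi_u+m})+H(\pi_{w\mathbb{R}}\psi g_u\mu_Y,\mathcal{D}_{\chi_u+m}).
\]
By Remark \ref{rem:is a comp supp measure on C}, $\mathrm{supp}(\psi g_u\mu_Y)$ has diameter $O_\eta(2^{-\chi_u})$, so $\pi_{w\mathbb{R}}\psi g_u\mu_Y$ is supported on a line segment of length $O_\eta(2^{-\chi_u})$, which meets only $O_\eta(2^m)$ atoms of $\mathcal{D}_{\chi_u+m}^\mathbb{C}$. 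This yields the trivial bound $H(\pi_{w\mathbb{R}}\psi g_u\mu_Y,\mathcal{D}_{\chi_u+m})\le m+O_\eta(1)$, and rearranging gives
\[
H(\pi_{z\mathbb{R}}\psi g_u\mu_Y,\mathcal{D}_{\chi_u+m})\ge m(\dim\mu-1-\epsilon)-O_\eta(1).
\]

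Finally, to pass from unconditional to conditional entropy, I would use that $\mathcal{D}_{\chi_u+m}$ refines $\mathcal{D}_{\chi_u}$, so that the conditional entropy equals the difference of the unconditional ones; the same diameter estimate forces $H(\pi_{z\mathbb{R}}\psi g_u\mu_Y,\mathcal{D}_{\chi_u})=O_\eta(1)$. Dividing by $m$, the $O_\eta(1)/m$ error is absorbed into $\epsilon$ because $\eta^{-1}\ll m$, and combining with the concavity step from the first paragraph (with the initial $\epsilon$ chosen suitably small) completes the argument. I do not anticipate any real obstacle here: the statement amounts to the intuitive fact that a one-dimensional orthogonal projection can lose at most one unit of entropy dimension, and the substantive content is already packaged in Lemma \ref{lem:lb on ent wrt mod dyad part}.
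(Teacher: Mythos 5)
Your proposal is correct and takes essentially the same route as the paper: reduce to $\psi g_{u}\mu_{Y_{u,\eta}}$ by concavity, invoke Lemma \ref{lem:lb on ent wrt mod dyad part} with a direction perpendicular to $z\mathbb{R}$, and subtract the at most $m+O_{\eta}(1)$ bits carried by the perpendicular projection, using Remark \ref{rem:is a comp supp measure on C} to control the $\mathcal{D}_{\chi_{u}}$-level terms. The only cosmetic difference is that the paper organizes the final step via the conditional entropy formula $H(\mathcal{E})-H(\mathcal{E}\mid\pi_{z\mathbb{R}}^{-1}\mathcal{D}_{\chi_{u}+m})$ instead of subadditivity of entropy over the join, which is an equivalent bookkeeping.
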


\begin{proof}
Let $\epsilon,\eta\in(0,1)$ and $m\in\mathbb{Z}_{>0}$ be such that
$\epsilon^{-1}\ll\eta^{-1}\ll m$, let $z\mathbb{R}\in\mathbb{RP}^{1}$,
let $u\in\Lambda^{*}$ be with $\Vert g_{u}\Vert_{\mathrm{op}}\ge\eta^{-1}$
and $L(g_{u})\notin B(e_{1}\mathbb{C},2\eta)$, and set
\[
H:=\frac{1}{m}H\left(\pi_{z\mathbb{R}}\varphi_{u}\nu,\mathcal{D}_{\chi_{u}+m}\mid\mathcal{D}_{\chi_{u}}\right).
\]

Since $\epsilon^{-1}\ll\eta^{-1}$, we may assume that $\mu\left(Y_{u,\eta}\right)>1-\epsilon$.
Hence, by concavity of conditional entropy, from $\varphi_{u}\nu=\psi g_{u}\mu$,
and from (\ref{eq:norm cond ent on C <=00003D 2}), we obtain that
\[
H\ge\frac{1}{m}H\left(\pi_{z\mathbb{R}}\psi g_{u}\mu_{Y_{u,\eta}},\mathcal{D}_{\chi_{u}+m}\mid\mathcal{D}_{\chi_{u}}\right)-2\epsilon.
\]
Thus, by Remark \ref{rem:is a comp supp measure on C},
\[
H\ge\frac{1}{m}H\left(\psi g_{u}\mu_{Y_{u,\eta}},\pi_{z\mathbb{R}}^{-1}\mathcal{D}_{\chi_{u}+m}\right)-3\epsilon.
\]

Let $\left(z\mathbb{R}\right)^{\perp}\in\mathbb{RP}^{1}$ denote the
line perpendicular to $z\mathbb{R}$, and set
\[
\mathcal{E}:=\pi_{z\mathbb{R}}^{-1}\mathcal{D}_{\chi_{u}+m}\vee\pi_{\left(z\mathbb{R}\right)^{\perp}}^{-1}\mathcal{D}_{\chi_{u}+m}.
\]
From the last inequality and by the conditional entropy formula,
\[
H\ge\frac{1}{m}H\left(\psi g_{u}\mu_{Y_{u,\eta}},\mathcal{E}\right)-\frac{1}{m}H\left(\psi g_{u}\mu_{Y_{u,\eta}},\mathcal{E}\mid\pi_{z\mathbb{R}}^{-1}\mathcal{D}_{\chi_{u}+m}\right)-3\epsilon.
\]
By Lemma \ref{lem:lb on ent wrt mod dyad part},
\[
\frac{1}{m}H\left(\psi g_{u}\mu_{Y_{u,\eta}},\mathcal{E}\right)>\dim\mu-\epsilon.
\]
Additionally, using $\epsilon^{-1},\eta^{-1}\ll m$ and Remark \ref{rem:is a comp supp measure on C},
it is easy to verify that
\[
\frac{1}{m}H\left(\psi g_{u}\mu_{Y_{u,\eta}},\mathcal{E}\mid\pi_{z\mathbb{R}}^{-1}\mathcal{D}_{\chi_{u}+m}\right)\le1+\epsilon.
\]
All of this completes the proof of the lemma.
\end{proof}
\begin{lem}
\label{lem:triv lb}For every $\epsilon>0$, $0<\eta<\eta(\epsilon)$,
and $m\ge M(\epsilon,\eta)\ge1$ the following holds. Let $u,v\in\Lambda^{*}$
be with $\Vert g_{u}\Vert_{\mathrm{op}}\ge\eta^{-1}$, $L(g_{u})\notin B(e_{1}\mathbb{C},2\eta)$,
$\Vert g_{v}\Vert_{\mathrm{op}}\ge3\eta^{-2}$, and $L(g_{v})\in Y_{u,2\eta}$.
Then for every $z\mathbb{R}\in\mathbb{RP}^{1}$,
\[
\frac{1}{m}H\left(\pi_{z\mathbb{R}}\varphi_{uv}\nu,\mathcal{D}_{\chi_{u}+\chi_{v}+m}\mid\mathcal{D}_{\chi_{u}+\chi_{v}}\right)\ge\dim\mu-1-\epsilon.
\]
\end{lem}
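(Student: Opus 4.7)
The plan is to apply Lemma~\ref{lem:pre triv lb} to the concatenation $uv$ at its natural scale $\chi_{uv}$, and then translate the resulting bound to the scale $\chi_u+\chi_v$. The two scales differ by $a:=\chi_u+\chi_v-\chi_{uv}\ge 0$, which will turn out to be $O(\log\eta^{-1})$ under our hypotheses, and this gap will be absorbed into the final error.

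First I would verify that the word $uv$ satisfies the hypotheses of Lemma~\ref{lem:pre triv lb} with parameter $\eta':=\eta/2$. Taking a unit vector $z$ in the top right singular direction of $g_v$, so that $g_v z\mathbb{C}=L(g_v)$ and $\|g_v z\|=\|g_v\|_{\mathrm{op}}$, the assumption $L(g_v)\in Y_{u,2\eta}$ combined with \eqref{eq:lb for norm of g_u z} gives
\[
\|g_{uv}\|_{\mathrm{op}}\ge\|g_u g_v z\|\ge 2\eta\,\|g_u\|_{\mathrm{op}}\|g_v\|_{\mathrm{op}}\ge 6\eta^{-2},
\]
so $\|g_{uv}\|_{\mathrm{op}}\ge(\eta')^{-1}$ and $a\le 2\log(2\eta)^{-1}=O(\log\eta^{-1})$. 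For the directional condition, observe that $g_{uv}z\mathbb{C}=g_u L(g_v)$ exactly, while Lemma~\ref{lem:dist of gzC to L(g)} applied to $g_u$ yields $d(L(g_u),g_u L(g_v))\le (2\eta)^{-1}\|g_u\|_{\mathrm{op}}^{-2}\le\eta/2$. A short singular value decomposition computation for $g_u g_v$, exploiting the identity $\sigma_1\sigma_2=1$ for the singular values of an element of $\mathrm{G}$, shows that $g_{uv}z\mathbb{C}$ differs from $L(g_{uv})$ by at most $O(\eta^{-1}\|g_{uv}\|_{\mathrm{op}}^{-2})=O(\eta^3)$. Combining these estimates gives $d(L(g_{uv}),L(g_u))<\eta$, whence $L(g_{uv})\notin B(e_1\mathbb{C},\eta)=B(e_1\mathbb{C},2\eta')$ since $L(g_u)\notin B(e_1\mathbb{C},2\eta)$.

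With the hypotheses verified, applying Lemma~\ref{lem:pre triv lb} to $uv$ with parameter $\eta'$ at scale $m+a$ (which is at least $M(\epsilon,\eta')$ since $m\gg\eta^{-1}$ and $a=O(\log\eta^{-1})$), and noting that $\chi_{uv}+(m+a)=\chi_u+\chi_v+m$, yields
\[
H\bigl(\pi_{z\mathbb{R}}\varphi_{uv}\nu,\mathcal{D}_{\chi_u+\chi_v+m}\mid\mathcal{D}_{\chi_{uv}}\bigr)>(m+a)(\dim\mu-1-\epsilon).
\]
Combining with the chain rule
\[
H(\cdot,\mathcal{D}_{\chi_u+\chi_v+m}\mid\mathcal{D}_{\chi_{uv}})=H(\cdot,\mathcal{D}_{\chi_u+\chi_v+m}\mid\mathcal{D}_{\chi_u+\chi_v})+H(\cdot,\mathcal{D}_{\chi_u+\chi_v}\mid\mathcal{D}_{\chi_{uv}}),
\]
and the elementary bound $H(\cdot,\mathcal{D}_{\chi_u+\chi_v}\mid\mathcal{D}_{\chi_{uv}})=O(a)$ (each $\mathcal{D}_{\chi_{uv}}$-atom in $\mathbb{C}$ contains $O(4^a)$ subatoms of $\mathcal{D}_{\chi_u+\chi_v}$), then dividing by $m$ and using $a/m=O(\log\eta^{-1}/m)=o(1)$ under $m\gg\eta^{-1}$, the claim follows after a harmless renaming of $\epsilon$. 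The principal obstacle is the directional estimate $d(L(g_{uv}),L(g_u))<\eta$ in the hypothesis check, which is not a direct consequence of Lemma~\ref{lem:dist of gzC to L(g)} applied to $g_u g_v$ and requires the explicit SVD comparison sketched above; everything else amounts to routine manipulation of conditional dyadic entropies.
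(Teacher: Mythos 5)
Your proposal is correct and follows essentially the same route as the paper: verify that the concatenated word $uv$ satisfies the hypotheses of Lemma \ref{lem:pre triv lb} (large norm of $g_{uv}$ via (\ref{eq:lb for norm of g_u z}), and $L(g_{uv})$ close to $L(g_u)$ hence away from $e_{1}\mathbb{C}$), then pass from scale $\chi_{uv}$ to scale $\chi_u+\chi_v$ using that the gap is $O(\log\eta^{-1})$ and absorbing it into the error. The only cosmetic difference is the test direction used to locate $L(g_{uv})$ --- you take the top right-singular direction of $g_v$ and run the SVD estimate showing it lies in (essentially) $Y_{uv,\eta}$ before invoking Lemma \ref{lem:dist of gzC to L(g)}, whereas the paper picks a $\mu$-typical $w\mathbb{C}\in Y_{v,\eta}\cap Y_{uv,\eta}$ --- and both verifications go through.
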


\begin{proof}
Let $\epsilon,\eta\in(0,1)$ and $m\in\mathbb{Z}_{>0}$ be such that
$\epsilon^{-1}\ll\eta^{-1}\ll m$, let $u,v\in\Lambda^{*}$ be such
that the conditions in the statement of the lemma are satisfied, and
fix $z\mathbb{R}\in\mathbb{RP}^{1}$. We may assume that $\eta$ is
sufficiently small so that $\mu\left(Y_{v,\eta}\cap Y_{uv,\eta}\right)>0$.
Let $w\in\mathbb{C}^{2}$ be a unit vector with $w\mathbb{C}\in Y_{v,\eta}\cap Y_{uv,\eta}$.
By Lemma \ref{lem:dist of gzC to L(g)},
\[
d\left(L(g_{v}),g_{v}w\mathbb{C}\right)\le\eta^{-1}\Vert g_{v}\Vert_{\mathrm{op}}^{-2}\le\eta^{3}/9.
\]
Together with $L(g_{v})\in Y_{u,2\eta}$, this implies that $g_{v}w\mathbb{C}\in Y_{u,\eta}$.
Thus, from (\ref{eq:lb for norm of g_u z}),
\begin{equation}
\Vert g_{uv}\Vert_{\mathrm{op}}\ge\Vert g_{uv}w\Vert\ge\eta^{2}\Vert g_{u}\Vert_{\mathrm{op}}\Vert g_{v}\Vert_{\mathrm{op}}\ge3\eta^{-1}.\label{eq:lb on norm of g_uv}
\end{equation}

By Lemma \ref{lem:dist of gzC to L(g)} and since $w\mathbb{C}\in Y_{uv,\eta}$,
\[
d\left(L(g_{uv}),g_{uv}w\mathbb{C}\right)\le\eta^{-1}\Vert g_{uv}\Vert_{\mathrm{op}}^{-2}\le\eta/9.
\]
Similarly, since $g_{v}w\mathbb{C}\in Y_{u,\eta}$,
\[
d\left(L(g_{u}),g_{uv}w\mathbb{C}\right)\le\eta^{-1}\Vert g_{u}\Vert_{\mathrm{op}}^{-2}\le\eta.
\]
Hence, from $L(g_{u})\notin B\left(e_{1}\mathbb{C},2\eta\right)$,
we obtain $L(g_{uv})\notin B\left(e_{1}\mathbb{C},8\eta/9\right)$.

By Lemma \ref{lem:pre triv lb} it now follows that,
\[
\frac{1}{m}H\left(\pi_{z\mathbb{R}}\varphi_{uv}\nu,\mathcal{D}_{\chi_{uv}+m}\mid\mathcal{D}_{\chi_{uv}}\right)>\dim\mu-1-\epsilon.
\]
Additionally, from $\Vert g_{uv}\Vert_{\mathrm{op}}\le\Vert g_{u}\Vert_{\mathrm{op}}\Vert g_{v}\Vert_{\mathrm{op}}$
and (\ref{eq:lb on norm of g_uv}),
\[
\chi_{uv}\le\chi_{u}+\chi_{v}\le\chi_{uv}+O_{\eta}(1).
\]
Thus, since $\epsilon^{-1},\eta^{-1}\ll m$,
\begin{multline*}
\frac{1}{m}H\left(\pi_{z\mathbb{R}}\varphi_{uv}\nu,\mathcal{D}_{\chi_{u}+\chi_{v}+m}\mid\mathcal{D}_{\chi_{u}+\chi_{v}}\right)\\
\ge\frac{1}{m}H\left(\pi_{z\mathbb{R}}\varphi_{uv}\nu,\mathcal{D}_{\chi_{uv}+m}\mid\mathcal{D}_{\chi_{uv}}\right)-\frac{1}{m}H\left(\pi_{z\mathbb{R}}\varphi_{uv}\nu,\mathcal{D}_{\chi_{u}+\chi_{v}}\mid\mathcal{D}_{\chi_{uv}}\right)\\
>\dim\mu-1-2\epsilon,
\end{multline*}
which completes the proof of the lemma.
\end{proof}

\subsection{\label{subsec:The-direction-cocycle}The direction cocycle}

Let $\alpha:\Lambda^{\mathbb{N}}\rightarrow\mathbb{RP}^{1}$ be such
that
\[
\alpha(\omega)=\begin{cases}
\varphi_{\omega_{0}}'\left(\psi L\left(\sigma\omega\right)\right)\mathbb{R} & \text{ if }\psi L\left(\sigma\omega\right)\notin\left\{ \infty,\varphi_{\omega_{0}}^{-1}(\infty)\right\} \\
\mathbb{R} & \text{ otherwise}
\end{cases}\text{ for }\omega\in\Lambda^{\mathbb{N}}.
\]
Define a cocycle, which we call the direction cocycle, by setting
\[
\alpha_{n}(\omega):=\prod_{i=0}^{n-1}\alpha\left(\sigma^{i}\omega\right)\text{ for }n\ge0\text{ and }\omega\in\Lambda^{\mathbb{N}},
\]
where recall from Section \ref{subsec:Algebraic-notation} that $\mathbb{RP}^{1}$
is considered as a multiplicative group. Note that, since $L\beta=\mu$
and $\mu$ is nonatomic, $\psi L\left(\sigma\omega\right)\notin\left\{ \infty,\varphi_{\omega_{0}}^{-1}(\infty)\right\} $
for $\beta$-a.e. $\omega$. Thus, by (\ref{eq:L is equivariant})
and the chain rule, for each $n\ge0$ we have
\begin{equation}
\alpha_{n}(\omega)=\varphi_{\omega|_{n}}'\left(\psi L\left(\sigma^{n}\omega\right)\right)\mathbb{R}\text{ for }\beta\text{-a.e. }\omega.\label{eq:alpha_n a.s. equals}
\end{equation}

Our goal in this subsection is to show that, in a certain quantitative
sense, sequences of the form $\left(\alpha_{n}(\omega)h\left(\sigma^{n}\omega\right)\right)_{n\ge0}$,
with $h:\Lambda^{\mathbb{N}}\rightarrow\mathbb{RP}^{1}$ continuous,
do not equidistribute to a mass point. The following statement is
the first step toward this.
\begin{prop}
\label{prop:alpha not coboundary}There does not exist a Borel measurable
map $f:\Lambda^{\mathbb{N}}\rightarrow\mathbb{RP}^{1}$ such that
$\alpha(\omega)=f(\omega)^{-1}f\left(\sigma\omega\right)$ for $\beta$-a.e.
$\omega$.
\end{prop}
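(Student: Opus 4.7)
Suppose for contradiction that there exists a Borel $f:\Lambda^{\mathbb{N}}\to\mathbb{RP}^{1}$ with $\alpha(\omega)=f(\omega)^{-1}f(\sigma\omega)$ for $\beta$-a.e.\ $\omega$. My aim is to produce a generalized circle $C_{0}\subset\mathbb{C}_{\infty}$ with $\nu(C_{0})>0$, contradicting Lemma \ref{lem:nu(gen circ)=00003D0}.

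\textbf{From the coboundary to a stationary measure.} I form the pushforward $\hat{\mu}:=(L,f)_{*}\beta\in\mathcal{M}(\mathbb{CP}^{1}\times\mathbb{RP}^{1})$. Since $\alpha(i\omega)=\varphi'_{i}(\psi L(\omega))\mathbb{R}$ by the definition of $\alpha$, the coboundary relation rearranges to $f(i\omega)=f(\omega)\cdot\varphi'_{i}(\psi L(\omega))^{-1}\mathbb{R}$. Combined with $L(i\omega)=g_{i}L(\omega)$ from \eqref{eq:L is equivariant}, this shows that $\hat{\mu}$ is stationary for the random walk on $\mathbb{CP}^{1}\times\mathbb{RP}^{1}$ driven by $\{(T_{i},p_{i})\}_{i\in\Lambda}$, where
\[
T_{i}(z\mathbb{C},\ell) := \bigl(g_{i}z\mathbb{C},\,\ell\cdot\varphi'_{i}(\psi z\mathbb{C})^{-1}\mathbb{R}\bigr).
\]
The cocycle identity assembles these into an $\mathrm{S}_{\mathcal{G}}$-action on $\mathbb{CP}^{1}\times\mathbb{RP}^{1}$ projecting to the standard base action, with the base projection of $\hat{\mu}$ equal to $\mu$.

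\textbf{Graph concentration.} The central technical step, which I expect to be the main obstacle, is to show that $\hat{\mu}$ is supported on the graph of a Borel function $h:\mathbb{CP}^{1}\to\mathbb{RP}^{1}$ satisfying $h(g_{i}z\mathbb{C})=h(z\mathbb{C})\cdot\varphi'_{i}(\psi z\mathbb{C})^{-1}\mathbb{R}$ for each $i\in\Lambda$ and $\mu$-a.e.\ $z\mathbb{C}$. The plan is to construct a boundary-type limit map $\hat{L}:\Lambda^{\mathbb{N}}\to\mathbb{CP}^{1}\times\mathbb{RP}^{1}$ as the $\beta$-a.s.\ limit of $T_{g_{\omega|_{n}}}(z_{0},\ell_{0})$ for some fixed $(z_{0},\ell_{0})$: the base coordinate converges to $L(\omega)$ by proximality and the standard contracting-on-average argument (for $z_{0}$ outside a $\mu^{t}$-null bad set), while the fiber coordinate, which equals $\ell_{0}\cdot\varphi'_{\omega|_{n}}(\psi z_{0})^{-1}\mathbb{R}$, is shown to converge by exploiting that $\|g_{\omega|_{n}}\|_{\mathrm{op}}\to\infty$ and $\psi z_{0}$ typically stays separated from the contracting axis of $g_{\omega|_{n}}$. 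A uniqueness argument (comparing $\hat{L}_{*}\beta$ to $\hat{\mu}$ via their common stationarity and base projection) then forces the second coordinate of $\hat{L}(\omega)$ to be a Borel function of $L(\omega)$ alone, yielding the desired $h$.

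\textbf{From the line field to a positive-measure generalized circle.} Using the identity $w^{-1}\mathbb{R}=\overline{w}\mathbb{R}$ in $\mathbb{RP}^{1}$ (valid since $w^{-1}=\overline{w}/|w|^{2}$), the equivariance of $h$ rewrites as $h(g_{i}z\mathbb{C})=h(z\mathbb{C})\cdot\overline{\varphi'_{i}(\psi z\mathbb{C})}\mathbb{R}$, which matches the natural action of $\mathrm{G}$ on a suitable field of generalized circles. Transferring $h$ via $\psi$ to a line field $\tilde{h}$ on $\mathrm{supp}(\nu)$, I associate to each $z$ a generalized circle $C(z)$ through $z$ whose tangent structure at $z$ is determined by $\tilde{h}(z)$ (for instance, $C(z)$ is the unique generalized circle through $z$ and $\overline{z}$ with tangent at $z$ along $\tilde{h}(z)$). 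The equivariance translates to $\varphi_{i}(C(z))=C(\varphi_{i}(z))$, so $\rho:=C_{*}\nu\in\mathcal{M}(\mathrm{Circ}(\mathbb{C}_{\infty}))$ is $\mathrm{S}_{\mathcal{G}}$-stationary. By Zariski density (Lemma \ref{lem:zariski density}), together with the standing assumption that $\mathrm{S}_{\mathcal{G}}$ does not fix a generalized circle and the reasoning underlying Lemma \ref{lem:No Q exists}, $\rho$ must possess an atom at some $C_{0}\in\mathrm{Circ}(\mathbb{C}_{\infty})$. Since $z\in C(z)$ by construction, $\nu(C_{0})\ge\rho(\{C_{0}\})>0$, contradicting Lemma \ref{lem:nu(gen circ)=00003D0}.
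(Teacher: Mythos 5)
Your overall target (force $\nu$ to charge a generalized circle and contradict Lemma \ref{lem:nu(gen circ)=00003D0}) is the same as the paper's, but the route you propose has two genuine gaps, the first of which is fatal as stated. In the ``graph concentration'' step, the fiber coordinate of $T_{g_{\omega|_{n}}}(z_{0},\ell_{0})$ is $\ell_{0}\cdot\varphi_{\omega|_{n}}'(\psi z_{0})^{-1}\mathbb{R}$, and since $\varphi_{g}'(z)=\mathrm{B}\left(g^{t}e_{2},(z,1)\right)^{-2}$, its direction in $\mathbb{RP}^{1}$ is governed by $g_{\omega|_{n}}^{t}e_{2}$. But $g_{\omega|_{n}}^{t}=g_{\omega_{n-1}}^{t}\cdots g_{\omega_{0}}^{t}$ has the newest letter applied outermost, so $g_{\omega|_{n}}^{t}e_{2}\mathbb{C}$ is a Markov trajectory that converges only in distribution to $\mu^{t}$ (this is exactly what Section \ref{subsec:results-from-rand-prod-of-mat} asserts), not almost surely; the growth of $\Vert g_{\omega|_{n}}\Vert_{\mathrm{op}}$ and separation from the contracting axis control the base contraction, not this angular quantity, so the a.s.\ limit $\hat{L}$ does not exist by the mechanism you describe. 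The subsequent ``uniqueness'' step is also unavailable: the fiber maps $\ell\mapsto\ell\cdot\varphi_{i}'(\psi z\mathbb{C})^{-1}\mathbb{R}$ are rotations of $\mathbb{RP}^{1}$, so $\mu\times m_{\mathbb{RP}^{1}}$ is another stationary measure with base projection $\mu$, and stationarity plus the base marginal cannot force $\hat{\mu}$ to sit on a graph, nor is there any a priori reason that $f$ factors through $L$. Secondly, even granting an equivariant line field $h$, your passage to a circle field is not equivariant as constructed (the circle ``through $z$ and $\overline{z}$'' is destroyed by M\"obius maps, since $z\mapsto\overline{z}$ is not equivariant), and the claim that the stationary measure $\rho$ on $\mathrm{Circ}(\mathbb{C}_{\infty})$ ``must possess an atom'' is unsubstantiated: Zariski density and Lemma \ref{lem:No Q exists} rule out finite invariant families but give no atom for a stationary measure.

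For comparison, the paper avoids both issues by never building an invariant object on $\mathbb{CP}^{1}$: it applies Lusin and the martingale theorem to find, for each $k$, two cylinders $[u_{k,1}],[u_{k,2}]$ on which $f$ is nearly constant while $g_{u_{k,j}}^{t}e_{2}\mathbb{C}$ stay $\epsilon$-separated (using only the \emph{distributional} non-concentration of Lemma \ref{lem:small ball --> small mass}); then the identity $\varphi_{g}'(z)=\mathrm{B}\left(g^{t}e_{2},(z,1)\right)^{-2}$ yields, after passing to subsequential limits $w_{1},w_{2}$, two expressions $f(\omega)=\mathrm{B}\left(w_{j},(\psi L(\omega),1)\right)^{-2}e^{it_{j}}\mathbb{R}$ valid a.e., and dividing them shows $\left(\varphi_{A}(\psi L(\omega))\right)^{2}\mathbb{R}$ is a.s.\ constant, so $\nu$ is carried by the $\varphi_{A}$-preimage of two lines, i.e.\ by two generalized circles, contradicting Lemma \ref{lem:nu(gen circ)=00003D0}. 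If you want to salvage your approach, you would need a genuinely different argument for why the conditional distributions of $f(\omega)$ given $L(\omega)$ lead to a contradiction; as written, the two central steps do not go through.
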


\begin{rem*}
In the terminology of measurable cohomology (see \cite{MR578731}),
Proposition \ref{prop:alpha not coboundary} asserts that the cocycle
$\alpha_{n}$ is not a coboundary.
\end{rem*}
\begin{proof}
Assume by contradiction that there exists a Borel measurable $f:\Lambda^{\mathbb{N}}\rightarrow\mathbb{RP}^{1}$
such that $\alpha(\omega)=f(\omega)^{-1}f\left(\sigma\omega\right)$
for $\beta$-a.e. $\omega$. Then by (\ref{eq:alpha_n a.s. equals}),
\[
\varphi_{\omega|_{n}}'\left(\psi L\left(\sigma^{n}\omega\right)\right)\mathbb{R}=f(\omega)^{-1}f\left(\sigma^{n}\omega\right)\text{ for all }n\ge0\text{ and }\beta\text{-a.e. }\omega,
\]
which implies that
\begin{equation}
\varphi_{u}'\left(\psi L\left(\omega\right)\right)\mathbb{R}f(u\omega)=f(\omega)\text{ for all }u\in\Lambda^{*}\text{ and }\beta\text{-a.e. }\omega.\label{eq:first eq in comp arg}
\end{equation}

By Lemma \ref{lem:small ball --> small mass}, there exist $\epsilon>0$
and $N\ge1$ such that
\begin{equation}
\mathbb{P}\left\{ g_{\mathbf{U}_{n}}^{t}e_{2}\mathbb{C}\in B\left(z\mathbb{C},\epsilon\right)\right\} <1/2\text{ for all }n\ge N\text{ and }z\mathbb{C}\in\mathbb{CP}^{1},\label{eq:mass in ball < 1/2 in comp arg}
\end{equation}
where, as always, $e_{2}$ denotes the second vector of the standard
basis of $\mathbb{C}^{2}$.

By Lusin's theorem, there exists a compact subset $K$ of $\Lambda^{\mathbb{N}}$
such that $\beta(K)>4/5$ and $f|_{K}$ is continuous. Let $k\ge1$
be given. Since $K$ is compact, there exists $N'\ge1$ such that
$d\left(f(\omega),f(\omega')\right)<1/k$ for all $\omega,\omega'\in K$
with $\omega|_{N'}=\omega'|_{N'}$. By the martingale theorem,
\[
\underset{n\rightarrow\infty}{\lim}\beta_{[\omega|_{n}]}(K)=1\text{ for }\beta\text{-a.e. }\omega\in K.
\]
Hence, there exist $n\ge N'$ and a Borel set $K'\subset K$ such
that $\beta(K')>3/4$ and $\beta_{[\omega|_{n}]}(K)>1-2^{-1-k}$ for
$\omega\in K'$. Since $n\ge N'$ and by the choice of $N'$,
\begin{equation}
\beta_{[\omega|_{n}]}\left\{ \omega'\in\Lambda^{\mathbb{N}}\::\:d\left(f(\omega),f(\omega')\right)<1/k\right\} >1-2^{-1-k}\text{ for all }\omega\in K'.\label{eq:cond meas large mass in comp arg}
\end{equation}

Let $\tilde{f}:\Lambda^{n}\rightarrow\mathbb{RP}^{1}$ be defined
as follows. Given $u\in\Lambda^{n}$ with $[u]\cap K'\ne\emptyset$,
choose some $\omega\in[u]\cap K'$ and set $\tilde{f}(u)=f(\omega)$.
For $u\in\Lambda^{n}$ with $[u]\cap K'=\emptyset$, set $\tilde{f}(u)=\mathbb{R}$.
From (\ref{eq:cond meas large mass in comp arg}) and since $\beta(K')>3/4$,
\[
\mathbb{P}\left\{ \beta\left\{ \omega\in\Lambda^{\mathbb{N}}\::\:d\left(\tilde{f}\left(\mathbf{U}_{n}\right),f\left(\mathbf{U}_{n}\omega\right)\right)<1/k\right\} >1-2^{-1-k}\right\} >3/4.
\]

From the last inequality and from (\ref{eq:first eq in comp arg})
and (\ref{eq:mass in ball < 1/2 in comp arg}), it follows that for
each $k\ge1$ there exist $n_{k}\ge1$, $u_{k,1},u_{k,2}\in\Lambda^{n_{k}}$
with
\[
d\left(g_{u_{k,1}}^{t}e_{2}\mathbb{C},g_{u_{k,2}}^{t}e_{2}\mathbb{C}\right)\ge\epsilon,
\]
and $z_{k,1},z_{k,2}\in\mathbb{C}$ with $|z_{k,1}|=|z_{k,2}|=1$,
so that $\beta(E_{k})>1-2^{-k}$, where $E_{k}$ is the set of all
$\omega\in\Lambda^{\mathbb{N}}$ such that
\begin{equation}
d\left(z_{k,j}\mathbb{R},f(u_{k,j}\omega)\right)<1/k\;\text{ and }\;\varphi_{u_{k,j}}'\left(\psi L\left(\omega\right)\right)\mathbb{R}f(u_{k,j}\omega)=f(\omega)\;\text{ for }j=1,2.\label{eq:def prop of E_k in comp arg}
\end{equation}

By compactness, and by moving to a subsequence without changing the
notation, we may assume that there exist $w_{1},w_{2}\in\mathbb{C}^{2}$
and $t_{1},t_{2}\in\mathbb{R}$ such that
\begin{equation}
\underset{k\rightarrow\infty}{\lim}\frac{g_{u_{k,j}}^{t}e_{2}}{\left\Vert g_{u_{k,j}}^{t}e_{2}\right\Vert }=w_{j}\;\text{ and }\;\underset{k\rightarrow\infty}{\lim}z_{k,j}=e^{it_{j}}\text{ for }j=1,2.\label{eq:by moving to sub in comp arg}
\end{equation}
We clearly have $d\left(w_{1}\mathbb{C},w_{2}\mathbb{C}\right)\ge\epsilon$,
which implies that $w_{1}$ and $w_{2}$ are linearly independent
over $\mathbb{C}$.

Let $\mathrm{B}:\mathbb{C}^{2}\times\mathbb{C}^{2}\rightarrow\mathbb{C}$
denote the symmetric bilinear form defined by
\[
\mathrm{B}\left(\left(a_{1},a_{2}\right),\left(b_{1},b_{2}\right)\right)=a_{1}b_{1}+a_{2}b_{2}\text{ for }\left(a_{1},a_{2}\right),\left(b_{1},b_{2}\right)\in\mathbb{C}^{2},
\]
and set
\[
E:=\left\{ \omega\in\Lambda^{\mathbb{N}}\::\:\mathrm{B}\left(w_{j},\left(\psi L\left(\omega\right),1\right)\right)\ne0\text{ for }j=1,2\right\} \cap\left(\cup_{m\ge1}\cap_{k\ge m}E_{k}\right).
\]
Since $\nu=\psi L\beta$ is nonatomic, and by the Borel-Cantelli lemma,
$\beta(E)=1$. For $\left(\begin{array}{cc}
a & b\\
c & d
\end{array}\right)=g\in\mathrm{G}$ and $z\in\mathbb{C}\setminus\left\{ \varphi_{g}^{-1}(\infty)\right\} $,
\[
\varphi_{g}'(z)=\frac{1}{(cz+d)^{2}}=\mathrm{B}\left(g^{t}e_{2},(z,1)\right)^{-2}.
\]
Thus, by (\ref{eq:def prop of E_k in comp arg}) and (\ref{eq:by moving to sub in comp arg}),
\begin{equation}
f(\omega)=\mathrm{B}\left(w_{j},\left(\psi L\left(\omega\right),1\right)\right)^{-2}e^{it_{j}}\mathbb{R}\text{ for }\omega\in E\text{ and }j=1,2.\label{eq:f(omega)=00003D in comp arg}
\end{equation}

Write $A$ for the matrix whose rows are $w_{1}$ and $w_{2}$. Since
$w_{1}$ and $w_{2}$ are linearly independent, $A\in\mathrm{GL}(2,\mathbb{C})$.
By (\ref{eq:f(omega)=00003D in comp arg}), for each $\omega\in E$
\[
e^{i(t_{1}-t_{2})}\mathbb{R}=\frac{\mathrm{B}\left(w_{1},\left(\psi L\left(\omega\right),1\right)\right)^{2}}{\mathrm{B}\left(w_{2},\left(\psi L\left(\omega\right),1\right)\right)^{2}}\mathbb{R}=\left(\varphi_{A}\left(\psi L\left(\omega\right)\right)\right)^{2}\mathbb{R},
\]
where $\varphi_{A}$ is the Möbius transformation induced by $A$.
Hence, since $\nu=\psi L\beta$ and $\beta(E)=1$,
\[
\nu\left(\varphi_{A}^{-1}\left(e^{i(t_{1}-t_{2})/2}\mathbb{R}\right)\cup\varphi_{A}^{-1}\left(e^{i(t_{1}-t_{2}+\pi)/2}\mathbb{R}\right)\right)=1.
\]
But this contradicts Lemma \ref{lem:nu(gen circ)=00003D0}, which
completes the proof of the proposition.
\end{proof}
We can now establish the desired non-concentration property of the
sequences $\left(\alpha_{n}(\omega)h\left(\sigma^{n}\omega\right)\right)_{n\ge0}$,
for which we need the following definition.
\begin{defn}
Given $\delta>0$, we say that $\theta\in\mathcal{M}\left(\mathbb{RP}^{1}\right)$
is $\delta$-concentrated if there exists $z\mathbb{R}\in\mathbb{RP}^{1}$
such that $\theta\left(B\left(z\mathbb{R},\delta\right)\right)>1-\delta$.
\end{defn}

\begin{prop}
\label{prop:equidist prop}There exists $\delta>0$ such that for
every continuous $h:\Lambda^{\mathbb{N}}\rightarrow\mathbb{RP}^{1}$
and for $\beta$-a.e. $\omega$, the sequence $\left(\alpha_{n}(\omega)h\left(\sigma^{n}\omega\right)\right)_{n\ge0}$
is equidistributed with respect to some $\theta\in\mathcal{M}\left(\mathbb{RP}^{1}\right)$
that is not $\delta$-concentrated.
\end{prop}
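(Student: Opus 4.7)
The plan is to recast the problem as a question about the skew product $T: \Lambda^{\mathbb{N}} \times \mathbb{RP}^1 \to \Lambda^{\mathbb{N}} \times \mathbb{RP}^1$ defined by $T(\omega, x) = (\sigma\omega, \alpha(\omega) x)$. Since $\mathbb{RP}^1$ is a compact abelian group and left multiplication preserves normalized Haar measure $m$, $T$ preserves $\beta \times m$, and $T^n(\omega, \mathbb{R}) = (\sigma^n\omega, \alpha_n(\omega))$. For continuous $f: \mathbb{RP}^1 \to \mathbb{R}$, the function $F(\omega, x) := f(x h(\omega))$ is continuous, and its Birkhoff sums along the $T$-orbit of $(\omega, \mathbb{R})$ reproduce exactly the averages $\frac{1}{N}\sum_{n<N} f(\alpha_n(\omega) h(\sigma^n\omega))$ whose limits we must understand.

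For the equidistribution step, fix a countable dense family $\{f_j\} \subset C(\mathbb{RP}^1)$ and apply Birkhoff's theorem to each $F_j(\omega, x) := f_j(x h(\omega))$. Intersecting the resulting $(\beta \times m)$-conull sets, the translated empirical measures $x \cdot \nu_N^\omega := x \cdot \frac{1}{N}\sum_{n<N}\delta_{\alpha_n(\omega) h(\sigma^n\omega)}$ converge weakly for $(\beta \times m)$-a.e.\ $(\omega, x)$. By Fubini, for $\beta$-a.e.\ $\omega$ this holds for $m$-a.e.\ $x$. For such $\omega$, any two subsequential weak limits $\theta, \theta'$ of $\nu_N^\omega$ satisfy $x\theta = x\theta'$ for each $m$-generic $x$; picking one such $x$ yields $\theta = \theta'$, so $\nu_N^\omega \to \theta_\omega$ weakly for a single $\theta_\omega \in \mathcal{M}(\mathbb{RP}^1)$.

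To pin down the structure of $\theta_\omega$, I invoke the essential range theorem (Mackey) for a Borel cocycle into a compact abelian group over the ergodic Bernoulli base: there exist a closed subgroup $H \subseteq \mathbb{RP}^1$ and a Borel $\phi: \Lambda^{\mathbb{N}} \to \mathbb{RP}^1$ such that $\alpha(\omega) \phi(\sigma\omega) \phi(\omega)^{-1}$ takes values in $H$, and the resulting reduced skew product is ergodic on each $\Lambda^{\mathbb{N}} \times xH$. Consequently, after the $\phi$-coordinate change in the fiber, the $T$-invariant $\sigma$-algebra is generated by the $H$-coset of the fiber variable, so the Birkhoff limits $\mathbb{E}[F_j \mid \mathcal{I}_T](\omega, x)$ depend on $x$ only modulo $H$. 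Comparing with $\int f_j \, d(x \theta_\omega)$ from the previous step forces $\theta_\omega$ to be $H$-invariant for $\beta$-a.e.\ $\omega$, that is, a convex combination of Haar measures on $H$-cosets in $\mathbb{RP}^1$.

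By Proposition~\ref{prop:alpha not coboundary}, $\alpha$ is not a coboundary, so $H \neq \{\mathbb{R}\}$. The remaining closed subgroups of $\mathbb{RP}^1 \cong \mathbb{R}/\pi\mathbb{Z}$ are either the whole group or a cyclic subgroup of some order $k \geq 2$, the latter consisting of $k$ equally spaced points whose pairwise distances in $d_{\mathbb{RP}^1}$ are at least $c/k$ for an absolute $c > 0$. In either case, Haar on an $H$-coset, and hence any convex combination of such, assigns mass at most $O(\delta) + 1/k$ to a ball of radius $\delta$, so choosing $\delta > 0$ sufficiently small depending only on $|H|$ (which is intrinsic to $\alpha$, hence to $\mathcal{G}$ and $p$, and in particular independent of $h$) yields the uniform non-$\delta$-concentration required. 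The main obstacle is the third paragraph: translating the abstract essential range / ergodic decomposition into the concrete $H$-invariance of $\theta_\omega$ while accommodating that both $\alpha$ and the cohomological transfer function $\phi$ are merely Borel, since the boundary map $L$ is not continuous in the contracting-on-average regime; the first, second, and fourth steps are relatively routine once this structural step is in place.
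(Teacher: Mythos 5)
Your argument is correct in outline, but it takes a genuinely different route from the paper. The paper also works with the skew product $T(\omega,z\mathbb{R})=(\sigma\omega,\alpha(\omega)z\mathbb{R})$ preserving $\beta\times m_{\mathbb{RP}^{1}}$, but it avoids cocycle-reduction machinery entirely: it picks a single ergodic component $\lambda$ with $\pi\lambda=\beta$, disintegrates it as $\{\xi_\omega\}$, and observes that if the $\xi_\omega$ were $\delta$-concentrated a.s.\ for every $\delta$ they would be point masses, producing a transfer function and contradicting Proposition \ref{prop:alpha not coboundary}; hence $\beta\{\xi_\omega\text{ is }\delta\text{-concentrated}\}<1-\delta$ for some fixed $\delta$. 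Then, for $\beta$-a.e.\ $\omega$, it takes a $\lambda$-generic point $(\omega,z\mathbb{R})$ and uses the continuous test functions $\tilde\phi(\omega',w\mathbb{R})=\phi(z^{-1}w\mathbb{R}h(\omega'))$ to identify the limit of the empirical measures as $\int S_{z^{-1}\mathbb{R}h(\omega')}\xi_{\omega'}\,d\beta(\omega')$, which is not $\delta^2$-concentrated. Your route instead pins down convergence of the untranslated empirical measures via the translation-uniqueness trick (a nice observation the paper does not need, since it builds the translate into the test function), and then identifies the limit structurally, as an $H$-invariant measure where $H$ is the essential range of $\alpha$, nontrivial by Proposition \ref{prop:alpha not coboundary}. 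Your approach buys a sharper description of the limits (mixtures of Haar measures on cosets of a fixed nontrivial closed subgroup, hence a concrete value of $\delta$ in terms of $|H|$); the paper's is softer and self-contained, needing only ergodic decomposition and Birkhoff.

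One caveat you should address explicitly: the Mackey--Zimmer--Schmidt essential-range/reduction theorem is normally stated for cocycles over invertible transformations (or group actions), whereas here the base is the non-invertible one-sided shift. This is fixable in two ways, but neither is free. Either run the Mackey-range argument directly over the endomorphism: the fiber translations $R_u(\omega,x)=(\omega,xu)$ commute with $T$, so $\mathbb{RP}^{1}$ acts ergodically on the $T$-invariant $\sigma$-algebra (a set invariant under $T$ and all $R_u$ is a product set, hence trivial), and an ergodic action of a compact group on a Lebesgue space is transitive; this yields the coset factor $(\omega,x)\mapsto\psi(\omega)xH$ with $\psi$ Borel on $\Lambda^{\mathbb{N}}$ itself, and $H=\{\mathbb{R}\}$ then gives exactly the coboundary equation of Proposition \ref{prop:alpha not coboundary}. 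Or pass to the natural extension (two-sided shift), where the quoted theorem applies verbatim; but then the transfer function lives on the two-sided space, and contradicting Proposition \ref{prop:alpha not coboundary}, which is a one-sided statement, requires an additional descent argument. Finally, the sentence ``comparing with $\int f_j\,d(x\theta_\omega)$ forces $H$-invariance'' hides a small but necessary step: for fixed $\omega$ and $u\in H$ one gets the equality $\int f_j\,d(xu\theta_\omega)=\int f_j\,d(x\theta_\omega)$ only for $m$-a.e.\ $x$, and one should then use continuity of $x\mapsto\int f_j\,d(x\theta_\omega)$ (or Fubini over Haar on $H$) before evaluating at the identity. With these points filled in, the proof is complete.
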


\begin{proof}
Set $X:=\Lambda^{\mathbb{N}}\times\mathbb{RP}^{1}$, and let $T:X\rightarrow X$
and $\pi:X\rightarrow\Lambda^{\mathbb{N}}$ be defined by
\[
Tx=\left(\sigma\omega,\alpha(\omega)z\mathbb{R}\right)\;\text{ and }\;\pi x=\omega\;\text{ for }(\omega,z\mathbb{R})=x\in X.
\]
Writing $m_{\mathbb{RP}^{1}}$ for the normalized Haar measure of
$\mathbb{RP}^{1}$, it holds that $\zeta:=\beta\times m_{\mathbb{RP}^{1}}$
is $T$-invariant. Thus, from $\pi\circ T=\sigma\circ\pi$ and $\pi\zeta=\beta$,
since $\beta$ is $\sigma$-invariant and ergodic, and by considering
the ergodic decomposition of $\zeta$, it follows that there exists
a $T$-invariant and ergodic $\lambda\in\mathcal{M}(X)$ such that
$\pi\lambda=\beta$. Write $\left\{ \delta_{\omega}\times\xi_{\omega}\right\} _{\omega\in\Lambda^{\mathbb{N}}}$
for the disintegration of $\lambda$ over $\Lambda^{\mathbb{N}}$.
That is, $\xi_{\omega}\in\mathcal{M}\left(\mathbb{RP}^{1}\right)$
for $\omega\in\Lambda^{\mathbb{N}}$, and
\[
\lambda=\int\delta_{\omega}\times\xi_{\omega}\:d\beta(\omega).
\]

Given $\delta>0$, write $E_{\delta}$ for the set of $\omega\in\Lambda^{\mathbb{N}}$
for which $\xi_{\omega}$ is $\delta$-concentrated. Assuming by contradiction
that $\beta(E_{\delta})=1$ for all $\delta>0$, it follows that $\xi_{\omega}$
is a mass point for $\beta$-a.e. $\omega$, which implies that there
exists a Borel measurable $f:\Lambda^{\mathbb{N}}\rightarrow\mathbb{RP}^{1}$
such that $\lambda=\int\delta_{\left(\omega,f(\omega)\right)}\:d\beta(\omega)$.
Since $\lambda$ is $T$-invariant,
\[
\lambda=T\lambda=\int\delta_{T\left(\omega,f(\omega)\right)}\:d\beta(\omega)=\int\delta_{\left(\sigma\omega,\alpha(\omega)f(\omega)\right)}\:d\beta(\omega).
\]
Moreover, since $\beta$ is $\sigma$-invariant,
\[
\lambda=\int\delta_{\left(\sigma\omega,f(\sigma\omega)\right)}\:d\beta(\omega).
\]
The last two formulas clearly imply that $\alpha(\omega)=f(\omega)^{-1}f(\sigma\omega)$
for $\beta$-a.e. $\omega$. But this contradicts Proposition \ref{prop:alpha not coboundary},
and so it must hold that $\beta(E_{\delta})<1-\delta$ for some $\delta>0$,
which we fix.

By the ergodic theorem, and since $\pi\lambda=\beta$, for $\beta$-a.e.
$\omega$ there exists $x\in X$ such that $\pi x=\omega$ and $\left(T^{n}x\right)_{n\ge0}$
is equidistributed with respect to $\lambda$. Fix such $\omega$
and $x$, and let $z\mathbb{R}\in\mathbb{RP}^{1}$ be with $x=(\omega,z\mathbb{R})$.
Let $h:\Lambda^{\mathbb{N}}\rightarrow\mathbb{RP}^{1}$ be continuous,
and set
\[
\theta:=\int S_{z^{-1}\mathbb{R}h(\omega')}\xi_{\omega'}\:d\beta(\omega'),
\]
where recall that $S_{w\mathbb{R}}\left(w'\mathbb{R}\right):=ww'\mathbb{R}$
for $w\mathbb{R},w'\mathbb{R}\in\mathbb{RP}^{1}$. Note that, since
$\beta(E_{\delta})<1-\delta$, the probability measure $\theta$ is
not $\delta^{2}$-concentrated.

Let $\phi:\mathbb{RP}^{1}\rightarrow\mathbb{R}$ be continuous, and
let $\tilde{\phi}:X\rightarrow\mathbb{R}$ be defined by
\[
\tilde{\phi}(\omega',w\mathbb{R})=\phi\left(z^{-1}w\mathbb{R}h(\omega')\right)\text{ for }(\omega',w\mathbb{R})\in X.
\]
Since $\tilde{\phi}$ is continuous and $\left(T^{n}x\right)_{n\ge0}$
is equidistributed with respect to $\lambda$,
\begin{multline*}
\underset{n}{\lim}\frac{1}{n}\sum_{j=0}^{n-1}\phi\left(\alpha_{j}(\omega)h\left(\sigma^{j}\omega\right)\right)=\underset{n}{\lim}\frac{1}{n}\sum_{j=0}^{n-1}\tilde{\phi}\left(T^{j}x\right)=\int\tilde{\phi}\:d\lambda\\
=\int\int\tilde{\phi}\left(\omega',w\mathbb{R}\right)\:d\xi_{\omega'}(w\mathbb{R})\:d\beta(\omega')=\int\phi\:d\theta.
\end{multline*}
This shows that the sequence $\left(\alpha_{n}(\omega)h\left(\sigma^{n}\omega\right)\right)_{n\ge0}$
is equidistributed with respect to $\theta$. Since $\theta$ is not
$\delta^{2}$-concentrated, this completes the proof of the proposition.
\end{proof}
The following corollary is an immediate consequence of Proposition
\ref{prop:equidist prop}. Recall that $\lambda_{n}$ denotes the
uniform probability measure on $\mathcal{N}_{n}:=\left\{ 1,...,n\right\} $.
\begin{cor}
\label{cor:from equidist prop}There exists $0<\delta<1$ such that
for every continuous $h:\Lambda^{\mathbb{N}}\rightarrow\mathbb{RP}^{1}$
and for $\beta$-a.e. $\omega$, there exists $N_{h,\omega}\ge1$
so that for every $n\ge N_{h,\omega}$,
\[
\lambda_{n}\left\{ i\in\mathcal{N}_{n}\::\:d\left(\alpha_{i}(\omega)h\left(\sigma^{i}\omega\right),z\mathbb{R}\right)>\delta\right\} >\delta\text{ for all }z\mathbb{R}\in\mathbb{RP}^{1}.
\]
\end{cor}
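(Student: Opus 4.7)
The plan is to deduce the corollary from Proposition \ref{prop:equidist prop} by a standard equidistribution-plus-compactness argument. Let $\delta_0\in(0,1)$ be the constant produced by the proposition, and set $\delta:=\delta_0/3$. By compactness of $\mathbb{RP}^1$, I would first fix, once and for all, a $(\delta_0/10)$-net $\{y_1\mathbb{R},\dots,y_k\mathbb{R}\}\subset\mathbb{RP}^1$ together with continuous bump functions $\phi_j:\mathbb{RP}^1\to[0,1]$ satisfying $\phi_j\equiv 0$ on $B(y_j\mathbb{R},\delta_0/2)$ and $\phi_j\equiv 1$ on $\mathbb{RP}^1\setminus B(y_j\mathbb{R},\delta_0)$. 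These objects depend only on $\delta_0$, so there is no circularity when we later apply the proposition to arbitrary $h$ and $\omega$.

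Next, fix a continuous $h:\Lambda^{\mathbb{N}}\to\mathbb{RP}^1$ and a $\beta$-typical $\omega$, obtaining from Proposition \ref{prop:equidist prop} a limit measure $\theta\in\mathcal{M}(\mathbb{RP}^1)$ that is not $\delta_0$-concentrated and against which the orbit $\bigl(\alpha_i(\omega)h(\sigma^i\omega)\bigr)_{i\ge 0}$ equidistributes. Non-concentration yields
\[
\int \phi_j\,d\theta \;\ge\; \theta\bigl(\mathbb{RP}^1\setminus B(y_j\mathbb{R},\delta_0)\bigr) \;\ge\; \delta_0
\]
for every $j$. Applying equidistribution to the finite family $\phi_1,\dots,\phi_k$ then produces $N_{h,\omega}\ge 1$ such that
\[
\frac{1}{n}\sum_{i=1}^{n}\phi_j\bigl(\alpha_i(\omega)h(\sigma^i\omega)\bigr) \;>\; \delta_0/2 \quad\text{for all }n\ge N_{h,\omega}\text{ and all }j=1,\dots,k.
\]

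Finally, given any $z\mathbb{R}\in\mathbb{RP}^1$, I would choose $j$ with $d(z\mathbb{R},y_j\mathbb{R})\le\delta_0/10$. Then $\phi_j(w\mathbb{R})>0$ forces $d(w\mathbb{R},y_j\mathbb{R})>\delta_0/2$, and hence by the triangle inequality $d(w\mathbb{R},z\mathbb{R})>\delta_0/2-\delta_0/10>\delta$. Therefore $\phi_j\le\mathbf{1}_{\{d(\,\cdot\,,z\mathbb{R})>\delta\}}$, and the Cesàro lower bound upgrades to
\[
\lambda_n\bigl\{i\in\mathcal{N}_n : d(\alpha_i(\omega)h(\sigma^i\omega),z\mathbb{R})>\delta\bigr\} \;>\; \delta_0/2 \;>\; \delta,
\]
as required.

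There is no substantial obstacle — the corollary is genuinely immediate given Proposition \ref{prop:equidist prop}. The only point to watch is that the net $\{y_j\mathbb{R}\}$ and the bumps $\{\phi_j\}$ must be chosen before invoking the proposition, so that the full-measure set of good $\omega$ and the resulting $N_{h,\omega}$ serve uniformly for all $z\mathbb{R}$ via the finite-net-plus-triangle-inequality bookkeeping above.
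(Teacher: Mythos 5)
Your argument is correct and is exactly the finite-net, weak-$*$ convergence argument that the paper implicitly intends when it calls the corollary an immediate consequence of Proposition \ref{prop:equidist prop}. The only cosmetic point is the indexing mismatch between the equidistribution averages over $i=0,\dots,n-1$ and $\lambda_n$ on $\mathcal{N}_n=\{1,\dots,n\}$, which changes each Ces\`aro average by at most $2/n$ and is absorbed by your slack between $\delta_0/2$ and $\delta=\delta_0/3$.
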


\subsection{\label{subsec:The-nontrivial-lower lb}The nontrivial lower bound}

The purpose of this subsection is to prove the following proposition.
\begin{prop}
\label{prop:nontriv portion > dim(mu)/2}There exists $0<\delta<1$
such that for every $\epsilon>0$, $0<\eta<\eta(\epsilon)$, $m\ge M(\epsilon,\eta)\ge1$,
$n\ge N(\epsilon,\eta,m)\ge1$, $z\mathbb{R}\in\mathbb{RP}^{1}$,
and $u\in\Lambda^{*}$ with $\Vert g_{u}\Vert_{\mathrm{op}}\ge\eta^{-1}$
and $L(g_{u})\notin B\left(e_{1}\mathbb{C},2\eta\right)$,
\[
\mathbb{P}_{1\le i\le n}\left\{ \frac{1}{m}H\left(\pi_{z\mathbb{R}}\varphi_{u\mathbf{U}_{i}}\nu,\mathcal{D}_{\chi_{u}+\chi_{\mathbf{U}_{i}}+m}\mid\mathcal{D}_{\chi_{u}+\chi_{\mathbf{U}_{i}}}\right)>\frac{1}{2}\dim\mu-\epsilon\right\} >\delta.
\]
\end{prop}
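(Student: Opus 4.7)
The plan follows the strategy outlined in Section~\ref{subsec:About-the-proof}. Couple $\mathbf{U}_i = \omega|_i$ with $\omega \sim \beta$, fix a large integer $k$ (to be chosen in terms of $\epsilon$), and decompose $\mathbf{U}_i = v_1 v_2$ with $v_1 = \omega|_{i-k} \in \Lambda^{i-k}$ and $v_2 = (\sigma^{i-k}\omega)|_k \in \Lambda^k$. Then $\varphi_{u\mathbf{U}_i} = \varphi_{uv_1} \circ \varphi_{v_2}$. The first goal is to linearize the highly contracting map $\varphi_{uv_1}$ around the reference point $z_0 := \varphi_{v_2}(\psi L(\sigma^i \omega)) \in \mathrm{supp}(\varphi_{v_2}\nu)$. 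Decomposing $\varphi_{v_2}\nu$ into small components to control the quadratic linearization error and using the identity $\pi_{z\mathbb{R}} \circ S_w = S_w \circ \pi_{w^{-1}z\mathbb{R}}$ together with Lemmas~\ref{lem:dyad ent =000026 lip func} and~\ref{lem:ent of push by close func}, I reduce the problem, up to an $O(\epsilon)$ error in normalized conditional entropy, to lower-bounding
\[
\frac{1}{m}H\left(\pi_{z\mathbb{R}\ell}\,\varphi_{v_2}\nu,\,\mathcal{D}_{\chi_{v_2}+m} \mid \mathcal{D}_{\chi_{v_2}}\right),
\]
where $\ell := (\varphi_{uv_1}'(z_0))^{-1}\mathbb{R} \in \mathbb{RP}^1$.

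The decisive algebraic identity comes from the chain rule, the equivariance~\eqref{eq:psi is G-equi}--\eqref{eq:L is equivariant}, identity~\eqref{eq:alpha_n a.s. equals}, and the cocycle relation $\alpha_i(\omega) = \alpha_{i-k}(\omega)\,\alpha_k(\sigma^{i-k}\omega)$: one computes
\[
\varphi_{uv_1}'(z_0)\mathbb{R} = \varphi_u'(\psi L(\omega))\mathbb{R} \cdot \alpha_{i-k}(\omega),
\]
so $z\mathbb{R}\ell = c(\omega)\,\alpha_{i-k}(\omega)^{-1}$, with $c(\omega) := z\mathbb{R}\,(\varphi_u'(\psi L(\omega))\mathbb{R})^{-1}$ independent of $i$. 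Separately, combining Proposition~\ref{prop:uni ent dim} with a Marstrand-type projection estimate available because $\dim\mu < 2$, I establish that for $p^k$-most $v_2 \in \Lambda^k$ there exists a small interval $I_{v_2} \subset \mathbb{RP}^1$ with $\frac{1}{m}H(\pi_{w\mathbb{R}}\varphi_{v_2}\nu, \mathcal{D}_{\chi_{v_2}+m} \mid \mathcal{D}_{\chi_{v_2}}) > \frac{1}{2}\dim\mu - \epsilon$ for every $w\mathbb{R} \notin I_{v_2}$. Applying Corollary~\ref{cor:from equidist prop} with $h$ the constant function equal to $c(\omega)$ (trivially continuous) yields, for $\beta$-a.e.\ $\omega$ and all large $n$, a density at least $\delta$ of indices $i \in \mathcal{N}_n$ for which $z\mathbb{R}\ell = c(\omega)\alpha_{i-k}(\omega)^{-1}$ lies outside an arbitrary $\delta$-neighborhood of $I_{v_2}$; combining this with the entropy bound from the Marstrand step and the linearization reduction, a final integration over $\omega$ via Fubini delivers the proposition.

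The principal technical obstacle is the linearization itself. Since $\mathrm{supp}(\varphi_{v_2}\nu)$ has diameter $O(1)$ while the M\"obius quadratic error forces the linearization of $\varphi_{uv_1}$ to be valid only on balls of radius $\ll 2^{-m/2}$, one must decompose $\varphi_{v_2}\nu$ into many small components, linearize on each with its own reference point $z_0'$, and reassemble via concavity of conditional entropy. The M\"obius distortion estimate ensures that the local directions $(\varphi_{uv_1}'(z_0'))^{-1}\mathbb{R}$ drift from the central $\ell$ by only $O(2^{-m/2})$, negligible against the width of $I_{v_2}$, so the Marstrand-type bound applies uniformly across components; controlling the entropy losses in this reassembly requires a delicate choice of scales and relies on the standing hypotheses $\|g_u\|_{\mathrm{op}} \ge \eta^{-1}$ and $L(g_u) \notin B(e_1\mathbb{C}, 2\eta)$, together with the non-concentration of $L(g_{v_1})$ supplied by Lemma~\ref{lem:small ball --> small mass}.
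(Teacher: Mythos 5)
Your skeleton coincides with the paper's: split the word into a long prefix and a length-$k$ suffix, linearize the long composition at the boundary point, use the chain rule and (\ref{eq:alpha_n a.s. equals}) to identify the relevant direction as $\varphi_u'\left(\psi L(\omega)\right)^{-1}z\mathbb{R}\,\alpha_{\bullet}(\omega)^{-1}$, prove a $\frac{1}{2}\dim\mu-\epsilon$ bound for projections of the suffix measure in all directions off a small interval $I_{v_2}$, and finish with Corollary \ref{cor:from equidist prop}. But your execution of the last step fails as stated. You apply the corollary with $h$ constant equal to $c(\omega)$. Beyond the formal point that $h$ is quantified before $\omega$ (the exceptional null set depends on $h$; the $\omega$-dependent quantity must instead be placed in the target $z\mathbb{R}$, which is legitimate since the corollary's conclusion is uniform over $z\mathbb{R}$), the substantive problem is that the set to be avoided, the neighborhood of $I_{v_2}$, depends on $i$ through $v_2=(\sigma^{i-k}\omega)|_k$. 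With a constant $h$ the corollary only gives that $\alpha_{i-k}(\omega)$ translated by a fixed element avoids each \emph{fixed} point on a $\delta$-proportion of indices; it gives no control over a target moving with $i$. The $v_2$-dependence has to be absorbed into $h$: one must take $h(\omega')$ to be the center of $I_{\omega'|_k}$ (locally constant, hence continuous) and then apply the corollary with target $\varphi_u'\left(\psi L(\omega)\right)^{-1}z\mathbb{R}$. This is exactly how the paper concludes; without it your density-$\delta$ set of good indices is not produced.

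The linearization step is also quantitatively off. The quadratic error of $\varphi_{uv_1}$ over a ball of radius $r$ is of order $\Vert g_u\Vert_{\mathrm{op}}^{-2}\Vert g_{v_1}\Vert_{\mathrm{op}}^{-2}r^{2}$, while the final scale is $2^{-m}\Vert g_u\Vert_{\mathrm{op}}^{-2}\Vert g_{v_1v_2}\Vert_{\mathrm{op}}^{-2}$, so one needs $r\lesssim2^{-(\chi_{v_2}+m)/2}$, not $r\ll2^{-m/2}$; moreover, over a set of diameter $O(1)$ the argument of $\varphi_{uv_1}'$ varies macroscopically, so the claimed $O(2^{-m/2})$ drift of the local directions is not available either. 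Chopping $\varphi_{v_2}\nu$ into pieces of the genuinely required size and reassembling by almost-convexity either costs an entropy term of order $1$ (ruining the bound), or leaves you with projections of small \emph{components} of $\nu$, which is essentially what this section is trying to prove. The paper sidesteps all of this: it replaces $\mu$ by $\mu_{Y_{v_2,\eta}}$ (an $O(\epsilon)$ loss via Lemma \ref{lem:small ball --> small mass} and concavity), so that $\psi g_{v_2}\mu_{Y_{v_2,\eta}}$ already has diameter $O_{\eta}\left(\Vert g_{v_2}\Vert_{\mathrm{op}}^{-2}\right)$, and takes $k\gg m$, after which a single global linearization (Lemma \ref{lem:lemma with lots of conds}) works and no decomposition or reassembly is needed. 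Finally, the half-dimension bound off a small interval does not come from a Marstrand-type estimate (none is proved or available here, and almost-every-direction statements do not confine the exceptional directions to an interval, which is precisely what the non-concentration step requires); it follows elementarily from the two-transversal-projections bound of Lemma \ref{lem:lb on ent wrt mod dyad part} applied to a near-minimizing direction (Lemma \ref{lem:ent > 1/2 dim mu outside a small interval}), with no use of $\dim\mu<2$ or Proposition \ref{prop:uni ent dim} at this stage.
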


The proof of the proposition relies on Corollary \ref{cor:from equidist prop},
a technical linearization argument, and the following simple lemma.
\begin{lem}
\label{lem:ent > 1/2 dim mu outside a small interval}For every $\epsilon>0$,
$0<\eta<\eta(\epsilon)$, and $m\ge M(\epsilon,\eta)\ge1$ the following
holds. Let $u\in\Lambda^{*}$ be with $\Vert g_{u}\Vert_{\mathrm{op}}\ge\eta^{-1}$
and $L(g_{u})\notin B(e_{1}\mathbb{C},2\eta)$. Then there exists
$z\mathbb{R}\in\mathbb{RP}^{1}$ such that,
\[
\frac{1}{m}H\left(\pi_{w\mathbb{R}}\psi g_{u}\mu_{Y_{u,\eta}},\mathcal{D}_{\chi_{u}+m}\right)>\frac{1}{2}\dim\mu-\epsilon\text{ for all }w\mathbb{R}\in\mathbb{RP}^{1}\setminus B\left(z\mathbb{R},\eta\right).
\]
\end{lem}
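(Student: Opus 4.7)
The plan is to combine Lemma \ref{lem:lb on ent wrt mod dyad part} with the subadditivity of entropy under joins of partitions. Fix $\epsilon,\eta,m,u$ as in the statement, with $\epsilon^{-1}\ll\eta^{-1}\ll m$, and set $\theta:=\psi g_{u}\mu_{Y_{u,\eta}}$ for brevity. Define the bad direction set
\[
A:=\left\{w\mathbb{R}\in\mathbb{RP}^{1}\::\:\frac{1}{m}H\left(\pi_{w\mathbb{R}}\theta,\mathcal{D}_{\chi_{u}+m}\right)\le\frac{1}{2}\dim\mu-\epsilon\right\}.
\]
The claim I would prove is that the diameter of $A$ in $(\mathbb{RP}^{1},d_{\mathbb{RP}^{1}})$ is strictly less than $\eta$; the lemma then follows immediately.

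To verify the diameter bound I would argue by contradiction. Suppose $A$ contains two points $z_{1}\mathbb{R},z_{2}\mathbb{R}$ with $d(z_{1}\mathbb{R},z_{2}\mathbb{R})\ge\eta$. Applying Lemma \ref{lem:lb on ent wrt mod dyad part} with the same $\epsilon$ (the hypotheses $\eta<\eta(\epsilon)$ and $m>M(\epsilon,\eta)$ are already built into the parameter hierarchy), the joint entropy satisfies
\[
\frac{1}{m}H\left(\theta,\,\pi_{z_{1}\mathbb{R}}^{-1}\mathcal{D}_{\chi_{u}+m}\vee\pi_{z_{2}\mathbb{R}}^{-1}\mathcal{D}_{\chi_{u}+m}\right)>\dim\mu-\epsilon.
\]
On the other hand, from the basic subadditivity $H(\cdot,\mathcal{E}_{1}\vee\mathcal{E}_{2})\le H(\cdot,\mathcal{E}_{1})+H(\cdot,\mathcal{E}_{2})$ applied to $\mathcal{E}_{j}:=\pi_{z_{j}\mathbb{R}}^{-1}\mathcal{D}_{\chi_{u}+m}$, together with the identity $H(\theta,\pi_{z\mathbb{R}}^{-1}\mathcal{D})=H(\pi_{z\mathbb{R}}\theta,\mathcal{D})$, the left-hand side is bounded above by
\[
\frac{1}{m}H\left(\pi_{z_{1}\mathbb{R}}\theta,\mathcal{D}_{\chi_{u}+m}\right)+\frac{1}{m}H\left(\pi_{z_{2}\mathbb{R}}\theta,\mathcal{D}_{\chi_{u}+m}\right)\le\dim\mu-2\epsilon,
\]
where the last inequality uses $z_{1}\mathbb{R},z_{2}\mathbb{R}\in A$. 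Combining these two estimates would give $\dim\mu-\epsilon<\dim\mu-2\epsilon$, which is absurd.

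Having established the diameter bound, I would conclude by choosing any $z\mathbb{R}\in A$ (or any $z\mathbb{R}\in\mathbb{RP}^{1}$ if $A=\emptyset$); then every element of $A$ lies at distance strictly less than $\eta$ from $z\mathbb{R}$, so $A\subset B(z\mathbb{R},\eta)$. Equivalently, for every $w\mathbb{R}\in\mathbb{RP}^{1}\setminus B(z\mathbb{R},\eta)$ we have $w\mathbb{R}\notin A$, i.e.\ $\frac{1}{m}H\!\left(\pi_{w\mathbb{R}}\theta,\mathcal{D}_{\chi_{u}+m}\right)>\frac{1}{2}\dim\mu-\epsilon$, which is the desired conclusion. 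I do not expect any serious obstacle here: the argument reduces entirely to one application of Lemma \ref{lem:lb on ent wrt mod dyad part} and the standard subadditivity of entropy under joins, with parameter dependencies already matching those in the hypothesis.
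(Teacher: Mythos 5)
Your proposal is correct and follows essentially the same route as the paper: both arguments rest on Lemma \ref{lem:lb on ent wrt mod dyad part} together with subadditivity of entropy under the join $\pi_{z_{1}\mathbb{R}}^{-1}\mathcal{D}_{\chi_{u}+m}\vee\pi_{z_{2}\mathbb{R}}^{-1}\mathcal{D}_{\chi_{u}+m}$, forcing $H(z_{1}\mathbb{R})+H(z_{2}\mathbb{R})>\dim\mu-\epsilon$ whenever $d(z_{1}\mathbb{R},z_{2}\mathbb{R})\ge\eta$. The only (cosmetic) difference is that the paper takes $z\mathbb{R}$ to be a near-minimizer of the projected entropy, whereas you argue by contradiction that the set of bad directions has diameter less than $\eta$; both packagings are fine.
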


\begin{proof}
Let $\epsilon,\eta\in(0,1)$ and $m\in\mathbb{Z}_{>0}$ be with $\epsilon^{-1}\ll\eta^{-1}\ll m$,
and let $u\in\Lambda^{*}$ be such that $\Vert g_{u}\Vert_{\mathrm{op}}\ge\eta^{-1}$
and $L(g_{u})\notin B(e_{1}\mathbb{C},2\eta)$. For each $w\mathbb{R}\in\mathbb{RP}^{1}$
set
\[
H\left(w\mathbb{R}\right):=\frac{1}{m}H\left(\pi_{w\mathbb{R}}\psi g_{u}\mu_{Y_{u,\eta}},\mathcal{D}_{\chi_{u}+m}\right),
\]
and let $z\mathbb{R}\in\mathbb{RP}^{1}$ be such that
\[
H\left(z\mathbb{R}\right)\le\underset{w\mathbb{R}\in\mathbb{RP}^{1}}{\inf}H\left(w\mathbb{R}\right)+\epsilon.
\]
From Lemma \ref{lem:lb on ent wrt mod dyad part}, by the conditional
entropy formula, and by the last inequality, it follows that for each
$w\mathbb{R}\in\mathbb{RP}^{1}\setminus B\left(z\mathbb{R},\eta\right)$
\[
\dim\mu-\epsilon\le H\left(z\mathbb{R}\right)+H\left(w\mathbb{R}\right)\le2H\left(w\mathbb{R}\right)+\epsilon,
\]
which proves the lemma.
\end{proof}
The linearization argument mentioned above is contained in the proof
of the following lemma.
\begin{lem}
\label{lem:lemma with lots of conds}For every $\epsilon>0$, $0<\eta<\eta(\epsilon)$,
$m\ge M(\epsilon,\eta)\ge1$, and $k\ge K(\epsilon,\eta,m)\ge1$ the
following holds. Let $u\in\Lambda^{*}$ be with $\Vert g_{u}\Vert_{\mathrm{op}}\ge\eta^{-1}$
and $L(g_{u})\notin B\left(e_{1}\mathbb{C},2\eta\right)$. Additionally,
let $i\in\mathbb{Z}_{>0}$ and $\omega\in\Lambda^{\mathbb{N}}$ be
such that
\begin{equation}
\begin{aligned}\Vert g_{\omega|_{i}}\Vert_{\mathrm{op}}\ge\eta^{-1}, &  & \Vert g_{(\sigma^{i}\omega)|_{k}}\Vert_{\mathrm{op}}\ge2^{k\chi/2},\\
L(g_{\omega|_{i}})\in Y_{u,2\eta}\setminus B\left(e_{1}\mathbb{C},2\eta\right), &  & L\left(\sigma^{i}\omega\right)\in Y_{\omega|_{i},2\eta}\setminus B\left(e_{1}\mathbb{C},2\eta\right),\\
L\left(\sigma^{i+k}\omega\right)\in Y_{(\sigma^{i}\omega)|_{k},\eta}, &  & \alpha_{i}(\omega)=\varphi_{\omega|_{i}}'\left(\psi L\left(\sigma^{i}\omega\right)\right)\mathbb{R},\\
g_{\omega|_{i}}L\left(\sigma^{i}\omega\right)=L(\omega), &  & g_{(\sigma^{i}\omega)|_{k}}L\left(\sigma^{i+k}\omega\right)=L\left(\sigma^{i}\omega\right).
\end{aligned}
\label{eq:lots of conds}
\end{equation}
Then for each $z\mathbb{R}\in\mathbb{RP}^{1}$,
\begin{multline*}
\frac{1}{m}H\left(\pi_{z\mathbb{R}}\varphi_{u\omega|_{i+k}}\nu,\mathcal{D}_{\chi_{u}+\chi_{\omega|_{i+k}}+m}\mid\mathcal{D}_{\chi_{u}+\chi_{\omega|_{i+k}}}\right)+\epsilon\\
>\frac{1}{m}H\left(\pi_{\varphi_{u}'\left(\psi L(\omega)\right)^{-1}z\mathbb{R}\alpha_{i}(\omega)^{-1}}\psi g_{(\sigma^{i}\omega)|_{k}}\mu_{Y_{(\sigma^{i}\omega)|_{k},\eta}},\mathcal{D}_{\chi_{(\sigma^{i}\omega)|_{k}}+m}\right).
\end{multline*}
\end{lem}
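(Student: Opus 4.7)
The plan is to linearize the composition $\varphi_u \circ \varphi_{\omega|_i}$ around the point $z_0 := \psi L(\sigma^i\omega)$. Set $v := (\sigma^i\omega)|_k$, $w := \omega|_i$, and let $\lambda_0 := \varphi_u'(\psi L(\omega)) \cdot \varphi_w'(z_0) \in \mathbb{C}\setminus\{0\}$. By the chain rule, together with the identity $\alpha_i(\omega) = \varphi_w'(z_0)\mathbb{R}$ stipulated in (\ref{eq:lots of conds}), the coset $\lambda_0\mathbb{R}$ equals $\varphi_u'(\psi L(\omega))\alpha_i(\omega)$, so $\lambda_0^{-1}z\mathbb{R}$ is precisely the direction $\varphi_u'(\psi L(\omega))^{-1}z\mathbb{R}\alpha_i(\omega)^{-1}$ appearing on the right-hand side. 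A direct computation from $\pi_{z\mathbb{R}}(x) = |z|^{-2}\mathrm{Re}(x\overline{z})z$ yields the conjugation identity $\pi_{z\mathbb{R}} \circ S_{\lambda_0} = S_{\lambda_0} \circ \pi_{\lambda_0^{-1}z\mathbb{R}}$. Consequently, on any region where $\varphi_u \circ \varphi_w$ is well approximated by its first-order Taylor expansion $z \mapsto c_0 + \lambda_0(z - z_0)$, with $c_0 := \psi(g_uL(\omega))$, the measure $\pi_{z\mathbb{R}}\varphi_u\varphi_w\varphi_v\nu_{Y_{v,\eta}}$ coincides, up to translation, with the image under $S_{\lambda_0}$ of $\pi_{\lambda_0^{-1}z\mathbb{R}}\varphi_v\nu_{Y_{v,\eta}}$.

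I would carry this out in four steps. First, by concavity of conditional entropy applied to the decomposition of $\mu$ relative to $Y_{v,\eta}$, together with $\mu(Y_{v,\eta}^c) < \epsilon$ (valid for $\eta$ small by Lemma \ref{lem:small ball --> small mass}), I replace $\nu$ by $\psi\mu_{Y_{v,\eta}}$ in the LHS at a cost $O(\epsilon)$ in the normalized entropy. Second, iterating Lemma \ref{lem:dist of gzC to L(g)} using (\ref{eq:lots of conds}), I verify that the support of $\psi g_u g_w g_v \mu_{Y_{v,\eta}}$ has diameter $O_\eta(2^{-\chi_u - \chi_w - \chi_v})$ and that $\chi_w + \chi_v - O_\eta(1) \le \chi_{\omega|_{i+k}} \le \chi_w + \chi_v$ (the lower bound is obtained as in Lemma \ref{lem:triv lb}); the support therefore meets only $O_\eta(1)$ atoms of $\mathcal{D}_{\chi_u + \chi_{\omega|_{i+k}}}$, so the conditioning contributes at most $O_\eta(1)/m$ to the normalized entropy. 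Third, using Lemma \ref{lem:ent of push by close func}, I replace $\varphi_u \circ \varphi_w$ on the relevant support by the linearization above, provided the Taylor remainder is smaller than $2^{-\chi_u-\chi_w-\chi_v-m}$. Fourth, I apply the conjugation identity and then Lemma \ref{lem:dyad ent =000026 lip func} to $S_{\lambda_0}$, whose modulus is of order $2^{-\chi_u - \chi_w}$ up to $\eta$-dependent constants: this shifts the dyadic scale from $\chi_u + \chi_{\omega|_{i+k}} + m$ down to $\chi_v + m$ up to an $O_\eta(1)$ additive error, negligible upon dividing by $m$.

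The main obstacle is step three, namely controlling the second-order Taylor remainder of $\varphi_u \circ \varphi_w$ on the support of $\psi g_v \mu_{Y_{v,\eta}}$ at the target dyadic scale. Using the explicit M\"obius formula $\varphi_g''(z) = -2c(cz+d)^{-3}$ together with $L(\sigma^i\omega) \in Y_{w,2\eta}$ and $L(g_w) \in Y_{u,2\eta}$, which keep the relevant base points bounded away from the poles $L(g_w^{-1})$ and $L(g_u^{-1})$, a direct computation yields the bound $|\varphi_u\varphi_w(z) - (c_0 + \lambda_0(z-z_0))| = O_\eta\bigl(2^{-\chi_u - \chi_w}|z - z_0|^2\bigr)$ on the relevant region. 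Since the support of $\psi g_v\mu_{Y_{v,\eta}}$ lies in a ball of radius $O_\eta(2^{-\chi_v})$ around $z_0$ (by Lemma \ref{lem:dist of gzC to L(g)} applied with $L(\sigma^{i+k}\omega) \in Y_{v,\eta}$, which also forces $L(g_v)$ within $O_\eta(2^{-\chi_v})$ of $z_0$), the remainder has size $O_\eta(2^{-\chi_u - \chi_w - 2\chi_v})$, which is $\ll 2^{-\chi_u - \chi_w - \chi_v - m}$ since $\chi_v \ge k\chi$ and $m \ll k$. Once this estimate is pinned down with constants depending only on $\eta$, the remaining entropy manipulations are routine.
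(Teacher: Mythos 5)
Your proposal is correct and follows essentially the same route as the paper: restrict to $Y_{(\sigma^i\omega)|_k,\eta}$, use the conditions in (\ref{eq:lots of conds}) to keep all base points away from the relevant poles and from $e_1\mathbb{C}$, drop the conditioning via the $O_\eta(2^{-\chi_u-\chi_{\omega|_{i+k}}})$ support-diameter bound, linearize $\varphi_{u\omega|_i}$ at $\psi L(\sigma^i\omega)$ with a quadratic remainder beaten by $2^{-\chi_u-\chi_{\omega|_i}-\chi_{(\sigma^i\omega)|_k}-m}$, and conclude with the identity $\pi_{z\mathbb{R}}\circ S_{\lambda_0}=S_{\lambda_0}\circ\pi_{\lambda_0^{-1}z\mathbb{R}}$ plus Lemma \ref{lem:dyad ent =000026 lip func}. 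The only deviation is cosmetic: you bound the Taylor remainder by an explicit estimate on $\varphi_g''$ for M\"obius maps, whereas the paper invokes the holomorphic first-order remainder bound of Lemma \ref{lem:first ord Taylor}; both rest on the same derivative bounds coming from the containments, so the arguments are interchangeable.
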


The proof of the lemma requires the following first-order Taylor remainder
estimate, which follows directly from \cite[p. 126]{MR510197}.
\begin{lem}
\label{lem:first ord Taylor}Let $\Omega$ be an open subset of $\mathbb{C}$,
let $f:\Omega\rightarrow\mathbb{C}$ be holomorphic, and let $z_{0}\in\Omega$
and $r>0$ be such that $B(z_{0},2r)\subset\Omega$. Then, setting
$M:=\max\left\{ |f(z)|\::\:z\in\partial B(z_{0},2r)\right\} $,
\[
\left|f(z)-f(z_{0})-f'(z_{0})(z-z_{0})\right|\le\frac{1}{2}Mr^{-2}|z-z_{0}|^{2}\text{ for all }z\in B(z_{0},r).
\]
\end{lem}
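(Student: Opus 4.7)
The plan is to derive this standard first-order Taylor remainder bound directly from the Cauchy integral formula on the circle $\gamma := \partial B(z_{0}, 2r)$. Since $B(z_{0}, 2r)$ (closed, in this paper's convention) is contained in $\Omega$, the trace $\gamma$ lies in $\Omega$, and for every $z \in B(z_{0}, r)$ we may write
\[
f(z) = \frac{1}{2\pi i}\oint_{\gamma}\frac{f(\zeta)}{\zeta - z}\,d\zeta, \quad f(z_{0}) = \frac{1}{2\pi i}\oint_{\gamma}\frac{f(\zeta)}{\zeta - z_{0}}\,d\zeta, \quad f'(z_{0}) = \frac{1}{2\pi i}\oint_{\gamma}\frac{f(\zeta)}{(\zeta - z_{0})^{2}}\,d\zeta.
\]
Combining the three integrals under one integral sign and using the elementary partial-fraction identity
\[
\frac{1}{\zeta - z} - \frac{1}{\zeta - z_{0}} - \frac{z - z_{0}}{(\zeta - z_{0})^{2}} = \frac{(z - z_{0})^{2}}{(\zeta - z)(\zeta - z_{0})^{2}}
\]
yields the exact remainder formula
\[
f(z) - f(z_{0}) - f'(z_{0})(z - z_{0}) = \frac{(z - z_{0})^{2}}{2\pi i}\oint_{\gamma}\frac{f(\zeta)}{(\zeta - z)(\zeta - z_{0})^{2}}\,d\zeta.
\]

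From here the bound follows by a routine $ML$-estimate. For $\zeta \in \gamma$ one has $|\zeta - z_{0}| = 2r$ and, by the reverse triangle inequality together with $|z - z_{0}| \le r$, also $|\zeta - z| \ge r$; the length of $\gamma$ equals $4\pi r$, and $|f(\zeta)| \le M$ on $\gamma$ by the definition of $M$. Therefore
\[
\bigl|f(z) - f(z_{0}) - f'(z_{0})(z - z_{0})\bigr| \le \frac{|z - z_{0}|^{2}}{2\pi}\cdot 4\pi r \cdot \frac{M}{r \cdot (2r)^{2}} = \frac{M\,|z - z_{0}|^{2}}{2 r^{2}},
\]
which is the asserted inequality.

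There is no real obstacle: the lemma is a classical consequence of Cauchy's formula, and, as the paper already notes, it is stated explicitly in \cite[p.~126]{MR510197}. An equivalent route would be to expand $f$ in its Taylor series $\sum_{n \ge 0} a_{n}(z - z_{0})^{n}$ around $z_{0}$, apply the Cauchy estimates $|a_{n}| \le M/(2r)^{n}$ to the tail $\sum_{n \ge 2} a_{n}(z - z_{0})^{n}$, and sum the resulting geometric series using $|z - z_{0}|/(2r) \le 1/2$; this yields the same constant $\frac{1}{2}$ in front of $M r^{-2}|z - z_{0}|^{2}$. The contour-integral route is marginally cleaner because it produces the sharp constant without any series manipulation.
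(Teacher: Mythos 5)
Your proof is correct: the exact remainder formula obtained from the partial-fraction identity is right, and the $ML$-estimate with $|\zeta-z_{0}|=2r$, $|\zeta-z|\ge r$, and length $4\pi r$ gives precisely the constant $\tfrac{1}{2}Mr^{-2}$. The paper offers no argument of its own for this lemma beyond citing \cite[p.~126]{MR510197}, and your self-contained Cauchy-integral derivation is exactly the standard proof behind that citation, so there is nothing to reconcile.
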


\begin{proof}[Proof of Lemma \ref{lem:lemma with lots of conds}]
Let $0<\epsilon,\eta<1$ and $m,k\in\mathbb{Z}_{>0}$ be with $\epsilon^{-1}\ll\eta^{-1}\ll m\ll k$,
let $u\in\Lambda^{*}$ be with $\Vert g_{u}\Vert_{\mathrm{op}}\ge\eta^{-1}$
and $L(g_{u})\notin B\left(e_{1}\mathbb{C},2\eta\right)$, let $i\in\mathbb{Z}_{>0}$
and $\omega\in\Lambda^{\mathbb{N}}$ be such that the conditions in
(\ref{eq:lots of conds}) are all satisfied, and fix $z\mathbb{R}\in\mathbb{RP}^{1}$.
Set
\[
H:=\frac{1}{m}H\left(\pi_{z\mathbb{R}}\varphi_{u\omega|_{i+k}}\nu,\mathcal{D}_{\chi_{u}+\chi_{\omega|_{i+k}}+m}\mid\mathcal{D}_{\chi_{u}+\chi_{\omega|_{i+k}}}\right),
\]
and write $v_{1}:=\omega|_{i}$ and $v_{2}:=(\sigma^{i}\omega)|_{k}$.
Since $\epsilon^{-1}\ll\eta^{-1}$, we may assume that $\mu\left(Y_{v_{2},\eta}\right)>1-\epsilon$.

From $\nu=\psi\mu$ and by (\ref{eq:psi is G-equi}),
\[
H=\frac{1}{m}H\left(\pi_{z\mathbb{R}}\varphi_{uv_{1}}\psi g_{v_{2}}\mu,\mathcal{D}_{\chi_{u}+\chi_{v_{1}v_{2}}+m}\mid\mathcal{D}_{\chi_{u}+\chi_{v_{1}v_{2}}}\right).
\]
Hence, by concavity, since $\mu\left(Y_{v_{2},\eta}\right)>1-\epsilon$,
and from (\ref{eq:norm cond ent on C <=00003D 2}),
\begin{equation}
H\ge\frac{1}{m}H\left(\pi_{z\mathbb{R}}\varphi_{uv_{1}}\psi g_{v_{2}}\mu_{Y_{v_{2},\eta}},\mathcal{D}_{\chi_{u}+\chi_{v_{1}v_{2}}+m}\mid\mathcal{D}_{\chi_{u}+\chi_{v_{1}v_{2}}}\right)-2\epsilon.\label{eq:first lb for H(omega|_=00007Bi+k=00007D)}
\end{equation}

By Lemma \ref{lem:dist of gzC to L(g)}, from $L\left(\sigma^{i+k}\omega\right)\in Y_{v_{2},\eta}$,
and since $\Vert g_{v_{2}}\Vert_{\mathrm{op}}\ge2^{k\chi/2}$ and
$\eta^{-1}\ll k$, it follows that for each $w\mathbb{C}\in Y_{v_{2},\eta}$
\[
d\left(L\left(\sigma^{i}\omega\right),g_{v_{2}}w\mathbb{C}\right)=d\left(g_{v_{2}}L\left(\sigma^{i+k}\omega\right),g_{v_{2}}w\mathbb{C}\right)\le\eta^{-2}\Vert g_{v_{2}}\Vert_{\mathrm{op}}^{-2}\le\eta.
\]
Thus, since $L\left(\sigma^{i}\omega\right)\in Y_{v_{1},2\eta}\setminus B\left(e_{1}\mathbb{C},2\eta\right)$,
\begin{equation}
g_{v_{2}}\left(Y_{v_{2},\eta}\right)\subset B\left(L\left(\sigma^{i}\omega\right),\eta^{-2}\Vert g_{v_{2}}\Vert_{\mathrm{op}}^{-2}\right)\subset Y_{v_{1},\eta}\setminus B\left(e_{1}\mathbb{C},\eta\right).\label{eq:first containment}
\end{equation}

By Lemma \ref{lem:dist of gzC to L(g)} and since $L(g_{v_{1}})\in Y_{u,2\eta}\setminus B\left(e_{1}\mathbb{C},2\eta\right)$
and $\Vert g_{v_{1}}\Vert_{\mathrm{op}}\ge\eta^{-1}$,
\begin{equation}
g_{v_{1}}\left(Y_{v_{1},\eta}\right)\subset B\left(L\left(g_{v_{1}}\right),\eta^{-1}\Vert g_{v_{1}}\Vert_{\mathrm{op}}^{-2}\right)\subset Y_{u,\eta}\setminus B\left(e_{1}\mathbb{C},\eta\right).\label{eq:second containment}
\end{equation}
Similarly, by Lemma \ref{lem:dist of gzC to L(g)} and since $L(g_{u})\notin B\left(e_{1}\mathbb{C},2\eta\right)$
and $\Vert g_{u}\Vert_{\mathrm{op}}\ge\eta^{-1}$,
\begin{equation}
g_{u}\left(Y_{u,\eta}\right)\subset B\left(L\left(g_{u}\right),\eta^{-1}\Vert g_{u}\Vert_{\mathrm{op}}^{-2}\right)\subset\mathbb{CP}^{1}\setminus B\left(e_{1}\mathbb{C},\eta\right).\label{eq:third containment}
\end{equation}

For $j=1,2$ set
\[
\Omega_{j}:=\psi\left(Y_{v_{1},j\eta}\setminus B\left(e_{1}\mathbb{C},j\eta\right)\right),
\]
and let $0<\rho<1$ be such that $B(w,2\rho)\subset\Omega_{1}$ for
all $w\in\Omega_{2}$. Since $\eta^{-1}\ll k$, we may assume that
$\rho^{-1}\ll k$. Setting $w_{0}:=\psi\left(L\left(\sigma^{i}\omega\right)\right)$,
we have $w_{0}\in\Omega_{2}$. Moreover, from (\ref{eq:first containment}),
by Lemma \ref{lem:bi-lip prop of psi}, from $\Vert g_{v_{2}}\Vert_{\mathrm{op}}\ge2^{k\chi/2}$,
and since $\eta^{-1},\rho^{-1}\ll k$,
\begin{equation}
\mathrm{supp}\left(\psi g_{v_{2}}\mu_{Y_{v_{2},\eta}}\right)\subset B\left(w_{0},2\eta^{-4}\Vert g_{v_{2}}\Vert_{\mathrm{op}}^{-2}\right)\subset B\left(w_{0},\rho\right).\label{eq:supp cont in B(w_0,rho)}
\end{equation}
Additionally, from (\ref{eq:psi is G-equi}), since $w_{0}\in\Omega_{2}$,
from (\ref{eq:second containment}) and (\ref{eq:third containment}),
and by Lemmas \ref{lem:bi-lip prop of psi}, \ref{lem:zC to gzC is bi-Lip with const norm^2}
and \ref{lem:dist of gzC to L(g)}, it follows that for each $w\in\Omega_{1}\setminus\{w_{0}\}$
\begin{equation}
\frac{1}{2}\eta^{2}\Vert g_{u}\Vert_{\mathrm{op}}^{-2}\Vert g_{v_{1}}\Vert_{\mathrm{op}}^{-2}\le\frac{\left|\varphi_{uv_{1}}(w)-\varphi_{uv_{1}}(w_{0})\right|}{|w-w_{0}|}\le2\eta^{-6}\Vert g_{u}\Vert_{\mathrm{op}}^{-2}\Vert g_{v_{1}}\Vert_{\mathrm{op}}^{-2}.\label{eq:lip prop of varph_=00007Buv_1=00007D}
\end{equation}

By (\ref{eq:supp cont in B(w_0,rho)}) and (\ref{eq:lip prop of varph_=00007Buv_1=00007D}),
and since $B(w_{0},2\rho)\subset\Omega_{1}$,
\[
\mathrm{diam}\left(\mathrm{supp}\left(\pi_{z\mathbb{R}}\varphi_{uv_{1}}\psi g_{v_{2}}\mu_{Y_{v_{2},\eta}}\right)\right)=O_{\eta}\left(\Vert g_{u}\Vert_{\mathrm{op}}^{-2}\Vert g_{v_{1}v_{2}}\Vert_{\mathrm{op}}^{-2}\right).
\]
Thus, from (\ref{eq:first lb for H(omega|_=00007Bi+k=00007D)}) and
$\epsilon^{-1},\eta^{-1}\ll m$,
\begin{equation}
H\ge\frac{1}{m}H\left(\pi_{z\mathbb{R}}\varphi_{uv_{1}}\psi g_{v_{2}}\mu_{Y_{v_{2},\eta}},\mathcal{D}_{\chi_{u}+\chi_{v_{1}v_{2}}+m}\right)-3\epsilon.\label{eq:second lb for H(omega|_=00007Bi+k=00007D)}
\end{equation}

By (\ref{eq:lip prop of varph_=00007Buv_1=00007D}),
\[
\left|\varphi_{uv_{1}}(w)-\varphi_{uv_{1}}(w_{0})\right|\le4\rho\eta^{-6}\Vert g_{u}\Vert_{\mathrm{op}}^{-2}\Vert g_{v_{1}}\Vert_{\mathrm{op}}^{-2}\;\text{ for }w\in\partial B\left(w_{0},2\rho\right).
\]
From this and (\ref{eq:supp cont in B(w_0,rho)}), by applying Lemma
\ref{lem:first ord Taylor} with $f:=\varphi_{uv_{1}}-\varphi_{uv_{1}}(w_{0})$,
since $B(w_{0},2\rho)\subset\Omega_{1}$, from $\Vert g_{v_{2}}\Vert_{\mathrm{op}}\ge2^{k\chi/2}$,
and since $\eta^{-1},\rho^{-1},m\ll k$, it follows that for each
$w\in\mathrm{supp}\left(\psi g_{v_{2}}\mu_{Y_{v_{2},\eta}}\right)$
\begin{eqnarray*}
\left|\varphi_{uv_{1}}(w)-\varphi_{uv_{1}}(w_{0})-\varphi_{uv_{1}}'(w_{0})(w-w_{0})\right| & \le & 2\rho^{-1}\eta^{-6}\Vert g_{u}\Vert_{\mathrm{op}}^{-2}\Vert g_{v_{1}}\Vert_{\mathrm{op}}^{-2}|w-w_{0}|^{2}\\
 & \le & 8\rho^{-1}\eta^{-14}\Vert g_{u}\Vert_{\mathrm{op}}^{-2}\Vert g_{v_{1}}\Vert_{\mathrm{op}}^{-2}\Vert g_{v_{2}}\Vert_{\mathrm{op}}^{-4}\\
 & \le & 2^{-m}\Vert g_{u}\Vert_{\mathrm{op}}^{-2}\Vert g_{v_{1}v_{2}}\Vert_{\mathrm{op}}^{-2}.
\end{eqnarray*}
Hence, from (\ref{eq:second lb for H(omega|_=00007Bi+k=00007D)})
and by Lemma \ref{lem:ent of push by close func},
\begin{equation}
H\ge\frac{1}{m}H\left(\pi_{z\mathbb{R}}S_{\varphi_{uv_{1}}'(w_{0})}\psi g_{v_{2}}\mu_{Y_{v_{2},\eta}},\mathcal{D}_{\chi_{u}+\chi_{v_{1}v_{2}}+m}\right)-4\epsilon.\label{eq:third lb for H(omega|_=00007Bi+k=00007D)}
\end{equation}

We have,
\[
\varphi_{v_{1}}'(w_{0})\mathbb{R}=\varphi_{\omega|_{i}}'\left(\psi L\left(\sigma^{i}\omega\right)\right)\mathbb{R}=\alpha_{i}(\omega).
\]
Additionally, from (\ref{eq:psi is G-equi}) and $g_{\omega|_{i}}L\left(\sigma^{i}\omega\right)=L(\omega)$,
\[
\varphi_{v_{1}}(w_{0})=\varphi_{\omega|_{i}}\psi L\left(\sigma^{i}\omega\right)=\psi L(\omega).
\]
Hence,
\begin{eqnarray*}
\pi_{z\mathbb{R}}\circ S_{\varphi_{uv_{1}}'(w_{0})} & = & S_{\varphi_{uv_{1}}'(w_{0})}\circ\pi_{\varphi_{uv_{1}}'(w_{0})^{-1}z\mathbb{R}}\\
 & = & S_{\varphi_{uv_{1}}'(w_{0})}\circ\pi_{\varphi_{u}'\left(\psi L(\omega)\right)^{-1}z\mathbb{R}\alpha_{i}(\omega)^{-1}}.
\end{eqnarray*}
Together with (\ref{eq:third lb for H(omega|_=00007Bi+k=00007D)}),
this gives
\begin{equation}
H\ge\frac{1}{m}H\left(S_{\varphi_{uv_{1}}'(w_{0})}\pi_{\varphi_{u}'\left(\psi L(\omega)\right)^{-1}z\mathbb{R}\alpha_{i}(\omega)^{-1}}\psi g_{v_{2}}\mu_{Y_{v_{2},\eta}},\mathcal{D}_{\chi_{u}+\chi_{v_{1}v_{2}}+m}\right)-4\epsilon.\label{eq:fourth lb for H(omega|_=00007Bi+k=00007D)}
\end{equation}

By (\ref{eq:lb for norm of g_u z}) and (\ref{eq:first containment}),
\[
\Vert g_{v_{1}v_{2}}\Vert_{\mathrm{op}}\ge\eta^{2}\Vert g_{v_{1}}\Vert_{\mathrm{op}}\Vert g_{v_{2}}\Vert_{\mathrm{op}}.
\]
Moreover, from (\ref{eq:lip prop of varph_=00007Buv_1=00007D}),
\[
\left|\varphi_{uv_{1}}'(w_{0})\right|\ge\frac{1}{2}\eta^{2}\Vert g_{u}\Vert_{\mathrm{op}}^{-2}\Vert g_{v_{1}}\Vert_{\mathrm{op}}^{-2}.
\]
Thus, from (\ref{eq:fourth lb for H(omega|_=00007Bi+k=00007D)}) and
since $\epsilon^{-1},\eta^{-1}\ll m$,
\[
H\ge\frac{1}{m}H\left(\pi_{\varphi_{u}'\left(\psi L(\omega)\right)^{-1}z\mathbb{R}\alpha_{i}(\omega)^{-1}}\psi g_{v_{2}}\mu_{Y_{v_{2},\eta}},\mathcal{D}_{\chi_{v_{2}}+m}\right)-5\epsilon,
\]
which completes the proof of the lemma.
\end{proof}
\begin{proof}[Proof of Proposition \ref{prop:nontriv portion > dim(mu)/2}]
Let $0<\delta<1$ be as obtained in Corollary \ref{cor:from equidist prop},
let $0<\epsilon,\eta<1$ and $m,k,n\in\mathbb{Z}_{>0}$ be with $\delta^{-1}\ll\epsilon^{-1}\ll\eta^{-1}\ll m\ll k\ll n$,
fix $z\mathbb{R}\in\mathbb{RP}^{1}$, and fix $u\in\Lambda^{*}$ with
$\Vert g_{u}\Vert_{\mathrm{op}}\ge\eta^{-1}$ and $L(g_{u})\notin B\left(e_{1}\mathbb{C},2\eta\right)$.
Set
\[
H(v):=\frac{1}{m}H\left(\pi_{z\mathbb{R}}\varphi_{uv}\nu,\mathcal{D}_{\chi_{u}+\chi_{v}+m}\mid\mathcal{D}_{\chi_{u}+\chi_{v}}\right)\text{ for }v\in\Lambda^{*},
\]
and let
\[
P:=\mathbb{P}_{1\le i\le n}\left\{ H\left(\mathbf{U}_{i}\right)>\frac{1}{2}\dim\mu-\epsilon\right\} .
\]

Recalling that $\lambda_{n}$ denotes the uniform probability measure
on $\mathcal{N}_{n}:=\left\{ 1,...,n\right\} $,
\begin{eqnarray*}
P & = & \int\beta\left\{ \omega:H\left(\omega|_{i}\right)>\frac{1}{2}\dim\mu-\epsilon\right\} d\lambda_{n}(i)\\
 & = & \int\lambda_{n}\left\{ i\in\mathcal{N}_{n}:H\left(\omega|_{i}\right)>\frac{1}{2}\dim\mu-\epsilon\right\} d\beta(\omega).
\end{eqnarray*}
Hence, from $\epsilon^{-1},k\ll n$,
\begin{equation}
P\ge\int\lambda_{n}\left\{ i\in\mathcal{N}_{n}\::\:H\left(\omega|_{i+k}\right)>\frac{1}{2}\dim\mu-\epsilon\right\} \:d\beta(\omega)-\epsilon.\label{eq:first lb on P}
\end{equation}

Let $F$ denote the set of all $(i,\omega)\in\mathcal{N}_{n}\times\Lambda^{\mathbb{N}}$
such that $\Vert g_{(\sigma^{i}\omega)|_{k}}\Vert_{\mathrm{op}}\ge\eta^{-1}$,
$L\left(g_{(\sigma^{i}\omega)|_{k}}\right)\notin B\left(e_{1}\mathbb{C},2\eta\right)$,
and the conditions in (\ref{eq:lots of conds}) are all satisfied.
Note that $\beta$ is $\sigma$-invariant, and that for each $i\ge1$,
the maps $\omega\mapsto\omega|_{i}$ and $\omega\mapsto\sigma^{i}\omega$
are $\beta$-independent. Hence, by the results of Section \ref{subsec:results-from-rand-prod-of-mat},
from (\ref{eq:chi =00003D lim a.s.}) and (\ref{eq:alpha_n a.s. equals}),
and since $\epsilon^{-1}\ll\eta^{-1}\ll k,n$, we may assume that
$\lambda_{n}\times\beta(F)>1-\epsilon$.

By Lemma \ref{lem:lemma with lots of conds}, for each $(i,\omega)\in F$
we have
\begin{equation}
H\left(\omega|_{i+k}\right)\ge\frac{1}{m}H\left(\pi_{\varphi_{u}'\left(\psi L(\omega)\right)^{-1}z\mathbb{R}\alpha_{i}(\omega)^{-1}}\psi g_{(\sigma^{i}\omega)|_{k}}\mu_{Y_{(\sigma^{i}\omega)|_{k},\eta}},\mathcal{D}_{\chi_{(\sigma^{i}\omega)|_{k}}+m}\right)-\epsilon/2.\label{eq:by lem with lots of conds}
\end{equation}
Let $\mathcal{V}$ denote the set of all $v\in\Lambda^{k}$ such that
$\Vert g_{v}\Vert_{\mathrm{op}}\ge\eta^{-1}$ and $L\left(g_{v}\right)\notin B\left(e_{1}\mathbb{C},2\eta\right)$.
By Lemma \ref{lem:ent > 1/2 dim mu outside a small interval}, for
each $v\in\mathcal{V}$ there exists $w_{v}\mathbb{R}\in\mathbb{RP}^{1}$
such that
\begin{equation}
\frac{1}{m}H\left(\pi_{w\mathbb{R}}\psi g_{v}\mu_{Y_{v,\eta}},\mathcal{D}_{\chi_{v}+m}\right)>\frac{1}{2}\dim\mu-\epsilon/2\text{ for all }w\mathbb{R}\in\mathbb{RP}^{1}\setminus B\left(w_{v}\mathbb{R},\eta\right).\label{eq:>Delta-=00005Cepsilon/2 ouside of interval}
\end{equation}

Let $h:\Lambda^{\mathbb{N}}\rightarrow\mathbb{RP}^{1}$ be defined
by
\[
h(\omega)=\begin{cases}
w_{\omega|_{k}}\mathbb{R} & \text{ if }\omega|_{k}\in\mathcal{V}\\
\mathbb{R} & \text{ otherwise}
\end{cases}\text{ for }\omega\in\Lambda^{\mathbb{N}}.
\]
Note that, since $\epsilon^{-1},\eta^{-1},m,k\ll n$, we may assume
that $n$ is large with respect to $h$. From (\ref{eq:by lem with lots of conds})
and (\ref{eq:>Delta-=00005Cepsilon/2 ouside of interval}), it follows
that $H\left(\omega|_{i+k}\right)>\frac{1}{2}\dim\mu-\epsilon$ for
all $(i,k)\in F$ with
\[
\varphi_{u}'\left(\psi L(\omega)\right)^{-1}z\mathbb{R}\alpha_{i}(\omega)^{-1}\notin B\left(h(\sigma^{i}\omega),\eta\right).
\]
Hence, by (\ref{eq:first lb on P}) and since $\lambda_{n}\times\beta(F)>1-\epsilon$,
\[
P\ge\int\lambda_{n}\left\{ i\in\mathcal{N}_{n}\::\:d\left(\varphi_{u}'\left(\psi L(\omega)\right)^{-1}z\mathbb{R},\alpha_{i}(\omega)h(\sigma^{i}\omega)\right)>\eta\right\} \:d\beta(\omega)-2\epsilon.
\]
From this, by Corollary \ref{cor:from equidist prop}, since $n$
is large with respect to $h$, and since $\delta^{-1}\ll\epsilon^{-1},\eta^{-1}$,
it follows that $P\ge\delta/2$, which completes the proof of the
proposition.
\end{proof}

\subsection{\label{subsec:A-technical-lemma on random words}A lemma concerning
random words}

Recall the random words $\mathbf{I}(j,l;k)$ from Section \ref{subsec:Symbolic-related-notations}.
We shall need the following lemma in order to obtain the conclusion
of Proposition \ref{prop:nontriv portion > dim(mu)/2} with $\mathbf{I}(j,l;k)$
in place of $\mathbf{U}_{i}$.
\begin{lem}
\label{lem:ac of words}For every $\epsilon>0$ and $l\ge L(\epsilon)\ge1$
there exists $M=M(\epsilon,l)\in\mathbb{Z}_{>0}$ such that for every
$0\le j<l$ and $n\ge N(\epsilon,l)\ge1$, there exists $\mathcal{V}\subset\cup_{1\le k\le n}\Lambda^{j+lk}$
satisfying \noun{
\begin{equation}
\mathbb{P}_{1\le k\le n}\left\{ \mathbf{U}_{j+lk}\in\mathcal{V}\right\} \ge1-\epsilon\label{eq:big mass of V}
\end{equation}
}and
\begin{equation}
\mathbb{E}_{1\le k\le n}\left(\mathbf{1}_{\left\{ \mathbf{U}_{j+lk}\in\mathcal{V}\right\} }\delta_{\mathbf{U}_{j+lk}}\right)\ll\mathbb{E}_{1\le k\le nM}\left(\delta_{\mathbf{I}(j,l;k)}\right),\label{eq:ac of words}
\end{equation}
with Radon--Nikodym derivative bounded by $M$.
\end{lem}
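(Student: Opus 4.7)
Given $\epsilon>0$, the idea is to construct $\mathcal{V}$ from the explicit correspondence between words $u\in\Lambda^{j+lk}$ and the stopping-time sets $\Psi(j,l;m)$. For $u=u_0u_1\ldots u_k\in\Lambda^{j+lk}$ with $k\ge1$, set
\[
T(u):=\bigl[\max_{0\le i<k}\chi_{u_0\ldots u_i},\,\chi_u\bigr)\cap\mathbb{Z}_{\ge1},
\]
so that $u\in\Psi(j,l;m)$ iff $m\in T(u)$. Choose $M:=\lceil 3\chi l\rceil$, depending only on $\chi$ and $l$, and let
\[
\mathcal{V}:=\Bigl\{u\in\bigcup_{1\le k\le n}\Lambda^{j+lk}\::\:T(u)\cap[1,nM]\ne\emptyset\Bigr\}.
\]
Writing $\mu_1$ and $\mu_2$ for the left- and right-hand sides of (\ref{eq:ac of words}), a direct computation gives, for $u\in\mathcal{V}$ of length $j+lk$,
\[
\frac{\mu_1(\{u\})}{\mu_2(\{u\})}=\frac{M}{|T(u)\cap[1,nM]|}\le M,
\]
so the Radon--Nikodym bound is automatic. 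The whole task reduces to proving $\mu_1(\mathcal{V})\ge1-\epsilon$ for $l\ge L(\epsilon)$ and $n\ge N(\epsilon,l)$.

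To control $\mu_1(\mathcal{V})$, set $M_k(\omega):=\max_{0\le i\le k}\chi_{\omega|_{j+li}}$. By (\ref{eq:lim on normalized chi_omega|_n}) and Egorov's theorem, there exists a set $E\subset\Lambda^{\mathbb{N}}$ with $\beta(E)\ge1-\epsilon/3$ on which $\chi_{\omega|_{j+lk}}\le(5\chi/2)(j+lk)\le nM$ for all $k\le n$, provided $l$ is large enough. On $E$, the condition $T(\omega|_{j+lk})\cap[1,nM]\ne\emptyset$ reduces to $M_k(\omega)-M_{k-1}(\omega)\ge1$. It therefore suffices to show that for $\beta$-a.e.\ $\omega$ on a subset of $E$ of measure at least $1-\epsilon/2$, the density in $k\in[1,n]$ of indices with $M_k-M_{k-1}\ge1$ exceeds $1-\epsilon/2$, provided $l$ and $n$ are sufficiently large.

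The main analytic step is concentration of the block increments $A_k(\omega):=\chi_{\omega|_{j+lk}}-\chi_{\omega|_{j+l(k-1)}}$ around their asymptotic mean $2\chi l$ as $l\to\infty$. By ergodicity of the skew-product $(\Lambda^{\mathbb{N}}\times\mathbb{CP}^1,\beta\times\mu)$ under $(\omega,v\mathbb{C})\mapsto(\sigma\omega,g_{\omega_0}v\mathbb{C})$---which holds under our strong irreducibility and proximality assumptions---Furstenberg's formula gives the mean $2\chi l$, while Le Page's central limit theorem for the log-norm cocycle on $\mathrm{SL}(2,\mathbb{C})$ yields $\mathrm{Var}(A_k)=O(l)$ in the stationary regime; hence $\mathbb{P}(A_k\le1)=O(1/l)\to0$ as $l\to\infty$. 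The drawdown process $R_k(\omega):=M_k(\omega)-\chi_{\omega|_{j+lk}}$ satisfies $R_k=(R_{k-1}-A_k)^+$, and the positive drift $2\chi l$ of $A_k$ forces its stationary distribution to concentrate at $0$ as $l\to\infty$. Since $M_k-M_{k-1}=(A_k-R_{k-1})^+$, combining these two facts gives $\mathbb{P}(M_k-M_{k-1}\ge1)\ge1-\epsilon/6$ for $l$ sufficiently large. Birkhoff's ergodic theorem on the skew-product converts this into an a.s.\ density statement, and a final application of Egorov yields the uniformity in $n$; integrating over $\omega\in E$ produces $\mu_1(\mathcal{V})\ge1-\epsilon$. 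The main technical obstacle is the concentration-plus-drawdown-tightness step, which is however a standard consequence of the well-developed theory of random matrix products in our setting.
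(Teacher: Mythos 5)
Your construction of $\mathcal{V}$ and $M$, and the observation that the Radon--Nikodym bound is automatic once membership in $\mathcal{V}$ guarantees $T(u)\cap[1,nM]\ne\emptyset$, is essentially the paper's own reduction: the paper's $\mathcal{V}_k$ is defined by $\Vert g_{u_0\ldots u_k}\Vert_{\mathrm{op}}^2>2\Vert g_{u_0\ldots u_i}\Vert_{\mathrm{op}}^2$ for all $i<k$, which is exactly the sufficient condition ``$T(u)$ has length at least $1$'', and its choice $M=\lceil 2l\log R\rceil$ makes the containment $T(u)\subset[1,nM]$ automatic (your smaller $M=\lceil 3\chi l\rceil$ plus an Egorov set also works). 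So the whole content of the lemma is, as you say, the mass bound (\ref{eq:big mass of V}), i.e.\ that for most $k\le n$ the time $j+lk$ is a record of $\chi_{\omega|_{j+li}}$ by a gap of at least $1$. Here your route diverges from the paper and, as written, has genuine gaps. First, the claim that the drawdown $R_k=(R_{k-1}-A_k)^+$ has ``a stationary distribution concentrating at $0$'' is not a proof: $R_k$ is not a Markov chain (each $A_k$ depends on the entire past through the direction of the partial product $g_{\omega|_{j+l(k-1)}}$), no stationary law is defined, and no quantitative bound on $\mathbb{P}(R_{k-1}\ge\chi l)$ is derived. Since $R_{k-1}=\max_{i\le k-1}\bigl(\chi_{\omega|_{j+li}}-\chi_{\omega|_{j+l(k-1)}}\bigr)$, controlling it requires a bound on drops of the norm over $m$ blocks that is summable in $m$ uniformly in $k$; knowing only that each fixed-lag probability tends to $0$ as $l\to\infty$ is useless after a union bound over up to $n$ lags, so one genuinely needs uniform exponential lower-tail (large deviation) estimates for the norm cocycle, which you never invoke. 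Second, the Birkhoff step fails as stated: $\mathbf{1}\{M_k-M_{k-1}\ge1\}$ is not of the form $F\circ T^k$ for a fixed observable on the skew product, precisely because it involves $R_{k-1}$, i.e.\ the whole past. (This particular step is easily repaired: since (\ref{eq:big mass of V}) is itself an average over $k$, it suffices to average the per-$k$ probabilities; no a.s.\ density statement is needed.) Third, the mean and variance claims for $A_k$ (``Furstenberg gives mean $2\chi l$'', ``Le Page gives $\mathrm{Var}(A_k)=O(l)$'') must hold conditionally on the past, uniformly in the conditioning, which requires uniform non-concentration/regularity input of the type in Lemma \ref{lem:small ball --> small mass} (or Le Page-type spectral gap estimates); this is available but not supplied, and it is the same kind of input missing in the first gap.

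For comparison, the paper proves the mass bound without any CLT, LDP or spectral-gap machinery: it fixes a typical set $\mathcal{W}$ of words of length $j+ln$ (final norm at least $2^{ln(2\chi-\delta)}$ and all but a $\delta$-fraction of blocks of controlled norm), using only the a.s.\ convergence (\ref{eq:lim on normalized chi_omega|_n}) and ergodicity of $\sigma^l$, and then shows deterministically, via the greedy interval-selection argument with the sequences $\{b_q\}$, $\{c_q\}$, that any $w\in\mathcal{W}$ has at most $\epsilon n/2$ non-record block times. Your probabilistic strategy could be completed, but only by importing uniform large-deviation and non-concentration estimates for matrix norms; as sketched, the crucial step -- that $R_{k-1}$ is small with high probability uniformly in $k$ -- is asserted rather than proved, and that is essentially the heart of the lemma.
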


\begin{proof}
Let $0<\epsilon,\delta<1$ and $l,j,n\in\mathbb{Z}_{\ge0}$ be such
that $\epsilon^{-1}\ll\delta^{-1}\ll l\ll n$ and $0\le j<l$. Given
$1\le k\le n$, let $\mathcal{V}_{k}$ denote the set of words $u_{0}...u_{k}=v\in\Lambda^{j+lk}$
such that $u_{0}\in\Lambda^{j}$, $u_{i}\in\Lambda^{l}$ for $1\le i\le k$,
and $\Vert g_{v}\Vert_{\mathrm{op}}^{2}>2\Vert g_{u_{0}...u_{i}}\Vert_{\mathrm{op}}^{2}$
for all $0\le i<k$. Set
\[
\mathcal{V}:=\cup_{k=1}^{n}\mathcal{V}_{k},\:R:=\max_{i\in\Lambda}\Vert g_{i}\Vert_{\mathrm{op}}^{2},\text{ and }M:=\left\lceil 2l\log R\right\rceil .
\]
Given $1\le k\le n$ and $v\in\Lambda^{j+lk}$, we have $\Vert g_{v}\Vert_{\mathrm{op}}^{2}<2^{nM}$.
This clearly implies that $\mathcal{V}\subset\cup_{k=1}^{nM}\Psi\left(j,l;k\right)$,
which gives (\ref{eq:ac of words}) with Radon--Nikodym derivative
bounded by $M$. Thus, in order to complete the proof of the lemma
it remains to establish (\ref{eq:big mass of V}).

Let $\mathcal{U}$ denote the set of words $u\in\Lambda^{l}$ with
$\Vert g_{u}\Vert_{\mathrm{op}}^{2}<2^{l(2\chi+\delta)}$. By (\ref{eq:lim on normalized chi_omega|_n})
and since $\delta^{-1}\ll l$, we may assume that $\beta\left(\left[\mathcal{U}\right]\right)>1-\delta/2$,
where recall that $\left[\mathcal{U}\right]:=\cup_{u\in\mathcal{U}}[u]$.
Let $\mathcal{W}$ denote the set of words $u_{0}...u_{n}=w\in\Lambda^{j+ln}$
such that $u_{0}\in\Lambda^{j}$, $u_{k}\in\Lambda^{l}$ for $1\le k\le n$,
$\Vert g_{w}\Vert_{\mathrm{op}}^{2}>2^{ln(2\chi-\delta)}$, and 
\[
\frac{1}{n}\#\left\{ 1\le k\le n\::\:u_{k}\in\mathcal{U}\right\} >1-\delta.
\]
By (\ref{eq:lim on normalized chi_omega|_n}), by the ergodicity of
$\left(\Lambda^{\mathbb{N}},\sigma^{l},\beta\right)$, from $\beta\left(\left[\mathcal{U}\right]\right)>1-\delta/2$,
and since $\delta^{-1},l\ll n$, we may assume that $\beta\left(\left[\mathcal{W}\right]\right)>1-\delta$.

Given $u_{0}...u_{n}=w\in\mathcal{W}$, let $K_{w}$ denote the set
of integers $1\le k\le n$ such that $u_{0}...u_{k}\notin\mathcal{V}$.
Let us show that $|K_{w}|\le\epsilon n/2$. Suppose that $K_{w}\ne\emptyset$,
set $m:=|K_{w}|$, and let $1\le k_{1}<...<k_{m}\le n$ be an enumeration
of $K_{w}$. Note that for each $1\le a\le m$ there exists $0\le i_{a}<k_{a}$
such that
\begin{equation}
\Vert g_{u_{0}...u_{k_{a}}}\Vert_{\mathrm{op}}^{2}\le2\Vert g_{u_{0}...u_{i_{a}}}\Vert_{\mathrm{op}}^{2}.\label{eq:prop of k_a}
\end{equation}

Let us construct by induction strictly decreasing sequences $\{b_{q}\}_{q=1}^{s}\subset\{i_{a}\}_{a=1}^{m}$
and $\{c_{q}\}_{q=1}^{s}\subset\{k_{a}\}_{a=1}^{m}$ as follows. Set
$b_{1}:=i_{m}$ and $c_{1}:=k_{m}$. Let $q\ge1$ and suppose that
$\{b_{t}\}_{t=1}^{q}$ and $\{c_{t}\}_{t=1}^{q}$ have already been
chosen. If $b_{q}<k_{1}$, then set $s:=q$ and terminate the construction.
Otherwise, if $b_{q}\ge k_{1}$, let $1\le a<m$ be such that $b_{q}\ge k_{a}$
and $b_{q}<k_{a+1}$, and set $b_{q+1}:=i_{a}$ and $c_{q+1}:=k_{a}$.
This completes the inductive construction.

Note that the intervals $(b_{1},c_{1}],...,(b_{s},c_{s}]$ are disjoint.
Using this and (\ref{eq:prop of k_a}), it is easy to show by induction
that for each $1\le q\le s$,
\begin{equation}
\Vert g_{w}\Vert_{\mathrm{op}}^{2}\le2^{q}\Vert g_{u_{0}...u_{b_{q}}}\Vert_{\mathrm{op}}^{2}\left(\prod_{t=2}^{q}\Vert g_{u_{c_{t}+1}...u_{b_{t-1}}}\Vert_{\mathrm{op}}^{2}\right)\Vert g_{u_{c_{1}+1}...u_{n}}\Vert_{\mathrm{op}}^{2}.\label{eq:ineq by ind on q}
\end{equation}
Let $J_{1}$ denote the set of $1\le k\le n$ such that $k\notin\cup_{q=1}^{s}(b_{q},c_{q}]$
and $u_{k}\in\mathcal{U}$, and let $J_{2}$ denote the set of $0\le k\le n$
such that $u_{k}\notin\mathcal{U}$. By applying (\ref{eq:ineq by ind on q})
with $q=s$,
\begin{equation}
\Vert g_{w}\Vert_{\mathrm{op}}^{2}\le2^{s}\left(\prod_{k\in J_{1}}\Vert g_{u_{k}}\Vert_{\mathrm{op}}^{2}\right)\left(\prod_{k\in J_{2}}\Vert g_{u_{k}}\Vert_{\mathrm{op}}^{2}\right).\label{eq:prod over J_1 times prod over J_2}
\end{equation}

By the construction of the sequences $\{b_{q}\}_{q=1}^{s}$ and $\{c_{q}\}_{q=1}^{s}$,
it follows that $K_{w}\subset\cup_{q=1}^{s}(b_{q},c_{q}]$, which
implies $|J_{1}|\le n-m$. Moreover, by the definitions of $J_{1}$
and $\mathcal{U}$, we have $\Vert g_{u_{k}}\Vert_{\mathrm{op}}^{2}\le2^{l(2\chi+\delta)}$
for each $k\in J_{1}$. Hence,
\[
\prod_{k\in J_{1}}\Vert g_{u_{k}}\Vert_{\mathrm{op}}^{2}\le2^{l(2\chi+\delta)(n-m)}.
\]
From $w\in\mathcal{W}$, we get $|J_{2}|<\delta n+1$. Additionally,
note that $\Vert g_{u_{k}}\Vert_{\mathrm{op}}^{2}\le R^{l}$ for $k\in J_{2}$.
Thus,
\[
\prod_{k\in J_{2}}\Vert g_{u_{k}}\Vert_{\mathrm{op}}^{2}\le R^{2l\delta n}.
\]
Since $w\in\mathcal{W}$, we also have $\Vert g_{w}\Vert_{\mathrm{op}}^{2}>2^{ln(2\chi-\delta)}$.
By combining these inequalities together with (\ref{eq:prod over J_1 times prod over J_2}),
and then taking the logarithm of both sides,
\[
ln(2\chi-\delta)<s+l(2\chi+\delta)(n-m)+2l\delta n\log R.
\]
Together with $s\le m$, this gives
\[
(2\chi+\delta)m<4\delta\left(1+\log R\right)n.
\]
From $\epsilon^{-1}\ll\delta^{-1}$, and since $\chi$ and $R$ are
positive global constants, we obtain $|K_{w}|=m\le\epsilon n/2$ for
$w\in\mathcal{W}$, as desired.

Now we can establish (\ref{eq:big mass of V}). Indeed,
\begin{eqnarray*}
\mathbb{P}_{1\le k\le n}\left\{ \mathbf{U}_{j+lk}\in\mathcal{V}\right\}  & = & \frac{1}{n}\sum_{k=1}^{n}\int\mathbf{1}_{\left\{ \omega|_{j+lk}\in\mathcal{V}_{k}\right\} }d\beta(\omega)\\
 & \ge & \int\mathbf{1}_{\left\{ \omega|_{j+ln}\in\mathcal{W}\right\} }\frac{1}{n}\sum_{k=1}^{n}\mathbf{1}_{\left\{ \omega|_{j+lk}\in\mathcal{V}_{k}\right\} }d\beta(\omega)\\
 & = & \int\mathbf{1}_{\left\{ \omega|_{j+ln}\in\mathcal{W}\right\} }\frac{1}{n}\left(n-\left|K_{\omega|_{j+ln}}\right|\right)d\beta(\omega)\\
 & \ge & \beta\left(\left[\mathcal{W}\right]\right)(1-\epsilon/2).
\end{eqnarray*}
Since $\beta\left(\left[\mathcal{W}\right]\right)>1-\delta$, this
gives (\ref{eq:big mass of V}), which completes the proof of the
lemma.
\end{proof}

\subsection{\label{subsec:Proof-of-Propositions}Proof of Propositions \ref{prop:lb on ent of proj of comp of nu}
and \ref{prop:lb on ent of proj of cylinders of nu}}

First we prove Proposition \ref{prop:lb on ent of proj of cylinders of nu},
which is the following statement.
\begin{prop*}
Suppose that $\dim\mu<2$. Then there exist $\gamma,\eta_{0}\in(0,1)$
such that for every $0<\eta<\eta_{0}$, $n\ge N(\eta)\ge1$, $z\mathbb{R}\in\mathbb{RP}^{1}$,
and $u\in\Lambda^{*}$ with $\Vert g_{u}\Vert_{\mathrm{op}}\ge\eta^{-1}$
and $L(g_{u})\notin B\left(e_{1}\mathbb{C},2\eta\right)$,
\[
\frac{1}{n}H\left(\pi_{z\mathbb{R}}\varphi_{u}\nu,\mathcal{D}_{\chi_{u}+n}\mid\mathcal{D}_{\chi_{u}}\right)\ge\dim\mu-1+\gamma.
\]
\end{prop*}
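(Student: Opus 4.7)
The plan is to combine the trivial bound of Lemma \ref{lem:triv lb} with the nontrivial probabilistic gain of Proposition \ref{prop:nontriv portion > dim(mu)/2}, averaging them via Lemma \ref{lem:glob ent to loc ent} and the stationarity of $\nu$. Let $\delta\in(0,1)$ be the constant of Proposition \ref{prop:nontriv portion > dim(mu)/2}, and set $\gamma:=\frac{\delta}{8}(2-\dim\mu)>0$. Choose $\epsilon\ll\gamma$, then $\eta,m,l,n$ satisfying $\epsilon^{-1}\ll\eta^{-1}\ll m\ll l\ll n$. The constants $\eta_0$ and $N(\eta)$ in the statement will be fixed from the thresholds appearing in Lemmas \ref{lem:triv lb}, \ref{lem:ac of words}, \ref{lem:glob ent to loc ent} and Proposition \ref{prop:nontriv portion > dim(mu)/2}.

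The first step is to pass from the global conditional entropy to an average of local entropies at intermediate scales. Setting $\theta:=\pi_{z\mathbb{R}}\varphi_u\nu$, the hypotheses on $u$ together with Lemmas \ref{lem:bi-lip prop of psi} and \ref{lem:dist of gzC to L(g)} imply that $\mathrm{supp}(\theta)$ has diameter $O_\eta(2^{-\chi_u})$, so Lemma \ref{lem:glob ent to loc ent} yields
\[
\frac{1}{n}H\!\left(\theta,\mathcal{D}_{\chi_u+n}\mid\mathcal{D}_{\chi_u}\right)=\mathbb{E}_{\chi_u\leq j\leq\chi_u+n}\!\left[\frac{1}{m}H(\theta_{w,j},\mathcal{D}_{j+m})\right]+O_\eta(m/n).
\]
By iterating the stationarity relation $\nu=\sum_{v\in\Psi(0,l;k)}p_v\,\varphi_v\nu$ we write $\theta=\sum_v p_v\,\pi_{z\mathbb{R}}\varphi_{uv}\nu$, and each piece has support of diameter $\lesssim 2^{-\chi_u-\chi_v}$. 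A standard component-identification argument in the spirit of \cite[Section 5]{Ho1} then shows that, for typical scales $j\in[\chi_u,\chi_u+n]$ and $\theta$-typical $w$, the component $\theta_{w,j}$ can be matched with a single piece $\pi_{z\mathbb{R}}\varphi_{uv}\nu$ for $v$ with $\chi_v\approx j-\chi_u$, and that the induced distribution on $v$ is comparable to an $\mathbf{I}$-distribution.

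These local entropies admit two lower bounds. For $v$ meeting the mild genericity conditions of Lemma \ref{lem:triv lb} (controlled via Lemma \ref{lem:small ball --> small mass}), the trivial bound gives $\tfrac{1}{m}H(\pi_{z\mathbb{R}}\varphi_{uv}\nu,\mathcal{D}_{\chi_u+\chi_v+m}\mid\mathcal{D}_{\chi_u+\chi_v})\geq\dim\mu-1-\epsilon$. For the gain, applying Proposition \ref{prop:nontriv portion > dim(mu)/2} with $uv'$ in place of $u$ (for a suitable long prefix $v'$ of $v$) shows that a proportion at least $\delta$ of the remaining suffixes satisfy the stronger bound $\tfrac{1}{2}\dim\mu-\epsilon$; Lemma \ref{lem:ac of words} transfers this proportion from the $\mathbf{U}_i$-distribution of the proposition to the $\mathbf{I}(\,\cdot\,,\,\cdot\,;\,\cdot\,)$-distributed words appearing in our decomposition, at the cost of a factor $M=M(\epsilon,l)$.

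Averaging these two bounds over the decomposition, and using that $\tfrac{1}{2}\dim\mu-(\dim\mu-1)=1-\tfrac{1}{2}\dim\mu>0$,
\[
\mathbb{E}\!\left[\tfrac{1}{m}H(\theta_{w,j},\mathcal{D}_{j+m})\right]\geq\dim\mu-1+\tfrac{\delta}{2M}(2-\dim\mu)-O(\epsilon)\geq\dim\mu-1+\gamma,
\]
which together with the first display gives the proposition once $m/n$ is taken small enough. The main obstacle is the identification step in the second paragraph: matching generic components $\theta_{w,j}$ with single localized pieces $\pi_{z\mathbb{R}}\varphi_{uv}\nu$ and verifying that the distribution on $v$ induced by varying $j$ and $w$ is absolutely continuous, with bounded Radon--Nikodym derivative, to a distribution to which Proposition \ref{prop:nontriv portion > dim(mu)/2} applies. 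Pieces at the same scale may overlap, so this correspondence is delicate and ultimately relies on Lemma \ref{lem:ac of words} for the quantitative transfer between natural and exponentially-weighted distributions on words.
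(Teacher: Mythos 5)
Your overall architecture is the right one and matches the paper's: the trivial bound of Lemma \ref{lem:triv lb} on a set of words of $\Gamma$-probability close to $1$, the gain of Proposition \ref{prop:nontriv portion > dim(mu)/2} on a set of probability $\gtrsim\delta$, the transfer from $\mathbf{U}$-distributed to $\mathbf{I}$-distributed words via Lemma \ref{lem:ac of words}, and an average over intermediate scales via Lemma \ref{lem:glob ent to loc ent}. But the step you yourself flag as the main obstacle --- matching typical components $\theta_{w,j}$ of $\pi_{z\mathbb{R}}\varphi_u\nu$ with \emph{single} pieces $\pi_{z\mathbb{R}}\varphi_{uv}\nu$ and showing the induced distribution on $v$ is comparable to an $\mathbf{I}$-distribution --- is a genuine gap, and in the overlapping regime (which is the whole point here, with only a weak Diophantine hypothesis and no separation) no such matching exists: a single level-$j$ component is in general a mixture of many pieces $v$, so there is no ``standard component-identification argument'' to invoke. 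The point is that this step is not needed. Since $\mathbb{E}_{w}\bigl(\frac{1}{m}H(\theta_{w,\chi_u+i},\mathcal{D}_{\chi_u+i+m})\bigr)=\frac{1}{m}H\bigl(\theta,\mathcal{D}_{\chi_u+i+m}\mid\mathcal{D}_{\chi_u+i}\bigr)$, one can use the exact stationarity decomposition $\varphi_u\nu=\mathbb{E}\bigl(\varphi_{u\mathbf{I}(j,l;i)}\nu\bigr)$ from (\ref{eq:mu as conv comb with u in Psi_n}) and the \emph{concavity of conditional entropy in the measure argument} to bound this from below by $\mathbb{E}\bigl(\frac{1}{m}H(\pi_{z\mathbb{R}}\varphi_{u\mathbf{I}(j,l;i)}\nu,\mathcal{D}_{\chi_u+i+m}\mid\mathcal{D}_{\chi_u+i})\bigr)$; the scale mismatch between $\chi_u+i$ and $\chi_u+\chi_v$ is $O_l(1)$ by (\ref{eq:op norm is comp to 2^(n/2) for u in Psi_n}) and is absorbed since $l\ll m$. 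This is exactly how the paper proceeds, and it makes the absolute-continuity input of Lemma \ref{lem:ac of words} needed only once, to convert the $\mathbf{U}_i$-probability bound of Proposition \ref{prop:nontriv portion > dim(mu)/2} into $\Gamma(\mathcal{U}_1)\ge\delta/(8M)$.

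Two further points. First, you apply Proposition \ref{prop:nontriv portion > dim(mu)/2} ``with $uv'$ in place of $u$''; this detour is unnecessary (the hypotheses of the present proposition are exactly those required of $u$ there, so it applies directly with the given $u$ and the suffix as the random word) and it creates extra bookkeeping ($\chi_{uv'}+\chi_{v''}$ versus $\chi_u+\chi_{v'v''}$, and hypotheses on $uv'$) that you do not address. Second, your constants are inconsistent: you fix $\gamma=\frac{\delta}{8}(2-\dim\mu)$ but derive a gain $\frac{\delta}{2M}(2-\dim\mu)$ with $M=M(\epsilon,l)$, while your parameter order $m\ll l$ chooses $l$ (hence $M$) after $\epsilon$ and after $\gamma$; this does not close unless $M$ is bounded a priori. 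The fix, as in the paper, is to choose $l$ immediately after $\delta$ (so $M=M(\delta/4,l)$ is determined before $\epsilon$) and let $\gamma$ absorb the factor $1/M$, e.g. $\gamma\approx\frac{\delta}{16M}\bigl(1-\tfrac12\dim\mu\bigr)$.
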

\begin{proof}
Let $0<\delta<1$ be as obtained in Proposition \ref{prop:nontriv portion > dim(mu)/2},
and let $\epsilon,\eta\in(0,1)$ and $l,m,n\in\mathbb{Z}_{>0}$ be
with $\delta^{-1}\ll l\ll\epsilon^{-1}\ll\eta^{-1}\ll m\ll n$. Let
$M=M(\delta/4,l)\in\mathbb{Z}_{>0}$ be as obtained in Lemma \ref{lem:ac of words}.
Since $\delta^{-1},l\ll\epsilon^{-1}$, we may assume that $M\ll\epsilon^{-1}$.
Fix $z\mathbb{R}\in\mathbb{RP}^{1}$ and $u\in\Lambda^{*}$ with $\Vert g_{u}\Vert_{\mathrm{op}}\ge\eta^{-1}$
and $L(g_{u})\notin B\left(e_{1}\mathbb{C},2\eta\right)$, and set
\[
H:=\frac{1}{n}H\left(\pi_{z\mathbb{R}}\varphi_{u}\nu,\mathcal{D}_{\chi_{u}+n}\mid\mathcal{D}_{\chi_{u}}\right).
\]

Set $n':=\left\lfloor n/M\right\rfloor $, and let $\mathcal{U}_{1}$
denote the set of $v\in\Lambda^{*}$ such that
\[
\frac{1}{m}H\left(\pi_{z\mathbb{R}}\varphi_{uv}\nu,\mathcal{D}_{\chi_{u}+\chi_{v}+m}\mid\mathcal{D}_{\chi_{u}+\chi_{v}}\right)>\frac{1}{2}\dim\mu-\epsilon.
\]
By Proposition \ref{prop:nontriv portion > dim(mu)/2} and since $\delta^{-1},\epsilon^{-1},l,M\ll n$,
\[
\mathbb{P}_{l\le i\le n'l+l-1}\left\{ \mathbf{U}_{i}\in\mathcal{U}_{1}\right\} >\delta/2.
\]
Hence, there exists $0\le j<l$ such that
\begin{equation}
\mathbb{P}_{1\le i\le n'}\left\{ \mathbf{U}_{j+li}\in\mathcal{U}_{1}\right\} >\delta/2.\label{eq:P=00007BU_=00007Bj+li=00007D in U_1=00007D>delta/2}
\end{equation}
Given $\mathcal{U}\subset\Lambda^{*}$, set
\[
\Gamma\left(\mathcal{U}\right):=\mathbb{P}_{1\le i\le n}\left\{ \mathbf{I}\left(j,l;i\right)\in\mathcal{U}\right\} .
\]

By Lemma \ref{lem:ac of words}, there exists $\mathcal{V}\subset\cup_{1\le i\le n'}\Lambda^{j+li}$
such that\noun{
\[
\mathbb{P}_{1\le i\le n'}\left\{ \mathbf{U}_{j+li}\in\mathcal{V}\right\} \ge1-\delta/4
\]
}and
\[
\mathbb{E}_{1\le i\le n'}\left(\mathbf{1}_{\left\{ \mathbf{U}_{j+li}\in\mathcal{V}\right\} }\delta_{\mathbf{U}_{j+li}}\right)\ll\mathbb{E}_{1\le i\le n'M}\left(\delta_{\mathbf{I}(j,l;i)}\right),
\]
with Radon--Nikodym derivative bounded by $M$. From this, by (\ref{eq:P=00007BU_=00007Bj+li=00007D in U_1=00007D>delta/2}),
and since $M,\delta^{-1}\ll n$, we obtain $\Gamma\left(\mathcal{U}_{1}\right)>\frac{\delta}{8M}$.

Let $\mathcal{U}_{2}$ denote the set of all $v\in\Lambda^{*}$ such
that $\Vert g_{v}\Vert_{\mathrm{op}}\ge3\eta^{-2}$ and $L(g_{v})\in Y_{u,2\eta}$.
Since $\epsilon^{-1}\ll\eta^{-1}\ll n$, and by Lemma \ref{lem:small ball --> small mass},
we have $\Gamma\left(\mathcal{U}_{2}\right)>1-\epsilon$. Additionally,
by Lemma \ref{lem:triv lb},
\begin{equation}
\frac{1}{m}H\left(\pi_{z\mathbb{R}}\varphi_{uv}\nu,\mathcal{D}_{\chi_{u}+\chi_{v}+m}\mid\mathcal{D}_{\chi_{u}+\chi_{v}}\right)\ge\dim\mu-1-\epsilon\text{ for }v\in\mathcal{U}_{2}.\label{eq:lb on ent for v in U_1}
\end{equation}

Since $\epsilon^{-1},m\ll n$, and by applying Lemma \ref{lem:glob ent to loc ent}
to the measures $\left(\pi_{z\mathbb{R}}\varphi_{u}\nu\right)_{D}$
with $D\in\mathcal{D}_{\chi_{u}}$,
\[
H\ge\mathbb{E}_{1\le i\le n}\left(\frac{1}{m}H\left(\pi_{z\mathbb{R}}\varphi_{u}\nu,\mathcal{D}_{\chi_{u}+i+m}\mid\mathcal{D}_{\chi_{u}+i}\right)\right)-\epsilon.
\]
By (\ref{eq:mu as conv comb with u in Psi_n}), we have $\varphi_{u}\nu=\mathbb{E}\left(\varphi_{u\mathbf{I}\left(j,l;i\right)}\nu\right)$
for each $i\ge1$. Hence, from the last formula, by the concavity
of conditional entropy, from (\ref{eq:op norm is comp to 2^(n/2) for u in Psi_n}),
and since $l,\epsilon^{-1}\ll m$,
\[
H\ge\mathbb{E}_{1\le i\le n}\left(\frac{1}{m}H\left(\pi_{z\mathbb{R}}\varphi_{u\mathbf{I}\left(j,l;i\right)}\nu,\mathcal{D}_{\chi_{u}+\chi_{\mathbf{I}\left(j,l;i\right)}+m}\mid\mathcal{D}_{\chi_{u}+\chi_{\mathbf{I}\left(j,l;i\right)}}\right)\right)-2\epsilon.
\]

From the last inequality, by the definition of $\mathcal{U}_{1}$,
by (\ref{eq:lb on ent for v in U_1}), and since $\Gamma\left(\mathcal{U}_{1}\right)>\frac{\delta}{8M}$
and $\Gamma\left(\mathcal{U}_{2}\right)>1-\epsilon$,
\begin{multline*}
H\ge\Gamma\left(\mathcal{U}_{1}\right)\left(\frac{1}{2}\dim\mu-\epsilon\right)+\Gamma\left(\mathcal{U}_{2}\setminus\mathcal{U}_{1}\right)\left(\dim\mu-1-\epsilon\right)-2\epsilon\\
\ge\dim\mu-1+\frac{\delta}{8M}\left(1-\frac{1}{2}\dim\mu\right)-4\epsilon.
\end{multline*}
Since $\dim\mu<2$ and $\delta^{-1},M\ll\epsilon^{-1}$, this completes
the proof of the proposition.
\end{proof}
We can now prove Proposition \ref{prop:lb on ent of proj of comp of nu},
which is the following statement.
\begin{prop*}
Suppose that $\dim\mu<2$. Then there exists $\gamma>0$ so that for
every $\epsilon>0$, $m\ge M(\epsilon)\ge1$ and $n\ge1$,
\[
\mathbb{P}\left\{ \underset{w\mathbb{R}\in\mathbb{RP}^{1}}{\inf}\frac{1}{m}H\left(\pi_{w\mathbb{R}}\nu_{z,n},\mathcal{D}_{n+m}\right)>\dim\mu-1+\gamma\right\} >1-\epsilon.
\]
\end{prop*}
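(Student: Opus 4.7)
The plan is to mimic the proof of Proposition \ref{prop:lb on ent of comp of nu}, replacing the use of Lemma \ref{lem:ent of psi g mu} by Proposition \ref{prop:lb on ent of proj of cylinders of nu}. The required $\gamma$ will be half of the constant produced by Proposition \ref{prop:lb on ent of proj of cylinders of nu}; the crucial point is that that proposition provides a lower bound uniform in $w\mathbb{R}\in\mathbb{RP}^{1}$, and this uniformity is preserved by every subsequent concavity argument.

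First I would fix $\epsilon^{-1}\ll\eta^{-1}\ll k\ll m$, with $n\ge 1$ arbitrary, and let $\mathcal{U}_{1}\subset\Psi_{n+k}$ be the set of $u$ with $L(g_{u})\notin B(e_{1}\mathbb{C},2\eta)$. By Lemma \ref{lem:small ball --> small mass}, $\beta([\mathcal{U}_{1}])>1-\epsilon$ and $\mu(Y_{u,\eta})>1-\epsilon/2$ for $u\in\mathcal{U}_{1}$. Exactly as in the proof of Proposition \ref{prop:lb on ent of comp of nu}, $\mathrm{supp}(\psi g_{u}\mu_{Y_{u,\eta}})$ has diameter $<\eta 2^{-n}$, and combining this with Proposition \ref{prop:nu-mass of neigh of dyd cubes} one extracts $\mathcal{U}_{2}\subset\mathcal{U}_{1}$ with $\beta([\mathcal{U}_{2}])>1-3\epsilon$ so that, for each $u\in\mathcal{U}_{2}$, $\mathrm{supp}(\psi g_{u}\mu_{Y_{u,\eta}})$ is contained in a single atom of $\mathcal{D}_{n}^{\mathbb{C}}$. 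This yields the decomposition $\nu=q\nu_{1}+(1-q)\nu_{2}$ with $q>1-4\epsilon$ and $\nu_{1}=\frac{1}{q}\sum_{u\in\mathcal{U}_{2}}p_{u}\mu(Y_{u,\eta})\,\psi g_{u}\mu_{Y_{u,\eta}}$.

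Next, for $u\in\mathcal{U}_{2}$ one has $\Vert g_{u}\Vert_{\mathrm{op}}\ge\eta^{-1}$ (since $k\gg\eta^{-1}$) and $L(g_{u})\notin B(e_{1}\mathbb{C},2\eta)$, so Proposition \ref{prop:lb on ent of proj of cylinders of nu} applied with $m-k$ in place of its $n$ gives, for every $w\mathbb{R}\in\mathbb{RP}^{1}$,
\[
\frac{1}{m-k}H\bigl(\pi_{w\mathbb{R}}\varphi_{u}\nu,\mathcal{D}_{\chi_{u}+m-k}\mid\mathcal{D}_{\chi_{u}}\bigr)\ge\dim\mu-1+\gamma.
\]
Using $\chi_{u}=n+k+O(1)$ from (\ref{eq:op norm is comp to 2^(n/2) for u in Psi_n}), the fact that $\pi_{w\mathbb{R}}\psi g_{u}\mu_{Y_{u,\eta}}$ has support of diameter $O_{\eta}(2^{-n-k})$ (so its unconditional $\mathcal{D}_{n+k}$-entropy is $O_{\eta}(1)$), together with concavity and almost-convexity of entropy to pass from $\mu$ to $\mu_{Y_{u,\eta}}$ (which is licit since $\mu(Y_{u,\eta})>1-\epsilon/2$), I would rewrite this as
\[
\frac{1}{m}H\bigl(\pi_{w\mathbb{R}}\psi g_{u}\mu_{Y_{u,\eta}},\mathcal{D}_{n+m}\bigr)\ge\dim\mu-1+\gamma-O(\epsilon),
\]
uniformly in $u\in\mathcal{U}_{2}$ and $w\mathbb{R}$.

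To conclude, exactly as in the proof of Proposition \ref{prop:lb on ent of comp of nu}, for $D\in\mathcal{D}_{n}^{\mathbb{C}}$ with $\nu_{1}(D)>0$ the measure $(\nu_{1})_{D}$ is a convex combination of those $\psi g_{u}\mu_{Y_{u,\eta}}$ with $u\in\mathcal{U}_{2}$ and $\mathrm{supp}(\psi g_{u}\mu_{Y_{u,\eta}})\subset D$, so concavity of entropy transfers the uniform bound to $\pi_{w\mathbb{R}}(\nu_{1})_{D}$. The same Markov-type step as in the proof of Proposition \ref{prop:lb on ent of comp of nu} then produces $\mathcal{E}\subset\mathcal{D}_{n}^{\mathbb{C}}$ with $\nu(\cup\mathcal{E})>1-2\epsilon^{1/2}$ and $\nu(D)^{-1}q\nu_{1}(D)>1-2\epsilon^{1/2}$ for $D\in\mathcal{E}$; a final concavity estimate then gives
\[
\frac{1}{m}H\bigl(\pi_{w\mathbb{R}}\nu_{D},\mathcal{D}_{n+m}\bigr)\ge\dim\mu-1+\gamma/2
\]
for all $D\in\mathcal{E}$ and all $w\mathbb{R}$, provided $\epsilon$ is small enough relative to the fixed $\gamma$. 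Since $\nu_{z,n}=\nu_{D}$ for $z\in D$, this yields the proposition after renaming $\gamma/2$ as $\gamma$ and $2\epsilon^{1/2}$ as $\epsilon$. I anticipate no substantive obstacle: the argument is a replay of steps from Section \ref{sec:Uniform-entropy-dimension}, and the only new input is the uniformity in $w\mathbb{R}$ supplied by Proposition \ref{prop:lb on ent of proj of cylinders of nu}, which is manifestly stable under all of the concavity-based decompositions used above.
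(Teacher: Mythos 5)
Your proposal is correct and follows essentially the same route as the paper: the paper's proof of Proposition \ref{prop:lb on ent of proj of comp of nu} is precisely a replay of the proof of Proposition \ref{prop:lb on ent of comp of nu} with Lemma \ref{lem:ent of psi g mu} replaced by Proposition \ref{prop:lb on ent of proj of cylinders of nu}, using the same $\mathcal{U}_{1},\mathcal{U}_{2}$, the decomposition $\nu=q\nu_{1}+(1-q)\nu_{2}$, the Markov step, and concavity, with the uniformity in $w\mathbb{R}$ carried through unchanged. Your bookkeeping with $m-k$ versus the paper's direct use of $m$ (absorbing the $O(k)$ scale discrepancy via $k\ll m$) is an immaterial difference.
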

\begin{proof}
Let $0<\gamma,\eta_{0}<1$ be as obtained in Proposition \ref{prop:lb on ent of proj of cylinders of nu},
and let $\epsilon,\eta\in(0,1)$ and $k,m,n\in\mathbb{Z}_{>0}$ be
with $\gamma^{-1},\eta_{0}^{-1}\ll\epsilon^{-1}\ll\eta^{-1}\ll k\ll m$.
Let $\mathcal{U}_{1}$ be the set of all words $u\in\Psi_{n+k}$ such
that $L(g_{u})\notin B\left(e_{1}\mathbb{C},2\eta\right)$. For each
$u\in\mathcal{U}_{1}$ set $Y_{u}:=Y_{u,\eta}$. Since $\epsilon^{-1}\ll\eta^{-1}\ll k$,
and by Lemma \ref{lem:small ball --> small mass}, we may assume that
$\beta\left(\left[\mathcal{U}_{1}\right]\right)>1-\epsilon$ and $\mu(Y_{u})>1-\epsilon$
for $u\in\mathcal{U}_{1}$.

Exactly as in the proof of Proposition \ref{prop:lb on ent of comp of nu},
we have
\begin{equation}
\mathrm{diam}\left(\mathrm{supp}\left(\psi g_{u}\mu_{Y_{u}}\right)\right)<\eta2^{-n}\text{ for all }u\in\mathcal{U}_{1}.\label{eq:ub on diam(supp(psi g mu_E))-1}
\end{equation}

Let $u\in\mathcal{U}_{1}$ and $z\mathbb{R}\in\mathbb{RP}^{1}$ be
given. Since $\eta^{-1}\ll k$ and $u\in\Psi_{n+k}$, we may assume
that $\Vert g_{u}\Vert_{\mathrm{op}}\ge\eta^{-1}$. Thus, by Proposition
\ref{prop:lb on ent of proj of cylinders of nu},
\[
\frac{1}{m}H\left(\pi_{z\mathbb{R}}\varphi_{u}\nu,\mathcal{D}_{\chi_{u}+m}\mid\mathcal{D}_{\chi_{u}}\right)\ge\dim\mu-1+\gamma.
\]
From this, from $\varphi_{u}\nu=\psi g_{u}\mu$, by the almost-convexity
of entropy (see Section \ref{subsec:Entropy}), since $\mu(Y_{u})>1-\epsilon$,
and from (\ref{eq:norm cond ent on C <=00003D 2}),
\[
\frac{1}{m}H\left(\pi_{z\mathbb{R}}\psi g_{u}\mu_{Y_{u}},\mathcal{D}_{\chi_{u}+m}\mid\mathcal{D}_{\chi_{u}}\right)\ge\dim\mu-1+\gamma-3\epsilon.
\]
Hence, from $u\in\Psi_{n+k}$, (\ref{eq:op norm is comp to 2^(n/2) for u in Psi_n}),
and $\epsilon^{-1},k\ll m$,
\begin{equation}
\frac{1}{m}H\left(\pi_{z\mathbb{R}}\psi g_{u}\mu_{Y_{u}},\mathcal{D}_{n+m}\right)\ge\dim\mu-1+\gamma-4\epsilon\text{ for }u\in\mathcal{U}_{1}\text{ and }z\mathbb{R}\in\mathbb{RP}^{1}.\label{eq:lb on ent of proj of cyl for all u in U_1}
\end{equation}

Let $\mathcal{U}_{2}$ be the set of all $u\in\mathcal{U}_{1}$ for
which there exists $D\in\mathcal{D}_{n}^{\mathbb{C}}$ such that $\mathrm{supp}\left(\psi g_{u}\mu_{Y_{u}}\right)\subset D$.
Exactly as in the proof of Proposition \ref{prop:lb on ent of comp of nu},
using $\beta\left(\left[\mathcal{U}_{1}\right]\right)>1-\epsilon$,
(\ref{eq:ub on diam(supp(psi g mu_E))-1}), and Proposition \ref{prop:nu-mass of neigh of dyd cubes},
it can be shown that $\beta\left(\left[\mathcal{U}_{2}\right]\right)>1-3\epsilon$.

Setting $q:=\sum_{u\in\mathcal{U}_{2}}p_{u}\mu(Y_{u})$,
\[
\nu_{1}:=\frac{1}{q}\sum_{u\in\mathcal{U}_{2}}p_{u}\mu(Y_{u})\cdot\psi g_{u}\mu_{Y_{u}},\text{ and }\nu_{2}:=\frac{1}{1-q}\left(\nu-q\nu_{1}\right),
\]
we have $\nu=q\nu_{1}+(1-q)\nu_{2}$ and $q>1-4\epsilon$. Let $\mathcal{E}$
denote the set of all $D\in\mathcal{D}_{n}^{\mathbb{C}}$ such that
$2\epsilon^{1/2}\nu(D)>(1-q)\nu_{2}(D)$. As in the proof of Proposition
\ref{prop:lb on ent of comp of nu}, from $q>1-4\epsilon$ and by
Markov's inequality, it follows that $\nu\left(\bigcup\mathcal{E}\right)>1-2\epsilon^{1/2}$.

By the definitions of $\mathcal{U}_{2}$ and $\nu_{1}$, given $D\in\mathcal{D}_{n}^{\mathbb{C}}$
with $\nu_{1}(D)>0$, there exist $u_{1},...,u_{l}\in\mathcal{U}_{1}$
and a probability vector $(\rho_{1},...,\rho_{l})$ such that
\[
(\nu_{1})_{D}=\sum_{i=1}^{l}\rho_{i}\cdot\psi g_{u_{i}}\mu_{Y_{u_{i}}}.
\]
Hence, by (\ref{eq:lb on ent of proj of cyl for all u in U_1}) and
the concavity of entropy, for all $z\mathbb{R}\in\mathbb{RP}^{1}$
and $D\in\mathcal{D}_{n}^{\mathbb{C}}$ with $\nu_{1}(D)>0$, 
\begin{equation}
\frac{1}{m}H\left(\pi_{z\mathbb{R}}(\nu_{1})_{D},\mathcal{D}_{n+m}\right)\ge\dim\mu-1+\gamma-4\epsilon.\label{eq:lb on ent of orth proj of (nu_1)}
\end{equation}

Let $D\in\mathcal{E}$, and note that
\[
\nu_{D}=\frac{q\nu_{1}(D)}{\nu(D)}(\nu_{1})_{D}+\frac{(1-q)\nu_{2}(D)}{\nu(D)}(\nu_{2})_{D}.
\]
From this equality and by the definition of $\mathcal{E}$, we obtain
$\nu(D)^{-1}q\nu_{1}(D)>1-2\epsilon^{1/2}$. Thus, by concavity, from
(\ref{eq:lb on ent of orth proj of (nu_1)}), and since $\gamma^{-1}\ll\epsilon^{-1}$,
for each $z\mathbb{R}\in\mathbb{RP}^{1}$ we have
\begin{multline*}
\frac{1}{m}H\left(\pi_{z\mathbb{R}}\nu_{D},\mathcal{D}_{n+m}\right)\ge\frac{q\nu_{1}(D)}{\nu(D)}\frac{1}{m}H\left(\pi_{z\mathbb{R}}(\nu_{1})_{D},\mathcal{D}_{n+m}\right)\\
>\left(1-2\epsilon^{1/2}\right)\left(\dim\mu-1+\gamma-4\epsilon\right)>\dim\mu-1+\gamma/2.
\end{multline*}
As this holds for all $D\in\mathcal{E}$, and since $\nu\left(\bigcup\mathcal{E}\right)>1-2\epsilon^{1/2}$,
this completes the proof of the proposition.
\end{proof}

\section{\label{sec:Proof-of-the ent inc res}Proof of the entropy increase
result}

In this section we establish Theorem \ref{thm:ent inc}. Section \ref{subsec:Entropy-growth-under conv in C}
concerns entropy growth under convolution in $\mathbb{C}$. In Section
\ref{subsec:From-entropy-on G to ent on C}, we show that, in a suitable
sense, nonnegligible entropy on $\mathrm{G}$ translates to nonnegligible
entropy on $\mathbb{C}$. Section \ref{subsec:Linearization} concerns
the linearization part of the argument, and the proof of Theorem \ref{thm:ent inc}
is carried out in Section \ref{subsec:Proof-of-Theorem ent inc}.

\subsection{\label{subsec:Entropy-growth-under conv in C}Entropy growth under
convolution in $\mathbb{C}$}

The following theorem is a direct corollary of Hochman's \cite{Ho}
inverse theorem for entropy growth under convolutions in $\mathbb{R}^{d}$.
We include the derivation for the reader's convenience.
\begin{thm}
\label{thm:ent inc in C}For every $0<\epsilon<1$, $m\ge1$ and $0<\eta<\eta(\epsilon)$,
there exists $\delta=\delta(\epsilon,m,\eta)>0$, such that for all
$n\ge N(\epsilon,m,\eta)\ge1$ the following holds. Let $i\in\mathbb{Z}_{>0}$
and $\theta,\xi\in\mathcal{M}\left(\mathbb{C}\right)$ be such that
\[
\mathrm{diam}(\mathrm{supp}(\theta)),\mathrm{diam}(\mathrm{supp}(\xi))\le\epsilon^{-1}2^{-i},
\]
\begin{equation}
\mathbb{P}_{i\le j\le i+n}\left\{ \frac{1}{m}H\left(\xi_{z,j},\mathcal{D}_{j+m}\right)<2-\epsilon\right\} >1-\eta,\label{eq:dim comp < 2-epsilon in ent inc thm}
\end{equation}
\begin{equation}
\mathbb{P}_{i\le j\le i+n}\left\{ \underset{w\mathbb{R}\in\mathbb{RP}^{1}}{\inf}\frac{1}{m}H\left(\pi_{w\mathbb{R}}\xi_{z,j},\mathcal{D}_{j+m}\right)>\frac{1}{m}H\left(\xi_{z,j},\mathcal{D}_{j+m}\right)-1+\epsilon\right\} >1-\eta,\label{eq:.dim proj comp > dim comp -1 + eps in ent inc thm}
\end{equation}
and
\[
\frac{1}{n}H\left(\theta,\mathcal{D}_{i+n}\right)>\epsilon.
\]
Then,
\begin{equation}
\frac{1}{n}H\left(\theta*\xi,\mathcal{D}_{i+n}\right)\ge\frac{1}{n}H\left(\xi,\mathcal{D}_{i+n}\right)+\delta.\label{eq:ent inc ineq in int inc in C}
\end{equation}
\end{thm}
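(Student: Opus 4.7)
The plan is to deduce the theorem as a direct corollary of Hochman's inverse theorem for entropy growth under convolution in $\mathbb{R}^{d}$ \cite{Ho}, applied here with $d=2$ via the identification $\mathbb{C}\cong\mathbb{R}^{2}$. The two hypotheses (\ref{eq:dim comp < 2-epsilon in ent inc thm}) and (\ref{eq:.dim proj comp > dim comp -1 + eps in ent inc thm}) are precisely designed to rule out, in order, the two nontrivial saturation scenarios that the inverse theorem identifies as the only obstruction to substantial entropy growth in $\mathbb{R}^{2}$.

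First I would normalize to a setting directly amenable to the inverse theorem: translate so that $\mathrm{supp}(\xi)$ lies near the origin and apply the dilation $x\mapsto2^{i}x$. This reduces to measures of diameter $O(\epsilon^{-1})$ analyzed at scales $0,1,\ldots,n$; the normalized entropies and the projection and component constructions in the hypotheses transfer under this rescaling with at most $O(1)$ error. I then invoke the inverse theorem in contrapositive form: assuming (\ref{eq:ent inc ineq in int inc in C}) fails for some $\delta$ sufficiently small in terms of $(\epsilon,m,\eta)$, a set of pairs $(z,j)$ of joint $\xi$-probability exceeding $1-\eta/3$ (with $j$ distributed uniformly in $\{i,\ldots,i+n\}$) admits a nonzero proper $\mathbb{R}$-linear subspace $V_{z,j}\subset\mathbb{C}$ such that $\xi_{z,j}$ is saturated along $V_{z,j}$, meaning
\[
\frac{1}{m}H(\xi_{z,j},\mathcal{D}_{j+m})\ge\frac{1}{m}H(\pi_{V_{z,j}^{\perp}}\xi_{z,j},\mathcal{D}_{j+m})+\dim_{\mathbb{R}}V_{z,j}-\tfrac{\epsilon}{2}.
\]

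The remainder is a two-case analysis on this good set. If $\dim_{\mathbb{R}}V_{z,j}=2$, then $V_{z,j}=\mathbb{C}$, the map $\pi_{V_{z,j}^{\perp}}$ is identically zero, and saturation forces $\frac{1}{m}H(\xi_{z,j},\mathcal{D}_{j+m})\ge2-\epsilon/2$, which contradicts (\ref{eq:dim comp < 2-epsilon in ent inc thm}) on the intersection with its $(1-\eta)$-set as soon as $\eta<1/3$. If $\dim_{\mathbb{R}}V_{z,j}=1$, writing $V_{z,j}^{\perp}=w^{\perp}\mathbb{R}$ for the orthogonal line in $\mathbb{RP}^{1}$, saturation rearranges to
\[
\frac{1}{m}H(\pi_{w^{\perp}\mathbb{R}}\xi_{z,j},\mathcal{D}_{j+m})\le\frac{1}{m}H(\xi_{z,j},\mathcal{D}_{j+m})-1+\tfrac{\epsilon}{2},
\]
and since the infimum over $\mathbb{RP}^{1}$ is bounded above by the value at $w^{\perp}\mathbb{R}$, this contradicts (\ref{eq:.dim proj comp > dim comp -1 + eps in ent inc thm}), again provided $\eta<1/3$. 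Both cases being impossible on a subset of positive measure forces the assumption that (\ref{eq:ent inc ineq in int inc in C}) fails to be untenable.

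The main technical obstacle is purely bookkeeping: extracting from the precise statement of Hochman's inverse theorem in \cite{Ho} the clean saturation conclusion displayed above, with the correct quantitative dependencies between $(\epsilon,m,\eta)$ and $\delta$. The theorem in \cite{Ho} is phrased in a multiscale language and under slightly different normalizations, so the matching of parameters requires careful propagation through several quantifier alternations. Once this matching is in hand, the case analysis above is immediate and the value of $\delta=\delta(\epsilon,m,\eta)$ can be read off from the inverse theorem without further work.
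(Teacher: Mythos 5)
There is a genuine gap. Hochman's inverse theorem does not give you, for most components, a \emph{nonzero} subspace along which $\xi_{z,j}$ is saturated. Its conclusion (in the form of \cite[Theorem 2.8]{Ho}) is a two-sided structure statement: there are subspaces $V_j\subset\mathbb{R}^2$, one per scale $j$, such that for most $j$ and most components, $\xi_{z,j}$ is nearly saturated on $V_j$ \emph{and} the rescaled component $S_{2^j}\theta_{w,j}$ is nearly concentrated on a translate of $V_j$. The case $V_j=\{0\}$ is allowed, and in that case the saturation condition on $\xi_{z,j}$ is vacuous (it reads $H(\xi_{z,j})\ge H(\xi_{z,j})-\eta$), so your two hypotheses (\ref{eq:dim comp < 2-epsilon in ent inc thm}) and (\ref{eq:.dim proj comp > dim comp -1 + eps in ent inc thm}) say nothing about it; what it does say is that the components of $\theta$ are essentially point masses at scale $j$. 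Your case analysis only treats $\dim_{\mathbb{R}}V=1,2$ and never invokes the hypothesis $\frac{1}{n}H(\theta,\mathcal{D}_{i+n})>\epsilon$, so as written the argument would ``prove'' the theorem even when $\theta$ is a Dirac mass — but then $\theta*\xi$ is a translate of $\xi$ and (\ref{eq:ent inc ineq in int inc in C}) is false, while the hypotheses on $\xi$ can certainly hold. So the conclusion you extract from the inverse theorem is stronger than what it provides, and the two-case analysis does not close the argument.

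The fix, which is exactly the paper's route, is to keep the full conclusion of \cite[Theorem 2.8]{Ho}: after using (\ref{eq:dim comp < 2-epsilon in ent inc thm}) and (\ref{eq:.dim proj comp > dim comp -1 + eps in ent inc thm}) to exclude $\dim_{\mathbb{R}}V_j\in\{1,2\}$ for most scales and components, you are left with the statement that with probability $>1-3\eta$ over scales and components, $S_{2^j}\theta_{w,j}$ is $(\{0\},\eta)$-concentrated, i.e.\ the components of $\theta$ are nearly atomic at most scales. This is then contradicted by $\frac{1}{n}H(\theta,\mathcal{D}_{i+n})>\epsilon$ via the local-to-global entropy decomposition (Lemma \ref{lem:glob ent to loc ent}), using $\epsilon^{-1}\ll\eta^{-1}$: a measure whose components are overwhelmingly concentrated near single points at most scales has normalized entropy $o_\eta(1)<\epsilon$. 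Your rescaling remarks and the two-case analysis for $\dim V=1,2$ are fine and match the paper; the missing ingredient is this third branch and the use of the entropy of $\theta$ to kill it.
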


\begin{proof}
Given an $\mathbb{R}$-linear subspace $V$ of $\mathbb{C}$, we write
$\pi_{V}:\mathbb{C}\rightarrow\mathbb{C}$ for its orthogonal projection,
and $V^{\perp}$ for its orthogonal complement, where $\mathbb{C}$
is identified with $\mathbb{R}^{2}$. Given $\zeta\in\mathcal{M}\left(\mathbb{C}\right)$
and $\rho>0$, we say that $\zeta$ is $(V,\rho)$-concentrated if
$\zeta\left(z+V^{(\rho)}\right)\ge1-\rho$ for some $z\in\mathbb{C}$,
where recall that $V^{(\rho)}$ denotes the closed $\rho$-neighborhood
of $V$ in $\mathbb{C}$.

Let $\epsilon,\eta,\delta\in(0,1)$ and $m,n\in\mathbb{Z}_{>0}$ be
such that $\epsilon^{-1}\ll\eta^{-1}$ and $m,\eta^{-1}\ll\delta^{-1}\ll n$,
let $i\in\mathbb{Z}_{>0}$ and $\theta,\xi\in\mathcal{M}\left(\mathbb{C}\right)$
be such that the conditions of the theorem are satisfied, and assume
by contradiction that (\ref{eq:ent inc ineq in int inc in C}) does
not hold. By \cite[Theorem 2.8]{Ho}, there exist $\mathbb{R}$-linear
subspaces $V_{i},...,V_{i+n}\subset\mathbb{C}$ such that
\[
\mathbb{P}_{i\le j\le i+n}\left\{ \begin{array}{c}
\frac{1}{m}H\left(\xi_{z,j},\mathcal{D}_{j+m}\right)\ge\frac{1}{m}H\left(\pi_{V_{j}^{\perp}}\xi_{z,j},\mathcal{D}_{j+m}\right)+\dim_{\mathbb{R}}V_{j}-\eta\\
\text{ and }S_{2^{j}}\theta_{w,j}\text{ is }(V_{j},\eta)\text{-concentrated}
\end{array}\right\} >1-\eta.
\]
Hence, since Properties (\ref{eq:dim comp < 2-epsilon in ent inc thm})
and (\ref{eq:.dim proj comp > dim comp -1 + eps in ent inc thm})
are satisfied,
\begin{equation}
\mathbb{P}_{i\le j\le i+n}\left\{ S_{2^{j}}\theta_{w,j}\text{ is }(\{0\},\eta)\text{-concentrated}\right\} >1-3\eta.\label{eq:P=00007BdimV_j=00003D0 and ...=00007D>1-3eta}
\end{equation}

On the other hand, since $\frac{1}{n}H\left(\theta,\mathcal{D}_{i+n}\right)>\epsilon$,
by Lemma \ref{lem:glob ent to loc ent}, and since $\epsilon^{-1}\ll\eta^{-1}\ll n$,
it is easy to see that (\ref{eq:P=00007BdimV_j=00003D0 and ...=00007D>1-3eta})
cannot hold. This contradiction completes the proof of the theorem.
\end{proof}

\subsection{\label{subsec:From-entropy-on G to ent on C}Entropy on $\mathrm{G}$
translates to entropy on $\mathbb{C}$}

The purpose of this subsection is to prove the following proposition.
Recall that $1_{\mathrm{G}}$ denotes the identity element of $\mathrm{G}$.
Given $\theta\in\mathcal{M}\left(\mathrm{G}\right)$ and $z\in\mathbb{C}_{\infty}$,
recall also that $\theta.z$ denotes the pushforward of $\theta$
via the map $g\mapsto\varphi_{g}(z)$.
\begin{prop}
\label{prop:from ent on G to ent on C}Let $\xi\in\mathcal{M}(\mathbb{C})$
be nonatomic, set $Q:=\mathrm{supp}(\xi)$, and let $0<r\le1$ be
such that $-g\notin B(1_{\mathrm{G}},r)$ and $\varphi_{g}(z)\ne\infty$
for all $g\in B(1_{\mathrm{G}},r)$ and $z\in Q$. Then, for every
$\epsilon>0$, there exists $\epsilon_{0}=\epsilon_{0}(\xi,r,\epsilon)>0$
such that for all $k\ge K(\xi,r,\epsilon)\ge1$, $n\ge N(\xi,r,\epsilon,k)\ge1$,
and $\theta\in\mathcal{M}\left(B(1_{\mathrm{G}},r)\right)$ with $\frac{1}{n}H(\theta,\mathcal{D}_{n})\ge\epsilon$,
we have
\begin{equation}
\int\mathbb{P}_{1\le i\le n}\left\{ \frac{1}{k}H\left(\theta_{g,i}.z,\mathcal{D}_{i+k}\right)>\epsilon_{0}\right\} \:d\xi(z)>\epsilon_{0}.\label{eq:ent of nu.x nontrivial}
\end{equation}
\end{prop}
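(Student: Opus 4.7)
The plan is to combine a standard multiscale decomposition of $\theta$ with a three-point linearization that exploits the nonatomicity of $\xi$ to produce a uniform lower bound on $\xi$-measure. Since $\mathrm{supp}(\theta)\subset B(1_{\mathrm{G}},r)$ has bounded diameter, Lemma~\ref{lem:glob ent to loc ent} with $m=k$ converts the hypothesis $\frac{1}{n}H(\theta,\mathcal{D}_{n})\ge\epsilon$ into
\[
\mathbb{E}_{1\le j\le n}\left[\frac{1}{k}H(\theta_{g,j},\mathcal{D}_{j+k})\right]\ge\epsilon-O_{r}(k/n)\ge\epsilon/2
\]
for $n$ large compared to $k$. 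Because $\frac{1}{k}H(\theta_{g,j},\mathcal{D}_{j+k})$ is uniformly bounded (via Lemma~\ref{lem:ub on card of dyad atoms intersecting}), Markov's inequality produces a constant $\rho_{0}=\rho_{0}(\epsilon)>0$ with
\[
\mathbb{P}_{1\le j\le n}\left\{ \frac{1}{k}H(\theta_{g,j},\mathcal{D}_{j+k})\ge\epsilon/4\right\} \ge\rho_{0}.
\]
Restricting further to scales $j\ge k+C_{0}$ for a suitable absolute $C_{0}$ costs only a factor $1/2$ in $\rho_{0}$ when $n\gg k$, and will be required so that the quadratic remainder in the linearization below is negligible at resolution $2^{-(j+k)}$.

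For a fixed ``entropy-rich'' component $\theta':=\theta_{g_{0},j}$ in this restricted range, $\mathrm{supp}(\theta')\subset B(g_{0},C\cdot 2^{-j})$. Parametrize $g=g_{0}\exp(X)$ with $X=\left(\begin{array}{cc}a & b\\ c & -a\end{array}\right)\in\mathfrak{sl}(2,\mathbb{C})$ and $|X|=O(2^{-j})$ on $\mathrm{supp}(\theta')$. A first-order Taylor expansion in the spirit of Lemma~\ref{lem:first ord Taylor} gives, for each $z\in Q$,
\[
\varphi_{g}(z)=\varphi_{g_{0}}(z)+\varphi_{g_{0}}'(z)L_{X}(z)+O(|X|^{2}),\qquad L_{X}(z):=b+2az-cz^{2}.
\]
With $j\ge k+C_{0}$ arranged so that $|X|^{2}=O(2^{-2j})\le 2^{-(j+k)}$, Lemma~\ref{lem:ent of push by close func} shows that $\theta'.z$ has the same $\mathcal{D}_{j+k}$-entropy up to $O(1)$ as $T_{z}\tilde{\theta}$, where $\tilde{\theta}\in\mathcal{M}(\mathfrak{sl}(2,\mathbb{C}))$ is the pushforward of $\theta'$ under $g\mapsto\log(g_{0}^{-1}g)$ and $T_{z}:X\mapsto\varphi_{g_{0}}(z)+\varphi_{g_{0}}'(z)L_{X}(z)$ is $\mathbb{R}$-affine. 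The assumption $\varphi_{g}(z)\ne\infty$ and compactness of $B(1_{\mathrm{G}},r)\times Q$ supply a uniform bound $|\varphi_{g_{0}}'(z)|\asymp 1$.

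The heart of the argument is a three-point trick. Using nonatomicity of $\xi$ and compactness of $Q$, fix $\delta_{0}=\delta_{0}(\xi)>0$ with $\xi(B(z,\delta_{0}))\le 1/6$ for every $z\in\mathbb{C}$; then the $\xi^{3}$-measure of triples $(z_{1},z_{2},z_{3})\in Q^{3}$ with pairwise distances $\ge\delta_{0}$ is at least $1-3\cdot\tfrac{1}{6}=1/2$. For any such triple, the $\mathbb{C}$-linear map $X\mapsto(L_{X}(z_{j}))_{j=1}^{3}$ from $\mathfrak{sl}(2,\mathbb{C})$ to $\mathbb{C}^{3}$ has Vandermonde-type determinant $2(z_{1}-z_{2})(z_{1}-z_{3})(z_{2}-z_{3})$, bounded away from zero; together with the uniform bound on $|\varphi_{g_{0}}'(z_{j})|$, this makes the joint affine map $\tilde{T}:=(T_{z_{j}})_{j=1}^{3}$ bi-Lipschitz on $\mathrm{supp}(\tilde{\theta})$ with constants $K=K(\xi,r)$. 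Lemma~\ref{lem:dyad ent =000026 lip func} yields $\frac{1}{k}H(\tilde{T}\tilde{\theta},\mathcal{D}_{j+k})\ge\epsilon/4-O((\log K)/k)\ge\epsilon/8$ once $k$ is large. Since marginal entropies sum to at least the joint entropy, some $j\in\{1,2,3\}$ satisfies $\frac{1}{k}H(T_{z_{j}}\tilde{\theta},\mathcal{D}_{j+k})\ge\epsilon/24$, so $\frac{1}{k}H(\theta'.z_{j},\mathcal{D}_{j+k})\ge\epsilon_{1}:=\epsilon/48$ for that $j$.

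Set $G(\theta'):=\{z\in Q:\frac{1}{k}H(\theta'.z,\mathcal{D}_{j+k})\ge\epsilon_{1}\}$. The previous step shows that the $\xi^{3}$-event $\{(z_{1},z_{2},z_{3}):\{z_{1},z_{2},z_{3}\}\cap G(\theta')\ne\emptyset\}$ contains all well-separated triples, so has mass $\ge 1/2$; covering it by three permutations of $G(\theta')\times Q\times Q$ gives $\xi(G(\theta'))\ge 1/6$ for every entropy-rich $\theta'$. Integrating over the $\ge\rho_{0}/2$ fraction of entropy-rich pairs $(g,j)$ produces
\[
\int\mathbb{P}_{1\le j\le n}\left\{ \frac{1}{k}H(\theta_{g,j}.z,\mathcal{D}_{j+k})\ge\epsilon_{1}\right\} d\xi(z)\ge\frac{\rho_{0}}{12},
\]
and any $\epsilon_{0}\le\min(\epsilon_{1},\rho_{0}/12)$ suffices. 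The hard part is securing the uniform bi-Lipschitz control on $\tilde{T}$, which simultaneously requires the uniform lower bound on $|cz+d|$ provided by the hypothesis $\varphi_{g}(z)\ne\infty$ (together with compactness) and the quantitative separation $\delta_{0}$ afforded by the nonatomicity of $\xi$.
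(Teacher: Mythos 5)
Your proposal is correct, and its skeleton is the same as the paper's: reduce to entropy-rich components via Lemma \ref{lem:glob ent to loc ent} and a Markov bound, use nonatomicity of $\xi$ to produce a positive-measure set of $\delta_{0}$-separated triples $(z_{1},z_{2},z_{3})$, push the component forward under the three-point evaluation map into $\mathbb{C}^{3}$, and finish with subadditivity of dyadic entropy over the three coordinates. The genuine difference is in how the uniform bi-Lipschitz control of the three-point map is obtained. The paper proves it once and for all on the whole ball $B(1_{\mathrm{G}},r)$ (Lemma \ref{lem:L_z bi-lip with uni const}) by a soft argument -- equivariance gives constant rank, the hypothesis $-g\notin B(1_{\mathrm{G}},r)$ gives injectivity (three points determine a M\"obius map up to sign), and compactness gives uniform constants -- and then applies Lemma \ref{lem:dyad ent =000026 lip func} directly to each component at every scale. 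You instead linearize each component explicitly in the Lie-algebra chart, computing the derivative field $L_{X}(z)=b+2az-cz^{2}$ and its Vandermonde-type determinant $2\prod_{i<j}(z_{i}-z_{j})$; this buys you an elementary, quantitative nondegeneracy bound and never uses the hypothesis $-g\notin B(1_{\mathrm{G}},r)$, at the mild cost of discarding the scales $i\le k+C_{0}$ (note $C_{0}$ depends on $Q$ and $r$, not absolute, and the Taylor constant must be taken uniform on a slightly enlarged ball where the no-pole condition still holds by compactness -- both routine). Your endgame is also slightly different and cleaner: for each entropy-rich component you get the pointwise statement that every separated triple contains a good point, hence $\xi(G(\theta'))\ge1/6$ and an $\epsilon_{0}$ linear in $\epsilon$, whereas the paper averages entropy over triples and then converts the resulting expectation bound into the probability statement, ending with $\epsilon_{0}\sim\epsilon^{2}$. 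Aside from the cosmetic reuse of the letter $j$ for both the scale and the triple index, the argument is sound.
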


The proof of Proposition \ref{prop:from ent on G to ent on C} requires
the following lemma. Given $\left(z_{1},z_{2},z_{3}\right)=z\in\mathbb{C}_{\infty}^{3}$,
let $F_{z}:\mathrm{G}\rightarrow\mathbb{C}_{\infty}^{3}$ be defined
by
\[
F_{z}(g):=\left(\varphi_{g}(z_{1}),\varphi_{g}(z_{2}),\varphi_{g}(z_{3})\right)\text{ for }g\in\mathrm{G}.
\]

\begin{lem}
\label{lem:L_z bi-lip with uni const}Let $Q$ be a compact subset
of $\mathbb{C}$, and let $r>0$ be such that $-g\notin B(1_{\mathrm{G}},r)$
and $\varphi_{g}(z)\ne\infty$ for all $g\in B(1_{\mathrm{G}},r)$
and $z\in Q$. Then, for every $\epsilon>0$, there exists $C=C(Q,r,\epsilon)>1$
such that for all $\left(z_{1},z_{2},z_{3}\right)=z\in Q^{3}$ with
$|z_{i}-z_{j}|\ge\epsilon$ for $1\le i<j\le3$, we have
\begin{equation}
C^{-1}d(g_{1},g_{2})\le\Vert F_{z}(g_{1})-F_{z}(g_{2})\Vert\le Cd(g_{1},g_{2})\text{ for all }g_{1},g_{2}\in B(1_{\mathrm{G}},r),\label{eq:L_z bi-lip with uni const}
\end{equation}
where $\Vert\cdot\Vert$ denotes the standard norm on $\mathbb{C}^{3}$.
\end{lem}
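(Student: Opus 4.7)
The plan is to reduce the estimate to uniform two-sided control of the differential $d(F_z)_g$ on the relevant compact parameter set, and then convert this infinitesimal bound to the global bi-Lipschitz estimate by standard compactness and mean value arguments. Throughout, $B(1_{\mathrm{G}},r)$ is compact by the Hopf--Rinow theorem, and by the hypothesis $\varphi_g(z) \ne \infty$ on $B(1_{\mathrm{G}},r) \times Q$ together with compactness, the map $F_z$ extends smoothly to a uniform open neighborhood of $B(1_{\mathrm{G}},r)$ for every $z \in Q^3$ and takes values in $\mathbb{C}^3 \subset \mathbb{C}_\infty^3$. The upper bound in \eqref{eq:L_z bi-lip with uni const} then follows from uniform boundedness of $dF_z$ on the compact set $B(1_{\mathrm{G}},r) \times Q^3$ combined with integration along appropriate paths.

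The heart of the matter is the lower bound on $d(F_z)_g$. Trivializing $T\mathrm{G}$ by left translations and writing $X = \left(\begin{array}{cc} a & b \\ c & -a \end{array}\right) \in \mathfrak{sl}(2,\mathbb{C})$, a direct computation using the chain rule and the identity $\varphi_g'(z) = (cz+d)^{-2}$ (for $g = \left(\begin{array}{cc} a & b \\ c & d \end{array}\right)$) gives
\[
d(F_z)_g(X) = \bigl( \varphi_g'(z_i) \, (2az_i + b - cz_i^2) \bigr)_{i=1,2,3}.
\]
Hence $d(F_z)_g$ factors as the composition of the $\mathbb{C}$-linear diagonal map $\mathrm{diag}(\varphi_g'(z_1),\varphi_g'(z_2),\varphi_g'(z_3))$ with the $\mathbb{C}$-linear map $(a,b,c) \mapsto (2az_i + b - cz_i^2)_{i=1,2,3}$, whose Vandermonde-type determinant equals $2(z_2-z_1)(z_3-z_1)(z_3-z_2)$, of modulus at least $2\epsilon^3$ under the separation hypothesis. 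Combined with uniform upper and lower bounds on $|\varphi_g'(z_i)| = |cz_i+d|^{-2}$ on the compact set $B(1_{\mathrm{G}},r) \times Q$ (again by compactness and $\varphi_g(z) \ne \infty$), this produces a uniform positive lower bound $c = c(Q,r,\epsilon)$ on the smallest singular value of $d(F_z)_g$, viewed as an $\mathbb{R}$-linear map from $\mathfrak{g} \cong \mathbb{R}^6$ to $\mathbb{C}^3 \cong \mathbb{R}^6$, over all $(g,z) \in B(1_{\mathrm{G}},r) \times K_\epsilon$, where $K_\epsilon := \{(z_1,z_2,z_3) \in Q^3 : |z_i-z_j| \ge \epsilon \text{ for all } i \ne j\}$.

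To promote this infinitesimal lower bound to the global bi-Lipschitz estimate, observe first that $F_z$ is injective on $B(1_{\mathrm{G}},r)$ for every $z \in K_\epsilon$: by the sharp $3$-transitivity of the $\mathrm{PSL}(2,\mathbb{C})$-action on $\mathbb{C}_\infty$, the equality $F_z(g_1) = F_z(g_2)$ forces $g_2 \in \{g_1,-g_1\}$, and $g_2 = -g_1$ is ruled out by the standing hypothesis $-g_1 \notin B(1_{\mathrm{G}},r)$. Given injectivity, the continuous function $(g_1,g_2,z) \mapsto \|F_z(g_1) - F_z(g_2)\|/d(g_1,g_2)$ on the off-diagonal of $B(1_{\mathrm{G}},r)^2 \times K_\epsilon$ is strictly positive, and the singular value bound from the previous paragraph controls it uniformly on a neighborhood of the diagonal via Taylor expansion in local charts; compactness then yields a uniform positive lower bound, which supplies $C^{-1}$ in \eqref{eq:L_z bi-lip with uni const}. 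The conceptually key step, and the only one requiring a genuine new observation rather than routine bookkeeping, is the explicit computation of $d(F_z)_g$ exhibiting the Vandermonde factor $(z_2-z_1)(z_3-z_1)(z_3-z_2)$, which is what converts the geometric separation hypothesis $|z_i-z_j| \ge \epsilon$ into the quantitative lower bound on the differential.
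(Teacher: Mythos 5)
Your proposal is correct, and it shares the overall skeleton of the paper's proof (injectivity of $F_{z}$ on $B(1_{\mathrm{G}},r)$ from the fact that a M\"obius transformation is determined by its values at three distinct points together with the hypothesis $-g\notin B(1_{\mathrm{G}},r)$; invertibility of $d(F_{z})_{g}$; compactness to make all bounds uniform), but the central step is carried out differently. The paper obtains invertibility of $d(F_{z})_{g}$ abstractly: the equivariance $F_{z}(hg)=h.F_{z}(g)$ gives constant rank, injectivity plus the global rank theorem makes $F_{z}$ an immersion, and equality of dimensions gives invertibility, with all uniformity in $(g,z)$ then extracted purely from compactness of $B(1_{\mathrm{G}},r')\times E$. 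You instead compute $d(F_{z})_{g}$ explicitly in the left trivialization -- your formula $d(F_{z})_{g}(X)=\bigl(\varphi_{g}'(z_{i})(2az_{i}+b-cz_{i}^{2})\bigr)_{i}$ and the determinant evaluation $2(z_{2}-z_{1})(z_{3}-z_{1})(z_{3}-z_{2})$ are correct -- so the separation $|z_{i}-z_{j}|\ge\epsilon$ enters the lower bound on the smallest singular value directly, via the Vandermonde factor and the two-sided compactness bounds on $|\varphi_{g}'(z_{i})|$; this is more elementary (no rank theorems) and more quantitative in $\epsilon$. The promotion to the global estimate also differs in bookkeeping: the paper uses a uniform $\delta$ with $B(F_{z}(g),\delta)\subset F_{z}(U)$ together with the bound on $(d(F_{z})_{g})^{-1}$, whereas you split into a uniform neighborhood of the diagonal (quantitative Taylor estimate, legitimate since second derivatives are uniformly bounded by compactness) and its complement (where injectivity, continuity and compactness give a positive minimum of $\Vert F_{z}(g_{1})-F_{z}(g_{2})\Vert$); both are standard and valid. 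The only detail worth spelling out is the upper bound: a nearly minimizing path between $g_{1},g_{2}\in B(1_{\mathrm{G}},r)$ need not stay in the region where the $\varphi_{g}(z_{i})$ are finite, so one should enlarge $r$ slightly (possible by compactness, as in the paper's choice of $r'$) and handle pairs with $d(g_{1},g_{2})$ bounded below by the trivial diameter bound -- the same near/far dichotomy you already use for the lower bound.
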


\begin{proof}
Let $r'>r$ be such that $-g\notin B(1_{\mathrm{G}},r')$ and $\varphi_{g}(z)\ne\infty$
for all $g\in B(1_{\mathrm{G}},r')$ and $z\in Q$, and write $U$
for the open ball in $\mathrm{G}$ with center $1_{\mathrm{G}}$ and
radius $r'$. For $g\in\mathrm{G}$ and $z\in\mathbb{C}_{\infty}^{3}$
write $g.z:=F_{z}(g)$, which defines a smooth action of $\mathrm{G}$
on $\mathbb{C}_{\infty}^{3}$.

Let $(z_{1},z_{2},z_{3})=z\in\mathbb{C}^{3}$ be such that $z_{i}\ne z_{j}$
for $1\le i<j\le3$. Since $F_{z}(hg)=h.F_{z}(g)$ for $h,g\in\mathrm{G}$,
it follows that the smooth map $F_{z}:\mathrm{G}\rightarrow\mathbb{C}_{\infty}^{3}$
is of constant rank (see \cite[Theorem 7.25]{Le}). Additionally,
since $-g\notin U$ for $g\in U$ and $z_{1},z_{2},z_{3}$ are distinct,
it follows that $F_{z}|_{U}$ is injective\footnote{Here we use the fact that a Möbius transformation is uniquely determined
by its values on any three distinct points.}. Hence, by the global rank theorem (see \cite[Theorem 4.14]{Le}),
$F_{z}$ is an immersion. Since the manifolds $\mathrm{G}$ and $\mathbb{C}_{\infty}^{3}$
are of the same dimension, it follows that $d(F_{z})_{g}$ is invertible
for each $g\in\mathrm{G}$, where $d(F_{z})_{g}$ is the differential
of $F_{z}$ at $g$.

Let $\epsilon>0$, and write $E$ for the set of $(z_{1},z_{2},z_{3})=z\in Q^{3}$
such that $|z_{i}-z_{j}|\ge\epsilon$ for $1\le i<j\le3$. In what
follows, we equip $\mathrm{G}$ with the left-invariant Riemannian
metric that induces $d_{\mathrm{G}}$, and equip $\mathbb{C}^{3}$
with its standard Riemannian metric. By compactness, and by the preceding
paragraph, it follows that there exists $C_{1}>1$ such that
\[
\left\Vert d(F_{z})_{g}\right\Vert _{\mathrm{op}},\left\Vert \left(d(F_{z})_{g}\right)^{-1}\right\Vert _{\mathrm{op}}\le C_{1}\text{ for all }z\in E\text{ and }g\in B(1_{\mathrm{G}},r').
\]
By compactness, and since $F_{z}|_{B(1_{\mathrm{G}},r')}$ is injective
for $z\in E$, it also follows easily that there exists $\delta>0$
such that $B(F_{z}(g),\delta)\subset F_{z}(U)$ for each $z\in E$
and $g\in B(1_{\mathrm{G}},r)$. Combining these facts, we obtain
that there exists $C>1$ such that (\ref{eq:L_z bi-lip with uni const})
holds for all $z\in E$.
\end{proof}
\begin{proof}[Proof of Proposition \ref{prop:from ent on G to ent on C}]
Since $\xi$ is nonatomic, there exists $0<\delta<1$ such that $\xi\left(B(z,\delta)\right)<1/4$
for all $z\in\mathbb{C}$. Let $0<\epsilon<1$, $C>1$, and $k,n\in\mathbb{Z}_{>0}$
be with
\[
\delta^{-1},\epsilon^{-1}\ll C\ll k\ll n,
\]
suppose that $C$ is also large with respect to $Q$ and $r$, and
let $\theta\in\mathcal{M}\left(B(1_{\mathrm{G}},r)\right)$ be with
$\frac{1}{n}H(\theta,\mathcal{D}_{n})\ge\epsilon$.

By Lemma \ref{lem:glob ent to loc ent} and since $\epsilon^{-1},k\ll n$,
\[
\mathbb{E}_{1\le i\le n}\left(\frac{1}{k}H\left(\theta_{g,i},\mathcal{D}_{i+k}\right)\right)\ge\frac{1}{n}H(\theta,\mathcal{D}_{n})-\epsilon/2\ge\epsilon/2.
\]
Moreover, by Lemma \ref{lem:bd num of subatoms},
\[
\frac{1}{k}H\left(\theta_{D},\mathcal{D}_{i+k}\right)\le C\text{ for all }i\ge0\text{ and }D\in\mathcal{D}_{i}^{\mathrm{G}}\text{ with }\theta(D)>0.
\]
Hence,
\begin{equation}
\mathbb{P}_{1\le i\le n}\left\{ \frac{1}{k}H\left(\theta_{g,i},\mathcal{D}_{i+k}\right)\ge\frac{\epsilon}{4}\right\} \ge\frac{\epsilon}{4C}.\label{eq:comp of theta have non neg ent with no neg prob}
\end{equation}

Write $\xi^{\times3}\in\mathcal{M}\left(\mathbb{C}^{3}\right)$ for
the $3$-fold product of $\xi$ with itself. Let $E$ be the set of
$(z_{1},z_{2},z_{3})=z\in Q^{3}$ such that $|z_{i}-z_{j}|\ge\delta$
for all $1\le i<j\le3$. Since $\xi\left(B(z,\delta)\right)<1/4$
for all $z\in\mathbb{C}$, and by a Fubini-type argument, $\xi^{\times3}(E)\ge1/4$.

Let $i\ge0$ and $D\in\mathcal{D}_{i}^{\mathrm{G}}$ be with $\theta(D)>0$
and $\frac{1}{k}H\left(\theta_{D},\mathcal{D}_{i+k}\right)\ge\frac{\epsilon}{4}$.
By Lemmas \ref{lem:dyad ent =000026 lip func} and \ref{lem:L_z bi-lip with uni const},
and since $\delta^{-1},\epsilon^{-1}\ll C\ll k$, for each $z\in E$
\[
\frac{1}{k}H\left(F_{z}\theta_{D},\mathcal{D}_{i+k}\right)\ge\frac{1}{k}H\left(\theta_{D},\mathcal{D}_{i+k}\right)-\frac{\epsilon}{8}\ge\frac{\epsilon}{8}.
\]
Together with $\xi^{\times3}(E)\ge1/4$, this gives
\begin{equation}
\int\frac{1}{k}H\left(F_{z}\theta_{D},\mathcal{D}_{i+k}\right)\:d\xi^{\times3}(z)\ge2^{-5}\epsilon.\label{eq:integral >=00003D2^-5 epsilon}
\end{equation}

For $1\le j\le3$, let $\pi_{j}:\mathbb{C}^{3}\rightarrow\mathbb{C}$
be the projection onto the $j$th coordinate of $\mathbb{C}^{3}$.
Given $(z_{1},z_{2},z_{3})=z\in\mathbb{C}^{3}$, note that $\pi_{j}F_{z}\theta_{D}=\theta_{D}.z_{j}$
for $1\le j\le3$. Hence, by the conditional entropy formula,
\[
H\left(F_{z}\theta_{D},\mathcal{D}_{i+k}\right)\le\sum_{j=1}^{3}H\left(\pi_{j}F_{z}\theta_{D},\mathcal{D}_{i+k}\right)=\sum_{j=1}^{3}H\left(\theta_{D}.z_{j},\mathcal{D}_{i+k}\right).
\]
Together with (\ref{eq:integral >=00003D2^-5 epsilon}), this gives
\[
2^{-5}\epsilon\le\sum_{j=1}^{3}\int\frac{1}{k}H\left(\theta_{D}.z_{j},\mathcal{D}_{i+k}\right)\:d\xi^{\times3}(z_{1},z_{2},z_{3})=3\int\frac{1}{k}H\left(\theta_{D}.z,\mathcal{D}_{i+k}\right)\:d\xi(z).
\]

We have thus shown that for all $i\ge0$ and $D\in\mathcal{D}_{i}^{\mathrm{G}}$
with $\theta(D)>0$ and $\frac{1}{k}H\left(\theta_{D},\mathcal{D}_{i+k}\right)\ge\frac{\epsilon}{4}$,
\[
\int\frac{1}{k}H\left(\theta_{D}.z,\mathcal{D}_{i+k}\right)\:d\xi(z)\ge2^{-7}\epsilon.
\]
Together with (\ref{eq:comp of theta have non neg ent with no neg prob}),
this implies
\begin{equation}
\int\mathbb{E}_{1\le i\le n}\left(\frac{1}{k}H\left(\left(\theta_{g,i}\right).z,\mathcal{D}_{i+k}\right)\right)\:d\xi(z)\ge2^{-9}C^{-1}\epsilon^{2}.\label{eq:bd of exp of theta.z}
\end{equation}

Given $i\ge0$, $D\in\mathcal{D}_{i}^{\mathrm{G}}$ with $\theta(D)>0$,
and $z\in Q$, we have
\[
\mathrm{diam}\left(\mathrm{supp}\left(\left(\theta_{D}\right).z\right)\right)=O_{Q,r}\left(2^{-i}\right).
\]
Hence, since $k$ is large with respect to $Q$ and $r$, we may assume
that
\[
\frac{1}{k}H\left(\left(\theta_{D}\right).z,\mathcal{D}_{i+k}\right)\le3.
\]
Setting $\epsilon_{0}:=2^{-12}C^{-1}\epsilon^{2}$, together with
(\ref{eq:bd of exp of theta.z}) this gives (\ref{eq:ent of nu.x nontrivial}),
which completes the proof of the proposition.
\end{proof}

\subsection{\label{subsec:Linearization}Linearization}
\begin{lem}
\label{lem:step1 in ent inc pf}Let $Q$ be a compact subset of $\mathbb{C}$,
let $r>0$ be such that $\varphi_{g}(z)\ne\infty$ for all $g\in B(1_{\mathrm{G}},r)$
and $z\in Q$, and let $\theta\in\mathcal{M}\left(B(1_{\mathrm{G}},r)\right)$
and $\xi\in\mathcal{M}(Q)$ be given. Then for all $1\le k\le n$,
\[
\frac{1}{n}H\left(\theta.\xi,\mathcal{D}_{n}\right)\ge\mathbb{E}_{1\le i\le n}\left(\frac{1}{k}H\left(\theta_{g,i}.\xi_{z,i},\mathcal{D}_{i+k}\right)\right)-O_{Q,r}\left(\frac{k}{n}+\frac{1}{k}\right).
\]
\end{lem}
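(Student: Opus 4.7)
The plan is to combine Lemma \ref{lem:glob ent to loc ent} with concavity of conditional entropy and the Lipschitz property of the action map $F(g,z) := \varphi_g(z)$ on $B(1_\mathrm{G}, r) \times Q$. The hypothesis that $\varphi_g(z) \ne \infty$ for all such $(g, z)$ ensures that $F$ is smooth on this compact set, hence Lipschitz with constant $O_{Q,r}(1)$; in particular $\mathrm{supp}(\theta.\xi) = F(\mathrm{supp}(\theta) \times \mathrm{supp}(\xi))$ has diameter $O_{Q,r}(1)$, and more generally $\mathrm{diam}(\mathrm{supp}(\theta_{g,j}.\xi_{z,j})) = O_{Q,r}(2^{-j})$ for every $j$ and every $(g,z)$ in the relevant supports.

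The first step will be to apply Lemma \ref{lem:glob ent to loc ent} to $\theta.\xi$ with starting scale $i = 0$ and block size $k$, obtaining
\[
\frac{1}{n}H(\theta.\xi, \mathcal{D}_n) = \mathbb{E}_{0 \le j \le n}\left(\frac{1}{k} H((\theta.\xi)_{y,j}, \mathcal{D}_{j+k})\right) + O_{Q,r}(k/n),
\]
noting that replacing the range $[0,n]$ by $[1,n]$ costs only an additional $O(1/n)$ which is absorbed into the error. For each fixed $j$, the inner expectation equals $\frac{1}{k} H(\theta.\xi, \mathcal{D}_{j+k} \mid \mathcal{D}_j)$ by the standard identity for conditional entropy.

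The linearization step will exploit the identities $\theta = \int \theta_{g,j}\, d\theta(g)$ and $\xi = \int \xi_{z,j}\, d\xi(z)$, which yield $\theta.\xi = \int\int \theta_{g,j}.\xi_{z,j}\, d\theta(g)\, d\xi(z)$. Concavity of conditional entropy then gives
\[
H(\theta.\xi, \mathcal{D}_{j+k} \mid \mathcal{D}_j) \ge \int\int H(\theta_{g,j}.\xi_{z,j}, \mathcal{D}_{j+k} \mid \mathcal{D}_j)\, d\theta(g)\, d\xi(z).
\]
To remove the conditioning on the right, I will invoke the diameter bound $\mathrm{diam}(\mathrm{supp}(\theta_{g,j}.\xi_{z,j})) = O_{Q,r}(2^{-j})$ together with Lemma \ref{lem:ub on card of dyad atoms intersecting} to conclude that $\mathrm{supp}(\theta_{g,j}.\xi_{z,j})$ meets only $O_{Q,r}(1)$ atoms of $\mathcal{D}_j^{\mathbb{C}}$, so $H(\theta_{g,j}.\xi_{z,j}, \mathcal{D}_j) = O_{Q,r}(1)$. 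Since $\mathcal{D}_{j+k}$ refines $\mathcal{D}_j$, the relation $H(\cdot,\mathcal{D}_{j+k}\mid\mathcal{D}_j) = H(\cdot,\mathcal{D}_{j+k}) - H(\cdot,\mathcal{D}_j)$ gives $H(\theta_{g,j}.\xi_{z,j}, \mathcal{D}_{j+k} \mid \mathcal{D}_j) \ge H(\theta_{g,j}.\xi_{z,j}, \mathcal{D}_{j+k}) - O_{Q,r}(1)$. Dividing by $k$, averaging over $j$, and substituting into the first display yields the desired inequality with total error $O_{Q,r}(k/n + 1/k)$.

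The argument is essentially routine entropy bookkeeping; the only input specific to the projective setting is the Lipschitz continuity of the action map on compact subsets of $\mathrm{G} \times \mathbb{C}$ avoiding the pole $\varphi_g^{-1}(\infty)$, which the hypotheses of the lemma explicitly arrange. I do not foresee any serious obstacle.
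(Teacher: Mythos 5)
Your proposal is correct and follows essentially the same route as the paper: the paper's proof consists of exactly the Lipschitz estimate for $(g,z)\mapsto\varphi_{g}(z)$ on $B(1_{\mathrm{G}},r)\times Q$ that you invoke, and then defers the remaining entropy bookkeeping to an argument ``similar to \cite[Lemma 6.9]{HR}'', which is precisely the combination of Lemma \ref{lem:glob ent to loc ent}, the component decomposition with concavity of conditional entropy, and the $O_{Q,r}(2^{-j})$ diameter bound that you spell out. No gaps.
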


\begin{proof}
By the smoothness of the action map $(g,z)\mapsto\varphi_{g}(z)$,
by the compactness of $B(1_{\mathrm{G}},r)\times Q$, and since $\varphi_{g}(z)\ne\infty$
for all $g\in B(1_{\mathrm{G}},r)$ and $z\in Q$, there exists $C>1$
such that for all $g,h\in B(1_{\mathrm{G}},r)$ and $z,w\in Q$
\[
\left|\varphi_{g}(z)-\varphi_{h}(w)\right|\le C\left(d(g,h)+|z-w|\right).
\]
Using this fact, the lemma follows by an argument similar to that
in the proof of \cite[Lemma 6.9]{HR}.
\end{proof}
\begin{lem}
\label{lem:linearization}Let $Q$ be a compact subset of $\mathbb{C}$,
and let $r>0$ be such that $\varphi_{g}(z)\ne\infty$ for all $g\in B(1_{\mathrm{G}},r)$
and $z\in Q$. Then for every $\epsilon>0$, $k\ge K(\epsilon)\ge1$,
and $0<\delta<\delta(Q,r,\epsilon,k)$ the following holds. Let $g\in B(1_{\mathrm{G}},r)$,
$z\in Q$, $\theta\in\mathcal{M}\left(B(1_{\mathrm{G}},r)\right)$
and $\xi\in\mathcal{M}(Q)$ be such that $d(g,h)\le\delta$ for all
$h\in\mathrm{supp}(\theta)$ and $|z-w|\le\delta$ for all $w\in\mathrm{supp}(\xi)$.
Then,
\[
\left|\frac{1}{k}H\left(\theta.\xi,\mathcal{D}_{k-\log\delta}\right)-\frac{1}{k}H\left(\left(\theta.z\right)*\left(S_{\varphi_{g}'(z)}\xi\right),\mathcal{D}_{k-\log\delta}\right)\right|<\epsilon.
\]
\end{lem}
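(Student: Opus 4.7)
The plan is to linearize the action map $\Phi(h,w) := \varphi_h(w)$ jointly around the basepoint $(g,z)$ and observe that the resulting first-order approximation pushes $\theta \times \xi$ to a translate of $(\theta.z) * (S_{\varphi_g'(z)}\xi)$. Since $(h,w) \mapsto \varphi_h(w)$ is jointly smooth on a neighborhood of the relatively compact set $B(1_{\mathrm{G}},r)\times Q$ (using the hypothesis $\varphi_h(w)\ne\infty$ there), all partials up to second order are bounded by a constant depending only on $Q$ and $r$. Combining the first-order Taylor expansion of $\varphi_h$ at $z$ with the Lipschitz estimate $\varphi_h'(z) = \varphi_g'(z) + O_{Q,r}(d(h,g))$ yields
\[
\bigl|\varphi_h(w) - \bigl(\varphi_h(z) + \varphi_g'(z)(w-z)\bigr)\bigr| = O_{Q,r}\bigl(d(h,g)|w-z| + |w-z|^2\bigr) = O_{Q,r}(\delta^2)
\]
uniformly on $\mathrm{supp}(\theta)\times\mathrm{supp}(\xi)$, since $d(h,g),|w-z|\le\delta$ there.

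Set $\tilde\Phi(h,w) := \varphi_h(z) + \varphi_g'(z)(w-z)$, so that $\Phi(\theta\times\xi) = \theta.\xi$ and the previous estimate bounds $\|\Phi-\tilde\Phi\|_{\infty}$ by $O_{Q,r}(\delta^2)$ on $\mathrm{supp}(\theta\times\xi)$. Provided $\delta$ is chosen small enough that this error satisfies $O_{Q,r}(\delta^2) \le 2^{-(k-\log\delta)} = \delta\cdot 2^{-k}$ — equivalently $\delta \le c(Q,r)\,2^{-k}$ for some constant $c(Q,r)>0$ — Lemma \ref{lem:ent of push by close func} gives
\[
\bigl|H(\theta.\xi,\mathcal{D}_{k-\log\delta}) - H(\tilde\Phi(\theta\times\xi),\mathcal{D}_{k-\log\delta})\bigr| = O(1).
\]
Writing $\tilde\Phi(h,w) = \varphi_h(z) + \varphi_g'(z)w - \varphi_g'(z)z$ and using the independence of the two factors of $\theta\times\xi$, one identifies $\tilde\Phi(\theta\times\xi)$ as the translate, by the constant $-\varphi_g'(z)z$, of the convolution $(\theta.z) * (S_{\varphi_g'(z)}\xi)$. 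Since translating a measure on $\mathbb{C}\simeq\mathbb{R}^2$ changes its entropy relative to any $\mathcal{D}_n^{\mathbb{C}}$ by at most $O(1)$ (each translated dyadic cube meets only $O(1)$ cubes of the same level), combining these two bounds and dividing by $k$ yields the claimed $\epsilon$-bound, provided $k \ge K(\epsilon)$ is large enough that the $O(1)/k$ losses fall below $\epsilon$.

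The argument is essentially routine and presents no genuine obstacle. The only delicate point is the parameter hierarchy dictated by the estimates: $\epsilon$ forces $k$ large, to absorb the $O(1)/k$ losses from Lemma \ref{lem:ent of push by close func} and from the translation; $k$ in turn forces $\delta$ small, so that the $O(\delta^2)$ Taylor error fits below the resolution $\delta 2^{-k}$ of $\mathcal{D}_{k-\log\delta}$. This is precisely the order $\epsilon \to k \to \delta$ appearing in the statement of the lemma.
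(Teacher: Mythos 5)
Your argument is correct and follows essentially the same route as the paper: the paper also linearizes the joint action map $f(g,z)=\varphi_{g}(z)$ in the second variable (noting $df_{(g,z)}(h,w)=d(f_{z})_{g}(h)+\varphi_{g}'(z)w$) and then invokes the standard entropy-stability argument of \cite[Lemma 4.2]{BHR}, which is precisely the chain of estimates you carry out explicitly (Taylor error below the cell size $\delta 2^{-k}$, Lemma \ref{lem:ent of push by close func}, factorization of the pushforward of the product measure into a translated convolution, and $O(1)/k<\epsilon$). Your write-up simply fills in the details the paper leaves to the cited reference, with the same parameter hierarchy $\epsilon\to k\to\delta$.
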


\begin{proof}
Let $V$ and $U$ be open subsets of $\mathrm{GL}(2,\mathbb{C})$
and $\mathbb{C}$, respectively, such that $B(1_{\mathrm{G}},r)\subset V$,
$Q\subset U$, and $\varphi_{g}(z)\ne\infty$ for all $g\in V$ and
$z\in U$. Let $f:V\times U\rightarrow\mathbb{C}$ be defined by $f(g,z)=\varphi_{g}(z)$
for $(g,z)\in V\times U$. Given $z\in U$, let $f_{z}:V\rightarrow\mathbb{C}$
be defined by $f_{z}(g)=\varphi_{g}(z)$ for $g\in V$. It is easy
to verify that the differential of $f$ at a point $(g,z)\in V\times U$
is given by
\[
df_{(g,z)}(h,w)=d(f_{z})_{g}(h)+\varphi_{g}'(z)w\text{ for }(h,w)\in\mathrm{M}_{2}(\mathbb{C})\times\mathbb{C},
\]
where $d(f_{z})_{g}$ is the differential of $f_{z}$ at $g$, and
$\mathrm{M}_{2}(\mathbb{C})$ denotes the vector space of $2\times2$
complex matrices. Using this fact, the lemma follows by an argument
similar to that in the proof of \cite[Lemma 4.2]{BHR}.
\end{proof}

\subsection{\label{subsec:Proof-of-Theorem ent inc}Proof of Theorem \ref{thm:ent inc}}

We can now prove Theorem \ref{thm:ent inc}, which is the following
statement.
\begin{thm*}
Suppose that $\dim\mu<2$. Then there exists $0<r<1$ such that for
every $\epsilon>0$, there exists $\delta=\delta(\epsilon)>0$ so
that $\frac{1}{n}H(\theta.\mu,\mathcal{D}_{n})>\dim\mu+\delta$ for
all $n\ge N(\epsilon)\ge1$ and $\theta\in\mathcal{M}\left(B(1_{\mathrm{G}},r)\right)$
with $\frac{1}{n}H(\theta,\mathcal{D}_{n})\ge\epsilon$.
\end{thm*}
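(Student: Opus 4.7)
The plan is to reduce the action convolution $\theta.\mu$ on $\mathbb{CP}^{1}$ to ordinary convolutions on $\mathbb{C}$ via $\psi$ and a linearization, and then apply the corollary of Hochman's inverse theorem (Theorem \ref{thm:ent inc in C}), using Propositions \ref{prop:uni ent dim} and \ref{prop:lb on ent of proj of comp of nu} to verify its hypotheses. First I would fix $r\in(0,1)$ small enough so that there exist a compact set $Q\subset\mathbb{C}$ with $\nu(Q)>1-\epsilon_{*}$ (for $\epsilon_{*}$ to be chosen tiny relative to $\epsilon$) and such that $\varphi_{g}(z)\ne\infty$ and $-g\notin B(1_{\mathrm{G}},r)$ for every $g\in B(1_{\mathrm{G}},r)$ and $z\in Q$. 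After replacing $\mu$ by its conditioning on $\psi^{-1}(Q)$ and using Lemma \ref{lem:bi-lip prop of psi} to move everything to $\mathbb{C}$, it suffices to prove the analogous statement with $\nu|_{Q}$ in place of $\mu$.

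The next step is to apply Lemma \ref{lem:step1 in ent inc pf} to write
\[
\tfrac{1}{n}H\left(\theta.\nu_{Q},\mathcal{D}_{n}\right)\ge\mathbb{E}_{1\le i\le n}\left(\tfrac{1}{k}H\left(\theta_{g,i}.(\nu_{Q})_{z,i},\mathcal{D}_{i+k}\right)\right)-O\left(\tfrac{k}{n}+\tfrac{1}{k}\right),
\]
for $1\ll k\ll n$. By Proposition \ref{prop:from ent on G to ent on C} applied to $\xi=\nu_{Q}$, there is $\epsilon_{0}=\epsilon_{0}(\epsilon,r)>0$ such that with $\nu_{Q}$-probability at least $\epsilon_{0}$ in $z$, we have $\tfrac{1}{k}H(\theta_{g,i}.z,\mathcal{D}_{i+k})>\epsilon_{0}$ on an $\epsilon_{0}$-proportion of scales $1\le i\le n$. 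On this set of good scales, I would invoke Lemma \ref{lem:linearization} at the dyadic scale $i$: it replaces the action convolution $\theta_{g,i}.(\nu_{Q})_{z,i}$ by the Euclidean convolution $(\theta_{g,i}.z)*S_{\varphi_{g}'(z)}(\nu_{Q})_{z,i}$, up to an arbitrarily small entropy error and after a harmless rescaling.

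Now I would apply Theorem \ref{thm:ent inc in C} with $\xi$ taken to be (a translate and rescaling of) $(\nu_{Q})_{z,i}$ and $\theta$ taken to be (a suitable rescaling of) $\theta_{g,i}.z$. The hypotheses on $\xi$ follow directly from the two structural results already proved for $\nu$: by Proposition \ref{prop:uni ent dim}, on a set of $i$ of full proportion (and of components of full proportion), $\tfrac{1}{m}H(\nu_{z,i},\mathcal{D}_{i+m})$ is within any prescribed tolerance of $\dim\mu$, which is bounded away from $2$ by the standing assumption $\dim\mu<2$; and by Proposition \ref{prop:lb on ent of proj of comp of nu}, on a set of components of full proportion, every projection $\pi_{w\mathbb{R}}\nu_{z,i}$ has normalized entropy at least $\dim\mu-1+\gamma$, which beats $\dim\mu-1+\epsilon'$ once $\epsilon'<\gamma$. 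The hypothesis on $\theta_{g,i}.z$ is precisely the lower bound obtained from Proposition \ref{prop:from ent on G to ent on C}. Hence on a positive fraction of scales, a positive proportion of the linearized convolutions have normalized entropy at least $\tfrac{1}{m}H(\xi,\mathcal{D}_{i+m})+\delta'$ for some $\delta'=\delta'(\epsilon)>0$. Reassembling through the above display (for the remaining scales one uses the trivial bound $\tfrac{1}{k}H(\theta_{g,i}.(\nu_{Q})_{z,i},\mathcal{D}_{i+k})\ge\tfrac{1}{k}H((\nu_{Q})_{z,i},\mathcal{D}_{i+k})-o(1)$, which holds because $\theta_{g,i}$ is supported in a ball of radius $\approx 2^{-i}$ and because of Lemma \ref{lem:zC to gzC is bi-Lip with const norm^2}), and then using Lemma \ref{lem:glob ent to loc ent} together with the exact dimensionality of $\mu$ to relate the average of component entropies of $\nu_{Q}$ to $\dim\mu$, gives $\tfrac{1}{n}H(\theta.\mu,\mathcal{D}_{n})\ge\dim\mu+\delta$ for a suitable $\delta=\delta(\epsilon)>0$.

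The main technical obstacle will be bookkeeping: one must carefully quantify the ``most components'' claims so that the sets of good scales from Propositions \ref{prop:uni ent dim}, \ref{prop:lb on ent of proj of comp of nu} and \ref{prop:from ent on G to ent on C} have overlap of size bounded below uniformly in $n$, and one must verify that the loss from the linearization in Lemma \ref{lem:linearization} and from the rescalings needed to place the small-diameter measures into the hypotheses of Theorem \ref{thm:ent inc in C} is absorbed into the various $\epsilon$-budgets. A secondary, though routine, difficulty is handling the part of $\mu$ near $\infty\in\mathbb{C}_{\infty}$ that was cut off at the start; this is dealt with by choosing $Q$ large enough to make $\mu(\psi^{-1}(\mathbb{C}\setminus Q))$ negligible relative to the eventual gain $\delta$, and using the almost-convexity of entropy.
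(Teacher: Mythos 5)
Your plan follows the paper's proof of Theorem \ref{thm:ent inc} almost step for step: restrict to a compact piece of $\nu$, pass to components via Lemma \ref{lem:step1 in ent inc pf}, linearize via Lemma \ref{lem:linearization}, feed the entropy of $\theta_{g,i}.z$ coming from Proposition \ref{prop:from ent on G to ent on C} and the component hypotheses coming from Propositions \ref{prop:uni ent dim} and \ref{prop:lb on ent of proj of comp of nu} into Theorem \ref{thm:ent inc in C}, and reassemble. One small caveat on that core step: the projection hypothesis of Theorem \ref{thm:ent inc in C} is stated relative to the entropy of the component itself, not relative to $\dim\mu$, so you must combine the two propositions (projections $\ge\dim\mu-1+\gamma$ \emph{and} component entropy $\le\dim\mu+o(1)$), exactly as the paper does when defining its set $E_{2}$; this is immediate but should be said.

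The genuine gap is the endgame, i.e.\ the treatment of the part of $\nu$ cut off near $\infty$, and it breaks the quantifier structure of the statement. You take $Q$ with $\nu(Q)>1-\epsilon_{*}$ and $\epsilon_{*}$ tiny relative to the eventual gain $\delta$. Since in general $\nu$ may have mass arbitrarily far out in $\mathbb{C}$, the requirement $\varphi_{g}(z)\ne\infty$ on $B(1_{\mathrm{G}},r)\times Q$ forces $r\lesssim\bigl(\sup_{z\in Q}|z|\bigr)^{-1}$, so your $r$ depends on $\epsilon$, whereas the theorem asserts a single $r$ working for every $\epsilon$. Worse, the plan is circular: the gain $\delta$ against which you want $\nu(\mathbb{C}\setminus Q)$ to be negligible itself depends on $Q$ and $r$ (through $\epsilon_{0}(\xi,r,\epsilon)$ in Proposition \ref{prop:from ent on G to ent on C} and through the $O_{Q,r}(1)$ constants in the linearization and diameter bounds), and enlarging $Q$ can only shrink it, so there is no a priori reason that $\nu(\mathbb{C}\setminus Q)\ll\delta(Q)$ is ever achieved. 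The paper avoids both problems by fixing a square $S$ with $\nu(S)\ge1/2$ once and for all (so $r$ is an absolute constant) and, instead of making the complement negligible, showing it loses essentially no entropy: each $g\mu_{\psi^{-1}(\mathbb{C}\setminus S)}$ with $g\in B(1_{\mathrm{G}},r)$ is a bi-Lipschitz image of an exact-dimensional measure of dimension $\dim\mu$, whence $\frac{1}{n}H\bigl(\theta.\mu_{\psi^{-1}(\mathbb{C}\setminus S)},\mathcal{D}_{n}\bigr)\ge\dim\mu-o(1)$, and concavity then preserves at least half of the gain obtained on the $S$-part. You need this (or an equivalent) observation to prove the statement as formulated.
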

\begin{proof}
Since $\nu\{\infty\}=0$, there exists $b\in\mathbb{Z}_{>0}$ such
that for
\[
S:=\left\{ z\in\mathbb{C}\::\:\mathrm{Re}(z),\mathrm{Im}(z)\in[-b,b)\right\} 
\]
we have $\nu(S)\ge1/2$. Let $0<r<1$ be such that $-g\notin B(1_{\mathrm{G}},r)$,
$\varphi_{g}(z)\ne\infty$, and $1/2\le\left|\varphi_{g}'(z)\right|\le2$
for all $g\in B(1_{\mathrm{G}},r)$ and $z\in\overline{S}$.

Let $0<\gamma<1$ be as obtained in Proposition \ref{prop:lb on ent of proj of comp of nu},
let $\epsilon,\epsilon_{0},\eta,\delta,\rho\in(0,1)$ and $m,k,n\in\mathbb{Z}_{>0}$
be such that
\[
\gamma^{-1},\epsilon^{-1}\ll\epsilon_{0}^{-1}\ll\eta^{-1}\ll m\ll\delta^{-1}\ll\rho^{-1}\ll k\ll n,
\]
suppose that $\epsilon_{0}^{-1}$ is also large with respect to $S$
and $r$, and let $\theta\in\mathcal{M}\left(B(1_{\mathrm{G}},r)\right)$
be with $\frac{1}{n}H(\theta,\mathcal{D}_{n})\ge\epsilon$.

Setting $\xi:=\nu_{S}$, by Lemma \ref{lem:step1 in ent inc pf} we
have
\[
\frac{1}{n}H\left(\theta.\xi,\mathcal{D}_{n}\right)\ge\mathbb{E}_{1\le i\le n}\left(\frac{1}{k}H\left(\theta_{g,i}.\xi_{z,i},\mathcal{D}_{i+k}\right)\right)-\rho.
\]
Hence, by Lemma \ref{lem:linearization},
\[
\frac{1}{n}H\left(\theta.\xi,\mathcal{D}_{n}\right)\ge\mathbb{E}_{1\le i\le n}\left(\frac{1}{k}H\left(\left(\theta_{g,i}.z\right)*\left(S_{\varphi_{g}'(z)}\xi_{z,i}\right),\mathcal{D}_{i+k}\right)\right)-2\rho.
\]
Thus, since $1/2\le\left|\varphi_{g}'(z)\right|\le2$ for all $g\in B(1_{\mathrm{G}},r)$
and $z\in\overline{S}$,
\begin{equation}
\frac{1}{n}H\left(\theta.\xi,\mathcal{D}_{n}\right)+3\rho\ge\mathbb{E}_{1\le i\le n}\left(\frac{1}{k}H\left(S_{\varphi_{g}'(z)}^{-1}\left(\theta_{g,i}.z\right)*\xi_{z,i},\mathcal{D}_{i+k}\right)\right).\label{eq:by linearization ect}
\end{equation}

Recall the notation $\mathcal{N}_{n}$ and $\lambda_{n}$ from Section
\ref{subsec:Basic-notations}, write $\Gamma:=\lambda_{n}\times\xi\times\theta$,
and let $E_{1}$ be the set of all $(i,z,g)\in\mathcal{N}_{n}\times S\times B(1_{\mathrm{G}},r)$
such that
\[
\frac{1}{k}H\left(\xi_{z,i},\mathcal{D}_{i+k}\right)\ge\dim\mu-\rho.
\]
By Proposition \ref{prop:lb on ent of comp of nu}, we may assume
that $\Gamma(E_{1})>1-\rho$. Also, by \cite[Corollary 4.10]{Ho1},
\begin{equation}
\frac{1}{k}H\left(S_{\varphi_{g}'(z)}^{-1}\left(\theta_{g,i}.z\right)*\xi_{z,i},\mathcal{D}_{i+k}\right)>\dim\mu-2\rho\;\text{ for }(i,z,g)\in E_{1}.\label{eq:lb on E_1}
\end{equation}

Let $E_{2}$ be the set of all $(i,z,g)\in E_{1}$ such that
\[
\mathbb{P}_{i\le j\le i+k}\left\{ \frac{1}{m}H\left(\left(\xi_{z,i}\right)_{w,j},\mathcal{D}_{j+m}\right)<1+\frac{1}{2}\dim\mu\right\} >1-\eta,
\]
\[
\mathbb{P}_{i\le j\le i+k}\left\{ \begin{array}{c}
\underset{u\mathbb{R}\in\mathbb{RP}^{1}}{\inf}\frac{1}{m}H\left(\pi_{u\mathbb{R}}\left(\xi_{z,i}\right)_{w,j},\mathcal{D}_{j+m}\right)\\
>\frac{1}{m}H\left(\left(\xi_{z,i}\right)_{w,j},\mathcal{D}_{j+m}\right)-1+\gamma/2
\end{array}\right\} >1-\eta,
\]
and
\[
\frac{1}{k}H\left(S_{\varphi_{g}'(z)}^{-1}\left(\theta_{g,i}.z\right),\mathcal{D}_{i+k}\right)>\epsilon_{0}.
\]
By Propositions \ref{prop:uni ent dim}, \ref{prop:lb on ent of proj of comp of nu}
and \ref{prop:from ent on G to ent on C}, from \cite[Lemma 2.7]{Ho},
and since $\dim\mu<2$ and $1/2\le\left|\varphi_{g}'(z)\right|\le2$
for all $g\in B(1_{\mathrm{G}},r)$ and $z\in\overline{S}$, we may
assume that $\Gamma(E_{2})>\epsilon_{0}$.

Given $(i,z,g)\in E_{2}$, note that
\[
\mathrm{diam}\left(S_{\varphi_{g}'(z)}^{-1}\left(\theta_{g,i}.z\right)\right),\mathrm{diam}\left(\xi_{z,i}\right)=O_{S,r}\left(2^{-i}\right).
\]
Hence, by Theorem \ref{thm:ent inc in C},
\[
\frac{1}{k}H\left(S_{\varphi_{g}'(z)}^{-1}\left(\theta_{g,i}.z\right)*\xi_{z,i},\mathcal{D}_{i+k}\right)\ge\frac{1}{k}H\left(\xi_{z,i},\mathcal{D}_{i+k}\right)+\delta.
\]
Thus, since $E_{2}\subset E_{1}$,
\begin{equation}
\frac{1}{k}H\left(S_{\varphi_{g}'(z)}^{-1}\left(\theta_{g,i}.z\right)*\xi_{z,i},\mathcal{D}_{i+k}\right)\ge\dim\mu-\rho+\delta\;\text{ for }(i,z,g)\in E_{2}.\label{eq:lb on E_2}
\end{equation}

Now, from (\ref{eq:by linearization ect}), (\ref{eq:lb on E_1})
and (\ref{eq:lb on E_2}),
\[
\frac{1}{n}H\left(\theta.\xi,\mathcal{D}_{n}\right)+3\rho\ge\Gamma\left(E_{1}\setminus E_{2}\right)\left(\dim\mu-2\rho\right)+\Gamma\left(E_{2}\right)\left(\dim\mu-\rho+\delta\right).
\]
Hence, recalling that $\xi:=\nu_{S}$ and since $\Gamma(E_{1})>1-\rho$
and $\Gamma(E_{2})>\epsilon_{0}$,
\begin{equation}
\frac{1}{n}H\left(\theta.\nu_{S},\mathcal{D}_{n}\right)\ge\dim\mu+\epsilon_{0}\delta-O(\rho).\label{eq:lb on ent of theta.nu_S}
\end{equation}

Setting
\[
K:=\left\{ \varphi_{g}(z)\::\:g\in B(1_{\mathrm{G}},r)\text{ and }z\in\overline{S}\right\} ,
\]
it holds that $K$ is a compact subset of $\mathbb{C}$. Hence, by
Lemma \ref{lem:bi-lip prop of psi}, the restriction of $\psi^{-1}$
to $K$ is a bi-Lipschitz map with bi-Lipschitz constant depending
only on $S$ and $r$. Since $\epsilon_{0}^{-1}$ is large with respect
to $S$ and $r$, we may assume that this bi-Lipschitz constant is
at most $\epsilon_{0}^{-1}$. Note also that $\mathrm{supp}\left(\theta.\nu_{S}\right)\subset K$,
and that $\psi^{-1}(\theta.\nu_{S})=\theta.\mu_{\psi^{-1}(S)}$. Thus,
from (\ref{eq:lb on ent of theta.nu_S}), by Lemma \ref{lem:dyad ent =000026 lip func},
and since $\epsilon_{0}^{-1},\rho\ll n$,
\begin{equation}
\frac{1}{n}H\left(\theta.\mu_{\psi^{-1}(S)},\mathcal{D}_{n}\right)\ge\dim\mu+\epsilon_{0}\delta-O(\rho).\label{eq:lb on ent of theta.mu_psi^-1(S)}
\end{equation}

Assuming $\nu\left(\mathbb{C}\setminus S\right)>0$, the exact dimensionality
of $\mu$ implies that $\mu_{\psi^{-1}\left(\mathbb{C}\setminus S\right)}$
is also exact dimensional with dimension $\dim\mu$. Hence, by Lemma
\ref{lem:from dim to ent} and since $n$ is large with respect to
$S$ and $\rho$,
\[
\frac{1}{n}H\left(\mu_{\psi^{-1}\left(\mathbb{C}\setminus S\right)},\mathcal{D}_{n}\right)>\dim\mu-\rho.
\]

Since $B(1_{\mathrm{G}},r)$ is compact, we may assume that the map
sending $z\mathbb{C}\in\mathbb{CP}^{1}$ to $gz\mathbb{C}$ is bi-Lipschitz,
with bi-Lipschitz constant at most $\epsilon_{0}^{-1}$, for all $g\in B(1_{\mathrm{G}},r)$.
From this, by concavity of entropy, by Lemma \ref{lem:dyad ent =000026 lip func},
since $\epsilon_{0}^{-1},\rho\ll n$, and by the last inequality,
\[
\frac{1}{n}H\left(\theta.\mu_{\psi^{-1}\left(\mathbb{C}\setminus S\right)},\mathcal{D}_{n}\right)\ge\int\frac{1}{n}H\left(g\mu_{\psi^{-1}\left(\mathbb{C}\setminus S\right)},\mathcal{D}_{n}\right)\:d\theta(g)>\dim\mu-2\rho.
\]
Thus, by concavity, from (\ref{eq:lb on ent of theta.mu_psi^-1(S)}),
and since $\nu(S)\ge1/2$,
\[
\frac{1}{n}H\left(\theta.\mu,\mathcal{D}_{n}\right)\ge\dim\mu+\frac{1}{2}\epsilon_{0}\delta-O(\rho).
\]
Since $\epsilon_{0}^{-1},\delta^{-1}\ll\rho^{-1}$, this completes
the proof of the theorem.
\end{proof}

\section{\label{sec:Proof-of-the main result}Proof of the main result}

In this section we establish Theorem \ref{thm:main result}. Section
\ref{subsec:Preparations-for-the proof of main thm} contains preparations
for the proof, which is carried out in Section \ref{subsec:Proof-of-Theorem main}.

\subsection{\label{subsec:Preparations-for-the proof of main thm}Preparations
for the proof}

We begin by establishing the natural upper bound. Recall the definition
of $h_{\mathrm{RW}}$ from (\ref{eq:def of h_RW}).
\begin{lem}
\label{lem:h/2chi is ub for dim mu}It always holds that $\dim\mu\le\min\left\{ 2,\frac{h_{\mathrm{RW}}}{2\chi}\right\} $.
\end{lem}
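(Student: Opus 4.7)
The bound $\dim\mu\le 2$ is immediate from the characterization of $\dim\mu$ recalled in Section \ref{subsec:Setup-and-background}, combined with the fact that $\mathbb{CP}^1$ has real dimension $2$. The substantive bound is $\dim\mu\le h_{\mathrm{RW}}/(2\chi)$, and my plan is to deduce it directly from the Ledrappier--Young formula of Theorem \ref{thm:exact dim of mu and LY}, which expresses $\dim\mu$ as $(H(p)-\Delta)/(2\chi)$. Thus the task reduces to the entropy comparison $H(p)-\Delta\le h_{\mathrm{RW}}$.

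The first step is to rewrite $H(p)-h_{\mathrm{RW}}$ in a form compatible with $\Delta$. Let $\mathcal{Q}_n$ denote the countable partition of $\Lambda^{\mathbb{N}}$ that records the value of $\Pi_n(\omega):=g_{\omega|_n}$. Since $\mathcal{Q}_n$ is coarser than $\mathcal{P}_n$, the Bernoulli property gives
\[
H\left(\beta,\mathcal{P}_n\mid\mathcal{Q}_n\right)=nH(p)-H\left(X_1\cdots X_n\right),
\]
which upon dividing by $n$ converges to $H(p)-h_{\mathrm{RW}}$ by (\ref{eq:def of h_RW}). On the other hand, Theorem \ref{thm:exact dim of mu and LY} together with bounded convergence (using the uniform bound $H(\beta_\omega,\mathcal{P}_n)\le n\log|\Lambda|$) yields
\[
\frac{1}{n}H\left(\beta,\mathcal{P}_n\mid L^{-1}\mathcal{B}_{\mathbb{CP}^1}\right)\longrightarrow\Delta.
\]
Hence it suffices to establish the scale-by-scale inequality $H(\beta,\mathcal{P}_n\mid L^{-1}\mathcal{B}_{\mathbb{CP}^1})\ge H(\beta,\mathcal{P}_n\mid\mathcal{Q}_n)$ for every $n\ge1$.

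The heart of the argument is this last inequality, and it is where the structure of the problem enters. Iterating the equivariance (\ref{eq:L is equivariant}) produces $L(\omega)=\Pi_n(\omega)\cdot L(\sigma^n\omega)$ for $\beta$-a.e.\ $\omega$, from which $L^{-1}\mathcal{B}_{\mathbb{CP}^1}\subset\mathcal{Q}_n\vee\sigma^{-n}L^{-1}\mathcal{B}_{\mathbb{CP}^1}$ modulo $\beta$-null sets. Monotonicity of conditional entropy in the conditioning sigma-algebra therefore gives
\[
H\left(\beta,\mathcal{P}_n\mid L^{-1}\mathcal{B}_{\mathbb{CP}^1}\right)\ge H\left(\beta,\mathcal{P}_n\mid\mathcal{Q}_n\vee\sigma^{-n}L^{-1}\mathcal{B}_{\mathbb{CP}^1}\right),
\]
and since $\mathcal{P}_n$ depends only on $(\omega_0,\ldots,\omega_{n-1})$ while $\sigma^{-n}L^{-1}\mathcal{B}_{\mathbb{CP}^1}$ depends only on the tail $(\omega_n,\omega_{n+1},\ldots)$, the Bernoulli property ensures that these two $\sigma$-algebras are $\beta$-independent. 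Combined with the inclusion $\mathcal{Q}_n\subset\mathcal{P}_n$, this yields the conditional independence $\sigma^{-n}L^{-1}\mathcal{B}_{\mathbb{CP}^1}\perp\mathcal{P}_n\mid\mathcal{Q}_n$, which allows one to drop the tail conditioning and recover $H(\beta,\mathcal{P}_n\mid\mathcal{Q}_n)$. I do not anticipate a genuine obstacle; the whole argument is an exercise in standard manipulations of measure-theoretic conditional entropy, with the equivariance of the Furstenberg boundary map supplying the single nontrivial ingredient.
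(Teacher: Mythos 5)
Your argument is correct, but it takes a genuinely different route from the paper. The paper never touches the conditional-entropy identity $H(p)-\Delta\le h_{\mathrm{RW}}$ directly: instead it observes that $\mu$ is also the Furstenberg measure of the $n$-step system $\mathcal{G}_n=\{g_u : u\in\Lambda^n\}$ with weights $q_{n,g}=\sum_{g_u=g}p_u$ and Lyapunov exponent $n\chi$, checks (via strong irreducibility, proximality and Lemma \ref{lem:No Q exists}) that this system satisfies the standing assumptions, applies Theorem \ref{thm:exact dim of mu and LY} at level $n$, and simply uses $\Delta_n\ge0$ to get $\dim\mu\le H(q_n)/(2n\chi)$ before letting $n\to\infty$. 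You instead stay at the original scale and prove the information-theoretic inequality itself: the iterated equivariance $L(\omega)=g_{\omega|_n}L(\sigma^n\omega)$ gives $L^{-1}\mathcal{B}_{\mathbb{CP}^1}\subset\mathcal{Q}_n\vee\sigma^{-n}L^{-1}\mathcal{B}_{\mathbb{CP}^1}$ mod $\beta$-null sets, monotonicity plus the conditional independence $\mathcal{P}_n\perp\sigma^{-n}L^{-1}\mathcal{B}_{\mathbb{CP}^1}\mid\mathcal{Q}_n$ (valid since $\mathcal{Q}_n$ is $\mathcal{P}_n$-measurable and the past and the shifted tail are $\beta$-independent) yields $H(\beta,\mathcal{P}_n\mid L^{-1}\mathcal{B}_{\mathbb{CP}^1})\ge H(\beta,\mathcal{P}_n\mid\mathcal{Q}_n)=nH(p)-H(X_1\cdots X_n)$, and identifying the left-hand limit as $\Delta$ via the second statement of Theorem \ref{thm:exact dim of mu and LY}, the disintegration formula $H(\beta,\mathcal{P}_n\mid L^{-1}\mathcal{B}_{\mathbb{CP}^1})=\int H(\beta_\omega,\mathcal{P}_n)\,d\beta(\omega)$ and bounded convergence (with the uniform bound $n\log|\Lambda|$) finishes the proof. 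The trade-off: the paper's proof is shorter and uses only the dimension formula and $\Delta_n\ge 0$, but it must verify the hypotheses of Theorem \ref{thm:exact dim of mu and LY} for every iterated system; yours needs the a.e.\ fiber-entropy convergence (the second part of Theorem \ref{thm:exact dim of mu and LY}) and the equivariance (\ref{eq:L is equivariant}), but it explains directly why the boundary entropy $\Delta$ dominates $H(p)-h_{\mathrm{RW}}$, i.e.\ why the Furstenberg-type entropy is bounded by the random walk entropy, without appealing to the $n$-step systems at all. Both are complete; only make sure to state explicitly that the a.e.\ identity $L(\omega)=g_{\omega|_n}L(\sigma^n\omega)$ is obtained by iterating (\ref{eq:L is equivariant}) using the $\sigma$-invariance of $\beta$, and that monotonicity of conditional entropy is applied to $\sigma$-algebras that agree only modulo null sets, which is harmless since conditional expectations are unaffected.
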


\begin{proof}
Since $\dim\mathbb{CP}^{1}=2$ as a real manifold, we clearly have
$\dim\mu\le2$.

Given $n\ge1$, write $\mathcal{G}_{n}:=\left\{ g_{u}\::\:u\in\Lambda^{n}\right\} $,
and denote by $\mathrm{S}_{\mathcal{G}_{n}}$the subsemigroup of $\mathrm{G}$
generated by $\mathcal{G}_{n}$. Since $\mathrm{S}_{\mathcal{G}}$
is strongly irreducible and proximal, it is easy to see that the same
holds for $\mathrm{S}_{\mathcal{G}_{n}}$. Additionally, by Lemma
\ref{lem:No Q exists}, it follows easily that $\mathrm{S}_{\mathcal{G}_{n}}$
does not fix a generalized circle.

For $g\in\mathcal{G}_{n}$, set
\[
q_{n,g}:=\sum_{u\in\Lambda^{n},g_{u}=g}p_{u},
\]
and note that $\mu$ equals the Furstenberg measure associated to
$\mathcal{G}_{n}$ and the probability vector $q_{n}:=\left(q_{n,g}\right)_{g\in\mathcal{G}_{n}}$.
Moreover, the Lyapunov exponent associated to $\mathcal{G}_{n}$ and
$q_{n}$ equals $n\chi$. Hence, by Theorem \ref{thm:exact dim of mu and LY}
and since $\Delta\ge0$,
\[
\dim\mu\le\frac{H\left(q_{n}\right)}{2n\chi}\text{ for all }n\ge1.
\]

On the other hand, by the definition of $h_{\mathrm{RW}}$,
\[
h_{\mathrm{RW}}:=\underset{n\to\infty}{\lim}\frac{1}{n}H\left(q_{n}\right).
\]
Thus, $\dim\mu\le h_{\mathrm{RW}}/\left(2\chi\right)$, which completes
the proof of the lemma.
\end{proof}
From (\ref{eq:def of L(omega)}) it follows that the sequence $\left\{ \omega\mapsto L\left(g_{\omega|_{n}}\right)\right\} _{n\ge1}$
converges in probability to $\omega\mapsto L(\omega)$. The following
lemma provides a quantitative rate for this convergence. It could
be deduced from Ruelle\textquoteright s proof of the multiplicative
ergodic theorem (see \cite[Lemma I.4]{Ru}), but we include a complete
proof for the reader's convenience.
\begin{lem}
\label{lem:dist est bet L(om) and L(g_om|_n)}For every $\eta>0$
and $n\ge N(\eta)\ge1$,
\[
\beta\left\{ \omega\in\Lambda^{\mathbb{N}}\::\:d\left(L\left(\omega\right),L\left(g_{\omega|_{n}}\right)\right)\le2^{-n\left(2\chi-\eta\right)}\right\} >1-\eta.
\]
\end{lem}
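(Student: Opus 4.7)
The plan is to combine the equivariance (\ref{eq:L is equivariant}), which gives $L(\omega) = g_{\omega|_n} L(\sigma^n\omega)$ $\beta$-a.s., with the contraction estimate from Lemma \ref{lem:dist of gzC to L(g)}, which says that $g$ maps points outside $B(L(g^{-1}),\epsilon)$ to a ball of radius $\epsilon^{-1}\|g\|_{\mathrm{op}}^{-2}$ around $L(g)$. Writing $L(\omega) = g_{\omega|_n} L(\sigma^n \omega)$, the goal reduces to showing that, with high probability, $\|g_{\omega|_n}\|_{\mathrm{op}}^{-2}$ is roughly $2^{-2n\chi}$ and the point being moved, namely $L(\sigma^n\omega)$, stays a slightly smaller distance away from the repelling direction $L(g_{\omega|_n}^{-1})$.

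Fix $\eta>0$ and set $\epsilon_n := 2^{-n\eta/4}$. I would define the events
\begin{equation*}
A_n := \bigl\{\omega : \chi_{\omega|_n} \ge n(2\chi - \eta/2)\bigr\}, \qquad B_n := \bigl\{\omega : L(\sigma^n\omega)\notin B(L(g_{\omega|_n}^{-1}),\epsilon_n)\bigr\},
\end{equation*}
and show $\beta(A_n \cap B_n) > 1-\eta$ for $n$ large. The bound $\beta(A_n) > 1-\eta/2$ is immediate from (\ref{eq:lim on normalized chi_omega|_n}), which gives convergence in probability. For $B_n$, the key observation is that under the product measure $\beta = p^{\mathbb{N}}$, the prefix $\omega|_n$ and the shift $\sigma^n\omega$ are independent; hence, conditional on $\omega|_n$, the point $L(g_{\omega|_n}^{-1})$ is deterministic while $L(\sigma^n\omega)$ is distributed as $L\beta = \mu$. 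By Lemma \ref{lem:small ball --> small mass}, there is $\epsilon_0>0$ with $\sup_{z\mathbb{C}} \mu(B(z\mathbb{C},2\epsilon_0)) < \eta/2$, so once $\epsilon_n \le 2\epsilon_0$ we get $\beta(B_n \mid \omega|_n) \ge 1-\eta/2$ pointwise, and therefore $\beta(B_n) \ge 1-\eta/2$.

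On $A_n \cap B_n$, applying Lemma \ref{lem:dist of gzC to L(g)} with $g = g_{\omega|_n}$, $z\mathbb{C} = L(\sigma^n\omega)$ and $\epsilon = \epsilon_n$ yields
\begin{equation*}
d\bigl(L(g_{\omega|_n}),\, g_{\omega|_n} L(\sigma^n\omega)\bigr) \le \epsilon_n^{-1}\|g_{\omega|_n}\|_{\mathrm{op}}^{-2} \le 2^{n\eta/4}\cdot 2^{-n(2\chi-\eta/2)} = 2^{-n(2\chi - 3\eta/4)},
\end{equation*}
and then equivariance replaces $g_{\omega|_n} L(\sigma^n\omega)$ by $L(\omega)$. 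For $n$ large the right side is at most $2^{-n(2\chi-\eta)}$, completing the proof after possibly shrinking $\eta$ by a factor to absorb the two $\eta/2$ losses. There is no serious obstacle: the only thing one has to be careful about is the choice of the intermediate scale $\epsilon_n$ (any $\epsilon_n = 2^{-cn\eta}$ with $0 < c < 1$ works, since a subexponential loss in the contraction estimate is absorbed by the slightly weakened target rate $2\chi - \eta$) and the clean use of the product structure of $\beta$ to decouple $L(\sigma^n\omega)$ from the random point $L(g_{\omega|_n}^{-1})$.
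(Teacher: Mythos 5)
Your proof is correct and follows essentially the same route as the paper: iterate the equivariance $L(\omega)=g_{\omega|_n}L(\sigma^n\omega)$, use the independence of $\omega|_n$ and $\sigma^n\omega$ together with Lemma \ref{lem:small ball --> small mass} to keep $L(\sigma^n\omega)$ away from $L(g_{\omega|_n}^{-1})$, control $\Vert g_{\omega|_n}\Vert_{\mathrm{op}}$ via the Lyapunov exponent, and conclude with Lemma \ref{lem:dist of gzC to L(g)}. The only cosmetic difference is that you take a radius $\epsilon_n=2^{-n\eta/4}$ shrinking with $n$, while the paper fixes a small $\delta$ and absorbs $\delta^{-1}$ into the exponent for large $n$; both work equally well.
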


\begin{proof}
Let $\eta,\delta\in(0,1)$ and $n\in\mathbb{Z}_{>0}$ be with $\eta^{-1}\ll\delta^{-1}\ll n$,
and let $E$ be the set of all $\omega\in\Lambda^{\mathbb{N}}$ such
that
\[
L\left(\omega\right)=g_{\omega|_{n}}L\left(\sigma^{n}\omega\right),\:d\left(L\left(g_{\omega|_{n}}^{-1}\right),L\left(\sigma^{n}\omega\right)\right)>\delta,\text{ and }\Vert g_{\omega|_{n}}\Vert_{\mathrm{op}}\ge2^{n(\chi-\eta/4)}.
\]
By Lemma \ref{lem:small ball --> small mass} and $\eta^{-1}\ll\delta^{-1}$,
\[
\mu\left(B\left(z\mathbb{C},\delta\right)\right)<\eta/2\text{ for all }z\mathbb{C}\in\mathbb{CP}^{1}.
\]
Thus, since the maps $\omega\mapsto L\left(g_{\omega|_{n}}^{-1}\right)$
and $\omega\mapsto L\left(\sigma^{n}\omega\right)$ are $\beta$-independent,
since $\omega\mapsto L\left(\sigma^{n}\omega\right)$ is distributed
according to $\mu$, from (\ref{eq:chi =00003D lim a.s.}) and (\ref{eq:L is equivariant}),
and since $\eta^{-1}\ll n$, we may assume that $\beta(E)>1-\eta$.

Additionally, from Lemma \ref{lem:dist of gzC to L(g)} and since
$\eta^{-1},\delta^{-1}\ll n$, it follows that for $\omega\in E$
\[
d\left(L\left(\omega\right),L\left(g_{\omega|_{n}}\right)\right)=d\left(g_{\omega|_{n}}L\left(\sigma^{n}\omega\right),L\left(g_{\omega|_{n}}\right)\right)\le\delta^{-1}\Vert g_{\omega|_{n}}\Vert_{\mathrm{op}}^{-2}\le2^{-n(2\chi-\eta)}.
\]
Since $\beta(E)>1-\eta$, this completes the proof of the lemma.
\end{proof}
The proof of Theorem \ref{thm:main result} requires partitioning
subsets of $\mathbb{CP}^{1}$ and $\mathrm{G}$ into smaller pieces,
while controlling the cardinality of the partition. This is the content
of the following lemma.
\begin{lem}
\label{lem:part of bd cardin}Let $X$ denote $\mathbb{CP}^{1}$ or
$\mathrm{G}$, and let $R>1$ be given. Then for every $0<\epsilon<1$
and Borel set $\emptyset\ne F\subset X$ with $\epsilon\le\mathrm{diam}(F)\le R$,
there exists a Borel partition $\mathcal{E}$ of $F$ such that
\[
\log\left|\mathcal{E}\right|=O_{X,R}\left(1+\log\left(\mathrm{diam}(F)/\epsilon\right)\right)
\]
and $\mathrm{diam}(E)\le\epsilon$ for each $E\in\mathcal{E}$.
\end{lem}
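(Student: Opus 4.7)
The plan is to reduce the lemma to Lemma \ref{lem:ub on card of dyad atoms intersecting} by choosing a single level of the dyadic-like partition $\{\mathcal{D}_n^X\}_{n\ge0}$ at the correct scale. Specifically, let $C=C(X)>1$ be the constant from property (2) of Section \ref{subsec:Dyadic-partitions}, and pick the smallest integer $n\ge 0$ with $2C\cdot 2^{-n}\le\epsilon$; note $n = O_X(1+\log(1/\epsilon))$. By (\ref{eq:in second prop of D^X}), every atom $D\in\mathcal{D}_n^X$ satisfies $\mathrm{diam}(D)\le 2C\cdot 2^{-n}\le\epsilon$.

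Then I would define
\[
\mathcal{E} := \bigl\{\, D\cap F \;:\; D\in\mathcal{D}_n^X,\ D\cap F\ne\emptyset \,\bigr\}.
\]
This is clearly a Borel partition of $F$, and every element has diameter at most $\epsilon$ by the previous paragraph. It remains only to bound $|\mathcal{E}|$. Since $F$ is a Borel set with $\mathrm{diam}(F)\le R$, Lemma \ref{lem:ub on card of dyad atoms intersecting} applied with this $F$ yields
\[
|\mathcal{E}| \;=\; \#\{D\in\mathcal{D}_n^X : D\cap F\ne\emptyset\} \;=\; O_{X,R}\bigl(1+2^{nq}\mathrm{diam}(F)^q\bigr),
\]
where $q\in\{2,6\}$ is the real dimension of $X$.

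With the choice $2^{-n}\asymp\epsilon$, the right-hand side is $O_{X,R}\bigl(1+(\mathrm{diam}(F)/\epsilon)^q\bigr)$. Taking logarithms and using the hypothesis $\mathrm{diam}(F)\ge\epsilon$ (so that $\log(\mathrm{diam}(F)/\epsilon)\ge 0$),
\[
\log|\mathcal{E}| \;=\; O_{X,R}\bigl(1+q\log(\mathrm{diam}(F)/\epsilon)\bigr) \;=\; O_{X,R}\bigl(1+\log(\mathrm{diam}(F)/\epsilon)\bigr),
\]
which is the desired bound. There is no real obstacle here: the two ingredients are precisely the diameter control (\ref{eq:in second prop of D^X}) of the dyadic-like partitions and the volume-type cardinality estimate in Lemma \ref{lem:ub on card of dyad atoms intersecting}; the only point meriting a word of care is that, when $X=\mathrm{G}$, the implicit constant must be allowed to depend on $R$ because of the exponential volume growth of $\mathrm{G}$, but this dependence is already permitted in the statement.
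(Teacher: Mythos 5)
Your proposal is correct and follows essentially the same route as the paper: choose the dyadic-like level $n$ with $2^{-n}\asymp\epsilon/(2C)$ so that atoms of $\mathcal{D}_{n}^{X}$ have diameter at most $\epsilon$ by (\ref{eq:in second prop of D^X}), intersect them with $F$, and bound the cardinality via Lemma \ref{lem:ub on card of dyad atoms intersecting}. The only difference is that you spell out the final logarithmic estimate (using $\mathrm{diam}(F)\ge\epsilon$), which the paper leaves implicit.
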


\begin{proof}
Let $0<\epsilon<1$, and let $\emptyset\ne F\subset X$ be a Borel
set with $\epsilon\le\mathrm{diam}(F)\le R$. Let $C=C(X)>1$ be the
constant appearing in (\ref{eq:in second prop of D^X}), let $n\in\mathbb{Z}_{>0}$
be with $2^{-n}\le\frac{\epsilon}{2C}<2^{1-n}$, and set
\[
\mathcal{E}:=\left\{ D\cap F\::\:D\in\mathcal{D}_{n}^{X}\text{ and }D\cap F\ne\emptyset\right\} .
\]

By (\ref{eq:in second prop of D^X}), for each $D\in\mathcal{D}_{n}^{X}$
we have $\mathrm{diam}(D)\le2C2^{-n}\le\epsilon$. Additionally, by
Lemma \ref{lem:ub on card of dyad atoms intersecting} and since $\frac{\epsilon}{2C}<2^{1-n}$,
\[
\log\left|\mathcal{E}\right|=O_{X,R}\left(1+\log\left(\mathrm{diam}(F)/\epsilon\right)\right),
\]
which completes the proof of the lemma.
\end{proof}
The following lemma provides a uniform upper bound on the diameter
of certain subsets of $\mathrm{G}$. This will be needed when applying
Lemma \ref{lem:part of bd cardin} with $X=\mathrm{G}$.
\begin{lem}
\label{lem:dist est in G}There exists $R>1$ such that $d\left(g_{1},g_{2}\right)\le R$
for all $g_{1},g_{2}\in\mathrm{G}$ with
\begin{equation}
\frac{1}{2}\le\frac{\Vert g_{1}\Vert_{\mathrm{op}}}{\Vert g_{2}\Vert_{\mathrm{op}}}\le2\text{ and }d\left(L(g_{1}),L(g_{2})\right)\le\Vert g_{1}\Vert_{\mathrm{op}}^{-2}.\label{eq:assump on g_1 and g_2}
\end{equation}
\end{lem}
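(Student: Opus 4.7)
The plan is to use left-invariance of $d_{\mathrm{G}}$ to reduce matters to bounding $\Vert g_1^{-1} g_2 \Vert_{\mathrm{op}}$ by an absolute constant. Once that is achieved, the conclusion will follow at once from the observation that for any $C > 0$, the set $\{g \in \mathrm{G} : \Vert g \Vert_{\mathrm{op}} \le C\}$ is compact (being closed in $\mathrm{M}_2(\mathbb{C})$ and bounded entry-wise), hence has finite diameter with respect to $d_{\mathrm{G}}$; one then takes $R$ to be this diameter.

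To carry out the operator-norm bound, I would fix singular value decompositions $g_i = U_i D_i V_i$ with $U_i, V_i \in \mathrm{SU}(2)$ and $D_i = \mathrm{diag}(\Vert g_i \Vert_{\mathrm{op}}, \Vert g_i \Vert_{\mathrm{op}}^{-1})$. The degenerate case $\Vert g_1 \Vert_{\mathrm{op}} = 1$ forces $\Vert g_2 \Vert_{\mathrm{op}} \le 2$ (and vice versa), placing both elements in a fixed compact subset of $\mathrm{G}$ and yielding the conclusion immediately, so I focus on the case $\Vert g_1 \Vert_{\mathrm{op}}, \Vert g_2 \Vert_{\mathrm{op}} > 1$, for which $L(g_i) = U_i e_1 \mathbb{C}$. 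Writing $W := U_1^{-1} U_2 \in \mathrm{SU}(2)$ in the standard form $W = \bigl(\begin{smallmatrix} a & -\overline{b} \\ b & \overline{a} \end{smallmatrix}\bigr)$ with $|a|^2 + |b|^2 = 1$, and using the $\mathrm{SU}(2)$-invariance of $d_{\mathbb{CP}^1}$ together with its defining formula, the second hypothesis translates into the coordinate bound $|b| \le \Vert g_1 \Vert_{\mathrm{op}}^{-2}$.

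The factorization $g_1^{-1} g_2 = V_1^{-1}\bigl( D_1^{-1} W D_2 \bigr) V_2$, combined with the fact that $V_1^{-1}, V_2 \in \mathrm{SU}(2)$ act isometrically, reduces the problem to bounding $\Vert D_1^{-1} W D_2 \Vert_{\mathrm{op}}$. A short calculation shows that three of the four entries of $D_1^{-1} W D_2$ are bounded in absolute value by the ratio hypothesis $\tfrac{1}{2} \le \Vert g_1 \Vert_{\mathrm{op}}/\Vert g_2 \Vert_{\mathrm{op}} \le 2$; the only potentially large entry is the lower-left one, namely $\Vert g_1 \Vert_{\mathrm{op}} \Vert g_2 \Vert_{\mathrm{op}} b$. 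Combining $\Vert g_2 \Vert_{\mathrm{op}} \le 2 \Vert g_1 \Vert_{\mathrm{op}}$ with $|b| \le \Vert g_1 \Vert_{\mathrm{op}}^{-2}$ bounds this entry by $2$, which is exactly the cancellation provided by the two hypotheses in concert. I do not anticipate a real obstacle here: the lemma is essentially the observation that the two hypotheses together annihilate the single large term appearing in a singular value decomposition of $g_1^{-1} g_2$, and all that is required is to verify this cancellation in coordinates.
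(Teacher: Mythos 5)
Your argument is correct and is essentially the paper's own proof: both reduce via left-invariance and singular value decompositions to showing $g_1^{-1}g_2$ (resp.\ $g_2^{-1}g_1$ in the paper) lies in a fixed operator-norm ball, using that $d\left(L(g_1),L(g_2)\right)\le\Vert g_1\Vert_{\mathrm{op}}^{-2}$ controls the off-diagonal coordinate of $U_1^{-1}U_2$ and thus cancels the single large term $\Vert g_1\Vert_{\mathrm{op}}\Vert g_2\Vert_{\mathrm{op}}$. Your entrywise computation of $D_1^{-1}WD_2$ in the $\mathrm{SU}(2)$ parametrization is only a cosmetic variant of the paper's bound on $\Vert g_2^{-1}g_1V_1^{-1}e_j\Vert$ for $j=1,2$.
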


\begin{proof}
Let $g_{1},g_{2}\in\mathrm{G}$ be such that (\ref{eq:assump on g_1 and g_2})
holds. If $\Vert g_{1}\Vert_{\mathrm{op}}=1$ or $\Vert g_{2}\Vert_{\mathrm{op}}=1$,
then $g_{1}$ and $g_{2}$ both belong to the compact set $\left\{ g\in\mathrm{G}\::\:\Vert g\Vert_{\mathrm{op}}\le2\right\} $.
Hence, we may assume that $\Vert g_{1}\Vert_{\mathrm{op}},\Vert g_{2}\Vert_{\mathrm{op}}>1$.
For $i=1,2$, let $U_{i}D_{i}V_{i}$ be a singular value decomposition
of $g_{i}$ (see Section \ref{subsec:Algebraic-notation}).

Set $z:=U_{2}^{-1}U_{1}e_{1}$, and let $z_{1},z_{2}\in\mathbb{C}$
be with $z=\left(z_{1},z_{2}\right)$. By the definition of $d_{\mathbb{CP}^{1}}$,
and since then map $w\mathbb{C}\mapsto U_{2}w\mathbb{C}$ is an isometry
of $\mathbb{CP}^{1}$, 
\begin{multline*}
|z_{2}|=\left|\det\left(\begin{array}{cc}
1 & z_{1}\\
0 & z_{2}
\end{array}\right)\right|=d\left(e_{1}\mathbb{C},z\mathbb{C}\right)=d\left(U_{2}e_{1}\mathbb{C},U_{1}e_{1}\mathbb{C}\right)\\
=d\left(L(g_{1}),L(g_{2})\right)\le\Vert g_{1}\Vert_{\mathrm{op}}^{-2}.
\end{multline*}
From this and since $\frac{1}{2}\le\frac{\Vert g_{1}\Vert_{\mathrm{op}}}{\Vert g_{2}\Vert_{\mathrm{op}}}\le2$,
\begin{multline*}
\Vert g_{2}^{-1}g_{1}V_{1}^{-1}e_{1}\Vert=\Vert D_{2}^{-1}U_{2}^{-1}U_{1}D_{1}e_{1}\Vert=\Vert g_{1}\Vert_{\mathrm{op}}\Vert D_{2}^{-1}z\Vert\\
=\left(\frac{\Vert g_{1}\Vert_{\mathrm{op}}^{2}}{\Vert g_{2}\Vert_{\mathrm{op}}^{2}}|z_{1}|^{2}+\Vert g_{1}\Vert_{\mathrm{op}}^{2}\Vert g_{2}\Vert_{\mathrm{op}}^{2}|z_{2}|^{2}\right)^{1/2}\le8^{1/2}.
\end{multline*}
Set $w=U_{2}^{-1}U_{1}e_{2}$, and let $w_{1},w_{2}\in\mathbb{C}$
be with $w=\left(w_{1},w_{2}\right)$. Since $\frac{1}{2}\le\frac{\Vert g_{1}\Vert_{\mathrm{op}}}{\Vert g_{2}\Vert_{\mathrm{op}}}\le2$,
\begin{multline*}
\Vert g_{2}^{-1}g_{1}V_{1}^{-1}e_{2}\Vert=\Vert D_{2}^{-1}U_{2}^{-1}U_{1}D_{1}e_{2}\Vert=\Vert g_{1}\Vert_{\mathrm{op}}^{-1}\Vert D_{2}^{-1}w\Vert\\
=\Vert g_{1}\Vert_{\mathrm{op}}^{-1}\left(\Vert g_{2}\Vert_{\mathrm{op}}^{-2}|w_{1}|^{2}+\Vert g_{2}\Vert_{\mathrm{op}}^{2}|w_{2}|^{2}\right)^{1/2}\le5^{1/2}.
\end{multline*}

Since $\left\{ V_{1}^{-1}e_{1},V_{1}^{-1}e_{2}\right\} $ is an orthonormal
basis of $\mathbb{C}^{2}$, the inequalities above imply that $g_{2}^{-1}g_{1}$
belongs to the compact set 
\[
\left\{ g\in\mathrm{G}\::\:\Vert g\Vert_{\mathrm{op}}\le5^{1/2}+8^{1/2}\right\} ,
\]
from which it follows that $d\left(g_{2}^{-1}g_{1},1_{\mathrm{G}}\right)=O(1)$.
Thus, by the left invariance of $d_{\mathrm{G}}$, we obtain $d\left(g_{1},g_{2}\right)=O(1)$,
which completes the proof.
\end{proof}
The following lemma will be useful for applying Theorem \ref{thm:ent inc}
in situations where the measure $\theta\in\mathcal{M}\left(\mathrm{G}\right)$
is supported far from the identity.
\begin{lem}
\label{lem:scaling of ent in CP^1}For every $0<\eta<1$ and $n\ge N(\eta)\ge1$
the following holds. Let $g\in\mathrm{G}$ be with $\left|\frac{1}{n}\log\Vert g\Vert_{\mathrm{op}}-\chi\right|<\eta$.
Then for every $\theta\in\mathcal{M}\left(B\left(1_{\mathrm{G}},1\right)\right)$
and $M\ge0$,
\[
\left|\frac{1}{n}H\left(g\left(\theta.\mu\right),\mathcal{D}_{\left(M+2\chi\right)n}\right)-\frac{1}{n}H\left(\theta.\mu,\mathcal{D}_{Mn}\right)\right|=O(\eta(1+M)).
\]
\end{lem}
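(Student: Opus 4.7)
The plan is to exploit that, outside a small neighborhood of $L(g^{-1})$, the M\"obius action $z\mathbb{C}\mapsto gz\mathbb{C}$ contracts distances by a factor comparable to $\|g\|_{\mathrm{op}}^{-2}\approx 2^{-2\chi n}$, with distortion depending only on the size of the neighborhood. This reduces the entropy comparison to a direct application of Lemma \ref{lem:dyad ent =000026 lip func}.

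First, using the nonatomicity of $\mu$ (Lemma \ref{lem:small ball --> small mass}) together with the uniform operator-norm bound on the compact set $B(1_{\mathrm{G}},1)$, I would choose $\epsilon=\epsilon(\eta)\in(0,1)$ such that
\[
(\theta.\mu)\!\left(B(L(g^{-1}),\epsilon)\right)\le\eta
\]
holds for every $g\in\mathrm{G}$ and every $\theta\in\mathcal{M}(B(1_{\mathrm{G}},1))$. Set $A:=\mathbb{CP}^{1}\setminus B(L(g^{-1}),\epsilon)$ and decompose $\theta.\mu=p\nu_{1}+(1-p)\nu_{2}$, where $\nu_{1}$ is the normalized restriction to $A$ and $\nu_{2}$ the restriction to $A^{c}$, so that $1-p\le\eta$. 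By Lemmas \ref{lem:zC to gzC is bi-Lip with const norm^2} and \ref{lem:dist of gzC to L(g)}, on $A$ the map $\varphi_{g}$ satisfies
\[
\|g\|_{\mathrm{op}}^{-2}d(z,w)\le d(\varphi_{g}(z),\varphi_{g}(w))\le\epsilon^{-2}\|g\|_{\mathrm{op}}^{-2}d(z,w).
\]

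Next, I would apply Lemma \ref{lem:dyad ent =000026 lip func} (its refined $\mathbb{CP}^{1}\to\mathbb{CP}^{1}$ version) with $s:=\|g\|_{\mathrm{op}}^{-2}$, $C:=\epsilon^{-2}$, and scale $N:=(M+2\chi)n$, obtaining
\[
\left|H(g\nu_{1},\mathcal{D}_{N})-H(\nu_{1},\mathcal{D}_{N+\log s})\right|=O(1+\log\epsilon^{-1})=O_{\eta}(1).
\]
Since $N+\log s=Mn+2(\chi n-\log\|g\|_{\mathrm{op}})$ differs from $Mn$ by at most $2\eta n$, iterating Lemma \ref{lem:bd num of subatoms} gives $|H(\nu_{1},\mathcal{D}_{N+\log s})-H(\nu_{1},\mathcal{D}_{Mn})|=O(\eta n)$, and hence $|H(g\nu_{1},\mathcal{D}_{(M+2\chi)n})-H(\nu_{1},\mathcal{D}_{Mn})|=O(\eta n)+O_{\eta}(1)$.

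Finally, by the concavity and almost-convexity of entropy (Section \ref{subsec:Entropy}), returning from $\nu_{1}$ to $\theta.\mu$ costs at most $(1-p)\bigl[H(\nu_{2},\mathcal{D}_{Mn})+H(g\nu_{2},\mathcal{D}_{(M+2\chi)n})\bigr]+O(1)$. By the cardinality bound (\ref{eq:ub on card of dyd part of CP^1}), each term here is $O((1+M)n)$, and since $1-p\le\eta$ this is $O(\eta(1+M)n)$. Dividing by $n$ and taking $n\ge N(\eta)$ so that $\log\epsilon(\eta)^{-1}\le\eta n$ yields the claimed bound $O(\eta(1+M))$. The main technical point is the uniformity of $\epsilon$ in both $g$ and $\theta$, which follows from the uniformity of Lemma \ref{lem:small ball --> small mass} in the ball center together with the $\|h\|_{\mathrm{op}}$-bi-Lipschitz scaling on $B(1_{\mathrm{G}},1)$. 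A mild edge case arises when $M$ is comparable to $\eta$ or smaller, for which the shifted scale $N+\log s$ may dip below zero; this is handled directly by bounding $H(g\nu_{1},\mathcal{D}_{(M+2\chi)n})$ via Lemma \ref{lem:ub on card of dyad atoms intersecting}, using that $g\nu_{1}$ is supported in $B(L(g),\epsilon^{-1}\|g\|_{\mathrm{op}}^{-2})$ by the second part of Lemma \ref{lem:dist of gzC to L(g)}.
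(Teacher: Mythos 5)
Your proposal is correct and takes essentially the same route as the paper: restrict $\theta.\mu$ away from a ball around $L(g^{-1})$ of uniformly small mass (compactness of $B(1_{\mathrm{G}},1)$ plus Lemmas \ref{lem:zC to gzC is bi-Lip with const norm^2} and \ref{lem:small ball --> small mass}), apply Lemma \ref{lem:dyad ent =000026 lip func} to the contracted piece, and absorb the restriction error by concavity/almost-convexity together with (\ref{eq:ub on card of dyd part of CP^1}). The only (harmless) difference is bookkeeping: the paper applies the lemma with $s=2^{-2\chi n}$ and $C=2^{3\eta n}$, so the shifted scale is exactly $Mn$ and the refined $\mathbb{CP}^{1}$ case always applies, whereas your choice $s=\Vert g\Vert_{\mathrm{op}}^{-2}$, $C=\epsilon^{-2}$ forces the extra $O(\eta n)$ level shift and the separate small-$M$ edge case, both of which you handle correctly.
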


\begin{proof}
Let $\eta,\delta\in(0,1)$ and $n\in\mathbb{Z}_{>0}$ be such that
$\eta^{-1}\ll\delta^{-1}\ll n$, let $g\in\mathrm{G}$ be with $\left|\frac{1}{n}\log\Vert g\Vert_{\mathrm{op}}-\chi\right|<\eta$,
and let $\theta\in\mathcal{M}\left(B\left(1_{\mathrm{G}},1\right)\right)$
and $M\ge0$ be given. Set $\xi:=\theta.\mu$ and $Y:=\mathbb{CP}^{1}\setminus B\left(L\left(g^{-1}\right),\delta\right)$.

Since $B\left(1_{\mathrm{G}},1\right)$ is a compact subset of $\mathrm{G}$,
and by Lemmas \ref{lem:zC to gzC is bi-Lip with const norm^2} and
\ref{lem:small ball --> small mass}, we may assume that $g'\mu(Y)>1-\eta$
for all $g'\in B\left(1_{\mathrm{G}},1\right)$. Since $\xi=\int g'\mu\:d\theta(g')$,
this gives $\xi(Y)>1-\eta$. From this, from (\ref{eq:ub on card of dyd part of CP^1}),
and by concavity and almost-convexity (see Section \ref{subsec:Entropy}),
we obtain
\begin{equation}
\left|\frac{1}{n}H\left(\xi_{Y},\mathcal{D}_{Mn}\right)-\frac{1}{n}H\left(\xi,\mathcal{D}_{Mn}\right)\right|=O\left(\eta(1+M)\right)\label{eq:ent of xi_Y close to ent of xi}
\end{equation}
and
\begin{equation}
\left|\frac{1}{n}H\left(g\xi_{Y},\mathcal{D}_{\left(M+2\chi\right)n}\right)-\frac{1}{n}H\left(g\xi,\mathcal{D}_{\left(M+2\chi\right)n}\right)\right|=O\left(\eta(1+M)\right).\label{eq:ent of g xi_Y close to ent of g xi}
\end{equation}

Since $\eta^{-1},\delta^{-1}\le n$, we may assume that $\delta^{-2}\le2^{n\eta}$.
From this, from $\left|\frac{1}{n}\log\Vert g\Vert_{\mathrm{op}}-\chi\right|<\eta$,
and by Lemmas \ref{lem:zC to gzC is bi-Lip with const norm^2} and
\ref{lem:dist of gzC to L(g)}, it follows that for every $z\mathbb{C},w\mathbb{C}\in Y$
\[
2^{-2n\eta}2^{-2n\chi}d\left(z\mathbb{C},w\mathbb{C}\right)\le d\left(gz\mathbb{C},gw\mathbb{C}\right)\le2^{3n\eta}2^{-2n\chi}d\left(z\mathbb{C},w\mathbb{C}\right).
\]
Hence, by applying Lemma \ref{lem:dyad ent =000026 lip func} with
$s=2^{-2n\chi}$ and $C=2^{3n\eta}$,
\[
\left|\frac{1}{n}H\left(g\xi_{Y},\mathcal{D}_{\left(M+2\chi\right)n}\right)-\frac{1}{n}H\left(\xi_{Y},\mathcal{D}_{Mn}\right)\right|=O\left(\eta\right).
\]
This, together with (\ref{eq:ent of xi_Y close to ent of xi}) and
(\ref{eq:ent of g xi_Y close to ent of g xi}), completes the proof
of the lemma.
\end{proof}

\subsection{\label{subsec:Proof-of-Theorem main}Proof of Theorem \ref{thm:main result}}

We can now prove our main result. For the reader's convenience, we
recall the statement of Theorem \ref{thm:main result} before its
proof.
\begin{thm*}
Suppose that $\mathrm{S}_{\mathcal{G}}$ is strongly irreducible,
proximal, and does not fix a generalized circle. Assume moreover that
$\mathcal{G}$ is weakly Diophantine. Then,
\[
\dim\mu=\min\left\{ 2,\frac{h_{\mathrm{RW}}}{2\chi}\right\} .
\]
\end{thm*}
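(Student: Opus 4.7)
The plan is to derive Theorem \ref{thm:main result} from the entropy increase result (Theorem \ref{thm:ent inc}) by contradiction, following the blueprint of \cite{HR} in the self-affine setting. By Lemma \ref{lem:h/2chi is ub for dim mu} the upper bound is already known, so I would assume $\dim\mu<\min\{2,h_{\mathrm{RW}}/(2\chi)\}$; in particular $\dim\mu<2$, so Theorem \ref{thm:ent inc} supplies a constant $r\in(0,1)$ and, for each $\epsilon>0$, a positive $\delta(\epsilon)$.

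The first step is to convert the strict inequality $\dim\mu<h_{\mathrm{RW}}/(2\chi)$ into substantial dyadic entropy of the disintegrated walks $\Pi_n\beta_\omega$ on $\mathrm{G}$. By Theorem \ref{thm:exact dim of mu and LY}, the assumption gives $\Delta=H(p)-2\chi\dim\mu>H(p)-h_{\mathrm{RW}}$, and standard entropy manipulations (using that $\frac{1}{n}H(\beta_\omega,\mathcal{P}_n)\to\Delta$ almost surely while $\frac{1}{n}H(\Pi_n\beta,\{\text{points}\})\to h_{\mathrm{RW}}$) yield, for $\beta$-a.e.\ $\omega$, that the normalized Shannon entropy $\frac{1}{n}H(\Pi_n\beta_\omega,\{\text{points}\})$ is bounded below by $h_{\mathrm{RW}}-2\chi\dim\mu-o(1)>0$. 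The weakly Diophantine assumption then guarantees that for infinitely many $n$, distinct $g_u$ with $u\in\Lambda^n$ lie in different atoms of $\mathcal{D}_{Mn}^{\mathrm{G}}$, where $M$ is chosen in terms of the Diophantine constant, so point entropy and $\mathcal{D}_{Mn}$-entropy agree up to $o(n)$. Together these show that there exist $\epsilon_0>0$ and an integer $M\ge1$ such that
\[
\beta\bigl\{\omega:\tfrac{1}{n}H(\Pi_n\beta_\omega,\mathcal{D}_{Mn}^{\mathrm{G}})>\epsilon_0\bigr\}>\epsilon_0
\]
for infinitely many $n$.

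The second step uses this to produce the contradiction via Theorem \ref{thm:ent inc}. From the decomposition $\mu=\int(\Pi_n\beta_\omega).\mu\,d\beta(\omega)$ and concavity of conditional entropy, it suffices to show that for a positive $\beta$-proportion of $\omega$, the measure $(\Pi_n\beta_\omega).\mu$ contributes normalized conditional entropy at least $\dim\mu+\delta'$ at scales $2\chi n\le\cdot\le(M+2\chi)n$ for some $\delta'>0$ independent of $n$; this contradicts the fact, obtained from exact dimensionality (Lemma \ref{lem:from dim to ent} applied atom by atom), that $\frac{1}{Mn}H(\mu,\mathcal{D}_{(M+2\chi)n}^{\mathbb{CP}^1}\mid\mathcal{D}_{2\chi n}^{\mathbb{CP}^1})\to\dim\mu$. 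To apply Theorem \ref{thm:ent inc}, whose input must lie in $\mathcal{M}(B(1_{\mathrm{G}},r))$, one chops $\mathrm{supp}(\Pi_n\beta_\omega)$ into pieces of $d_{\mathrm{G}}$-diameter at most $r$ and translates each into $B(1_{\mathrm{G}},r)$ by left-multiplication with some $g_j$, producing measures $\theta_{\omega,j}\in\mathcal{M}(B(1_{\mathrm{G}},r))$. Lemmas \ref{lem:dist est bet L(om) and L(g_om|_n)} and \ref{lem:dist est in G} confine $\mathrm{supp}(\Pi_n\beta_\omega)$ to a $d_{\mathrm{G}}$-bounded set for typical $\omega$, so Lemma \ref{lem:part of bd cardin} provides a partition with only $e^{o(n)}$ pieces; consequently, a nonnegligible fraction of the $\theta_{\omega,j}$ inherits $\frac{1}{Mn}H(\theta_{\omega,j},\mathcal{D}_{Mn}^{\mathrm{G}})\ge\epsilon_0/2$. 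For such $\theta_{\omega,j}$, Theorem \ref{thm:ent inc} gives $\frac{1}{Mn}H(\theta_{\omega,j}.\mu,\mathcal{D}_{Mn}^{\mathbb{CP}^1})\ge\dim\mu+\delta(\epsilon_0/2)$, and Lemma \ref{lem:scaling of ent in CP^1} transports this through left-multiplication by $g_j$ to the scale $(M+2\chi)n$ with controlled error. Averaging yields the required lower bound on the conditional entropy of $\mu$, and hence the contradiction.

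The main obstacle is the chopping procedure. Since $\log\|\Pi_n(\omega)\|_{\mathrm{op}}\approx n\chi$, the support of $\Pi_n\beta_\omega$ lies very far from the identity of $\mathrm{G}$, and because $\mathrm{G}$ has exponential volume growth, an arbitrary set at that depth could require $e^{\Omega(n)}$ pieces of $d_{\mathrm{G}}$-diameter $\le r$, which would wipe out any entropy gain. The key escape route is that $\mathrm{supp}(\Pi_n\beta_\omega)$ actually concentrates in a low-dimensional slice: Lemma \ref{lem:dist est bet L(om) and L(g_om|_n)} quantifies the convergence $L(g_{\omega|_n})\to L(\omega)$ at rate $2^{-n(2\chi-\eta)}$, and Lemma \ref{lem:dist est in G} converts this, together with the narrow band for $\|\cdot\|_{\mathrm{op}}$, into an $O(1)$ bound on $d_{\mathrm{G}}$-diameter. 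Carefully balancing the scales and small constants so that the entropy gain $\delta(\epsilon_0)$ survives averaging over pieces, over $\omega$, and through the scale transport of Lemma \ref{lem:scaling of ent in CP^1} is the technical heart of the argument.
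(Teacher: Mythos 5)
Your proposal is correct and follows essentially the same route as the paper's own proof: the Ledrappier--Young formula (Theorem~\ref{thm:exact dim of mu and LY}) combined with the weak Diophantine property gives a positive proportion of fibres $\Pi_{n}\beta_{\omega}$ with nonnegligible entropy at scale $Mn$, and then the decomposition $\mu=\int\left(\Pi_{n}\beta_{\omega}\right).\mu\:d\beta(\omega)$, concavity, the chopping-and-translating of these fibre measures into $B(1_{\mathrm{G}},r)$ via Lemmas~\ref{lem:dist est bet L(om) and L(g_om|_n)}, \ref{lem:dist est in G} and \ref{lem:part of bd cardin}, Theorem~\ref{thm:ent inc}, and the scale transport of Lemma~\ref{lem:scaling of ent in CP^1} produce the contradiction exactly as in Section~\ref{subsec:Proof-of-Theorem main}. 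Two small imprecisions, both repairable within your stated $e^{o(n)}$ budget and handled this way in the paper: Lemma~\ref{lem:dist est in G} does not bound the diameter of the whole typical part of $\mathrm{supp}(\Pi_{n}\beta_{\omega})$ by $O(1)$, since its hypotheses (operator-norm ratio at most $2$ and boundary distance at most $\Vert g\Vert_{\mathrm{op}}^{-2}$) hold only after a preliminary refinement into factor-$2$ norm bands and boundary cells of size $2^{-n(2\chi+2\eta)}$ (an $e^{O(\eta n)}$-cardinality refinement); and for the final averaging one also needs the complementary bound that $(\Pi_{n}\beta_{\omega})_{Z}.\mu$ has conditional entropy at least $\dim\mu-O(\eta)$ for the remaining pieces and for $\omega$ outside the good set (obtained from exact dimensionality, Lemma~\ref{lem:scaling of ent in CP^1} with $\theta=\delta_{1_{\mathrm{G}}}$, and concavity), so that the entropy gain on the good set is not swamped.
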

\begin{proof}
By Lemma \ref{lem:h/2chi is ub for dim mu}, we only need to show
that $\dim\mu\ge\min\left\{ 2,\frac{h_{\mathrm{RW}}}{2\chi}\right\} $.
Assume by contradiction that $\dim\mu<\min\left\{ 2,\frac{h_{\mathrm{RW}}}{2\chi}\right\} $.
From this and by Theorem \ref{thm:exact dim of mu and LY}, it follows
that there exists $0<\epsilon<1$ such that
\begin{equation}
H(p)-h_{\mathrm{RW}}<\Delta-\epsilon,\label{eq:H(p)-h_RW<Delta}
\end{equation}
where $\Delta$ is defined in Section \ref{subsec:Exact-dimensionality-and LY formula}.

Since $\mathcal{G}$ is weakly Diophantine, there exists $c>0$ such
that for infinitely many integers $n\ge1$,
\begin{equation}
d\left(g_{u_{1}},g_{u_{2}}\right)\ge c^{n}\text{ for all }u_{1},u_{2}\in\Lambda^{n}\text{ with }g_{u_{1}}\ne g_{u_{2}}.\label{eq:by Diophantine}
\end{equation}
By (\ref{eq:in second prop of D^X}), there exists $M=M(c)>1$ such
that
\begin{equation}
\mathcal{D}_{Mn}^{\mathrm{G}}\left(g\right)\ne\mathcal{D}_{Mn}^{\mathrm{G}}\left(g'\right)\text{ for all }n\ge1\text{ and }g,g'\in\mathrm{G}\text{ with }d\left(g,g'\right)\ge c^{n}.\label{eq:def prop of M}
\end{equation}

Let $0<\eta<1$ and $n\in\mathbb{Z}_{>0}$ be such that $\epsilon^{-1},M\ll\eta^{-1}\ll n$,
and (\ref{eq:by Diophantine}) holds. Given $\xi\in\mathcal{M}\left(\mathbb{CP}^{1}\right)$,
set
\[
\widehat{H}\left(\xi\right):=\frac{1}{Mn}H\left(\xi,\mathcal{D}_{\left(M+2\chi\right)n}\mid\mathcal{D}_{2\chi n}\right).
\]
By Lemma \ref{lem:from dim to ent} and since $\mu$ is exact dimensional,
we may assume that
\begin{equation}
\dim\mu\ge\widehat{H}(\mu)-\eta.\label{eq:dim mu >=00003D H(mu)-eta}
\end{equation}

Let $\Pi_{n}:\Lambda^{\mathbb{N}}\rightarrow\mathrm{G}$ be defined
by $\Pi_{n}(\omega)=g_{\omega|_{n}}$ for $\omega\in\Lambda^{\mathbb{N}}$,
and recall from Section \ref{subsec:Exact-dimensionality-and LY formula}
that $\left\{ \beta_{\omega}\right\} _{\omega\in\Lambda^{\mathbb{N}}}$
denotes the disintegration of $\beta$ with respect to $L^{-1}\mathcal{B}_{\mathbb{CP}^{1}}$.
From $\mu=\sum_{i\in\Lambda}p_{i}\cdot g_{i}\mu$ and $\beta=\int\beta_{\omega}\:d\beta(\omega)$,
we obtain
\[
\mu=\sum_{u\in\Lambda^{n}}p_{u}\cdot g_{u}\mu=\left(\Pi_{n}\beta\right).\mu=\int\left(\Pi_{n}\beta_{\omega}\right).\mu\:d\beta(\omega).
\]
Hence, by (\ref{eq:dim mu >=00003D H(mu)-eta}) and the concavity
of conditional entropy,
\begin{equation}
\dim\mu\ge\int\widehat{H}\left(\left(\Pi_{n}\beta_{\omega}\right).\mu\right)\:d\beta(\omega)-\eta.\label{eq:ineq to contradict in main pf}
\end{equation}
To prove the theorem, we shall derive a contradiction with (\ref{eq:ineq to contradict in main pf}).

Set
\[
E_{0}:=\left\{ \omega\in\Lambda^{\mathbb{N}}\::\:\left|\frac{1}{n}\log\Vert g_{\omega|_{n}}\Vert_{\mathrm{op}}-\chi\right|<\eta\right\} ,
\]
and let $E_{1}$ be the set of all $\omega\in\Lambda^{\mathbb{N}}$
such that $\beta_{\omega}\left(E_{0}\right)>1-\eta$. By (\ref{eq:chi =00003D lim a.s.}),
$\eta^{-1}\ll n$, and $\beta=\int\beta_{\omega}\:d\beta(\omega)$,
we may assume that $\beta\left(E_{1}\right)>1-\eta$.

Write $\mathcal{E}_{n}:=\left\{ \Pi_{n}^{-1}\{g\}\::\:g\in\mathrm{G}\right\} $
for the partition of $\Lambda^{\mathbb{N}}$ into level sets of $\Pi_{n}$,
and recall that $\mathcal{P}_{n}$ denotes the partition of $\Lambda^{\mathbb{N}}$
into level-$n$ cylinders. By (\ref{eq:def of h_RW}),
\[
h_{\mathrm{RW}}\le\frac{1}{n}H\left(\Pi_{n}\beta\right)=\frac{1}{n}H\left(\beta,\mathcal{E}_{n}\right),
\]
where $H\left(\Pi_{n}\beta\right)$ denotes the Shannon entropy of
the discrete probability measure $\Pi_{n}\beta$. By the last formula,
from (\ref{eq:H(p)-h_RW<Delta}), and since $H(p)=\frac{1}{n}H\left(\beta,\mathcal{P}_{n}\right)$,
\[
\Delta-\epsilon>\frac{1}{n}H\left(\beta,\mathcal{P}_{n}\right)-\frac{1}{n}H\left(\beta,\mathcal{E}_{n}\right)=\frac{1}{n}H\left(\beta,\mathcal{P}_{n}\mid\mathcal{E}_{n}\right).
\]
Thus, by the concavity of conditional entropy,
\begin{equation}
\Delta-\epsilon>\int\frac{1}{n}H\left(\beta_{\omega},\mathcal{P}_{n}\mid\mathcal{E}_{n}\right)\:d\beta(\omega).\label{eq:Delta-epsilon>int of cond ents}
\end{equation}

By Theorem \ref{thm:exact dim of mu and LY} and since $\epsilon^{-1}\ll n$,
\[
\int\frac{1}{n}H\left(\beta_{\omega},\mathcal{P}_{n}\right)\:d\beta(\omega)>\Delta-\epsilon/2.
\]
Hence, by (\ref{eq:Delta-epsilon>int of cond ents}),
\[
\int\frac{1}{n}H\left(\Pi_{n}\beta_{\omega}\right)\:d\beta(\omega)>\epsilon/2.
\]
From this and since
\begin{equation}
\frac{1}{n}H\left(\Pi_{n}\xi\right)\le\log|\Lambda|\text{ for each }\xi\in\mathcal{M}\left(\Lambda^{\mathbb{N}}\right),\label{eq:ub on Shannon ent of Pi_n xi}
\end{equation}
we obtain
\begin{equation}
\beta\left\{ \omega\in\Lambda^{\mathbb{N}}\::\:\frac{1}{n}H\left(\Pi_{n}\beta_{\omega}\right)\ge\epsilon/4\right\} \ge\frac{\epsilon}{4\log|\Lambda|}.\label{eq:Shannon > ep/4}
\end{equation}

Let $E_{2}$ be the set of all $\omega\in E_{1}$ such that $\frac{1}{n}H\left(\Pi_{n}\beta_{\omega}\right)\ge\epsilon/4$
and
\[
\beta_{\omega}\left\{ \omega'\in\Lambda^{\mathbb{N}}\::\:d\left(L\left(\omega\right),L\left(g_{\omega'|_{n}}\right)\right)\le2^{-n\left(2\chi-\eta\right)}\right\} >1-\eta.
\]
Note that, by the definition of $\left\{ \beta_{\omega}\right\} _{\omega\in\Lambda^{\mathbb{N}}}$,
for $\beta$-a.e. $\omega$ we have $L\left(\omega'\right)=L\left(\omega\right)$
for $\beta_{\omega}$-a.e. $\omega'$. From this, by Lemma \ref{lem:dist est bet L(om) and L(g_om|_n)},
since $\beta\left(E_{1}\right)>1-\eta$, from (\ref{eq:Shannon > ep/4}),
and since $\epsilon^{-1}\ll\eta^{-1}\ll n$, it follows that $\beta\left(E_{2}\right)>\frac{\epsilon}{8\log|\Lambda|}$.

Fix $\omega\in E_{2}$, and let $F$ be the set of all $\omega'\in E_{0}$
such that
\[
d\left(L\left(\omega\right),L\left(g_{\omega'|_{n}}\right)\right)\le2^{-n\left(2\chi-\eta\right)}.
\]
Since $\omega\in E_{2}\subset E_{1}$, we have $\beta_{\omega}(F)>1-2\eta$.
Thus, from $\frac{1}{n}H\left(\Pi_{n}\beta_{\omega}\right)\ge\epsilon/4$,
by almost-convexity of entropy, from (\ref{eq:ub on Shannon ent of Pi_n xi}),
and since $\epsilon^{-1}\ll\eta^{-1}\ll n$, we obtain $\frac{1}{n}H\left(\Pi_{n}\left(\beta_{\omega}\right)_{F}\right)\ge\epsilon/8$.

By Lemma \ref{lem:part of bd cardin}, there exists a Borel partition
$\mathcal{Q}$ of $B\left(L\left(\omega\right),2^{-n\left(2\chi-\eta\right)}\right)$
such that $\log\left|\mathcal{Q}\right|=O\left(\eta n\right)$ and
$\mathrm{diam}\left(Q\right)\le2^{-n\left(2\chi+2\eta\right)}$ for
all $Q\in\mathcal{Q}$. Hence, by the definition of $F$, there exist
$m\in\mathbb{Z}_{>0}$ and a Borel partition $\left\{ Z_{1},...,Z_{m}\right\} $
of $F$ such that $\log m=O\left(\eta n\right)$, and for all $1\le j\le m$
and $\omega',\omega''\in Z_{j}$,
\begin{equation}
\frac{1}{2}\le\frac{\Vert g_{\omega'|_{n}}\Vert_{\mathrm{op}}}{\Vert g_{\omega''|_{n}}\Vert_{\mathrm{op}}}\le2\text{ and }d\left(L\left(g_{\omega'|_{n}}\right),L\left(g_{\omega''|_{n}}\right)\right)\le2^{-n\left(2\chi+2\eta\right)}.\label{eq:prop of Z_j}
\end{equation}

Let $1\le j\le m$, and note that from $Z_{j}\subset E_{0}$ and (\ref{eq:prop of Z_j}),
\[
d\left(L\left(g_{\omega'|_{n}}\right),L\left(g_{\omega''|_{n}}\right)\right)\le\Vert g_{\omega'|_{n}}\Vert_{\mathrm{op}}^{-2}\text{ for all }\omega',\omega''\in Z_{j}.
\]
Hence, by Lemma \ref{lem:dist est in G},
\begin{equation}
\mathrm{diam}\left(\Pi_{n}\left(Z_{j}\right)\right)\le R\text{ for every }1\le j\le m,\label{eq:diam(Pi_n(F_j)) <=00003D R}
\end{equation}
where $R>1$ is the global constant obtained in Lemma \ref{lem:dist est in G}.

Let $0<r<1$ be the constant obtained in Theorem \ref{thm:ent inc},
and suppose that $R,r^{-1}\ll\eta^{-1}$. By (\ref{eq:diam(Pi_n(F_j)) <=00003D R})
and Lemma \ref{lem:part of bd cardin}, for each $1\le j\le m$ there
exist $l_{j}\in\mathbb{Z}_{>0}$ and a Borel partition $\left\{ Z_{j,1},...,Z_{j,l_{j}}\right\} $
of $Z_{j}$ such that $\log l_{j}=O_{R,r}(1)$ and $\mathrm{diam}\left(\Pi_{n}\left(Z_{j,i}\right)\right)\le r$
for all $1\le i\le l_{j}$. Setting
\[
\mathcal{Z}:=\left\{ Z_{j,i}\::\:1\le j\le m\text{ and }1\le i\le l_{j}\right\} ,
\]
it holds that $\mathcal{Z}$ is a Borel partition of $F$ with $\log\left|\mathcal{Z}\right|=O_{R,r}\left(\eta n\right)$
and
\begin{equation}
\mathrm{diam}\left(\Pi_{n}(Z)\right)\le r\text{ for }Z\in\mathcal{Z}.\label{eq:diam (Pi_n(Z))<=00003Dr}
\end{equation}

From $\frac{1}{n}H\left(\Pi_{n}\left(\beta_{\omega}\right)_{F}\right)\ge\epsilon/8$
and $\log\left|\mathcal{Z}\right|=O_{R,r}\left(\eta n\right)$, by
the almost-convexity of entropy (see Section \ref{subsec:Entropy}),
and since $R,r^{-1},\epsilon^{-1}\ll\eta^{-1}$,
\begin{equation}
\sum_{Z\in\mathcal{Z}}\frac{\beta_{\omega}(Z)}{\beta_{\omega}(F)}\frac{1}{n}H\left(\Pi_{n}\left(\beta_{\omega}\right)_{Z}\right)\ge\frac{\epsilon}{16}.\label{eq:sum over Z in Z of Shannon ent}
\end{equation}
Let $\mathcal{Z}_{1}$ be the set of all $Z\in\mathcal{Z}$ such that
$\beta_{\omega}(Z)>0$ and $\frac{1}{n}H\left(\Pi_{n}\left(\beta_{\omega}\right)_{Z}\right)\ge\frac{\epsilon}{32}$.
From (\ref{eq:ub on Shannon ent of Pi_n xi}) and (\ref{eq:sum over Z in Z of Shannon ent}),
we obtain that $\left(\beta_{\omega}\right)_{F}\left(\bigcup\mathcal{Z}_{1}\right)\ge\frac{\epsilon}{32\log|\Lambda|}$.
Thus, since $\beta_{\omega}(F)>1-2\eta$, we have $\beta_{\omega}\left(\bigcup\mathcal{Z}_{1}\right)\ge\frac{\epsilon(1-2\eta)}{32\log|\Lambda|}$.

Let $Z\in\mathcal{Z}_{1}$ be given, set $\theta:=\Pi_{n}\left(\beta_{\omega}\right)_{Z}$,
and fix some $g\in\mathrm{supp}(\theta)$. From (\ref{eq:diam (Pi_n(Z))<=00003Dr}),
it follows that $\mathrm{supp}\left(g^{-1}\theta\right)\subset B\left(1_{\mathrm{G}},r\right)$.
Moreover, since $g\in\Pi_{n}(Z)\subset\Pi_{n}(F)\subset\Pi_{n}(E_{0})$,
we have $\left|\frac{1}{n}\log\Vert g\Vert_{\mathrm{op}}-\chi\right|<\eta$.
Hence, by Lemma \ref{lem:scaling of ent in CP^1} and since $\theta.\mu=g\left(\left(g^{-1}\theta\right).\mu\right)$,
\begin{equation}
\widehat{H}\left(\theta.\mu\right)\ge\frac{1}{Mn}H\left(\left(g^{-1}\theta\right).\mu,\mathcal{D}_{Mn}\mid\mathcal{D}_{0}\right)-O\left(\eta\right).\label{eq:lb by scaling of ent}
\end{equation}

Note that by (\ref{eq:ub on card of dyd part of CP^1}) and since
$\eta^{-1}\ll n$,
\begin{equation}
\frac{1}{Mn}H\left(\xi,\mathcal{D}_{0}\right)\le\eta\text{ for all }\xi\in\mathcal{M}\left(\mathbb{CP}^{1}\right).\label{eq:ent wrt D_0 is small in main}
\end{equation}

Let $\delta=\delta\left(\frac{\epsilon}{32M}\right)\in(0,1)$ be as
obtained in Theorem \ref{thm:ent inc}. Since $\epsilon^{-1},M\ll\eta^{-1}$,
we may assume that $\delta^{-1}\ll\eta^{-1}$. From (\ref{eq:by Diophantine})
and since $d_{\mathrm{G}}$ is left invariant, it follows that $d\left(g_{1},g_{2}\right)\ge c^{n}$
for all distinct $g_{1},g_{2}\in\mathrm{supp}\left(g^{-1}\theta\right)$.
Thus, from (\ref{eq:def prop of M}) and since $\frac{1}{n}H\left(g^{-1}\theta\right)=\frac{1}{n}H\left(\theta\right)\ge\frac{\epsilon}{32}$,
we obtain $\frac{1}{Mn}H\left(g^{-1}\theta,\mathcal{D}_{Mn}\right)\ge\frac{\epsilon}{32M}$.
From this, from $\mathrm{supp}\left(g^{-1}\theta\right)\subset B\left(1_{\mathrm{G}},r\right)$,
since $\dim\mu<2$, by Theorem \ref{thm:ent inc}, and since $\epsilon^{-1},M\ll n$,
\[
\frac{1}{Mn}H\left(\left(g^{-1}\theta\right).\mu,\mathcal{D}_{Mn}\right)\ge\dim\mu+\delta.
\]
Combining this with (\ref{eq:lb by scaling of ent}) and (\ref{eq:ent wrt D_0 is small in main}),
and using $\delta^{-1}\ll\eta^{-1}$, we have thus shown that
\begin{equation}
\widehat{H}\left(\Pi_{n}\left(\beta_{\omega}\right)_{Z}.\mu\right)\ge\dim\mu+\delta/2\text{ for all }Z\in\mathcal{Z}_{1}.\label{eq:lb of cond ent for all Z in Z_1}
\end{equation}

Next, we derive a lower bound for the left-hand side of (\ref{eq:lb of cond ent for all Z in Z_1}),
which is valid for all $Z\in\mathcal{Z}$. Let $g\in\Pi_{n}\left(E_{0}\right)$
be given. By applying Lemma \ref{lem:scaling of ent in CP^1} with
$\theta=\delta_{1_{\mathrm{G}}}$,
\[
\widehat{H}\left(g\mu\right)\ge\frac{1}{Mn}H\left(\mu,\mathcal{D}_{Mn}\mid\mathcal{D}_{0}\right)-O\left(\eta\right).
\]
Hence, by Lemma \ref{lem:from dim to ent}, from (\ref{eq:ent wrt D_0 is small in main}),
and since $\eta^{-1}\ll n$,
\begin{equation}
\widehat{H}\left(g\mu\right)\ge\dim\mu-O\left(\eta\right)\text{ for all }g\in\Pi_{n}\left(E_{0}\right).\label{eq:lb of cond ent all g in Pi_n(E_0)}
\end{equation}
Consequently, by the concavity of conditional entropy and since $F\subset E_{0}$,
\begin{equation}
\widehat{H}\left(\Pi_{n}\left(\beta_{\omega}\right)_{Z}.\mu\right)\ge\dim\mu-O\left(\eta\right)\text{ for all }Z\in\mathcal{Z}\text{ with }\beta_{\omega}(Z)>0.\label{eq:lb of cond ent for all Z in Z}
\end{equation}

From (\ref{eq:lb of cond ent for all Z in Z_1}) and (\ref{eq:lb of cond ent for all Z in Z}),
from $\beta_{\omega}(F)>1-2\eta$ and $\beta_{\omega}\left(\bigcup\mathcal{Z}_{1}\right)\ge\frac{\epsilon(1-2\eta)}{32\log|\Lambda|}$,
and by concavity,
\begin{eqnarray*}
\widehat{H}\left(\Pi_{n}\beta_{\omega}.\mu\right) & \ge & \sum_{Z\in\mathcal{Z}}\beta_{\omega}(Z)\cdot\widehat{H}\left(\Pi_{n}\left(\beta_{\omega}\right)_{Z}.\mu\right)\\
 & \ge & \beta_{\omega}\left(\bigcup\mathcal{Z}_{1}\right)\left(\dim\mu+\delta/2\right)+\beta_{\omega}\left(F\setminus\bigcup\mathcal{Z}_{1}\right)\left(\dim\mu-O\left(\eta\right)\right)\\
 & \ge & \dim\mu+\frac{\epsilon\delta}{64\log|\Lambda|}-O\left(\eta\right),
\end{eqnarray*}
which holds for all $\omega\in E_{2}$. Additionally, from (\ref{eq:lb of cond ent all g in Pi_n(E_0)})
and by concavity, for each $\omega\in E_{1}$
\[
\widehat{H}\left(\Pi_{n}\beta_{\omega}.\mu\right)\ge\beta_{\omega}\left(E_{0}\right)\widehat{H}\left(\Pi_{n}\left(\beta_{\omega}\right)_{E_{0}}.\mu\right)\ge\dim\mu-O\left(\eta\right).
\]

From the last two formulas, by (\ref{eq:ineq to contradict in main pf}),
and since $\beta\left(E_{1}\right)>1-\eta$ and $\beta\left(E_{2}\right)>\frac{\epsilon}{8\log|\Lambda|}$,
\begin{eqnarray*}
\dim\mu & \ge & \int_{E_{1}\setminus E_{2}}\widehat{H}\left(\left(\Pi_{n}\beta_{\omega}\right).\mu\right)\:d\beta(\omega)+\int_{E_{2}}\widehat{H}\left(\left(\Pi_{n}\beta_{\omega}\right).\mu\right)\:d\beta(\omega)-\eta\\
 & \ge & \beta\left(E_{1}\setminus E_{2}\right)\left(\dim\mu-O\left(\eta\right)\right)+\beta\left(E_{2}\right)\left(\dim\mu+\frac{\epsilon\delta}{64\log|\Lambda|}-O\left(\eta\right)\right)-\eta\\
 & \ge & \dim\mu+\frac{\epsilon}{8\log|\Lambda|}\cdot\frac{\epsilon\delta}{64\log|\Lambda|}-O\left(\eta\right).
\end{eqnarray*}
Since $\epsilon^{-1},\delta^{-1}\ll\eta^{-1}$, the last formula leads
to a contradiction, which completes the proof of the theorem.
\end{proof}
\appendix

\section{\label{sec:appendix}Exact dimensionality and Ledrappier--Young
formula}

The purpose of this appendix is to derive Theorem \ref{thm:exact dim of mu and LY}
from the results of \cite{rapaport2020exact}. Recall that $\mathcal{B}_{\mathbb{CP}^{1}}$
denotes the Borel $\sigma$-algebra of $\mathbb{CP}^{1}$, that we
set
\[
\Delta:=H\left(\beta,\mathcal{P}_{1}\mid L^{-1}\mathcal{B}_{\mathbb{CP}^{1}}\right),
\]
and that $\left\{ \beta_{\omega}\right\} _{\omega\in\Lambda^{\mathbb{N}}}$
denotes the disintegration of $\beta$ with respect to $L^{-1}\mathcal{B}_{\mathbb{CP}^{1}}$.
For the reader's convenience, we repeat the statement of Theorem \ref{thm:exact dim of mu and LY}.
\begin{thm*}
The measure $\mu$ is exact dimensional with $\dim\mu=\frac{H(p)-\Delta}{2\chi}$.
Moreover,
\begin{equation}
\underset{n\to\infty}{\lim}\frac{1}{n}H\left(\beta_{\omega},\mathcal{P}_{n}\right)=\Delta\text{ for }\beta\text{-a.e. }\omega.\label{eq:local dim of slices of beta}
\end{equation}
\end{thm*}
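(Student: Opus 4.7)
The plan is to transfer the results of \cite{rapaport2020exact} on exact dimensionality and Ledrappier--Young-type formulas for Furstenberg measures on \emph{real} projective spaces to our complex setting. The obstacle noted in the excerpt is that the natural embedding $\mathrm{G}\subset\mathrm{GL}(4,\mathbb{R})$ obtained from $\mathbb{C}^2\cong_{\mathbb{R}}\mathbb{R}^4$ fails to be proximal in the real sense, since the real singular values of $g\in\mathrm{G}$ come in equal pairs. To remedy this, I would use the spin representation $\rho:\mathrm{G}\to\mathrm{GL}(V)$ on the $4$-dimensional real space $V:=\mathrm{H}_2(\mathbb{C})$ of Hermitian $2\times 2$ matrices, given by $\rho(g)\cdot X:=gXg^{*}$. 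This preserves the Lorentzian form $X\mapsto\det(X)$ of signature $(1,3)$, factoring through the double cover $\mathrm{G}\twoheadrightarrow\mathrm{PSL}(2,\mathbb{C})\cong\mathrm{SO}(1,3)^{+}$, and the map $v\mapsto vv^{*}$ induces a smooth $\mathrm{G}$-equivariant diffeomorphism $\pi:\mathbb{CP}^{1}\to N$ onto the projectivized future null cone $N\subset\mathbb{P}(V)$. A direct computation in a singular value decomposition of $g$ shows that the singular values of $\rho(g)$ are $\|g\|_{\mathrm{op}}^{\pm 2}$, each with multiplicity one, together with two equal to $1$; so $\rho$ is proximal in the real sense, and the Zariski density of $\mathrm{S}_{\mathcal{G}}$ in $\mathrm{G}$ from Lemma \ref{lem:zariski density} transfers to Zariski density of $\rho(\mathrm{S}_{\mathcal{G}})$ in $\mathrm{SO}(1,3)^{+}$, which yields strong irreducibility.

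With $\rho$ fixed, let $\mu'\in\mathcal{M}(\mathbb{P}(V))$ be the Furstenberg measure for $\sum_{i}p_{i}\delta_{\rho(g_{i})}$. By the equivariance of $\pi$ and uniqueness of the stationary measure, $\mu'=\pi\mu$ is supported on $N$, and the restriction of $\pi$ is a bi-Lipschitz diffeomorphism $\mathbb{CP}^{1}\to\mathrm{supp}(\mu')$. The main theorem of \cite{rapaport2020exact} then asserts exact dimensionality of $\mu'$ and a Ledrappier--Young formula
\[
\dim\mu'=\frac{H(p)-\Delta'}{\chi'_{1}-\chi'_{2}},
\]
with $\chi'_{1}>\chi'_{2}$ the top two real Lyapunov exponents of the $\rho$-product and $\Delta'=H(\beta,\mathcal{P}_{1}\mid L_{\rho}^{-1}\mathcal{B}_{\mathbb{P}(V)})$ the conditional entropy over its Furstenberg boundary map $L_{\rho}$. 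Transferring across the bi-Lipschitz $\pi$ preserves exact dimensionality and the value of the dimension, so $\mu$ is exact dimensional with $\dim\mu=\dim\mu'$. From $\|\rho(g)\|_{\mathrm{op}}=\|g\|_{\mathrm{op}}^{2}$ and the singular value structure computed above we get $\chi'_{1}=2\chi$ and $\chi'_{2}=0$, so the denominator is $2\chi$. Since $L_{\rho}=\pi\circ L$, the pullback $\sigma$-algebras $L_{\rho}^{-1}\mathcal{B}_{\mathbb{P}(V)}$ and $L^{-1}\mathcal{B}_{\mathbb{CP}^{1}}$ coincide modulo $\beta$-null sets, giving $\Delta'=\Delta$. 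This yields $\dim\mu=(H(p)-\Delta)/(2\chi)$.

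For the local entropy formula (\ref{eq:local dim of slices of beta}), the analogous statement for $\mu'$ is part of the output of \cite{rapaport2020exact}: the disintegration $\{\beta'_{\omega}\}$ of $\beta$ over $L_{\rho}^{-1}\mathcal{B}_{\mathbb{P}(V)}$ satisfies $\frac{1}{n}H(\beta'_{\omega},\mathcal{P}_{n})\to\Delta'$ for $\beta$-a.e.\ $\omega$. Because the boundary $\sigma$-algebras of $L$ and $L_{\rho}$ agree $\beta$-a.s., $\beta'_{\omega}=\beta_{\omega}$ for $\beta$-a.e.\ $\omega$, and the formula follows.

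The main obstacle lies in the technical matching: checking the precise assumptions of \cite{rapaport2020exact} for the representation $\rho$, verifying that its Ledrappier--Young formula specializes as claimed (spectral gap $2\chi$ since the second and third Lyapunov exponents vanish and the top has multiplicity one), and identifying the boundary-map data so that $\Delta'=\Delta$. Additional care is needed to ensure that $\mu'$ is automatically supported on $N$—which follows from uniqueness of the stationary measure and the fact that $\pi\mu$ is already stationary on $\mathbb{P}(V)$—and that $\pi$ preserves pointwise dimension everywhere on $\mathrm{supp}(\mu')$, which follows from its bi-Lipschitz behavior on compact pieces of $N$. None of these steps is deep in isolation, but each requires careful attention to the precise formulation of \cite{rapaport2020exact}; once they are assembled, Theorem \ref{thm:exact dim of mu and LY} follows.
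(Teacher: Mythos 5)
Your overall strategy is the same as the paper's: pass to a real $4$-dimensional representation of $\mathrm{G}$ on which the action is proximal and strongly irreducible, identify $\mathbb{CP}^{1}$ equivariantly with a quadric surface in the corresponding projective $3$-space, and pull the results of \cite{rapaport2020exact} back through this diffeomorphism. Your model (the action $X\mapsto gXg^{*}$ on Hermitian matrices, with $\mathbb{CP}^{1}$ mapped onto the projectivized null cone) is isomorphic to the paper's model (the invariant subspace $\mathbb{V}\subset\wedge^{2}\mathbb{R}^{4}$, with $\mathbb{CP}^{1}$ mapped onto a paraboloid-type surface $\mathrm{P}(X)$), so this part of the plan is sound, as is the identification $\Delta'=\Delta$ via $L_{\rho}=\pi\circ L$ and the use of Zariski density to transfer strong irreducibility and proximality.

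The genuine gap is in the step you treat as routine bookkeeping: the Ledrappier--Young formula of \cite{rapaport2020exact} for this representation is \emph{not} the single-term expression $\dim\mu'=\bigl(H(p)-\Delta'\bigr)/\bigl(\chi'_{1}-\chi'_{2}\bigr)$. Since the Lyapunov spectrum is $2\chi,0,0,-2\chi$ with the zero exponent of multiplicity two, the formula from \cite[Theorem 1.3]{rapaport2020exact} has an additional term: in the paper's notation,
\[
\dim\mu'=\frac{H(p)-\mathrm{H}_{1}}{2\chi}+\frac{\mathrm{H}_{1}-\mathrm{H}_{2}}{4\chi},
\]
where $\mathrm{H}_{1}$ is an entropy conditioned on the composition of the boundary map with orthogonal projections $\pi_{\ell^{\perp}}$ (averaged over the backward stationary measure) and $\mathrm{H}_{2}=\Delta'$. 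Since $\mathrm{H}_{1}\ge\mathrm{H}_{2}$, this only gives $\dim\mu\le\bigl(H(p)-\Delta\bigr)/(2\chi)$ a priori; the claimed equality requires proving $\mathrm{H}_{1}=\mathrm{H}_{2}$, equivalently that the conditional measures of $\mu'$ on fibers of the projections $\pi_{\ell^{\perp}}$ are zero-dimensional. This is exactly the nontrivial content of the paper's appendix: it exploits the fact that $\mu'$ is carried by a quadric surface (your null cone, the paper's paraboloid) which meets each fiber of such a projection in at most two points, forcing the fiber conditionals to be finitely supported. Your proposal never supplies this argument — it assumes the simplified formula and mentions "verifying that its Ledrappier--Young formula specializes as claimed" only in passing — so as written the proof does not yield the exact value $\dim\mu=\bigl(H(p)-\Delta\bigr)/(2\chi)$; you would need to add the finite-fiber-intersection (or an equivalent zero-dimensionality) argument to close it.
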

\begin{rem*}
The derivation of Theorem \ref{thm:exact dim of mu and LY} from \cite{rapaport2020exact}
is somewhat technical. An explanation of why this is necessary is
given in the paragraph at the end of Section \ref{subsec:Exact-dimensionality-and LY formula}.
\end{rem*}
\begin{proof}
Let $T:\mathbb{C}^{2}\rightarrow\mathbb{R}^{4}$ denote the natural
identification between $\mathbb{C}^{2}$ and $\mathbb{R}^{4}$; that
is,
\[
T(x_{1}+x_{2}i,x_{3}+x_{4}i)=(x_{1},x_{2},x_{3},x_{4})\:\text{ for }x_{1},x_{2},x_{3},x_{4}\in\mathbb{R}.
\]
Let $\wedge^{2}\mathbb{R}^{4}$ denote the real vector space of alternating
$2$-forms on the dual of $\mathbb{R}^{4}$, and let $\rho:\mathrm{G}\rightarrow\mathrm{GL}\left(\wedge^{2}\mathbb{R}^{4}\right)$
be such that
\[
\rho(g)(x\wedge y)=\left(TgT^{-1}x\right)\wedge\left(TgT^{-1}y\right)\text{ for all }g\in\mathrm{G}\text{ and }x,y\in\mathbb{R}^{4}.
\]
It is easy to verify that the Lie group representation $\rho$ descends
to an embedding of $\mathrm{PSL}\left(2,\mathbb{C}\right):=\mathrm{G}/\{\pm1_{\mathrm{G}}\}$
into $\mathrm{GL}\left(\wedge^{2}\mathbb{R}^{4}\right)$.

Let $X$ denote the set of vectors in $\wedge^{2}\mathbb{R}^{4}$
of the form $x\wedge T\left(iT^{-1}x\right)$ for some $0\ne x\in\mathbb{R}^{4}$,
and write $\mathbb{V}$ for the subspace of $\wedge^{2}\mathbb{R}^{4}$
spanned by $X$. It is easy to verify that $X$, and hence also $\mathbb{V}$,
is $\rho(\mathrm{G})$-invariant.

Let $\{f_{j}\}_{j=1}^{4}$ denote the standard basis of $\mathbb{R}^{4}$,
and set
\[
\zeta_{1}:=f_{1}\wedge f_{2},\:\zeta_{2}:=f_{3}\wedge f_{4},\:\zeta_{3}:=f_{1}\wedge f_{4}-f_{2}\wedge f_{3}\text{ and }\zeta_{4}:=f_{1}\wedge f_{3}+f_{2}\wedge f_{4}.
\]
It is easy to verify that $\{\zeta_{j}\}_{j=1}^{4}$ forms a basis
of $\mathbb{V}$. Using this, it is not difficult to show that $\rho(\mathrm{G})$
acts proximally and irreducibly on $\mathbb{V}$. Since $\mathrm{G}$
is connected, it follows that $\rho(\mathrm{G})$ acts strongly irreducibly
on $\mathbb{V}$.

Write $\mathrm{P}(\mathbb{V})$ for the projective space of $\mathbb{V}$.
Since $\rho(\mathrm{G})$ acts strongly irreducibly and proximally
on $\mathbb{V}$, it follows from Lemma \ref{lem:zariski density}
and \cite[Lemma 6.23]{BQ} that $\rho(\mathrm{S}_{\mathcal{G}})$
also acts strongly irreducibly and proximally on $\mathbb{V}$. Hence,
setting
\[
\theta:=\sum_{i\in\Lambda}p_{i}\delta_{\rho(g_{i})}\in\mathcal{M}\left(\mathrm{GL}\left(\wedge^{2}\mathbb{R}^{4}\right)\right),
\]
there exists a unique $\mu'\in\mathcal{M}\left(\mathrm{P}(\mathbb{V})\right)$
which is $\theta$-stationary. By \cite[Theorem 1.1]{rapaport2020exact},
the measure $\mu'$ is exact dimensional. From the $\rho(\mathrm{G})$-invariance
of $X$, it follows that the compact set $\mathrm{P}(X):=\left\{ \phi\mathbb{R}\::\:\phi\in X\right\} $
is also $\rho(\mathrm{G})$-invariant. Thus, by the uniqueness of
$\mu'$, it follows that $\mu'$ is supported on $\mathrm{P}(X)$.

Let $F:\mathbb{CP}^{1}\rightarrow\mathrm{P}(X)$ be such that $F\left(z\mathbb{C}\right)=T(z)\wedge T(iz)\mathbb{R}$
for $z\mathbb{C}\in\mathbb{CP}^{1}$. It is easy to verify that $F$
is well defined, and that it is a diffeomorphism of $\mathbb{CP}^{1}$
onto $\mathrm{P}(X)$. Moreover, $F\left(gz\mathbb{C}\right)=\rho(g)\left(F\left(z\mathbb{C}\right)\right)$
for each $g\in\mathrm{G}$ and $z\mathbb{C}\in\mathbb{CP}^{1}$. Thus,
$F$ is an isomorphism between the action of $\mathrm{PSL}\left(2,\mathbb{C}\right)$
on $\mathbb{CP}^{1}$ and the action of $\rho(\mathrm{G})$ on $\mathrm{P}(X)$.
In particular, $\mu'=F\mu$, and so, since $\mu'$ is exact dimensional,
we obtain that $\mu$ is also exact dimensional with $\dim\mu=\dim\mu'$.

The standard Euclidean inner product on $\mathbb{R}^{4}$ induces
an inner product on $\wedge^{2}\mathbb{R}^{4}$ in a natural way (see
\cite[Section III.5]{BL}), which restricts to an inner product on
$\mathbb{V}$. Given a line $\ell\in\mathrm{P}(\mathbb{V})$, write
$\ell^{\perp}$ for the orthogonal complement of $\ell$ in $\mathbb{V}$,
and let $\pi_{\ell^{\perp}}:\mathbb{V}\rightarrow\mathbb{V}$ denote
the orthogonal projection onto $\ell^{\perp}$.

Since $\rho(\mathrm{S}_{\mathcal{G}})$ acts strongly irreducibly
and proximally on $\mathbb{V}$, there exists a unique $\lambda\in\mathcal{M}\left(\mathrm{P}(\mathbb{V})\right)$
which is stationary with respect to $\sum_{i\in\Lambda}p_{i}\delta_{\rho(g_{i})^{-1}}$.
Additionally, let $L':\Lambda^{\mathbb{N}}\rightarrow\mathrm{P}(\mathbb{V})$
denote the Furstenberg boundary map associated to $\theta$ (see \cite[Proposition 4.7]{BQ}),
write $\mathcal{B}_{\mathrm{P}(\mathbb{V})}$ for the Borel $\sigma$-algebra
of $\mathrm{P}(\mathbb{V})$, and set
\[
\mathrm{H}_{1}:=\int H\left(\beta,\mathcal{P}_{1}\mid L'^{-1}\pi_{\ell^{\perp}}^{-1}\mathcal{B}_{\mathrm{P}(\mathbb{V})}\right)\:d\lambda(\ell)\text{ and }\mathrm{H}_{2}:=H\left(\beta,\mathcal{P}_{1}\mid L'^{-1}\mathcal{B}_{\mathrm{P}(\mathbb{V})}\right).
\]
Given $\ell\in\mathrm{P}(\mathbb{V})$, note that $\pi_{\ell^{\perp}}\circ L'$
defines a Borel map on $\Lambda^{\mathbb{N}}$ outside a set of zero
$\beta$-measure, and so $\mathrm{H}_{1}$ is well defined.

Given an orthonormal basis $\left\{ z,w\right\} $ of $\mathbb{C}^{2}$,
it is easy to verify that
\[
\left\{ \begin{array}{c}
T(z)\wedge T(iz),\:\frac{1}{\sqrt{2}}\left(T(z)\wedge T(iw)-T(iz)\wedge T(w)\right),\\
T(w)\wedge T(iw),\:\frac{1}{\sqrt{2}}\left(T(z)\wedge T(w)+T(iz)\wedge T(iw)\right)
\end{array}\right\} 
\]
forms an orthonormal basis of $\mathbb{V}$. Using this, and since
the Lyapunov exponents corresponding to $\sum_{i\in\Lambda}p_{i}\delta_{g_{i}}$
are $\chi$ and $-\chi$, it is not difficult to show that the Lyapunov
exponents corresponding to $\theta$ are $2\chi,0,0,-2\chi$. Hence,
by \cite[Theorem 1.3]{rapaport2020exact},
\begin{equation}
\dim\mu'=\frac{H(p)-\mathrm{H}_{1}}{2\chi}+\frac{\mathrm{H}_{1}-\mathrm{H}_{2}}{4\chi}.\label{eq:LY for mu'}
\end{equation}

Let us next show that in fact $\mathrm{H}_{1}=\mathrm{H}_{2}$. Given
$\ell\in\mathrm{P}(\mathbb{V})$, write $\{\mu'_{\ell,Z}\}_{Z\in\mathrm{P}(\mathbb{V})}$
for the disintegration of $\mu'$ with respect to $\pi_{\ell^{\perp}}^{-1}\mathcal{B}_{\mathrm{P}(\mathbb{V})}$.
By \cite[Theorem 1.3]{rapaport2020exact}, it follows that for $\lambda$-a.e.
$\ell$ and $\mu'$-a.e. $Z$ the measure $\mu'_{\ell,Z}$ is exact
dimensional with dimension $\frac{1}{4\chi}\left(\mathrm{H}_{1}-\mathrm{H}_{2}\right)$.
Thus, in order to show that $\mathrm{H}_{1}=\mathrm{H}_{2}$, it suffices
to prove that $\dim\mu'_{\ell,Z}=0$ for $\lambda\times\mu'$-a.e.
$\left(\ell,Z\right)$.

Recall the basis $\left\{ \zeta_{j}\right\} _{j=1}^{4}$ defined above.
Fix $\ell\in\mathrm{P}(\mathbb{V})$, set
\[
W:=\zeta_{2}+\mathrm{span}\{\zeta_{1},\zeta_{3},\zeta_{4}\},
\]
and let,
\[
S:=\left\{ (x^{2}+y^{2})\zeta_{1}+\zeta_{2}+x\zeta_{3}+y\zeta_{4}\::\:x,y\in\mathbb{R}\right\} .
\]
For $x,y\in\mathbb{R}$,
\[
F\left((x+yi,1)\mathbb{C}\right)=\left((x^{2}+y^{2})\zeta_{1}+\zeta_{2}+x\zeta_{3}+y\zeta_{4}\right)\mathbb{R}.
\]
Thus, setting $N:=F\left((1,0)\mathbb{C}\right)$, each line $Z\in\mathrm{P}(X)\setminus\left\{ N\right\} $
intersects $S$ at precisely one point.

Given $Q\in\mathrm{P}\left(\ell^{\perp}\right):=\left\{ \ell'\in\mathrm{P}(\mathbb{V})\::\:\ell'\subset\ell^{\perp}\right\} $,
the set $\pi_{\ell^{\perp}}^{-1}(Q)$ is a $2$-dimensional linear
subspace of $\mathbb{V}$. Since $0\notin W$, it follows that $\pi_{\ell^{\perp}}^{-1}(Q)\cap W$
is either an affine line or the empty set. Moreover, it is easy to
see that an affine line can intersect the translated paraboloid $S\subset W$
in at most $2$ points. We have thus shown that,
\[
\#\left\{ Z\in\mathrm{P}(X)\setminus\left\{ N\right\} \::\:\pi_{\ell^{\perp}}(Z)=Q\right\} \le2\text{ for all }Q\in\mathrm{P}\left(\ell^{\perp}\right).
\]
Since $\mu'$ is supported on $\mathrm{P}(X)$, this clearly implies
that $\dim\mu'_{\ell,Z}=0$ for $\mu'$-a.e. $Z$, which gives $\mathrm{H}_{1}=\mathrm{H}_{2}$.

Since $F$ is an isomorphism between actions,
\begin{equation}
L'(\omega)=F\circ L(\omega)\text{ for }\beta\text{-a.e. }\omega,\label{eq:L' factors via L}
\end{equation}
which implies $\mathrm{H}_{2}=\Delta$. From this, $\mathrm{H}_{1}=\mathrm{H}_{2}$,
and (\ref{eq:LY for mu'}), we get
\[
\dim\mu=\dim\mu'=\frac{H(p)-\Delta}{2\chi}.
\]
Moreover, from (\ref{eq:L' factors via L}) it also follows that the
disintegration of $\beta$ with respect to $L^{-1}\mathcal{B}_{\mathbb{CP}^{1}}$,
which we have denoted by $\left\{ \beta_{\omega}\right\} _{\omega\in\Lambda^{\mathbb{N}}}$,
equals almost surely the disintegration of $\beta$ with respect to
$L'^{-1}\mathcal{B}_{\mathrm{P}(\mathbb{V})}$. From this, $\mathrm{H}_{2}=\Delta$,
and \cite[Lemma 4.4]{rapaport2020exact}, we obtain (\ref{eq:local dim of slices of beta}),
which completes the proof of the theorem.
\end{proof}

\subsubsection*{\textbf{\emph{Acknowledgment}}}

We thank Boris Solomyak and Adam \'{S}piewak for helpful discussions
in the early stages of this work. We also thank François Ledrappier
and Ilya Gekhtman for helpful remarks on an earlier version of the
paper.

This research was supported by the Israel Science Foundation (grant
No. 619/22). AR received support from the Horev Fellowship at the
Technion -- Israel Institute of Technology.

\bibliographystyle{plain}
\bibliography{../../bibfile.bib}

\begin{thebibliography}{10}

\bibitem{MR510197}
L.~V. Ahlfors.
\newblock {\em Complex analysis}.
\newblock International Series in Pure and Applied Mathematics. McGraw-Hill
  Book Co., New York, third edition, 1978.
\newblock An introduction to the theory of analytic functions of one complex
  variable.

\bibitem{BHR}
B.~B{\'a}r{\'a}ny, M.~Hochman, and A.~Rapaport.
\newblock Hausdorff dimension of planar self-affine sets and measures.
\newblock {\em Invent. Math.}, 216(3):601--659, 2019.

\bibitem{BQ}
Y.~Benoist and J.-F. Quint.
\newblock {\em Random walks on reductive groups}.
\newblock Springer International Publishing, 2016.

\bibitem{BL}
P.~Bougerol and J.~Lacroix.
\newblock {\em Products of random matrices with applications to
  {S}chr\"{o}dinger operators}, volume~8 of {\em Progress in Probability and
  Statistics}.
\newblock Birkh\"{a}user Boston, Inc., Boston, MA, 1985.

\bibitem{Bourgain-FurMeas}
J.~Bourgain.
\newblock On the {F}urstenberg measure and density of states for the
  {A}nderson-{B}ernoulli model at small disorder.
\newblock {\em J. Anal. Math.}, 117:273--295, 2012.

\bibitem{MR3903263}
C.~Bruce and X.~Jin.
\newblock Projections of {G}ibbs measures on self-conformal sets.
\newblock {\em Nonlinearity}, 32(2):603--621, 2019.

\bibitem{dC}
M.~P. do~Carmo.
\newblock {\em Riemannian geometry}.
\newblock Mathematics: Theory \& Applications. Birkh\"{a}user Boston, Inc.,
  Boston, MA, 1992.
\newblock Translated from the second Portuguese edition by Francis Flaherty.

\bibitem{EiWa}
M.~Einsiedler and T.~Ward.
\newblock {\em Ergodic theory with a view towards number theory}, volume 259 of
  {\em Graduate Texts in Mathematics}.
\newblock Springer-Verlag London, Ltd., London, 2011.

\bibitem{MR1449135}
K.~Falconer.
\newblock {\em Techniques in fractal geometry}.
\newblock John Wiley \& Sons, Ltd., Chichester, 1997.

\bibitem{MR3263957}
K.~J. Falconer and X.~Jin.
\newblock Exact dimensionality and projections of random self-similar measures
  and sets.
\newblock {\em J. Lond. Math. Soc. (2)}, 90(2):388--412, 2014.

\bibitem{Ho1}
M.~Hochman.
\newblock On self-similar sets with overlaps and inverse theorems for entropy.
\newblock {\em Ann. of Math. (2)}, 180(2):773--822, 2014.

\bibitem{MR3966837}
M.~Hochman.
\newblock Dimension theory of self-similar sets and measures.
\newblock In {\em Proceedings of the {I}nternational {C}ongress of
  {M}athematicians---{R}io de {J}aneiro 2018. {V}ol. {III}. {I}nvited
  lectures}, pages 1949--1972. World Sci. Publ., Hackensack, NJ, 2018.

\bibitem{Ho}
M.~Hochman.
\newblock On self-similar sets with overlaps and inverse theorems for entropy
  in {$\mathbb{R}^d$}.
\newblock {\em Mem. Amer. Math. Soc.}, 265(no. 1287), 2021.

\bibitem{HR}
M.~Hochman and A.~Rapaport.
\newblock Hausdorff dimension of planar self-affine sets and measures with
  overlaps.
\newblock {\em J. Eur. Math. Soc. (JEMS)}, 24(7):2361--2441, 2022.

\bibitem{HoSh}
M.~Hochman and P.~Shmerkin.
\newblock Local entropy averages and projections of fractal measures.
\newblock {\em Ann. of Math. (2)}, 175(3):1001--1059, 2012.

\bibitem{HS}
M.~Hochman and B.~Solomyak.
\newblock On the dimension of {F}urstenberg measure for ${SL} _ {2}(\mathbb
  {R})$ random matrix products.
\newblock {\em Invent. Math.}, 210(3):815--875, 2017.

\bibitem{jurga2023FurstOnRP2}
N.~Jurga.
\newblock Hausdorff dimension of the rauzy gasket, 2023.
\newblock To appear in \emph{J. Eur. Math. Soc.} (JEMS), arXiv:2312.04999.

\bibitem{KRS}
A.~K\"{a}enm\"{a}ki, T.~Rajala, and V.~Suomala.
\newblock Existence of doubling measures via generalised nested cubes.
\newblock {\em Proc. Amer. Math. Soc.}, 140(9):3275--3281, 2012.

\bibitem{kittle2025dimension}
S.~Kittle and C.~Kogler.
\newblock Dimension of contracting on average self-similar measures, 2025.
\newblock arXiv:2501.17795.

\bibitem{Ledrappier1983}
F.~Ledrappier.
\newblock Une relation entre entropie, dimension et exposant pour certaines
  marches al{\'e}atoires.
\newblock {\em C. R. Acad. Sci. Paris S{\'e}r. I Math.}, 296(8):369--372, 1983.

\bibitem{LedLesGAFA}
F.~Ledrappier and P.~Lessa.
\newblock Exact dimension of {F}urstenberg measures.
\newblock {\em Geom. Funct. Anal.}, 33(1):245--298, 2023.

\bibitem{MR4845880}
F.~Ledrappier and P.~Lessa.
\newblock Exact dimension of dynamical stationary measures.
\newblock {\em J. Mod. Dyn.}, 20:679--715, 2024.

\bibitem{Le}
J.~M. Lee.
\newblock {\em Introduction to smooth manifolds}, volume 218 of {\em Graduate
  Texts in Mathematics}.
\newblock Springer, New York, second edition, 2013.

\bibitem{li2024FurstOnRP2}
J.~Li, W.~Pan, and D.~Xu.
\newblock On the dimension of limit sets on $\mathbb{P}(\mathbb{R}^3)$ via
  stationary measures: the theory and applications, 2024.
\newblock arXiv:2311.10265.

\bibitem{parry_skew_prod}
W.~Parry.
\newblock Skew products of shifts with a compact {L}ie group.
\newblock {\em J. London Math. Soc. (2)}, 56(2):395--404, 1997.

\bibitem{rapaport2020exact}
A.~Rapaport.
\newblock Exact dimensionality and {L}edrappier-{Y}oung formula for the
  {F}urstenberg measure.
\newblock {\em Trans. Amer. Math. Soc.}, 374:5225--5268, 2021.

\bibitem{Rap_SA_Rd}
A.~Rapaport.
\newblock On self-affine measures associated to strongly irreducible and
  proximal systems.
\newblock {\em Adv. Math.}, 449:Paper No. 109734, 2024.

\bibitem{Ru}
D.~Ruelle.
\newblock Ergodic theory of differentiable dynamical systems.
\newblock {\em Inst. Hautes \'{E}tudes Sci. Publ. Math.}, (50):27--58, 1979.

\bibitem{MR578731}
K.~Schmidt.
\newblock {\em Cocycles on ergodic transformation groups}, volume Vol. 1 of
  {\em Macmillan Lectures in Mathematics}.
\newblock Macmillan Co. of India, Ltd., Delhi, 1977.

\bibitem{MR4300232}
B.~Solomyak and Y.~Takahashi.
\newblock Diophantine property of matrices and attractors of projective
  iterated function systems in {$\Bbb{R}\Bbb{P}^1$}.
\newblock {\em Int. Math. Res. Not. IMRN}, (16):12639--12669, 2021.

\bibitem{Var-Bernoulli}
P.~Varj\'{u}.
\newblock On the dimension of {B}ernoulli convolutions for all transcendental
  parameters.
\newblock {\em Ann. of Math. (2)}, 189(3):1001--1011, 2019.

\bibitem{Var_ICM}
P.~Varj\'{u}.
\newblock Self-similar sets and measures on the line.
\newblock In {\em I{CM}---{I}nternational {C}ongress of {M}athematicians.
  {V}ol. 5. {S}ections 9--11}, pages 3610--3634. EMS Press, Berlin, [2023]
  \copyright 2023.

\bibitem{Wh}
H.~Whitney.
\newblock Elementary structure of real algebraic varieties.
\newblock {\em Ann. of Math. (2)}, 66:545--556, 1957.

\bibitem{Yo}
L.~S. Young.
\newblock Dimension, entropy and {L}yapunov exponents.
\newblock {\em Ergodic Theory Dynam. Systems}, 2(1):109--124, 1982.

\end{thebibliography}

$\newline$\textsc{Ariel Rapaport, Department of Mathematics, Technion, Haifa, Israel}$\newline$\textit{E-mail: }
\texttt{arapaport@technion.ac.il}

$\newline$\textsc{Haojie Ren, Department of Mathematics, Technion, Haifa, Israel}$\newline$\textit{E-mail: }
\texttt{hjren@campus.technion.ac.il}
\end{document}